\documentclass{amsart}
\usepackage{amsmath,amssymb}
\title{Exact module categories over $\mathrm{Rep}(u_q(\mathfrak{sl}_2))$}
\date{}

\author[D. Nakamura]{Daisuke Nakamura}
\address[D. Nakamura]{Graduate School of Science and Engineering, Mathematical Science Course, Okayama University of Science \\
  1-1 Ridai-cho, Kita-ku Okayama-shi, Okayama 700-0005, Japan.}
\email{r23nmc9wy@ous.jp}
\author[T. Shibata]{Taiki Shibata}
\address[T. Shibata]{Department of Applied Mathematics,
  Okayama University of Science \\
  1-1 Ridai-cho, Kita-ku Okayama-shi, Okayama 700-0005, Japan.}
\email{shibata@ous.ac.jp}
\author[K. Shimizu]{Kenichi Shimizu}
\address[K. Shimizu]{Department of Mathematical Sciences,
  Shibaura Institute of Technology \\
  307 Fukasaku, Minuma-ku, Saitama-shi, Saitama 337-8570, Japan.}
\email{kshimizu@shibaura-it.ac.jp}

\keywords{Hopf algebra, finite tensor category, module category}

\subjclass[2020]{18M05, 16T05}

\usepackage{mathrsfs}
\usepackage{color,url}
\usepackage{tikz-cd}
\usepackage[setpagesize=false]{hyperref}
\usepackage{amsthm}
\numberwithin{equation}{section}
\newtheorem{C}{}[section] 
\newtheorem{lemma}[C]{Lemma}
\newtheorem{claim}[C]{Claim}
\newtheorem{theorem}[C]{Theorem}

\theoremstyle{definition}
\newtheorem{definition}[C]{Definition}

\newtheorem{assumption}[C]{Assumption}
\theoremstyle{remark}
\newtheorem{remark}[C]{Remark}

\newcommand{\id}{\mathrm{id}}
\newcommand{\op}{\mathrm{op}}

\newcommand{\rev}{\mathrm{rev}}
\newcommand{\eval}{\mathrm{ev}}
\newcommand{\coev}{\mathrm{coev}}
\newcommand{\bfk}{\Bbbk}
\newcommand{\Hom}{\mathrm{Hom}}
\newcommand{\End}{\mathrm{End}}
\newcommand{\iHom}{\underline{\Hom}}
\newcommand{\iEnd}{\underline{\End}}
\newcommand{\Ker}{\mathrm{Ker}}
\newcommand{\Img}{\mathrm{Im}}
\newcommand{\Jac}{\mathrm{Jac}}
\newcommand{\socle}{\mathrm{soc}}
\newcommand{\unitobj}{\mathbf{1}}
\newcommand{\catactl}{\triangleright}
\newcommand{\Rex}{\mathrm{Rex}}
\newcommand{\Rep}{\mathrm{Rep}}
\newcommand{\Vect}{\mathsf{Vec}}
\newcommand{\FiltVect}{\mathsf{FiltVect}}
\newcommand{\GrVect}{\mathsf{GrVect}}
\newcommand{\gr}{\mathop{\mathrm{gr}}\nolimits}
\newcommand{\grc}{\gr_{\mathrm{c}}} 
\newcommand{\grL}{\gr_{\mathrm{L}}} 
\newcommand{\Grp}{\mathrm{G}}       

\newcommand{\lmod}[1]{{#1}\textnormal{-mod}}
\newcommand{\rmod}[1]{\textnormal{mod-}{#1}}
\newcommand{\bimod}[2]{{#1}\textnormal{-mod-}{#2}}
\newcommand{\ullMod}[1]{{{#1}\text{-}\underline{\mathrm{Mod}}}}
\newcommand{\Mod}{\mathfrak{M}}
\newcommand{\EW}{\mathrm{EW}}
\newcommand{\HM}{\mathrm{HM}}
\begin{document}

\begin{abstract}
  We give a complete list of indecomposable exact module categories over the finite tensor category $\Rep(u_q(\mathfrak{sl}_2))$ of representations of the small quantum group $u_q(\mathfrak{sl}_2)$, where $q$ is a root of unity of odd order. Each of them is given as the category of representations of a left comodule algebra over $u_q(\mathfrak{sl}_2)$ explicitly presented by generators and relations.
\end{abstract}

\maketitle

\tableofcontents

\section{Introduction}

Finite tensor categories are widely used in several branches of mathematics and mathematical physics including representation theory, low-dimensional topology, and conformal field theory. As in many other algebraic structures, studying `representations' of a finite tensor category is effective in the theory of finite tensor categories and their applications. The `representations' of a finite tensor category are mathematically formulated as module categories \cite{MR3242743}. Among them, exact module categories, introduced by Etingof and Ostrik, are especially important. As demonstrated in \cite{MR3242743}, several fundamental results were obtained by using the notion of exact module categories.

When $\mathcal{C}$ is a semisimple finite tensor category (that is, a fusion category \cite{MR3242743}), an exact module category is equivalent to a semisimple module category. The classification of indecomposable semisimple module categories was done for a variety of fusion categories; see, {\it e.g.}, \cite{MR1936496,MR1976233,MR2927179,MR3009745,MR2909758,MR3663587,MR4589524,2024arXiv240802794E}. While much progress has been made in the semisimple case, relatively little has been investigated in the non-semisimple case.
Etingof and Ostrik classified indecomposable exact module categories over $\Rep(H)$ for the case where $H$ is the Taft algebra or a finite supergroup algebra \cite{MR2119143}. In later, a general theory was established by Andruskiewitsch and Mombelli. Following their paper \cite{MR2331768}, for a finite-dimensional Hopf algebra $H$, an indecomposable exact $\Rep(H)$-module category is of the form $\Rep(A)$, where $A$ is a right $H$-simple left $H$-comodule algebra with trivial coinvariants (see Subsection \ref{subsec:term-comod-alg} for the terminology). Additionally, Mombelli and Garc\'ia Iglesias \cite{MR2678630,MR2989520,MR3264686,MR2860429} developed techniques for the classification of such comodule algebras in the case where $H$ is pointed.

The Taft algebra, dealt by Etingof and Ostrik in \cite{MR2119143}, is one of the simplest examples of pointed Hopf algebras. The small quantum group $u_q := u_q(\mathfrak{sl}_2)$ should belong to the second simplest class. The main purpose of this paper is to give an explicit list of indecomposable exact module categories over $\Rep(u_q)$ at a root of unity $q$ of odd order. For the associated graded Hopf algebra $\gr(u_q)$, Mombelli \cite{MR2678630} has classified right $\gr(u_q)$-simple left $\gr(u_q)$-comodule algebras with trivial coinvariants up to $\gr(u_q)$-equivariant Morita equivalence. Since $\Rep(u_q)$ and $\Rep(\gr(u_q))$ are categorically Morita equivalent in the sense of \cite[Section 7.12]{MR3242743}, one can, in principle, obtain a list of indecomposable exact module categories over $\Rep(u_q)$ from Mombelli's list.

So, what remains to be done? First, as is well-known, $u_q$ is obtained from $\gr(u_q)$ by 2-cocycle deformation. Fortunately, one can find an explicitly written 2-cocycle $\sigma$ turning $\gr(u_q)$ into $u_q$ in \cite{2010arXiv1010.4976G}. Now let $L$ be a right $\gr(u_q)$-simple left $\gr(u_q)$-comodule algebra with trivial coinvariants. We then obtain a left $u_q$-comodule algebra ${}_{\sigma}L$ by deforming the multiplication of $L$ by the cocycle $\sigma$. As is possibly well-known, $\Rep({}_{\sigma}L)$ is the indecomposable exact $\Rep(u_q)$-module category corresponding to the $\Rep(\gr(u_q))$-module category $\Rep(L)$ under the categorical Morita equivalence between $\Rep(u_q)$ and $\Rep(\gr(u_q))$. The main contribution of this paper is an explicit description of ${}_{\sigma}L$ for $L$ in Mombelli's list. As it turns out, the determination of ${}_{\sigma}L$ is not a trivial task for one family of $\gr(u_q)$-comodule algebras. In fact, there is a 3-parameter family $\mathscr{L}_4(\alpha, \beta; \xi)$ of left $\gr(u_q)$-comodule algebras generated by a single element $W$ subject to a simple relation $W^N = \xi$ (where $N$ is the order of $q$) and such that the coaction of $\gr(u_q)$ is given by
\begin{equation*}
  W \mapsto (\alpha K^{-1} E + \beta F) \otimes 1 + K^{-1} \otimes W,
\end{equation*}
where $E$, $F$ and $K$ are the standard generators of $u_q$ viewed as an element of $\gr(u_q)$. The algebra $\mathscr{A} := {}_{\sigma}(\mathscr{L}_4(\alpha, \beta; \eta))$ obtained by 2-cocycle deformation is still generated by $W$, and the coaction of $u_q$ on $\mathscr{A}$ is still given by the same formula on the generator $W$. However, the defining relation of $\mathscr{A}$ becomes
\begin{equation*}
  \sum_{k = 0}^{(N-1)/2} \binom{N - k}{k} \frac{N}{N - k}
  \left( \frac{q \alpha \beta}{(q^2 - 1)^2} \right)^{k} W^{N - 2 k} = \xi,
\end{equation*}
as we will prove by an idea from \cite{2024arXiv241010064S} and basic properties of the Chebyshev polynomials in \S\ref{subsec:cocycle-deform-L4} (where $\alpha$ is replaced with $(q-q^{-1}) \alpha$).

The main goal of this paper has been explained above. In addition, there is another objective closely related to it. After publication of the papers \cite{MR2331768,MR2678630,MR3264686,MR2860429} mentioned in the above, the theory of finite tensor categories has been updated substantially. A secondary purpose of this paper is to refine several results on Hopf algebras and comodule algebras given in these papers with the help of the latest understanding of the theory of finite tensor categories and their modules. Let $H$ be a finite-dimensional pointed Hopf algebra. Notably, we generalize a criterion for two right $H$-simple $H$-comodule algebras to be $H$-equivariant Morita equivalent \cite[Theorem 4.2]{MR2860429} from the viewpoint of exact module categories (see Theorems~\ref{thm:Morita-in-pointed-FTC} and~\ref{thm:H-Morita-pointed}). Mombelli's criterion of right $H$-simplicity of filtered $H$-comodule algebras \cite[Proposition 4.4 and Corollary 4.5]{MR2678630} is also extended to a broader setting (Theorem~\ref{thm:H-simplicity-comod-alg}). Based on these results, in \S\ref{subsec:classification-strategy}, we propose a classification strategy of right $H$-simple $H$-comodule algebras, slightly modifying that proposed in \cite{MR2678630,MR2860429}. In this paper, we actually classify indecomposable exact $\Rep(u_q)$-module categories from scratch using our strategy. Mombelli's list of $\gr(u_q)$-comodule algebras (with minor modifications) will also be obtained as an intermediate step of our computation (see Remark~\ref{rem:Mombelli-list-gr-uq-comod-alg}).

\subsection{Organization of this paper}

In Section \ref{sec:FTC-and-modules}, we review basic results on finite tensor categories and finite module categories, and add some new results on the Morita theory for algebras in a finite tensor category. This section specifically focuses on simple algebras and division algebras in a finite tensor category. As in ordinary ring theory, a simple algebra in a finite tensor category $\mathcal{C}$ is defined to be an algebra $A$ in $\mathcal{C}$ that is simple as an $A$-bimodule in $\mathcal{C}$ (Definition \ref{def:simple-alg-in-FTC}). Generalizing a result of Kesten and Walton \cite{2025arXiv250202532K}, we show that an algebra $A$ in $\mathcal{C}$ is simple as a left $A$-module in $\mathcal{C}$ if and only if it is simple as a right $A$-module in $\mathcal{C}$ (Lemma~\ref{lem:exact-alg-left-and-right-simple}). Such an algebra is called a division algebra in $\mathcal{C}$ (Definition~\ref{def:division-alg-in-C}). We provide some Morita-theoretic results on division algebras in $\mathcal{C}$ (Lemmas \ref{lem:exact-algebra-indec-Morita-equiv-to-right-simple}, \ref{lem:division-alg-Morita-equiv} and \ref{lem:division-alg-is-haploid}). One of the most important results in this section is Theorem~\ref{thm:Morita-in-pointed-FTC}, which states that two division algebras $A$ and $B$ in a pointed finite tensor category $\mathcal{C}$ are Morita equivalent if and only if there exists an invertible object $g \in \mathcal{C}$ and an isomorphism $B \cong g \otimes A \otimes g^*$ of algebras in $\mathcal{C}$.

In Section~\ref{sec:exact-mod-and-comod-alg}, we review pioneering works on comodule algebras relevant to our study, notably from \cite{MR2331768,MR2286047,MR2860429}, and offer a new perspective on some of these results. After introducing basic terminologies on comodule algebras, we deploy the Morita duality method: Given a finite tensor category $\mathcal{C}$, the 2-category $\ullMod{\mathcal{C}}$ of finite left $\mathcal{C}$-module categories is introduced in Section~\ref{sec:FTC-and-modules}. We now fix a finite-dimensional Hopf algebra $H$, set $\mathcal{C} = \Rep(H)$, and define $\mathcal{D}$ to be the category of finite-dimensional left $H$-comodules. We show that there is an equivalence
\begin{equation*}
  \ullMod{\mathcal{C}} \to (\ullMod{\mathcal{D}})^{\op},
  \quad \mathcal{M} \mapsto \mathcal{M}^* := \Rex_{\mathcal{C}}(\mathcal{M}, \Vect)
\end{equation*}
of 2-categories, which we call the Morita duality between $\mathcal{C}$ and $\mathcal{D}$ (see Subsection~\ref{subsec:morita-duality} for the detail). By the Morita duality, we show that two algebras $A$ and $B$ in $\mathcal{D}$ are Morita equivalent in $\mathcal{D}$ if and only if $\Rep(A)$ and $\Rep(B)$ are equivalent as left $\mathcal{C}$-module categories (Theorem~\ref{thm:equivariant-EW}). By translating Theorem~\ref{thm:Morita-in-pointed-FTC} by the Morita duality, we obtain \cite[Theorem 4.2]{MR2860429} in a generalized form. With emphasis on the use of the Morita duality, we also give a new proof of \cite[Theorem 3.3]{MR2331768}, which states that an indecomposable exact $\Rep(H)$-module category is of the form $\Rep(A)$, where $A$ is a right $H$-simple left $H$-comodule algebra with trivial coinvariants (see Theorem~\ref{thm:AM-exact-mod-cat}). We also derive properties of 2-cocycle deformation of comodule algebras by using the Morita duality (Subsection~\ref{subsec:cocycle-deform}).

In Section~\ref{sec:exact-comod-alg-over-pointed}, we discuss the case where $H$ is a finite-dimensional pointed Hopf algebra. Let $A$ be a finite-dimensional left $H$-comodule algebra. Then $A$ has a canonical filtration $\{ A_n \}_{n \ge 0}$ compatible with the coradical filtration of $H$. Moreover, the associated graded algebra $\gr(A)$ has a natural structure of a left $\gr(H)$-comodule algebra over $\gr(H)$. Under the assumption that the base field is an algebraically closed field of characteristic zero, Mombelli proved that $A$ is right $H$-simple if and only if $\gr(A)$ is right $\gr(H)$-simple, if and only if $A_0$ is right $H_0$-simple \cite[Proposition 4.4 and Corollary 4.5]{MR2678630}. The main result of this section is that his result still holds without the assumption on the base field (Theorem~\ref{thm:H-simplicity-comod-alg}). Now let $U$ be a finite-dimensional pointed Hopf algebra with group of grouplike elements $\Gamma$, and assume that the following conditions are satisfied:
\begin{enumerate}
\item There is a Hopf 2-cocycle $\sigma$ on $H := \gr(U)$ such that $U \cong H^{\sigma}$.
\item The cohomology group $\mathrm{H}^2(F)$ vanishes for all subgroups $F < \Gamma$.
\end{enumerate}
Then, as explained in Subsection~\ref{subsec:classification-strategy}, right $U$-simple left $U$-comodule algebras are classified up to $U$-Morita equivalence in the following way:
\begin{description}
  \setlength{\itemsep}{5pt}
\item[Step 1] Classify all graded left coideal subalgebras of $H$.
\item[Step 2] For each graded coideal subalgebra $L$ of $H$ obtained in Step 1, find all left $H$-comodule algebras $\mathcal{L}$ such that $\gr(\mathcal{L}) \cong L$ as graded $\gr(H)$-comodule algebras.
\item[Step 3] Determine whether left $H$-comodule algebras obtained in Step 2 are $H$-Morita equivalent by, for example, using Theorem~\ref{thm:H-Morita-pointed}.
\item[Step 4] For each representative $\mathcal{L}$ of $H$-Morita equivalence class of left $H$-comodule algebras obtained in Step 2, compute the 2-cocycle deformation ${}_{\sigma}\mathcal{L}$. The resulting algebras are representatives of right $U$-simple left $U$-comodule algebras under the $U$-Morita equivalence.
\end{description}

In Section~\ref{sec:exact-comod-alg-uqsl2}, we apply the above classification strategy to the small quantum group $u_q := u_q(\mathfrak{sl}_2)$ at a root of unity $q$ of odd order. Fortunately, Step 1 has been done in \cite{2024arXiv241010064S}. Step 2 and 3 are addressed by \cite[Lemma 5.5]{MR2678630} and Theorem~\ref{thm:H-Morita-pointed}, respectively. The most complicated part is Step 4 for $\mathcal{L}$ the left $\gr(u_q)$-comodule algebra that we denote by $\mathscr{L}_3(\alpha, \beta; \xi)$. We use arguments on the minimal polynomial of certain elements of $u_q$ given in \cite{2024arXiv241010064S} along with basic properties of Chebyshev polynomials to complete this part. Our classification result of indecomposable exact module category over $\Rep(u_q(\mathfrak{sl}_2))$ is given as Theorem~\ref{thm:conclusion}.

\subsection{Acknowledgements}

The authors would like to express their gratitude to Iv\'an Angiono, Cris Negron, and Christoph Schweigert for fruitful discussions on our project during the workshop `Tensor Categories, Quantum Symmetries, and Mathematical Physics' held at MATRIX, November 2024.
The third author (K.S.) also would like to thank Mart\'in Mombelli and Chelsea Walton for valuable comments in private communications.
The second author (T.S.) is supported by JSPS KAKENHI Grant Number JP22K13905.
The third author (K.S.) is supported by JSPS KAKENHI Grant Number JP24K06676

\section{Finite tensor categories and their modules}
\label{sec:FTC-and-modules}

\subsection{Notation and convention}

Throughout this paper, we work over a field $\bfk$.
We denote by $\Vect$ the category of finite-dimensional vector spaces over the field $\bfk$.
By an algebra, we mean an associative unital algebra over $\bfk$. Given algebras $A$ and $B$, we denote by $\lmod{A}$, $\rmod{B}$ and $\bimod{A}{B}$ the category of finite-dimensional left $A$-modules, right $B$-modules and $A$-$B$-bimodules, respectively. In Introduction, $\lmod{A}$ was written as $\Rep(A)$ to avoid a confusing phrase `$H$-mod-module category' for a Hopf algebra $H$. The symbol $\Rep(A)$ is rarely used in the main body of this paper.

Our main reference on monoidal categories and module categories is \cite{MR3242743}. All monoidal categories and module categories over them are assumed to be strict. Given an object $X$ of a rigid monoidal category, we denote its left dual object, the evaluation and the coevaluation (in the sense of \cite[Section 2.10]{MR3242743}) by $X^*$, $\eval_X : X^* \otimes X \to \unitobj$ and $\coev_X : \unitobj \to X \otimes X^*$, respectively. The right dual object of $X$ is denoted by ${}^* \! X$. Given algebras $A$ and $B$ in a monoidal category $\mathcal{C}$, we denote by ${}_A\mathcal{C}$, $\mathcal{C}_B$ and ${}_A\mathcal{C}_B$ the category of left $A$-modules, right $B$-modules and $A$-$B$-bimodules in $\mathcal{C}$, respectively.

\subsection{Finite tensor categories and their modules}
\label{subsec:finite-mod-cat}

A {\em finite abelian category} is a $\bfk$-linear category that is equivalent to $\lmod{A}$ for some finite-dimensional algebra $A$. Given finite abelian categories $\mathcal{M}$ and $\mathcal{N}$, we denote by $\Rex(\mathcal{M}, \mathcal{N})$ the category of $\bfk$-linear right exact functors from $\mathcal{M}$ to $\mathcal{N}$. For finite-dimensional algebras $A$ and $B$, there is the equivalence $\bimod{A}{B} \approx \Rex(\rmod{A}, \rmod{B})$ of linear categories, which we call the {\em Eilenberg-Watts equivalence}. By using this equivalence, one can show that a left (right) exact functor between finite abelian categories has a left (right) adjoint.

A {\em finite tensor category} is a finite abelian category equipped with a structure of a rigid monoidal category such that the tensor product is bilinear and the unit object is absolutely simple.
Let $\mathcal{C}$ be a finite tensor category.
A {\em finite left $\mathcal{C}$-module category} is a finite abelian category $\mathcal{M}$ equipped with a structure of a left $\mathcal{C}$-module category such that the action $\catactl : \mathcal{C} \times \mathcal{M} \to \mathcal{M}$ is bilinear and right exact in each variable. A {\em finite right $\mathcal{C}$-module category} is defined analogously.

Let $\mathcal{M}$ and $\mathcal{N}$ be left module categories over a monoidal category $\mathcal{C}$. An {\em oplax left $\mathcal{C}$-module functor} from $\mathcal{M}$ to $\mathcal{N}$ is a functor $F: \mathcal{M} \to \mathcal{N}$ equipped with a natural transformation $\xi_{X,M} : F(X \catactl M) \to X \catactl F(M)$ ($X \in \mathcal{C}, M \in \mathcal{M}$) satisfying certain axioms (see Appendix~\ref{appendix:equiv-EW}). A {\em strong left $\mathcal{C}$-module functor} is an oplax left $\mathcal{C}$-module functor with invertible structure morphism. When $\mathcal{C}$ is rigid, then every oplax left $\mathcal{C}$-module functor is strong \cite[Lemma 2.10]{2014arXiv1406.4204D}\footnote{The preprint \cite{2014arXiv1406.4204D} has already been published as \cite{MR3934626}, however, we cite the preprint version since \cite[Lemma 2.10]{2014arXiv1406.4204D} has been removed in the published version.} and therefore we simply call it a {\em left $\mathcal{C}$-module functor}.

\begin{definition}
  Given a finite tensor category $\mathcal{C}$, we denote by $\ullMod{\mathcal{C}}$ the 2-category of finite left $\mathcal{C}$-module categories, $\bfk$-linear right exact left $\mathcal{C}$-module functors and their morphisms.
  The Hom-category from $\mathcal{M}$ to $\mathcal{N}$ in $\ullMod{\mathcal{C}}$ is written as $\Rex_{\mathcal{C}}(\mathcal{M}, \mathcal{N})$.
\end{definition}

\subsection{Morita theory in finite tensor categories}
\label{subsec:Morita-theory}

Let $\mathcal{C}$ be a finite tensor category, and let $A$ be an algebra in $\mathcal{C}$.
For $X \in \mathcal{C}$ and $M \in \mathcal{C}_A$ with action $a_M : M \otimes A \to M$, the object $X \otimes M$ becomes a right $A$-module in $\mathcal{C}$ by the action $\id_X \otimes a_M$.
This construction makes $\mathcal{C}_A$ a finite left $\mathcal{C}$-module category.
The category ${}_A\mathcal{C}$ is a finite right $\mathcal{C}$-module category in a similar way.

\begin{definition}
  \label{def:Morita-in-FTC}
  Two algebras $A$ and $B$ in $\mathcal{C}$ are said to be {\em Morita equivalent in $\mathcal{C}$} if $\mathcal{C}_A \approx \mathcal{C}_B$ in $\ullMod{\mathcal{C}}$.
\end{definition}

Let $\mathcal{M}$ be a finite left $\mathcal{C}$-module category.
For each $M \in \mathcal{M}$, the functor $\mathcal{C} \to \mathcal{M}$ defined by $X \mapsto X \catactl M$ is right exact and therefore it has a right adjoint. Hence there is a functor $\iHom_{\mathcal{M}} : \mathcal{M}^{\op} \times \mathcal{M} \to \mathcal{C}$ together with a natural isomorphism
\begin{equation}
  \label{eq:iHom-adjunction}
  \Hom_{\mathcal{C}}(X, \iHom_{\mathcal{M}}(M, N)) \cong \Hom_{\mathcal{M}}(X \catactl M, N)
\end{equation}
for $M, N \in \mathcal{M}$ and $X \in \mathcal{C}$. We call $\iHom_{\mathcal{M}}$ the {\em internal Hom functor} and write it as $\iHom$ when $\mathcal{M}$ is clear from the context. By using the natural isomorphism \eqref{eq:iHom-adjunction}, one can define morphisms
\begin{equation*}
  \iHom(Y,Z) \otimes \iHom(X,Y) \to \iHom(X,Z)
  \quad \text{and} \quad
  \unitobj \to \iHom(X, X)
\end{equation*}
in $\mathcal{C}$ for $X, Y, Z \in \mathcal{C}$ that are `associative' and `unital' in a certain sense. In particular, the object $\iEnd(X) := \iHom(X,X)$ is an algebra in $\mathcal{C}$, which we call the {\em internal endomorphism algebra}.

Generalizing some notions in the theory of rings and modules, we introduce important classes of objects of $\mathcal{M}$ in terms of the internal Hom functor:

\begin{definition}[\cite{2014arXiv1406.4204D}]
  \label{def:C-projective}
  We fix an object $M \in \mathcal{M}$.
  \begin{enumerate}
  \item $M$ is said to be {\em $\mathcal{C}$-projective} if $\iHom(M, -)$ is exact.
  \item $M$ is said to be {\em $\mathcal{C}$-generator} if $\iHom(M, -)$ is faithful.
  \item $M$ is said to be {\em $\mathcal{C}$-injective} if $\iHom(-, M)$ is exact.
  \end{enumerate}
\end{definition}

Many conditions equivalent to these are exhibited in \cite{2014arXiv1406.4204D,2024arXiv240202929S}. Among others, we note that an object $M \in \mathcal{M}$ is $\mathcal{C}$-projective if and only if $P \catactl M$ is projective for all projective objects $P \in \mathcal{C}$ (see \cite[Lemma 5.3]{2024arXiv240202929S}). Also, $M$ is a $\mathcal{C}$-generator if and only if for every $N \in \mathcal{M}$ there exist an object $X \in \mathcal{C}$ such that $X \catactl M$ has $N$ as a quotient object \cite[Lemma 2.22]{2014arXiv1406.4204D}.

One can show that every finite left $\mathcal{C}$-module category is equivalent to $\mathcal{C}_A$ for some algebra $A$ in $\mathcal{C}$. More precisely, as an application of the Barr-Beck monadicity theorem, we obtain the following theorem for module categories:

\begin{lemma}[the Morita theorem {\cite[Theorem 2.24]{2014arXiv1406.4204D}}]
  Let $\mathcal{M}$ be a finite left $\mathcal{C}$-module category, and let $G \in \mathcal{M}$ be an object. Then the functor
  \begin{equation*}
    \iHom(G, -) : \mathcal{M} \to \mathcal{C}_{\iEnd(G)}
  \end{equation*}
  is an equivalence in $\ullMod{\mathcal{C}}$ if and only if $G$ is a $\mathcal{C}$-projective $\mathcal{C}$-generator.
\end{lemma}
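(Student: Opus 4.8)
The plan is to exhibit $\iHom(G,-)$ as the comparison functor of a suitable adjunction and then apply the Barr--Beck monadicity theorem. First I would record the adjunction underlying the internal Hom: by the defining isomorphism \eqref{eq:iHom-adjunction}, the functor $F := \iHom(G,-) \colon \mathcal{M} \to \mathcal{C}$ is right adjoint to the functor $L \colon \mathcal{C} \to \mathcal{M}$, $X \mapsto X \catactl G$. Let $T := F L$ be the resulting monad on $\mathcal{C}$. The technical heart of the setup is the projection formula $\iHom(G, X \catactl N) \cong X \otimes \iHom(G, N)$, natural in $X \in \mathcal{C}$ and $N \in \mathcal{M}$; I would prove it by comparing $\Hom_{\mathcal{C}}$-sets through \eqref{eq:iHom-adjunction} and the adjunctions between $X \otimes (-)$, $X \catactl (-)$ and their duals furnished by the rigidity of $\mathcal{C}$. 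Taking $N = G$ yields a natural isomorphism $T(X) \cong X \otimes \iEnd(G)$, and the right-hand side is exactly the monad on $\mathcal{C}$ whose Eilenberg--Moore category is $\mathcal{C}_{\iEnd(G)}$. Under this identification the comparison functor $\mathcal{M} \to \mathcal{C}^{T}$ is precisely $\iHom(G,-) \colon \mathcal{M} \to \mathcal{C}_{\iEnd(G)}$, the right $\iEnd(G)$-action being induced by the counit of $L \dashv F$.

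With this reformulation, the statement reduces to the assertion that $F$ is monadic exactly when $G$ is a $\mathcal{C}$-projective $\mathcal{C}$-generator. Since $F$ already possesses the left adjoint $L$, I would apply the form of Beck's theorem valid for abelian categories: a functor with a left adjoint between finite abelian categories is monadic if and only if it is exact and faithful. Indeed, exactness ensures that $F$ preserves the coequalizers of $F$-split pairs demanded by Beck's theorem, while a faithful exact functor between abelian categories reflects isomorphisms and is therefore conservative. By Definition~\ref{def:C-projective}, $F = \iHom(G,-)$ is exact precisely when $G$ is $\mathcal{C}$-projective, and it is faithful precisely when $G$ is a $\mathcal{C}$-generator. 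This gives both implications on the level of underlying categories: if $G$ is a $\mathcal{C}$-projective $\mathcal{C}$-generator then $F$ is exact and faithful, hence monadic, so the comparison functor is an equivalence $\mathcal{M} \approx \mathcal{C}_{\iEnd(G)}$; conversely, an equivalence is automatically exact and faithful, forcing $G$ to be a $\mathcal{C}$-projective $\mathcal{C}$-generator.

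It remains to upgrade this equivalence of underlying categories to an equivalence in $\ullMod{\mathcal{C}}$. The projection formula isomorphism equips $F$ with the structure morphism $\iHom(G, X \catactl N) \cong X \otimes \iHom(G, N)$ of an oplax left $\mathcal{C}$-module functor; this morphism is invertible, so $F$ is a left $\mathcal{C}$-module functor, as is in any case automatic over the rigid category $\mathcal{C}$. An equivalence of underlying finite abelian categories that carries the structure of a $\mathcal{C}$-module functor is an equivalence in $\ullMod{\mathcal{C}}$, which closes the argument. I expect the main obstacle to be the bookkeeping needed to identify $\mathcal{C}^{T}$ with $\mathcal{C}_{\iEnd(G)}$ as $\mathcal{C}$-module categories---that is, verifying that the natural isomorphism $T \cong (-) \otimes \iEnd(G)$ is compatible with the monad multiplication and unit, so that $T$-algebra structures correspond exactly to right $\iEnd(G)$-module structures in $\mathcal{C}$.
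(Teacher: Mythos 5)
Your proposal is correct and follows essentially the same route as the paper: the paper does not prove this lemma itself but cites \cite[Theorem 2.24]{2014arXiv1406.4204D} and explicitly frames it as ``an application of the Barr--Beck monadicity theorem,'' which is exactly your argument---identify the monad of the adjunction $(-) \catactl G \dashv \iHom(G,-)$ with $(-) \otimes \iEnd(G)$ via the projection formula, and apply the crude monadicity theorem using that, by Definition~\ref{def:C-projective}, exactness and faithfulness of $\iHom(G,-)$ are precisely $\mathcal{C}$-projectivity and the $\mathcal{C}$-generator property. The only point worth making explicit is that in the converse direction you should compose the equivalence $\mathcal{M} \to \mathcal{C}_{\iEnd(G)}$ with the exact faithful forgetful functor $\mathcal{C}_{\iEnd(G)} \to \mathcal{C}$ before invoking the definitions, but this is immediate.
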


In particular, if $G$ is a projective generator of $\mathcal{M}$, then the above functor is an equivalence in $\ullMod{\mathcal{C}}$. An important consequence is that every finite left $\mathcal{C}$-module category is equivalent to $\mathcal{C}_A$ for some algebra $A$ in $\mathcal{C}$ \cite{MR3934626}.

By the same argument in the ordinary Morita theory for rings and modules, one can show that two algebras $A$ and $B$ in $\mathcal{C}$ are Morita equivalent if and only if there is a $\mathcal{C}$-projective $\mathcal{C}$-generator $P \in \mathcal{C}_A$ such that $\iEnd(P) \cong B$ as algebras in $\mathcal{C}$.

A ring $R$ and a matrix ring over $R$ are typical examples of Morita equivalent rings.
Lemma~\ref{lem:Morita-eq-matrix-alg} below gives analogous examples in finite tensor categories.
Let $A$ be an algebra in $\mathcal{C}$ with multiplication $\mu$ and unit $\eta$.
For a non-zero object $X \in \mathcal{C}$, the object $X \otimes A \otimes X^*$ is an algebra by the multiplication
\begin{equation*}
  (\id_X \otimes \mu \otimes \id_{X^*}) \circ (\id_X \otimes \id_A \otimes \eval_X \otimes \id_A \otimes \id_{X^*})
\end{equation*}
and the unit $(\id_X \otimes \eta \otimes \id_{X^*}) \circ \coev_X$. When $\mathcal{C} = \Vect$, the algebra $X \otimes A \otimes X^*$ is isomorphic to the matrix algebra over $A$ of degree $\dim_{\bfk}(X)$.

Now let $\mathcal{M}$ be a finite left $\mathcal{C}$-module category, and let $X \in \mathcal{C}$ and $M \in \mathcal{M}$ be objects. By using well-known natural isomorphisms for the internal Hom functor (see, {\it e.g.}, \cite[Section 7.9]{MR3242743}), we obtain an isomorphism
\begin{equation}
  \label{eq:internal-End-X-M}
  \iEnd(X \catactl M) \cong X \otimes \iEnd(M) \otimes X^*
\end{equation}
of algebras in $\mathcal{C}$. We use this to prove:

\begin{lemma}
  \label{lem:Morita-eq-matrix-alg}
  Let $A$ be an algebra in $\mathcal{C}$, and let $X$ be a non-zero object of $\mathcal{C}$. Then the algebras $A$ and $X \otimes A \otimes X^*$ are Morita equivalent.
\end{lemma}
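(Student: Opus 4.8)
The plan is to apply the Morita criterion recalled just before the statement: two algebras $A$ and $B$ in $\mathcal{C}$ are Morita equivalent if and only if there exists a $\mathcal{C}$-projective $\mathcal{C}$-generator $P \in \mathcal{C}_A$ with $\iEnd(P) \cong B$ as algebras in $\mathcal{C}$. Taking $\mathcal{M} = \mathcal{C}_A$ and letting $A$ denote also the regular right $A$-module, the natural candidate is $P := X \catactl A$. Indeed, the free-forgetful adjunction identifies $\iEnd(A) \cong A$ as algebras in $\mathcal{C}$ (via $\Hom_{\mathcal{C}}(Y, \iHom(A,A)) \cong \Hom_{\mathcal{C}_A}(Y \otimes A, A) \cong \Hom_{\mathcal{C}}(Y, A)$ and Yoneda), so the isomorphism \eqref{eq:internal-End-X-M} gives $\iEnd(P) = \iEnd(X \catactl A) \cong X \otimes \iEnd(A) \otimes X^* \cong X \otimes A \otimes X^*$, which is exactly the algebra structure described just before the lemma. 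Thus the whole proof reduces to checking that $P$ is a $\mathcal{C}$-projective $\mathcal{C}$-generator of $\mathcal{C}_A$.

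For $\mathcal{C}$-projectivity I would use the criterion quoted after Definition~\ref{def:C-projective}: $P$ is $\mathcal{C}$-projective if and only if $Q \catactl P$ is projective in $\mathcal{C}_A$ for every projective $Q \in \mathcal{C}$. Now $Q \catactl P = (Q \otimes X) \catactl A = (Q \otimes X) \otimes A$ is the free $A$-module on $Q \otimes X$, and the free functor $- \otimes A$ (being left adjoint to the exact forgetful functor) preserves projectives. Since projectives form a tensor ideal in the finite tensor category $\mathcal{C}$, the object $Q \otimes X$ is projective whenever $Q$ is, so $Q \catactl P$ is projective. Note that this step only uses projectivity \emph{after} tensoring, which is precisely why $P$ can be $\mathcal{C}$-projective even though $X$ itself need not be projective in $\mathcal{C}$.

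For the $\mathcal{C}$-generator property I would exploit the non-vanishing of $X$. Since $\mathcal{C}$ is rigid with simple unit, $X \neq 0$ forces $\eval_X : X^* \otimes X \to \unitobj$ to be nonzero (otherwise the zigzag identity would yield $\id_X = 0$), hence an epimorphism. Applying the right-exact functor $- \catactl A$ shows that the regular module $A = \unitobj \catactl A$ is a quotient of $(X^* \otimes X) \catactl A = X^* \catactl P$. Because $A$ is itself a $\mathcal{C}$-generator of $\mathcal{C}_A$ (every $N \in \mathcal{C}_A$ is a quotient of the free module $N \catactl A$ via its action map), a short chase finishes the job: given $N$, pick $Z$ with $Z \catactl A \twoheadrightarrow N$; then $(Z \otimes X^*) \catactl P \twoheadrightarrow Z \catactl A \twoheadrightarrow N$, so $P$ is a $\mathcal{C}$-generator. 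Combining the two verifications with the computation of $\iEnd(P)$ then concludes the proof.

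The main obstacle is the generator step, and more precisely the clean use of the hypothesis $X \neq 0$: everything hinges on $\eval_X$ being an epimorphism, which is exactly where non-vanishing of $X$ enters (for $X = 0$ the two algebras are genuinely not Morita equivalent). The projectivity step, by contrast, is essentially formal once one invokes that projectives form a tensor ideal in $\mathcal{C}$.
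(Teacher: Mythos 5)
Your proof is correct, and its skeleton coincides with the paper's: both take $P = X \catactl A$ as the candidate progenerator, compute $\iEnd_A(P) \cong X \otimes \iEnd_A(A) \otimes X^* \cong X \otimes A \otimes X^*$ from \eqref{eq:internal-End-X-M}, and conclude by the Morita theorem. Where you genuinely diverge is in verifying that $P$ is a $\mathcal{C}$-projective $\mathcal{C}$-generator. The paper does this in one stroke by identifying the whole internal Hom functor: $\iHom_A(X \otimes A, M) \cong \iHom_A(A,M) \otimes X^* \cong M \otimes X^*$, so that $\iHom_A(P,-) \cong (-) \otimes X^*$ is exact (by rigidity) and faithful (this is where $X \ne 0$ enters), and both properties follow directly from Definition~\ref{def:C-projective}. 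You never compute this functor; instead you check the two properties separately via the object-level criteria quoted after Definition~\ref{def:C-projective}: for projectivity, that $Q \catactl P = (Q \otimes X) \otimes A$ is projective for every projective $Q \in \mathcal{C}$ (projectives form a tensor ideal, and free modules on projectives are projective since the free functor is left adjoint to the exact forgetful functor); for generation, that every module is a quotient of some $Z \catactl P$, using that $\eval_X$ is an epimorphism together with free covers (modulo the harmless abuse of writing $N \catactl A$ for the free module on the underlying object of $N$). Both routes are sound. The paper's computation is shorter and yields both properties simultaneously, while yours is more hands-on and makes the role of the hypothesis $X \ne 0$ explicit; note that the faithfulness of $(-) \otimes X^*$, which the paper leaves implicit, is proved by exactly your observation that $\eval_X$ is epi, so the two arguments ultimately use the hypothesis in the same way.
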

\begin{proof}
  Let $\iHom_A$ and $\iEnd_A$ denote the internal Hom and End for $\mathcal{C}_A$, respectively. There are natural isomorphisms
  \begin{equation*}
    \iHom_A(X \otimes A, M) \cong \iHom_A(A, M) \otimes X^* \cong M \otimes X^*
    \quad (M \in \mathcal{C}_A).
  \end{equation*}
  Thus $X \otimes A$ is a $\mathcal{C}$-projective $\mathcal{C}$-generator of $\mathcal{C}_A$.
  By the Morita theorem, $A$ and $\iEnd_A(X \otimes A)$ are Morita equivalent.
  By \eqref{eq:internal-End-X-M}, the latter is isomorphic to
  \begin{equation*}
    \iEnd_A(X \otimes A) \cong X \otimes \iEnd_A(A) \otimes X^* \cong X \otimes A \otimes X^*. \qedhere
  \end{equation*}
\end{proof}

\subsection{Eilenberg-Watts equivalence}
\label{subsec:EW-equivalence}

Let $\mathcal{C}$ be a finite tensor category, and let $A$ be an algebra in $\mathcal{C}$. The tensor product $X \otimes_A Y$ of $X \in \mathcal{C}_A$ and $Y \in {}_A\mathcal{C}$ is defined to be the coequalizer of $a_X \otimes \id_Y$ and $\id_X \otimes a_Y$, where $a_X$ and $a_Y$ are the right and the left action of $A$ on $X$ and $Y$, respectively. There is an equivalence
\begin{equation}
  \label{eq:EW-equivalence-in-FTC}
  {}_A\mathcal{C}_B \to \Rex_{\mathcal{C}}(\mathcal{C}_A, \mathcal{C}_B),
  \quad M \mapsto (-) \otimes_A M
\end{equation}
of categories, which we call the {\em Eilenberg-Watts equivalence} in $\mathcal{C}$ (see \cite[Theorem 4.2]{MR0498792}, where such an equivalence has been established in a more general setting).
Since every finite left $\mathcal{C}$-module category is equivalent to $\mathcal{C}_R$ for some algebra $R$ in $\mathcal{C}$, the equivalence \eqref{eq:EW-equivalence-in-FTC} shows that $\Rex_{\mathcal{C}}(\mathcal{M}, \mathcal{N})$ is a finite abelian category if $\mathcal{M}$ and $\mathcal{N}$ are finite left $\mathcal{C}$-module categories.

Under the Eilenberg-Watts equivalence \eqref{eq:EW-equivalence-in-FTC}, the tensor product over an algebra corresponds to the composition of functors in the reversed order.
By this observation, we see that two algebras $A$ and $B$ in $\mathcal{C}$ are Morita equivalent if and only if there are objects $P \in {}_A\mathcal{C}_B$ and $Q \in {}_B\mathcal{C}_A$ such that
\begin{equation}
  \label{eq:Morita-context}
  \text{$P \otimes_B Q \cong A$ in ${}_A\mathcal{C}_A$ and $Q \otimes_A P \cong B$ in ${}_B\mathcal{C}_B$}.
\end{equation}

Similarly to \eqref{eq:EW-equivalence-in-FTC}, there is also an equivalence between ${}_A\mathcal{C}_B$ and the category of $\bfk$-linear right exact right $\mathcal{C}$-module functors from ${}_B\mathcal{C}$ to ${}_A\mathcal{C}$. Since the condition \eqref{eq:Morita-context} is left-right symmetric, we have the following conclusion: Two algebras $A$ and $B$ in $\mathcal{C}$ are Morita equivalent in $\mathcal{C}$ if and only if ${}_A\mathcal{C}$ and ${}_B\mathcal{C}$ are equivalent as finite right $\mathcal{C}$-module categories.

\subsection{Indecomposable module categories}

Let $\mathcal{C}$ be a finite tensor category, and let $A$ be an algebra in $\mathcal{C}$. An {\em ideal} of $A$ is a subobject of $A$ in ${}_A\mathcal{C}_A$.
We say that $A$ is indecomposable if it is indecomposable as an object of ${}_A\mathcal{C}_A$, or, equivalently, it cannot be written as $A = I \oplus J$ for some non-zero ideals $I$ and $J$ of $A$.

A {\em module full subcategory} of a left or right finite $\mathcal{C}$-module category $\mathcal{M}$ is a non-empty full subcategory of $\mathcal{M}$ closed under finite direct sums, subquotients and the action of $\mathcal{C}$. We say that $\mathcal{M}$ is {\em indecomposable} if it cannot be written as $\mathcal{M} = \mathcal{V} \oplus \mathcal{W}$ for some non-zero module full subcategories $\mathcal{V}$ and $\mathcal{W}$ of $\mathcal{M}$.
This property is a categorical counterpart of the indecomposability of an algebra:

\begin{lemma}
  \label{lem:alg-in-C-indecomposability}
  For an algebra $A$ in $\mathcal{C}$, the following are equivalent:
  \begin{enumerate}
  \item $A$ is indecomposable as an algebra in $\mathcal{C}$.
  \item $\mathcal{C}_A$ is indecomposable as a finite left $\mathcal{C}$-module category.
  \item ${}_A\mathcal{C}$ is indecomposable as a finite right $\mathcal{C}$-module category.
  \end{enumerate}
\end{lemma}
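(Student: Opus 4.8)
The plan is to reduce all three indecomposability statements to the absence of non-trivial idempotents in a single endomorphism ring, exploiting the Eilenberg--Watts equivalence; it clearly suffices to prove (1) $\Leftrightarrow$ (2) and (1) $\Leftrightarrow$ (3). Recall that in a finite abelian category idempotents split, so that direct-sum decompositions of an object biject with idempotents in its endomorphism ring. In particular, $A$ fails to be indecomposable as an algebra precisely when the ring $\End_{{}_A\mathcal{C}_A}(A)$ of $A$-bimodule endomorphisms contains a non-trivial idempotent (one different from $0$ and $\id_A$): such an $e$ yields a decomposition $A \cong \Img(e) \oplus \Img(\id_A - e)$ into non-zero ideals, and conversely any ideal decomposition produces the projection idempotent.

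Next I would show that decompositions of $\mathcal{C}_A$ into module full subcategories are governed by the idempotents of the ring $R := \End_{\mathcal{C}}(\mathrm{Id}_{\mathcal{C}_A})$ of $\mathcal{C}$-module natural endotransformations of the identity functor (that is, the endomorphisms of $\mathrm{Id}_{\mathcal{C}_A}$ taken in $\Rex_{\mathcal{C}}(\mathcal{C}_A, \mathcal{C}_A)$). Given a decomposition $\mathcal{C}_A = \mathcal{V} \oplus \mathcal{W}$, the family of projections $M \mapsto M_{\mathcal{V}}$ defines an idempotent $e \in R$: naturality follows from $\Hom(\mathcal{V}, \mathcal{W}) = 0 = \Hom(\mathcal{W}, \mathcal{V})$, and module-naturality, i.e. $e_{X \catactl M} = \id_X \catactl e_M$, follows from the closure of $\mathcal{V}$ and $\mathcal{W}$ under the action of $\mathcal{C}$. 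Conversely, an idempotent $e \in R$ produces the full subcategories $\mathcal{V} = \{ M : e_M = \id_M \}$ and $\mathcal{W} = \{ M : e_M = 0 \}$; these are closed under subquotients (apply naturality of $e$ against monomorphisms and epimorphisms) and under the $\mathcal{C}$-action (by module-naturality of $e$), and every object splits as $\Img(e_M) \oplus \Img(\id_M - e_M)$ with the two summands lying in $\mathcal{V}$ and $\mathcal{W}$ respectively. Hence $\mathcal{C}_A$ is indecomposable if and only if $R$ has no non-trivial idempotent.

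Finally I would invoke the Eilenberg--Watts equivalence \eqref{eq:EW-equivalence-in-FTC}, ${}_A\mathcal{C}_A \approx \Rex_{\mathcal{C}}(\mathcal{C}_A, \mathcal{C}_A)$, $M \mapsto (-) \otimes_A M$, which sends $A$ to $(-) \otimes_A A \cong \mathrm{Id}_{\mathcal{C}_A}$. Since an equivalence of categories restricts to a ring isomorphism between the endomorphism rings of corresponding objects (composition passing to composition), we obtain $\End_{{}_A\mathcal{C}_A}(A) \cong R$, under which non-trivial idempotents correspond; together with the two previous paragraphs this gives (1) $\Leftrightarrow$ (2). The equivalence (1) $\Leftrightarrow$ (3) is entirely analogous, using instead the left--right symmetric Eilenberg--Watts equivalence between ${}_A\mathcal{C}_A$ and the category of right exact right $\mathcal{C}$-module functors ${}_A\mathcal{C} \to {}_A\mathcal{C}$ recorded in Subsection~\ref{subsec:EW-equivalence}, under which $A$ corresponds to $\mathrm{Id}_{{}_A\mathcal{C}}$.

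The main obstacle is the bijection established in the second paragraph: one must check carefully that an idempotent \emph{module} natural transformation, as opposed to a merely natural one, cuts out subcategories closed under the action of $\mathcal{C}$, and that these genuinely assemble into a module-category direct sum. It is precisely the module-naturality identity $e_{X \catactl M} = \id_X \catactl e_M$ that upgrades an ordinary block decomposition of the underlying finite abelian category to a decomposition into module full subcategories, so verifying this compatibility is the crux of the argument.
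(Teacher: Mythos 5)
Your proof is correct and follows essentially the same route as the paper: both arguments reduce the indecomposability of $\mathcal{C}_A$ to that of $\id_{\mathcal{C}_A}$ as an object of $\Rex_{\mathcal{C}}(\mathcal{C}_A, \mathcal{C}_A)$ and then transfer this to $A \in {}_A\mathcal{C}_A$ via the Eilenberg--Watts equivalence, under which $A$ corresponds to the identity functor (with the left--right reversed version handling (1) $\Leftrightarrow$ (3)). The only difference is presentational: the paper asserts the correspondence between module-category decompositions and direct-sum decompositions of $\id_{\mathcal{C}_A}$ without proof, whereas you verify it explicitly in the equivalent language of idempotent module natural endomorphisms, which is a worthwhile elaboration rather than a different method.
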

\begin{proof}
  We set $\mathcal{M} = \mathcal{C}_A$ and $\mathcal{E} = \Rex_{\mathcal{C}}(\mathcal{M}, \mathcal{M})$.
  A decomposition of $\mathcal{M}$ into a direct sum of left $\mathcal{C}$-module subcategories corresponds to a direct sum decomposition of $\id_{\mathcal{M}}$ as an object of $\mathcal{E}$, and therefore $\mathcal{M}$ is indecomposable if and only if $\id_{\mathcal{M}}$ is indecomposable as an object of $\mathcal{E}$.
  We have an equivalence $\mathcal{E} \approx {}_A\mathcal{C}_A$ and, under this equivalence, the object $A \in {}_A\mathcal{C}_A$ corresponds to $\id_{\mathcal{M}} \in \mathcal{E}$. Hence the indecomposability of $\mathcal{M}$ is equivalent to the indecomposability of $A \in {}_A\mathcal{C}_A$. This shows (1) $\Leftrightarrow$ (2). A left-right reversed argument shows (1) $\Leftrightarrow$ (3).
\end{proof}

\subsection{Exact module categories}

Let $\mathcal{C}$ be a finite tensor category.
We recall the following important class of finite $\mathcal{C}$-module categories and algebras:

\begin{definition}
  \label{def:exact-alg-in-FTC}
  An {\em exact left $\mathcal{C}$-module category} is a finite left $\mathcal{C}$-module category $\mathcal{M}$ where every object of $\mathcal{M}$ is $\mathcal{C}$-projective.
  An algebra $A$ in $\mathcal{C}$ is {\em exact} if $\mathcal{C}_A$ is an exact left $\mathcal{C}$-module category.
\end{definition}

By a characterization of $\mathcal{C}$-projectivity mentioned in the below of Definition~\ref{def:C-projective}, one finds that the above definition agrees with the original definition of exact module categories given in \cite{MR2119143}.
We note the following characterization of the exactness of an algebra:

\begin{lemma}
  \label{lem:exact-algebra-rigitidy}
  An algebra $A$ in $\mathcal{C}$ is exact if and only if the tensor product of the monoidal category ${}_A\mathcal{C}_A$ is exact in each variable.
\end{lemma}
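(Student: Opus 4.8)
The plan is to read the statement off from the Eilenberg–Watts equivalence together with the homological characterizations of $\mathcal{C}$-projectivity recalled after Definition~\ref{def:C-projective}. Set $\mathcal{M} = \mathcal{C}_A$. The equivalence \eqref{eq:EW-equivalence-in-FTC} (with $B = A$) identifies a bimodule $M \in {}_A\mathcal{C}_A$ with the module endofunctor $(-) \otimes_A M$ of $\mathcal{M}$, and this identification is an exact equivalence of abelian categories; hence $\otimes_A$ is exact in the first variable if and only if every object of $\Rex_{\mathcal{C}}(\mathcal{M}, \mathcal{M})$ is an exact functor. The analogous equivalence for the right $\mathcal{C}$-module category ${}_A\mathcal{C}$ identifies $M$ with $M \otimes_A (-)$ and shows that $\otimes_A$ is exact in the second variable if and only if every object of $\Rex_{\mathcal{C}}({}_A\mathcal{C}, {}_A\mathcal{C})$ is exact. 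Thus the lemma is equivalent to the assertion that $A$ is exact if and only if every $\bfk$-linear right exact $\mathcal{C}$-module endofunctor of $\mathcal{C}_A$ (equivalently, of ${}_A\mathcal{C}$) is exact.

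For the forward direction, assume $A$ is exact, so $\mathcal{M} = \mathcal{C}_A$ is an exact module category in the sense of Definition~\ref{def:exact-alg-in-FTC}. I would then invoke the standard fact that any right exact $\mathcal{C}$-module functor out of an exact module category is automatically exact, which follows from the characterizations of $\mathcal{C}$-projectivity collected in \cite{2014arXiv1406.4204D,2024arXiv240202929S}. Consequently every object of $\Rex_{\mathcal{C}}(\mathcal{M}, \mathcal{M})$ is an exact functor, giving exactness of $\otimes_A$ in the first variable by the previous paragraph; exactness in the second variable follows symmetrically, after noting that $A$ exact forces ${}_A\mathcal{C}$ to be an exact right $\mathcal{C}$-module category.

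The converse is the substantial direction. Assuming $\otimes_A$ is exact in each variable, I know by the first paragraph that every right exact module endofunctor of $\mathcal{C}_A$ is exact, and I would first upgrade this to: every right exact module functor $G \colon \mathcal{C}_A \to \mathcal{N}$ into any finite module category $\mathcal{N} \approx \mathcal{C}_B$ is exact. For this, post-compose $G$ with the forgetful functor $\mathcal{C}_B \to \mathcal{C}$ and then with the free functor $\mathcal{C} \to \mathcal{C}_A$, $X \mapsto X \otimes A$. Both are $\mathcal{C}$-module functors, both are exact (tensoring is exact in $\mathcal{C}$), and both are faithful — the free functor because the unit $\eta \colon \unitobj \to A$ is a monomorphism, so $\id_Y \otimes \eta$ is monic for every $Y$. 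Hence their composite with $G$ is a module endofunctor of $\mathcal{C}_A$, exact by hypothesis, and since faithful exact functors reflect exactness, $G$ itself is exact.

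The remaining and decisive step is to pass from ``every right exact module functor out of $\mathcal{C}_A$ is exact'' to the exactness of $A$, i.e. to the $\mathcal{C}$-projectivity of every $N \in \mathcal{C}_A$. The difficulty is that $\mathcal{C}$-projectivity of $N$ is the exactness of the internal Hom $\iHom_A(N, -) \colon \mathcal{C}_A \to \mathcal{C}$, and this functor is a priori only left exact (being a right adjoint in \eqref{eq:iHom-adjunction}), so it does not belong to the right exact family controlled by our hypothesis. I expect to close this gap by showing that biexactness of $\otimes_A$ forces the finite monoidal category ${}_A\mathcal{C}_A \simeq \Rex_{\mathcal{C}}(\mathcal{C}_A, \mathcal{C}_A)$ to be rigid, hence a genuine finite tensor category; exactness of $\mathcal{C}_A$ is then equivalent to its dual category being a finite tensor category. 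Concretely, biexactness guarantees that each $M \otimes_A (-)$ admits both a left and a right adjoint, and the plan is to identify these adjoints with tensoring by suitable dual bimodules — using the internal-Hom computations underlying \eqref{eq:internal-End-X-M} and the relative Serre functor formalism of \cite{2024arXiv240202929S} — thereby producing the evaluation and coevaluation morphisms. This rigidity step, equivalently the promotion of the internal Homs from left exact to exact, is the main obstacle; once it is settled, the structural characterization of exact module categories in \cite{MR3242743,2024arXiv240202929S} completes the converse.
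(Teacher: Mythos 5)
Your Eilenberg--Watts dictionary and your forward direction are sound: once $A$ is exact, \cite[Proposition 7.6.9]{MR3242743} makes every $\bfk$-linear right exact $\mathcal{C}$-module endofunctor of $\mathcal{C}_A$ (and of ${}_A\mathcal{C}$, granting the left--right symmetry of exactness that you assert without proof) exact, which is biexactness of $\otimes_A$ under your identification. The genuine gap is the converse, and you have located it correctly but not closed it: everything after ``the remaining and decisive step'' is a plan, not an argument, and the plan is circular. Biexactness does give each $M \otimes_A (-)$ both adjoints, and since $\mathcal{C}$ is rigid the right adjoint $R$ can be equipped with a $\mathcal{C}$-module structure; but to identify $R$ with tensoring by a dual bimodule via Eilenberg--Watts you must first know that $R$ is \emph{right} exact, and a priori $R$ is only left exact. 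Right-exactness of $R$ is equivalent to $M \otimes_A (-)$ preserving projectives, which is precisely the kind of projectivity assertion the lemma is trying to establish --- so this route assumes what it wants to prove. (That adjoint existence alone can never suffice is shown by the category of modules over a finite-dimensional bialgebra without antipode: its tensor product is biexact, so all tensoring functors have both adjoints, yet the category is not rigid.) The final step of your plan, that rigidity of $\Rex_{\mathcal{C}}(\mathcal{C}_A, \mathcal{C}_A)$ implies exactness of $\mathcal{C}_A$, is likewise not contained in the references you cite; \cite{MR3242743} establishes only the opposite implication.

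For comparison, the paper does not prove this lemma either: it simply cites \cite[Theorem 5.1]{2024arXiv240806314S}, where the required ``little argument'' is carried out, and that argument bypasses rigidity entirely. It is also how you could close your gap. By \cite[Example 7.9.8]{MR3242743} one has $\iHom_A(M,N) \cong (M \otimes_A {}^*N)^*$ for $M, N \in \mathcal{C}_A$; since $N \mapsto {}^*N$ and $(-)^*$ are exact anti-equivalences, $M$ is $\mathcal{C}$-projective if and only if $M \otimes_A (-) \colon {}_A\mathcal{C} \to \mathcal{C}$ is exact. To deduce the latter from your hypothesis, take a short exact sequence $0 \to N' \to N \to N'' \to 0$ in ${}_A\mathcal{C}$, tensor it on the right with $A$ to get a short exact sequence in ${}_A\mathcal{C}_A$, and apply the functor $(A \otimes M) \otimes_A (-)$, which is exact by hypothesis, where $A \otimes M$ carries the free left $A$-action and the right $A$-action through $M$. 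Using the natural isomorphism $(A \otimes M) \otimes_A (N \otimes A) \cong A \otimes (M \otimes_A N) \otimes A$ and the fact that $A \otimes (-) \otimes A$ is exact and kills no non-zero object (by rigidity of $\mathcal{C}$), hence reflects exactness, you conclude that $M \otimes_A (-)$ is exact for every $M \in \mathcal{C}_A$, so every object of $\mathcal{C}_A$ is $\mathcal{C}$-projective and $A$ is exact --- and this uses exactness of $\otimes_A$ in only one variable.
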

\begin{proof}
  This lemma can be proved by adding a little argument to known results on exact module categories exhibited in \cite{MR3242743}; see, {\it e.g.}, \cite[Theorem 5.1]{2024arXiv240806314S} for the detail.
\end{proof}

We recall that every finite left $\mathcal{C}$-module category is equivalent to $\mathcal{C}_A$ for some algebra $A$ in $\mathcal{C}$. By rephrasing Lemma~\ref{lem:exact-algebra-rigitidy} by using the Eilenberg-Watts equivalence, we obtain:

\begin{lemma}
  \label{lem:exact-module-cat-endo}
  A finite left $\mathcal{C}$-module category $\mathcal{M}$ is exact if and only if the tensor product of the monoidal category $\Rex_{\mathcal{C}}(\mathcal{M}, \mathcal{M})$ is exact in each variable.
\end{lemma}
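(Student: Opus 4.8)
The plan is to reduce the statement to Lemma~\ref{lem:exact-algebra-rigitidy} via the Morita theorem together with the Eilenberg--Watts equivalence. First I would invoke the fact, recorded just after the Morita theorem, that every finite left $\mathcal{C}$-module category is of the form $\mathcal{C}_A$ for some algebra $A$ in $\mathcal{C}$; concretely, picking a projective generator $G \in \mathcal{M}$ and setting $A = \iEnd(G)$ yields an equivalence $\mathcal{M} \approx \mathcal{C}_A$ in $\ullMod{\mathcal{C}}$. Since $\mathcal{C}$-projectivity of objects is defined purely in terms of the internal Hom and the action of $\mathcal{C}$, it is preserved by equivalences in $\ullMod{\mathcal{C}}$; hence $\mathcal{M}$ is exact if and only if $\mathcal{C}_A$ is exact, that is, if and only if $A$ is an exact algebra in the sense of Definition~\ref{def:exact-alg-in-FTC}. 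By Lemma~\ref{lem:exact-algebra-rigitidy}, the latter holds precisely when the tensor product $\otimes_A$ of the monoidal category ${}_A\mathcal{C}_A$ is exact in each variable.

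Next I would transport this condition across the Eilenberg--Watts equivalence \eqref{eq:EW-equivalence-in-FTC} specialized to $B = A$, namely ${}_A\mathcal{C}_A \approx \Rex_{\mathcal{C}}(\mathcal{C}_A, \mathcal{C}_A)$, $M \mapsto (-) \otimes_A M$. The key point, already observed in Subsection~\ref{subsec:EW-equivalence}, is that this equivalence intertwines the relative tensor product $\otimes_A$ with composition of functors, up to reversing the order of the two arguments; in other words it is a monoidal equivalence onto the reverse monoidal category of $\Rex_{\mathcal{C}}(\mathcal{C}_A, \mathcal{C}_A)$. Moreover the equivalence $\mathcal{M} \approx \mathcal{C}_A$ in $\ullMod{\mathcal{C}}$ induces a monoidal equivalence $\Rex_{\mathcal{C}}(\mathcal{M}, \mathcal{M}) \approx \Rex_{\mathcal{C}}(\mathcal{C}_A, \mathcal{C}_A)$ between the associated categories of module endofunctors under composition. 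Composing these two identifications realizes $\Rex_{\mathcal{C}}(\mathcal{M}, \mathcal{M})$ and ${}_A\mathcal{C}_A$ as monoidal categories, up to order-reversal.

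It then remains to note that the property ``the monoidal product is exact in each variable'' is preserved by monoidal equivalences and is insensitive to passing to the reverse monoidal category, since it is symmetric in the two slots: requiring exactness of $X \otimes (-)$ for all $X$ together with exactness of $(-) \otimes Y$ for all $Y$ is exactly the condition that reversal leaves unchanged. Chaining the equivalences of the previous two paragraphs therefore shows that $\otimes_A$ is exact in each variable if and only if the composition of $\Rex_{\mathcal{C}}(\mathcal{M}, \mathcal{M})$ is exact in each variable, completing the equivalence asserted in the lemma. I do not expect a genuine obstacle here, as this is essentially a rephrasing of Lemma~\ref{lem:exact-algebra-rigitidy}; the only point requiring care is the bookkeeping around the order-reversal in the monoidal structure carried by the Eilenberg--Watts equivalence, and verifying that ``exact in each variable'' is truly unaffected by it.
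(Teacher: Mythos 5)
Your proposal is correct and follows exactly the route the paper intends: the paper derives this lemma by writing $\mathcal{M} \approx \mathcal{C}_A$ and rephrasing Lemma~\ref{lem:exact-algebra-rigitidy} through the Eilenberg--Watts equivalence ${}_A\mathcal{C}_A \approx \Rex_{\mathcal{C}}(\mathcal{C}_A, \mathcal{C}_A)$, which is precisely your argument. Your added care about the order-reversal of the monoidal structure and the symmetry of ``exact in each variable'' is a detail the paper leaves implicit, and it is handled correctly.
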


We are especially interested in indecomposable exact algebras in $\mathcal{C}$.
To give basic properties of such algebras in the next subsection, we prepare Lemma~\ref{lem:exact-algebra-indecomposability} below on indecomposable exact module categories.

\begin{lemma}
  \label{lem:exact-algebra-indecomposability}
  An exact left $\mathcal{C}$-module category $\mathcal{M}$ is indecomposable if and only if every non-zero object of $\mathcal{M}$ is a $\mathcal{C}$-generator.
\end{lemma}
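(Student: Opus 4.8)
The plan is to prove the two implications separately, with the reverse implication (indecomposability forcing the generator property) being the substantial one, where exactness enters decisively.

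For the ``if'' direction I would argue by contraposition, and this uses no exactness. Suppose $\mathcal{M} = \mathcal{V}\oplus\mathcal{W}$ with $\mathcal{V},\mathcal{W}$ nonzero module full subcategories, and choose nonzero objects $M\in\mathcal{V}$ and $N\in\mathcal{W}$. Since $M$ is a $\mathcal{C}$-generator, the characterization recalled after Definition~\ref{def:C-projective} gives $X\in\mathcal{C}$ together with an epimorphism $X\catactl M\twoheadrightarrow N$. But $X\catactl M\in\mathcal{V}$ because $\mathcal{V}$ is closed under the action, and then $N$, being a quotient of an object of $\mathcal{V}$, lies in $\mathcal{V}$ by closure under subquotients. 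As an object lying in both $\mathcal{V}$ and $\mathcal{W}$ is zero, this forces $N=0$, a contradiction; hence $\mathcal{M}$ is indecomposable.

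For the ``only if'' direction the key exactness input I would isolate first is this: since every object of $\mathcal{M}$ is $\mathcal{C}$-projective, for the projective cover $P_{\unitobj}$ of $\unitobj$ in $\mathcal{C}$ the object $P_{\unitobj}\catactl T$ is projective in $\mathcal{M}$ for every simple $T$, and the epimorphism $P_{\unitobj}\catactl T\twoheadrightarrow \unitobj\catactl T = T$ exhibits the projective cover of $T$, which I denote $P_T$, as a direct summand of $P_{\unitobj}\catactl T$. I would also use that $X\catactl(-)$ is exact with left adjoint ${}^{*}X\catactl(-)$ and right adjoint $X^{*}\catactl(-)$ and preserves projectivity. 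Next I set up a support relation on the finitely many simple objects, writing $T\in\Sigma_S$ if $T$ is a composition factor of $X\catactl S$ for some $X\in\mathcal{C}$; reflexivity and transitivity are immediate from $\unitobj\catactl S=S$ and exactness of the action. The crucial and hardest point is symmetry, and I expect this to be the main obstacle. For $T\in\Sigma_S$ one has $0<[X\catactl S:T]=\dim\Hom_{\mathcal{M}}(P_T,X\catactl S)=\dim\Hom_{\mathcal{M}}({}^{*}X\catactl P_T,S)$, so $S$ lies in the head of the projective object ${}^{*}X\catactl P_T$, whence $P_S$ is a direct summand of ${}^{*}X\catactl P_T$. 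Since $P_T$ is a summand of $P_{\unitobj}\catactl T$, it follows that $P_S$ is a summand of $({}^{*}X\otimes P_{\unitobj})\catactl T$, so $S$ is a composition factor of this object and $S\in\Sigma_T$. This is precisely where the exactness fact ``$P_T$ is a summand of $P_{\unitobj}\catactl T$'' is indispensable, and it makes $\Sigma$ an equivalence relation.

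I would then translate indecomposability into connectivity of $\Sigma$. Using the same summand fact I would check that each $\Sigma$-class is a union of blocks of $\mathcal{M}$: a linkage $[P_S:T]>0$ or $[P_T:S]>0$ places $S$ and $T$ in one class, the second case via the symmetry just established. As a $\Sigma$-class is also closed under the action of $\mathcal{C}$ and under subquotients, grouping objects by the class of their composition factors produces a decomposition of $\mathcal{M}$ into module full subcategories; indecomposability of $\mathcal{M}$ therefore forces a single $\Sigma$-class, i.e. $T\in\Sigma_S$ for all simples $S,T$. Finally I would upgrade ``composition factor'' to ``quotient'' and assemble the generator property. Running the symmetry computation once more, $T\in\Sigma_S$ yields $S\in\Sigma_T$ and hence a nonzero, necessarily surjective, map from the projective object ${}^{*}W\catactl P_S$ onto the simple $T$; combined with $P_S$ being a summand of $P_{\unitobj}\catactl S$ this shows $T$ is a quotient of some $Y\catactl S$. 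For fixed nonzero $M$ I pick a simple quotient $S_0$ of $M$, so that every simple $T$ is a quotient of some $Y\catactl S_0$ and therefore of $Y\catactl M$. Tensoring $Y$ with $P_{\unitobj}$, which remains projective in $\mathcal{C}$, makes $(P_{\unitobj}\otimes Y)\catactl M$ a projective object of $\mathcal{M}$ surjecting onto $T$, so $P_T$ is a direct summand of it. Summing over all simple $T$ exhibits a projective generator of $\mathcal{M}$ as a summand, in particular a quotient, of $W\catactl M$ for a single $W\in\mathcal{C}$; then every object, being a quotient of a direct sum of copies of this generator, is a quotient of $W^{\oplus k}\catactl M$, and $M$ is a $\mathcal{C}$-generator by the cited characterization.
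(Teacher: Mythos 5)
Your proposal is correct, but it takes a genuinely different route from the paper's. The paper's ``only if'' direction outsources the combinatorial core to \cite[Proposition 7.7.2]{MR3242743}: it cites that result to get that, for simples $V,W$ in an indecomposable exact $\mathcal{M}$, the object $W$ is a composition factor of some $X \catactl V$, and then upgrades ``composition factor'' to ``quotient of $Q\catactl V$ with $Q$ projective'' by splitting the monomorphism $P_{\unitobj}\catactl V' \hookrightarrow P_{\unitobj}\catactl (X\catactl V)$, a splitting available because every object of an exact module category is injective \cite[Corollary 7.6.4]{MR3242743}. You instead reprove the content of \cite[Proposition 7.7.2]{MR3242743} from scratch: your symmetry argument (projective covers, the adjunction between $X\catactl(-)$ and the action of a dual of $X$, and the exactness input that $P_T$ is a summand of $P_{\unitobj}\catactl T$), followed by the linkage/block analysis, is precisely the mechanism behind EGNO's proposition; and your upgrade from composition factor to quotient runs through the same adjunction and projective-cover argument rather than through injectivity of objects. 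What your route buys is self-containedness, the complete avoidance of \cite[Corollary 7.6.4]{MR3242743}, and a sharp isolation of where exactness enters --- namely the projectivity of $P_{\unitobj}\catactl T$, which indeed fails in the paper's non-exact example following Lemma~\ref{lem:Morita-in-pointed-FTC-lemma}. What the paper's route buys is brevity, given that the cited EGNO results are available. The endgame (simple quotient $S_0$ of $M$, projective covers as summands of $Q\catactl S_0$ with $Q$ projective, summing over simples) is essentially identical in both proofs.

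Two cosmetic points, neither a gap. First, with the paper's (EGNO's) conventions for duals, the left adjoint of $X\catactl(-)$ is $X^*\catactl(-)$ and the right adjoint is ${}^*X\catactl(-)$, i.e.\ the opposite of what you wrote; your argument is unaffected, since all you need is the dual on the appropriate side, and both duals exist by rigidity. Second, the equality $[X\catactl S : T] = \dim_{\bfk}\Hom_{\mathcal{M}}(P_T, X\catactl S)$ requires $\End_{\mathcal{M}}(T)\cong\bfk$ (e.g.\ $\bfk$ algebraically closed), whereas the lemma is stated over an arbitrary field; as the paper's own footnote to this proof notes, in general one only has the equivalence $[X\catactl S : T]\neq 0 \Leftrightarrow \Hom_{\mathcal{M}}(P_T, X\catactl S)\neq 0$, and that nonvanishing statement is all your argument actually uses.
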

\begin{proof}
  If $\mathcal{M}$ is decomposed as $\mathcal{M} = \mathcal{V} \oplus \mathcal{W}$ for some non-zero full subcategories $\mathcal{V}$ and $\mathcal{W}$ of $\mathcal{M}$, then every non-zero object of $\mathcal{V}$ and $\mathcal{W}$ is not a $\mathcal{C}$-generator. The `if' part is proved by contraposition.

  The converse is \cite[Exercise 7.10.4]{MR3242743}. For reader's convenience, we give a proof. We assume that $\mathcal{M}$ is indecomposable. Since every non-zero object of $\mathcal{M}$ has a simple quotient, it suffices to show that every simple object of $\mathcal{M}$ is a $\mathcal{C}$-generator. So, we fix a simple object $V$ of $\mathcal{M}$ and aim to show that $V$ is a $\mathcal{C}$-generator.

  We first show that every simple object $W \in \mathcal{M}$ is a quotient of $Q \catactl V$ for some projective $Q \in \mathcal{C}$. By \cite[Proposition 7.7.2]{MR3242743}\footnote{The base field $\bfk$ is assumed to be algebraically closed in \cite[Proposition 7.7.2]{MR3242743} to use the multiplicity formula $[M : V] = \dim_{\bfk} \Hom_{\mathcal{A}}(P(V), M)$ for objects $M$ and $V$ of a finite abelian category $\mathcal{A}$, where $V$ is simple and $P(V)$ is a projective cover of $V$. The proof of \cite{MR3242743} still works in our setting since the equivalence $[M : V] \ne 0$ $\Leftrightarrow$ $\Hom_{\mathcal{A}}(P(V), M) \ne 0$ still holds without the assumption that $\bfk$ is algebraically closed.}, there is an object $X \in \mathcal{C}$ such that $X \catactl V$ has $W$ as a composition factor. Thus there is a subobject $V'$ of $X \catactl V$ and an epimorphism $f : V' \to W$ in $\mathcal{M}$. Let $\pi : P \to \unitobj$ be the projective cover of the unit object $\unitobj \in \mathcal{C}$.
  Since $P \catactl V'$ is projective, and since every object of an exact $\mathcal{C}$-module category is injective \cite[Corollary 7.6.4]{MR3242743}, the inclusion morphism $P \catactl V' \hookrightarrow P \catactl (X \catactl V)$ splits. Letting $s$ be the splitting morphism, we obtain an epimorphism $(\pi \catactl f) s : P \catactl (X \catactl V) \to W$ in $\mathcal{M}$. Thus $Q = P \catactl X \in \mathcal{C}$ meets the requirements.

  Now we prove that $V$ is in fact a $\mathcal{C}$-generator. By the above argument, for every simple object $W \in \mathcal{M}$, there is a projective object $Q \in \mathcal{C}$ such that $Q \catactl V$ has $W$ as a quotient. Since $Q \catactl V$ is projective, the projective cover of $W$ is a direct summand of $Q \catactl V$. In particular, every indecomposable projective object of $\mathcal{M}$ is a quotient of $Q \catactl V$ for some projective object $Q \in \mathcal{C}$. Since every object of $\mathcal{M}$ is a quotient of finite direct sum of indecomposable projective objects, $V$ is a $\mathcal{C}$-generator. The proof is done.
\end{proof}

\subsection{Simple algebras}

Let $\mathcal{C}$ be a finite tensor category. 
As a natural generalization of the notion of simple rings, we introduce:

\begin{definition}
  \label{def:simple-alg-in-FTC}
  An algebra $A$ in $\mathcal{C}$ is said to be {\em simple} if it is simple in ${}_A\mathcal{C}_A$.
\end{definition}

An important fact is that a simple algebra in $\mathcal{C}$ is exact. We would like to add a few words about the background.
The exactness of simple algebras was first conjectured by Etingof and Ostrik in \cite{MR4237968}. They have also verified their conjecture for the case where $\mathcal{C} = \lmod{H}$ for some finite-dimensional Hopf algebra $H$ based on Skryabin's results on relative Hopf modules \cite{MR2286047}.
Finally, just recently, Coulembier, Stroi\'nski and Zorman showed that Etingof and Ostrik's conjecture is true in general \cite{2025arXiv250106629C}.

It is well-known that a ring $R$ is simple as a left $R$-module if and only if it is simple as a right $R$-module, if and only if $R$ is a division ring.
Kesten and Walton \cite{2025arXiv250202532K} proved that an analogous result holds in a multi-fusion category.
By using Lemma~\ref{lem:exact-algebra-indecomposability}, we extend their result to a non-semisimple setting as follows:

\begin{lemma}
  \label{lem:exact-alg-left-and-right-simple}
  An algebra $A$ in a finite tensor category $\mathcal{C}$ is simple as a left $A$-module in $\mathcal{C}$ if and only if it is simple as a right $A$-module in $\mathcal{C}$.
\end{lemma}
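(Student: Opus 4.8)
The plan is to prove a single implication, say that right $A$-module simplicity follows from left $A$-module simplicity, and then to obtain the converse from the evident left–right symmetry of the statement (for instance, by applying the proved implication to $A$ regarded as an algebra in the reversed tensor category $\mathcal{C}^{\mathrm{rev}}$). So I would assume that $A$ is simple as a left $A$-module and argue that it is then simple as a right $A$-module.

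First I would record the structural consequences of the hypothesis. Since every ideal (subobject in ${}_A\mathcal{C}_A$) is in particular a left submodule, simplicity of $A$ in ${}_A\mathcal{C}$ forces $A$ to have no proper nonzero ideals; that is, $A$ is a simple algebra, hence exact by the theorem of Coulembier–Stroi\'nski–Zorman quoted above, and indecomposable as an algebra. By Lemma~\ref{lem:exact-algebra-rigitidy} exactness is left–right symmetric, and by Lemma~\ref{lem:alg-in-C-indecomposability} indecomposability of $A$ is equivalent to indecomposability of $\mathcal{C}_A$; thus $\mathcal{C}_A$ is an exact, indecomposable finite left $\mathcal{C}$-module category. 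Lemma~\ref{lem:exact-algebra-indecomposability} then tells me that every nonzero object of $\mathcal{C}_A$ is a $\mathcal{C}$-generator, and, $\mathcal{C}_A$ being exact, every object of $\mathcal{C}_A$ is $\mathcal{C}$-injective.

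The heart of the argument is to convert a failure of right simplicity into a left ideal. Suppose, for contradiction, that $A$ is not simple as a right $A$-module. Then the regular object $A \in \mathcal{C}_A$ admits a simple subobject $\iota \colon S \hookrightarrow A$ with $A/S \neq 0$. Because $A$ is $\mathcal{C}$-injective, the contravariant functor $\iHom_{\mathcal{C}_A}(-, A)$ is exact by Definition~\ref{def:C-projective}, so applying it to $0 \to S \to A \to A/S \to 0$ yields a short exact sequence
\begin{equation*}
  0 \longrightarrow \iHom(A/S, A) \longrightarrow \iEnd(A) \longrightarrow \iHom(S, A) \longrightarrow 0
\end{equation*}
in $\mathcal{C}$, where $\iEnd(A) = \iEnd_{\mathcal{C}_A}(A) \cong A$ as algebras in $\mathcal{C}$ (as used in the proof of Lemma~\ref{lem:Morita-eq-matrix-alg}). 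Precomposition with the projection $A \to A/S$ exhibits the subobject $\iHom(A/S, A)$ as a left ideal of $\iEnd(A) \cong A$, i.e.\ as a subobject of $A$ in ${}_A\mathcal{C}$. This ideal is proper, since its cokernel $\iHom(S, A)$ is nonzero: under the adjunction~\eqref{eq:iHom-adjunction} it receives the nonzero class of $\iota \in \Hom_{\mathcal{C}_A}(S, A)$. And it is nonzero, because $A/S$ is a nonzero object of $\mathcal{C}_A$, hence a $\mathcal{C}$-generator, so that $\iHom(A/S, -)$ is faithful and therefore sends the nonzero object $A$ to a nonzero object. Thus $A$ would acquire a proper nonzero left ideal, contradicting its simplicity as a left $A$-module.

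The step I expect to be the main obstacle is the non-vanishing of $\iHom(A/S, A)$: it is precisely here that the hypothesis must enter in an essential, non-formal way, and my plan routes it through the generator property furnished by Lemma~\ref{lem:exact-algebra-indecomposability}, which itself rests on $A$ being a simple — hence exact and indecomposable — algebra (so that self-injective but non-simple phenomena, which would otherwise break the conclusion, are excluded). I would also pay close attention to the bookkeeping of sides: one must verify that $\iHom(A/S, A)$ is a \emph{left} ideal of $\iEnd(A) \cong A$, so that the contradiction is with \emph{left} simplicity rather than a circular reproof of right simplicity, and this hinges on the precise identification $\iEnd_{\mathcal{C}_A}(A) \cong A$ together with the direction of the induced ideal structure coming from precomposition.
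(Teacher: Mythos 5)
Your proof is correct, and it takes a genuinely different route from the paper's. The paper's argument is Morita-theoretic and goes through duals: left simplicity of $A$ makes $A^*$ a simple object of $\mathcal{C}_A$ (via the anti-equivalence $(-)^*\colon {}_A\mathcal{C} \to \mathcal{C}_A$); the exactness theorem of \cite{2025arXiv250106629C} together with Lemma~\ref{lem:exact-algebra-indecomposability} makes $A^* \in \mathcal{C}_A$ a $\mathcal{C}$-projective $\mathcal{C}$-generator, so the Morita theorem yields an equivalence $\iHom_A(A^*,-)\colon \mathcal{C}_A \to \mathcal{C}_{A^{**}}$ carrying $A^*$ to $\iEnd_A(A^*) \cong A^{**}$; hence $A^{**}$ is simple in $\mathcal{C}_{A^{**}}$, and the double-dual bijection on subobjects transfers this back to $A \in \mathcal{C}_A$. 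You instead argue by contradiction, categorifying the classical annihilator argument: a nontrivial right submodule $S \subset A$ produces, through the contravariant internal Hom, a left ideal $\iHom_A(A/S,A) \hookrightarrow \iEnd_A(A) \cong A$, nonzero by the generator property and proper because its cokernel $\iHom_A(S,A)$ is nonzero. Your side-bookkeeping is sound (postcomposition makes $\iHom_A(-,A)$ land in left $\iEnd_A(A)$-modules, and the algebra isomorphism $\iEnd_A(A) \cong A$ identifies these subobjects with left ideals), and both proofs consume the same two nontrivial external inputs, namely \cite{2025arXiv250106629C} and Lemma~\ref{lem:exact-algebra-indecomposability}. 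What each buys: your route avoids double duals, the duality anti-equivalence and the Morita theorem, so it is arguably more elementary; the paper's route avoids the one extra fact you invoke, namely that every object of an exact module category is $\mathcal{C}$-injective, so that $\iHom_A(-,A)$ is exact.

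That extra fact is true but is not part of Definition~\ref{def:exact-alg-in-FTC}, so it should be cited rather than treated as automatic: it follows from the coincidence of projective and injective objects in exact module categories \cite[Corollary 7.6.4]{MR3242743} together with the injective analogue of the criterion quoted after Definition~\ref{def:C-projective}. Alternatively, you can dispense with it entirely. The contravariant functor $\iHom_A(-,A)$ sends cokernels to kernels for any finite module category, so $p^*$ is automatically a monomorphism; and properness follows without exactness, since $\iota^* \circ p^* = 0$ while $\iota^*$ is nonzero — by \eqref{eq:iHom-adjunction} it sends the global section $\id_A \in \Hom_{\mathcal{C}_A}(A,A) \cong \Hom_{\mathcal{C}}(\unitobj, \iEnd_A(A))$ to $\iota \ne 0$ — so $p^*$ cannot be an epimorphism. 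With that small modification, your argument uses exactly the same external inputs as the paper's.
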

\begin{proof}
  We only give a proof of the `only if' part since the converse is similar.
  Suppose that $A$ is simple in ${}_A\mathcal{C}$.
  Since the double dual functor induces a bijection between subobjects of $A \in \mathcal{C}_A$ and subobjects of $A^{**} \in \mathcal{C}_{A^{**}}$, it suffices to show that $A^{**}$ is simple in $\mathcal{C}_{A^{**}}$. By the formula
  \begin{equation*}
    \iHom_A(M, N) = (M \otimes_A {}^*\!N)^*
    \quad (M, N \in \mathcal{C}_A)
  \end{equation*}
  of the internal Hom functor of $\mathcal{C}_A$ \cite[Example 7.9.8]{MR3242743}, we can identify the algebra $\iEnd_A(A^*)$ with $A^{**}$.
  The main result of \cite{2025arXiv250106629C} implies that $\mathcal{C}_A$ is indecomposable and exact.
  Thus, by Lemma~\ref{lem:exact-algebra-indecomposability}, the object $A^* \in \mathcal{C}_A$ is a $\mathcal{C}$-projective $\mathcal{C}$-generator. Hence the Morita theorem gives an equivalence
  \begin{equation*}
    \Phi := \iHom_A(A^*, -) : \mathcal{C}_A \to \mathcal{C}_{A^{**}}
  \end{equation*}
  of left $\mathcal{C}$-module categories.
  Since the duality functor $(-)^* : {}_A\mathcal{C} \to \mathcal{C}_A$ is an anti-equivalence, the object $A^* \in \mathcal{C}_A$ is simple.
  Hence $A^{**} \in \mathcal{C}_{A^{**}}$, which is isomorphic to $\Phi(A^*)$, is also simple as an object of $\mathcal{C}_{A^{**}}$.
  The proof is done.
\end{proof}

Following the terminology adopted in \cite{2025arXiv250202532K}, we introduce:

\begin{definition}
  \label{def:division-alg-in-C}
  A {\em division algebra} in $\mathcal{C}$ is an algebra $A$ in $\mathcal{C}$ that is simple as a left (or, equivalently, right) module over $A$ in $\mathcal{C}$.
\end{definition}

The following lemma, which is useful for classifying indecomposable exact algebras up to Morita equivalence, is an analogue of the fact that a finite-dimensional simple algebra is Morita equivalent to a division algebra.

\begin{lemma}
  \label{lem:exact-algebra-indec-Morita-equiv-to-right-simple}
  A non-zero algebra $A$ in $\mathcal{C}$ is indecomposable exact if and only if it is Morita equivalent to a division algebra in $\mathcal{C}$.
\end{lemma}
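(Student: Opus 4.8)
The plan is to prove both implications by passing to the module category $\mathcal{M} := \mathcal{C}_A$ and invoking the Morita theorem. For the `only if' direction, suppose $A$ is indecomposable and exact. Then, by Lemma~\ref{lem:alg-in-C-indecomposability} together with Definition~\ref{def:exact-alg-in-FTC}, $\mathcal{M}$ is an indecomposable exact finite left $\mathcal{C}$-module category. Since $A \neq 0$, the finite abelian category $\mathcal{M}$ is nonzero and hence admits a simple object $S$. By exactness, $S$ is $\mathcal{C}$-projective, and by Lemma~\ref{lem:exact-algebra-indecomposability} applied to the indecomposable exact $\mathcal{M}$, the object $S$ is a $\mathcal{C}$-generator. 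Thus $S$ is a $\mathcal{C}$-projective $\mathcal{C}$-generator, and the Morita theorem yields an equivalence $\Phi := \iHom(S, -) : \mathcal{M} \to \mathcal{C}_{\iEnd(S)}$ in $\ullMod{\mathcal{C}}$; in particular $A$ is Morita equivalent to $D := \iEnd(S)$.

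It then remains to verify that $D$ is a division algebra. Here I would observe that $\Phi$ sends $S$ to $\iHom(S,S) = \iEnd(S) = D$, regarded as the regular right $D$-module, the right action being the composition morphism of internal Homs. Since $\Phi$ is in particular an equivalence of abelian categories, it preserves simple objects; as $S$ is simple in $\mathcal{M}$, the object $D = \Phi(S)$ is simple in $\mathcal{C}_D$. Hence $D$ is simple as a right $D$-module in $\mathcal{C}$, i.e.\ a division algebra in the sense of Definition~\ref{def:division-alg-in-C}. The one point requiring care in this direction is precisely this identification of $\Phi(S)$ with the regular right $D$-module, which is what lets the simplicity of $S$ transfer to the simplicity of $D$ over itself.

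For the `if' direction, suppose $A$ is Morita equivalent to a division algebra $D$. First I would note that $D$ is in fact simple as an algebra: any ideal of $D$ is a fortiori a left $D$-submodule, so the simplicity of $D$ as a left module (Definition~\ref{def:division-alg-in-C}) forces $D$ to have no nonzero proper ideals, whence $D$ is simple in the sense of Definition~\ref{def:simple-alg-in-FTC} and, by the same reasoning, indecomposable. By the theorem of Coulembier, Stroi\'nski and Zorman \cite{2025arXiv250106629C}, a simple algebra in a finite tensor category is exact, so $D$ is indecomposable and exact. Finally, the hypothesis gives an equivalence $\mathcal{C}_A \approx \mathcal{C}_D$ in $\ullMod{\mathcal{C}}$, and both exactness (Definition~\ref{def:exact-alg-in-FTC}) and indecomposability (Lemma~\ref{lem:alg-in-C-indecomposability}) are properties of the left $\mathcal{C}$-module category $\mathcal{C}_A$ that are invariant under such equivalences. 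Therefore $A$ is indecomposable and exact, completing the argument. The genuinely nontrivial input throughout is the exactness of simple (hence division) algebras used here, which rests on the deep result of \cite{2025arXiv250106629C}; modulo that, the proof is an assembly of the Morita theorem and the lemmas of this section.
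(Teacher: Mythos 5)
Your proof is correct and follows essentially the same route as the paper: for the `only if' part, fix a simple object of $\mathcal{C}_A$, use Lemma~\ref{lem:exact-algebra-indecomposability} together with exactness to see it is a $\mathcal{C}$-projective $\mathcal{C}$-generator, apply the Morita theorem, and transfer simplicity along the equivalence to get a division algebra; for the `if' part, use the result of \cite{2025arXiv250106629C} that simple algebras are indecomposable exact plus Morita invariance. You merely make explicit two points the paper leaves implicit, namely that a division algebra is simple as an algebra and that $\Phi(S)$ is the regular right $D$-module.
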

\begin{proof}
  The `if' part follows from that a simple algebra in $\mathcal{C}$ is indecomposable exact \cite{2025arXiv250106629C}, and the indecomposability and the exactness of an algebra is a Morita invariant.
  We prove the `only if' part. Let $A$ be a non-zero indecomposable exact algebra in $\mathcal{C}$.
  We fix a simple object $M \in \mathcal{C}_A$ and set $D := \iEnd(M)$.
  By Lemma~\ref{lem:exact-algebra-indecomposability} and the Morita theorem, we have an equivalence $\Phi := \iHom(M, -) : \mathcal{C}_A \to \mathcal{C}_D$ of left $\mathcal{C}$-module categories. Since $D = \Phi(M)$ and $M \in \mathcal{C}_A$ is simple, the object $D \in \mathcal{C}_D$ is simple. Hence $A$ is Morita equivalent to the division algebra $D$ in $\mathcal{C}$.
\end{proof}

We also give a criterion when two division algebras are Morita equivalent:

\begin{lemma}
  \label{lem:division-alg-Morita-equiv}
  Two division algebras $A$ and $B$ in $\mathcal{C}$ are Morita equivalent in $\mathcal{C}$ if and only if there are an indecomposable exact left $\mathcal{C}$-module category $\mathcal{M}$ and two simple objects $V$ and $W$ of $\mathcal{M}$ such that $A \cong \iEnd(V)$ and $B \cong \iEnd(W)$ as algebras in $\mathcal{C}$.
\end{lemma}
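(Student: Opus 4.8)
The plan is to prove both implications by realizing the internal endomorphism algebras through the Morita theorem, using that a division algebra is in particular a simple algebra and hence indecomposable exact.

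For the `if' part, suppose we are given an indecomposable exact $\mathcal{M}$ together with simple objects $V$ and $W$ such that $A \cong \iEnd(V)$ and $B \cong \iEnd(W)$. Since $\mathcal{M}$ is indecomposable exact, Lemma~\ref{lem:exact-algebra-indecomposability} tells us that every non-zero object of $\mathcal{M}$ is a $\mathcal{C}$-generator; in particular $V$ and $W$ are $\mathcal{C}$-generators, and they are $\mathcal{C}$-projective because every object of an exact module category is so by Definition~\ref{def:exact-alg-in-FTC}. Thus $V$ and $W$ are $\mathcal{C}$-projective $\mathcal{C}$-generators, and the Morita theorem yields equivalences $\iHom(V, -) : \mathcal{M} \to \mathcal{C}_{A}$ and $\iHom(W, -) : \mathcal{M} \to \mathcal{C}_{B}$ in $\ullMod{\mathcal{C}}$. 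Composing one of them with a quasi-inverse of the other gives an equivalence $\mathcal{C}_A \approx \mathcal{C}_B$ of left $\mathcal{C}$-module categories, which is precisely the statement that $A$ and $B$ are Morita equivalent (Definition~\ref{def:Morita-in-FTC}).

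For the `only if' part, I would take $\mathcal{M} := \mathcal{C}_A$ and exhibit both $V$ and $W$ inside it. Because $A$ is a division algebra, it is simple as a right $A$-module, hence simple as an $A$-bimodule, so it is a simple algebra; by \cite{2025arXiv250106629C} a simple algebra is indecomposable exact, and therefore $\mathcal{M} = \mathcal{C}_A$ is indecomposable exact by Lemma~\ref{lem:alg-in-C-indecomposability} and Definition~\ref{def:exact-alg-in-FTC}. The regular module $A \in \mathcal{C}_A$ is then a simple object, and the standard identification $\iEnd_A(A) \cong A$ lets me set $V := A$. For $W$, I use the Morita equivalence to fix an equivalence $G : \mathcal{C}_B \to \mathcal{C}_A$ of left $\mathcal{C}$-module categories and put $W := G(B)$; since $B$ is a division algebra, $B \in \mathcal{C}_B$ is simple, so $W$ is a simple object of $\mathcal{M}$, and because internal endomorphism algebras are preserved by module equivalences we obtain $\iEnd_{\mathcal{M}}(W) \cong \iEnd_{\mathcal{C}_B}(B) \cong B$ as algebras in $\mathcal{C}$.

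The proof is essentially an assembly of the Morita theorem and Lemma~\ref{lem:exact-algebra-indecomposability}, so I do not expect any single step to be genuinely difficult; the main obstacle is bookkeeping rather than depth. The two points that require care, both in the `only if' direction, are the `standard' facts I invoke: that $\iEnd_A(A) \cong A$ as algebras, which follows from the free-forgetful adjunction $\Hom_{\mathcal{C}_A}(X \otimes A, A) \cong \Hom_{\mathcal{C}}(X, A)$ combined with the defining adjunction~\eqref{eq:iHom-adjunction} and Yoneda; and that an equivalence of left $\mathcal{C}$-module categories induces an algebra isomorphism between the associated internal endomorphism algebras. I would state both explicitly so that the identification $\iEnd(W) \cong B$ is fully justified, after which the argument closes.
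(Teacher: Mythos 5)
Your proposal is correct and follows essentially the same route as the paper: the `if' direction via Lemma~\ref{lem:exact-algebra-indecomposability} and the Morita theorem, and the `only if' direction by taking $\mathcal{M} = \mathcal{C}_A$, $V = A$, $W = \Phi(B)$ for a module equivalence $\Phi : \mathcal{C}_B \to \mathcal{C}_A$, using $\iEnd_A(A) \cong A$ and invariance of internal endomorphism algebras under module equivalences. The only difference is that you spell out two points the paper leaves implicit — that $\mathcal{C}_A$ is itself indecomposable exact (via simplicity of division algebras and \cite{2025arXiv250106629C}), and the justification of the two `standard' identifications — which is a harmless refinement, not a different argument.
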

\begin{proof}
  The `if' part follows from the Morita theorem and Lemma~\ref{lem:exact-algebra-indecomposability}. To prove the converse, we assume that $A$ and $B$ are Morita equivalent in $\mathcal{C}$. By definition, there is an equivalence $\Phi : \mathcal{C}_B \to \mathcal{C}_A$ of finite left $\mathcal{C}$-module categories. We now set $\mathcal{M} = \mathcal{C}_A$, $V = A \in \mathcal{M}$ and $W = \Phi(B) \in \mathcal{M}$. Then, since $A$ and $B$ are division algebras in $\mathcal{C}$, the objects $V$ and $W$ are simple. Moreover, we have
  \begin{equation*}
    \iEnd_A(V) = \iEnd_A(A) \cong A,
    \quad \iEnd_A(W) = \iEnd_{A}(\Phi(B)) \cong \iEnd_{B}(B) \cong B
  \end{equation*}
  as algebras. The proof is done.
\end{proof}

\subsection{Haploid algebras}

Let $\mathcal{C}$ be a finite tensor category.
We say that an algebra $A$ in $\mathcal{C}$ is {\em haploid} \cite{MR2029790} if $\dim_{\bfk} \Hom_{\mathcal{C}}(\unitobj, A) = 1$.

\begin{lemma}
  \label{lem:haploid-implies-indecomposable}
  A haploid algebra in $\mathcal{C}$ is indecomposable.
\end{lemma}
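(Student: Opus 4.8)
The plan is to show that the algebra $\End_{{}_A\mathcal{C}_A}(A)$ of bimodule endomorphisms of $A$ is one-dimensional, and then to deduce indecomposability from the absence of nontrivial idempotents. Recall that a direct-sum decomposition $A = I \oplus J$ into non-zero ideals corresponds precisely to a nontrivial idempotent in $\End_{{}_A\mathcal{C}_A}(A)$, so it suffices to prove that the only idempotents of this endomorphism algebra are $0$ and $\id_A$.

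The key step is to construct an injective linear map
\begin{equation*}
  \End_{{}_A\mathcal{C}_A}(A) \to \Hom_{\mathcal{C}}(\unitobj, A), \quad f \mapsto f \circ \eta,
\end{equation*}
where $\eta : \unitobj \to A$ is the unit of $A$. Injectivity is the heart of the argument: using the unit axiom $\mu \circ (\id_A \otimes \eta) = \id_A$ together with the left-module compatibility $f \circ \mu = \mu \circ (\id_A \otimes f)$, one recovers $f$ from $f \circ \eta$ via $f = \mu \circ (\id_A \otimes (f \circ \eta))$. Hence $f$ is determined by $f \circ \eta$, and the displayed map is injective. It is worth noting that only the left-module structure of $f$ is used here; the right-module compatibility, while available since $f$ is a bimodule map, is not needed for injectivity.

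Granting this, the haploidness hypothesis $\dim_{\bfk} \Hom_{\mathcal{C}}(\unitobj, A) = 1$ forces $\dim_{\bfk} \End_{{}_A\mathcal{C}_A}(A) \le 1$. Since $A$ is non-zero (its space of maps from the unit is one-dimensional, hence non-zero), the identity $\id_A$ is a non-zero bimodule endomorphism, so in fact $\End_{{}_A\mathcal{C}_A}(A) = \bfk \cdot \id_A$. In this one-dimensional algebra, an idempotent $e = \lambda \id_A$ satisfies $\lambda^2 = \lambda$, whence $\lambda \in \{0, 1\}$ because $\bfk$ is a field. Therefore the only idempotents are $0$ and $\id_A$, so no nontrivial decomposition $A = I \oplus J$ into non-zero ideals exists, and $A$ is indecomposable.

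I expect the main obstacle to be the careful verification of the injectivity of $f \mapsto f \circ \eta$ in the strict monoidal setting; once the identity $f = \mu \circ (\id_A \otimes (f \circ \eta))$ is established, the remaining steps are purely formal. Since the absolute simplicity of $\unitobj$ guarantees $\End_{\mathcal{C}}(\unitobj) = \bfk$, the dimension count over $\bfk$ is meaningful, and the passage from a one-dimensional endomorphism algebra to the triviality of idempotents is immediate.
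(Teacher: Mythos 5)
Your proof is correct, but it takes a genuinely different route from the paper's. The paper argues directly on a putative decomposition $A = I_1 \oplus I_2$ into ideals: it splits $\Hom_{\mathcal{C}}(\unitobj, A) = \Hom_{\mathcal{C}}(\unitobj, I_1) \oplus \Hom_{\mathcal{C}}(\unitobj, I_2)$, uses haploidness to conclude that the unit $\eta$ factors through one of the two ideals, and then concludes that this ideal must be all of $A$ --- a last step that rests, implicitly, on exactly the identity you prove, namely $\id_A = \mu \circ (\id_A \otimes \eta)$ combined with the ideal property. You instead establish the structural statement that $\End_{{}_A\mathcal{C}_A}(A)$ embeds into $\Hom_{\mathcal{C}}(\unitobj, A)$ via $f \mapsto f \circ \eta$, hence is one-dimensional and equal to $\bfk \cdot \id_A$, and then invoke the standard correspondence in the abelian category ${}_A\mathcal{C}_A$ between direct-sum decompositions and idempotent endomorphisms. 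Your route is slightly longer but buys more: it identifies the full bimodule endomorphism algebra of $A$ (a local ring, which yields indecomposability by the usual criterion), and the injectivity of $f \mapsto f \circ \eta$ --- valid for any algebra and, as you note, using only the left module structure --- is a reusable fact beyond this lemma. The paper's argument is shorter and avoids any discussion of idempotent splittings, but leaves to the reader precisely the computation you make explicit.
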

\begin{proof}
  Let $A$ be a haploid algebra in $\mathcal{C}$.
  We assume that $A = I_1 \oplus I_2$ for some ideals $I_1$ and $I_2$ of $A$, and let $i_k : I_k \to A$ ($k = 1, 2$) be the inclusion morphism.
  Then, since $\Hom_{\mathcal{C}}(\unitobj, A) = \Hom_{\mathcal{C}}(\unitobj, I_1) \oplus \Hom_{\mathcal{C}}(\unitobj, I_2)$, the unit $\unitobj \to A$ is a non-zero scalar multiple of either $i_1$ or $i_2$. In the former case, we have $A = I_1$. In the latter case, we have $A = I_2$. Thus $A$ is indecomposable.
\end{proof}

The converse of this lemma does not hold. For example, the matrix algebra of degree $\ge 2$ is an indecomposable non-haploid algebra in $\Vect$. A finite field extension of $\bfk$ of degree $\ge 2$ is also an indecomposable non-haploid algebra in $\Vect$.
We note that the former example is Morita equivalent to $\bfk$, while the latter does not exist when $\bfk$ is algebraically closed. In a finite tensor category, we have:

\begin{lemma}
  \label{lem:division-alg-is-haploid}  
  Suppose that $\bfk$ is algebraically closed.
  Then every division algebra in $\mathcal{C}$ is haploid.
  In particular, every indecomposable exact algebra in $\mathcal{C}$ is Morita equivalent to a haploid algebra in $\mathcal{C}$.
\end{lemma}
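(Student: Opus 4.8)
The plan is to compute $\dim_{\bfk} \Hom_{\mathcal{C}}(\unitobj, A)$ by identifying this space with the endomorphism algebra of $A$ regarded as a left module over itself, and then to invoke Schur's lemma. The key observation is that the free-forgetful adjunction between $\mathcal{C}$ and ${}_A\mathcal{C}$ exhibits $A$ as the free left $A$-module on the unit object $\unitobj$; that is, $A \cong A \otimes \unitobj$ in ${}_A\mathcal{C}$. Consequently, for every left $A$-module $M$ in $\mathcal{C}$ there is a natural isomorphism $\Hom_{{}_A\mathcal{C}}(A, M) \cong \Hom_{\mathcal{C}}(\unitobj, M)$, and taking $M = A$ yields
\begin{equation*}
  \Hom_{\mathcal{C}}(\unitobj, A) \cong \End_{{}_A\mathcal{C}}(A)
\end{equation*}
as $\bfk$-vector spaces.

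First I would establish this isomorphism, and then argue as follows. Since $\mathcal{C}$ is a finite tensor category, the category ${}_A\mathcal{C}$ is a finite abelian category, so all of its Hom-spaces are finite-dimensional over $\bfk$. If $A$ is a division algebra in $\mathcal{C}$, then by definition $A$ is simple as an object of ${}_A\mathcal{C}$, and Schur's lemma shows that $\End_{{}_A\mathcal{C}}(A)$ is a finite-dimensional division algebra over $\bfk$. Because $\bfk$ is algebraically closed, this division algebra must be $\bfk$ itself, whence $\dim_{\bfk} \End_{{}_A\mathcal{C}}(A) = 1$. Combined with the displayed isomorphism, this gives $\dim_{\bfk} \Hom_{\mathcal{C}}(\unitobj, A) = 1$, which is precisely the statement that $A$ is haploid.

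For the final assertion, I would invoke Lemma~\ref{lem:exact-algebra-indec-Morita-equiv-to-right-simple}: every non-zero indecomposable exact algebra in $\mathcal{C}$ is Morita equivalent to a division algebra $D$ in $\mathcal{C}$. By the first part, $D$ is haploid, so the given indecomposable exact algebra is Morita equivalent to the haploid algebra $D$, as required. The argument is largely formal, so there is no serious obstacle; the only point requiring care is the identification of $\Hom_{\mathcal{C}}(\unitobj, A)$ with the module-endomorphism algebra, since it is this identification that converts the categorical simplicity of $A$ into the numerical condition defining haploidness. It is also here that the algebraic closure of $\bfk$ is indispensable: over a non-closed field a division algebra need not be haploid, exactly as the finite field extensions mentioned before the lemma illustrate.
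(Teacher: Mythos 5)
Your proof is correct and is essentially the same as the paper's: the paper also identifies $\Hom_{\mathcal{C}}(\unitobj, A)$ with the endomorphism algebra of $A$ as a module over itself (using right modules, $\Hom_{\mathcal{C}_A}(A,A)$, where you use left modules — immaterial, since Definition~\ref{def:division-alg-in-C} allows either), applies Schur's lemma together with algebraic closure of $\bfk$, and deduces the final assertion from Lemma~\ref{lem:exact-algebra-indec-Morita-equiv-to-right-simple}.
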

\begin{proof}
  If $A$ is a division algebra in $\mathcal{C}$, then we have isomorphisms
  \begin{equation*}
    \Hom_{\mathcal{C}}(\unitobj, A)
    \cong \Hom_{\mathcal{C}_A}(A,A) \cong \bfk
  \end{equation*}
  by Schur's lemma and the assumption that $\bfk$ is algebraically closed.
  The rest of the statement follows from Lemma~\ref{lem:exact-algebra-indec-Morita-equiv-to-right-simple}.
\end{proof}

In relation to the above lemma, we note that a haploid exact algebra in $\mathcal{C}$ is not necessary a division algebra in $\mathcal{C}$.
To see this, we fix a non-zero object $M$ of an indecomposable exact left $\mathcal{C}$-module category $\mathcal{M}$ and consider the algebra $B = \iEnd(M)$ in $\mathcal{C}$. By the Morita theorem, we have an equivalence $\mathcal{M} \approx \mathcal{C}_B$ of left $\mathcal{C}$-module categories which sends $M$ to $B$.
Now we assume that $M$ is not simple and $\End_{\mathcal{M}}(M) \cong \bfk$.
Then, by the same argument as the proof of Lemma \ref{lem:exact-algebra-indec-Morita-equiv-to-right-simple}, $B$ is a haploid exact algebra that is not a division algebra.

\subsection{Morita equivalence in pointed finite tensor categories}
\label{subsec:pointed-FTCs}

A {\em pointed finite tensor category} is a finite tensor category where every simple object is invertible (with respect to the tensor product, up to isomorphism). We conclude this section by giving the following useful criterion for two division algebras in a pointed finite tensor category to be Morita equivalent:

\begin{theorem}
  \label{thm:Morita-in-pointed-FTC}
  Let $\mathcal{C}$ be a pointed finite tensor category.
  Then two division algebras $A$ and $B$ in $\mathcal{C}$ are Morita equivalent in $\mathcal{C}$ if and only if there exists a simple object $g \in \mathcal{C}$ such that $B \cong g \otimes A \otimes g^{*}$ as algebras in $\mathcal{C}$.
\end{theorem}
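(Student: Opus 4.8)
The plan is to prove the two implications separately, with the backward direction being immediate and the forward direction resting on a composition-factor analysis inside an auxiliary module category. For the `if' part, suppose $B \cong g \otimes A \otimes g^*$ for some simple object $g \in \mathcal{C}$. Since a simple object is in particular non-zero, Lemma~\ref{lem:Morita-eq-matrix-alg} shows that $A$ and $g \otimes A \otimes g^*$ are Morita equivalent, and hence so are $A$ and $B$. This direction uses neither the division-algebra hypothesis nor pointedness.

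For the `only if' part, I would begin by feeding the Morita equivalence of the two division algebras into Lemma~\ref{lem:division-alg-Morita-equiv}: there exist an indecomposable exact left $\mathcal{C}$-module category $\mathcal{M}$ and simple objects $V, W \in \mathcal{M}$ such that $A \cong \iEnd(V)$ and $B \cong \iEnd(W)$ as algebras in $\mathcal{C}$. The whole problem is thereby reduced to producing a simple object $g \in \mathcal{C}$ with $V \cong g \catactl W$. Indeed, for invertible $g$ this gives $W \cong g^* \catactl V$, and then the isomorphism \eqref{eq:internal-End-X-M} applied with $X = g^*$ and $M = V$ yields $B \cong \iEnd(W) \cong \iEnd(g^* \catactl V) \cong g^* \otimes \iEnd(V) \otimes (g^*)^* \cong g^* \otimes A \otimes (g^*)^*$, so that $g^*$ (which is again simple, being the dual of an invertible object) is the sought object.

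To produce $g$, I would use that $\mathcal{M}$ is indecomposable exact: by Lemma~\ref{lem:exact-algebra-indecomposability} every non-zero object of $\mathcal{M}$, in particular $W$, is a $\mathcal{C}$-generator. By the generator characterization \cite[Lemma 2.22]{2014arXiv1406.4204D} recalled after Definition~\ref{def:C-projective}, there is an object $X \in \mathcal{C}$ admitting an epimorphism $X \catactl W \twoheadrightarrow V$. The key point is to control the composition factors of $X \catactl W$. Since $\mathcal{C}$ is pointed, $X$ has a filtration whose successive subquotients $g_1, \dots, g_n$ are invertible; applying the functor $(-) \catactl W$, which is right exact in the $\mathcal{C}$-variable, to this filtration and using that a right exact functor introduces no composition factors beyond those of the images of the subquotients, one finds that every composition factor of $X \catactl W$ is a composition factor of some $g_i \catactl W$. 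As each $g_i$ is invertible, the functor $g_i \catactl (-)$ is an autoequivalence of $\mathcal{M}$, so $g_i \catactl W$ is simple; hence the composition factors of $X \catactl W$ all lie in $\{ g_1 \catactl W, \dots, g_n \catactl W \}$. Because $V$ is a quotient, and therefore a composition factor, of $X \catactl W$, we conclude $V \cong g_i \catactl W$ for some $i$, which furnishes the desired invertible $g$.

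The hard part is this composition-factor step. The subtlety is that the action $(-) \catactl W$ need not be exact in the $\mathcal{C}$-variable, so I must take care to argue using only right exactness that no spurious composition factors appear; and it is precisely the pointedness of $\mathcal{C}$ that forces every simple subquotient of $X \catactl W$ to be an invertible twist $g \catactl W$ of $W$, which is the crucial structural input. Once this is established, assembling the isomorphism via \eqref{eq:internal-End-X-M} and invertibility of $g$ (so that $g^{**} \cong g$ and $g^* \otimes g \cong \unitobj$) is routine.
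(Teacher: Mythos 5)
Your proof is correct and follows essentially the same route as the paper: the `if' part via Lemma~\ref{lem:Morita-eq-matrix-alg}, reduction via Lemma~\ref{lem:division-alg-Morita-equiv}, and then the composition-factor argument, which is exactly the content of the paper's Lemma~\ref{lem:Morita-in-pointed-FTC-lemma} (transitivity of the $\Grp(\mathcal{C})$-action on $\mathrm{Irr}(\mathcal{M})$), finished off by \eqref{eq:internal-End-X-M}. The only cosmetic difference is that you realize $V$ as a quotient of $X \catactl W$ rather than $W$ as a quotient of $P \catactl V$, so you must pass to $g^*$ at the end, and your careful handling of mere right exactness (rather than invoking biexactness of the action on an exact module category) is a slight refinement of the same argument.
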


Let $\mathcal{C}$ be a finite tensor category, and let $\mathcal{M}$ be a finite left $\mathcal{C}$-module category. If $g \in \mathcal{C}$ is invertible, then the functor $g \catactl (-) : \mathcal{M} \to \mathcal{M}$ is an equivalence. In particular, the object $g \catactl V$ is simple if $V \in \mathcal{M}$ is. Hence the group $\Grp(\mathcal{C})$ of the isomorphism classes of invertible objects of $\mathcal{C}$ acts on the set $\mathrm{Irr}(\mathcal{M})$ of the isomorphism classes of simple objects of $\mathcal{M}$. To prove Theorem \ref{thm:Morita-in-pointed-FTC}, we note:

\begin{lemma}
  \label{lem:Morita-in-pointed-FTC-lemma}
  Suppose that $\mathcal{C}$ is a pointed finite tensor category and $\mathcal{M}$ is an indecomposable exact left $\mathcal{C}$-module category. Then the action of $\Grp(\mathcal{C})$ on $\mathrm{Irr}(\mathcal{M})$ is transitive.
\end{lemma}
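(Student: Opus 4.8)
The plan is to fix two simple objects $V, W \in \mathcal{M}$ and to produce an invertible object $g \in \mathcal{C}$ together with an isomorphism $W \cong g \catactl V$. Since $\Grp(\mathcal{C})$ is by definition the group of isomorphism classes of invertible objects, this exhibits the classes of $V$ and $W$ as lying in the same $\Grp(\mathcal{C})$-orbit in $\mathrm{Irr}(\mathcal{M})$, and as $V$ and $W$ are arbitrary this proves that the action is transitive. The main tool is the internal Hom functor $\iHom = \iHom_{\mathcal{M}}$ together with its defining adjunction \eqref{eq:iHom-adjunction}.

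First I would observe that $V$ is a $\mathcal{C}$-generator: it is a non-zero object of the indecomposable exact module category $\mathcal{M}$, so this follows from Lemma~\ref{lem:exact-algebra-indecomposability}. By Definition~\ref{def:C-projective} this means that $\iHom(V, -)$ is faithful; in particular it carries the non-zero object $W$ to a non-zero object, so $\iHom(V, W) \neq 0$. Next I would invoke the hypothesis that $\mathcal{C}$ is pointed. Since $\mathcal{C}$ is a finite tensor category, the non-zero object $\iHom(V, W)$ has a simple subobject $g$, and pointedness forces $g$ to be invertible. Applying the adjunction \eqref{eq:iHom-adjunction} with $X = g$, $M = V$ and $N = W$, the non-zero inclusion $g \hookrightarrow \iHom(V, W)$ corresponds to a non-zero morphism $f : g \catactl V \to W$ in $\mathcal{M}$.

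To conclude, I would argue that $f$ is an isomorphism. As recorded in the paragraph preceding the lemma, the functor $g \catactl (-)$ is an equivalence because $g$ is invertible; hence $g \catactl V$ is simple, being the image of the simple object $V$ under an equivalence. Thus $f$ is a non-zero morphism between two simple objects of $\mathcal{M}$, and is therefore an isomorphism. This yields $W \cong g \catactl V$ with $g$ invertible, as required.

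The argument is essentially direct once the correct invariant is identified, so no serious computational obstacle arises; the delicate point is conceptual. The faithfulness of $\iHom(V, -)$ only guarantees that $\iHom(V, W)$ is non-zero, which on its own says nothing about invertibility. It is precisely the pointedness of $\mathcal{C}$ that upgrades this to the existence of an invertible constituent $g$, and the invertibility of $g$ is in turn exactly what ensures that $g \catactl V$ is again simple, so that the adjoint morphism $g \catactl V \to W$ can be promoted from merely non-zero to an isomorphism.
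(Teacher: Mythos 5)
Your proof is correct, and it reaches the conclusion by a genuinely different route from the paper's. Both arguments begin with Lemma~\ref{lem:exact-algebra-indecomposability}, which makes the simple object $V$ a $\mathcal{C}$-generator, but they exploit this in different ways. The paper uses the quotient characterization of $\mathcal{C}$-generators recalled after Definition~\ref{def:C-projective}: there is an object $P \in \mathcal{C}$ such that $W$ is a quotient of $P \catactl V$; it then takes a composition series of $P$, whose factors $g_1, \dots, g_n$ are invertible by pointedness, observes that the composition factors of $P \catactl V$ are the simple objects $g_i \catactl V$, and concludes that the simple quotient $W$ must be isomorphic to some $g_i \catactl V$. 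You instead work with the internal Hom: faithfulness of $\iHom(V,-)$ gives $\iHom(V,W) \ne 0$, a simple subobject $g \subset \iHom(V,W)$ is invertible by pointedness, and the adjunction \eqref{eq:iHom-adjunction} converts the inclusion into a non-zero morphism $g \catactl V \to W$ between simple objects, hence an isomorphism. Your version is slightly more economical in its inputs: it needs only the defining adjunction of $\iHom$, the existence of a simple subobject in a finite abelian category, and Schur-type reasoning, whereas the paper's composition-factor step additionally relies on exactness of $(-) \catactl V : \mathcal{C} \to \mathcal{M}$ in order to transport a composition series of $P$ to one of $P \catactl V$ --- a fact that is automatic (for instance via $\mathcal{M} \approx \mathcal{C}_A$ and biexactness of the tensor product of $\mathcal{C}$) but is left implicit in the paper. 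Conversely, the paper's argument is more elementary in flavor, staying at the level of objects, quotients and composition factors without manipulating internal Homs beyond the generator property itself. Either argument constitutes a complete proof.
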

\begin{proof}
  Let $V$ and $W$ be simple objects of $\mathcal{M}$. By Lemma~\ref{lem:exact-algebra-indecomposability}, there is an object $P \in \mathcal{C}$ such that $W$ is a quotient of $P \otimes V$. Let $g_1, \cdots, g_n$ be the composition factors of $P$, counting multiplicities. Since $\mathcal{C}$ is pointed, $g_i \catactl V$ is simple for each $i$. This implies that the composition factors of $P \catactl V$ are $g_i \catactl V$ ($i = 1, \cdots, n$), counting multiplicities. Thus $W$ is isomorphic to $g_i \catactl V$ for some $i$. The proof is done.
\end{proof}

We note that the exactness of $\mathcal{M}$ is essential in Lemma~\ref{lem:Morita-in-pointed-FTC-lemma}. Indeed, let $A$ be the algebra generated by $x$ and $g$ subject to $x^2 = 0$, $g^2 = 1$ and $g x = - x g$. Then $\mathcal{M} := \lmod{A}$ is an indecomposable module category over the pointed finite tensor category $\mathcal{C} := \Vect$. The set $\mathrm{Irr}(\mathcal{M})$ consists of two elements, while $\Grp(\mathcal{C})$ is trivial. Hence the action of $\Grp(\mathcal{C})$ on $\mathrm{Irr}(\mathcal{M})$ is not transitive.

\begin{proof}[Proof of Theorem~\ref{thm:Morita-in-pointed-FTC}]
  Let $\mathcal{C}$, $A$ and $B$ be as in the statement.
  The `if' part follows from Lemma~\ref{lem:Morita-eq-matrix-alg}. To prove the converse, we assume that $A$ and $B$ are Morita equivalent. Then, by Lemma~\ref{lem:division-alg-Morita-equiv}, there are an indecomposable left $\mathcal{C}$-module category $\mathcal{M}$ and two simple objects $V$ and $W$ of $\mathcal{M}$ such that $\iEnd(V) \cong A$ and $\iEnd(W) \cong B$ as algebras in $\mathcal{C}$. By Lemma~\ref{lem:Morita-in-pointed-FTC-lemma}, there is an invertible object $g \in \mathcal{C}$ such that $W \cong g \catactl V$. Thus we have
  \begin{equation*}
    B \cong \iEnd(W)
    \cong \iEnd(g \catactl V)
    \cong g \otimes \iEnd(V) \otimes g^*
    \cong g \otimes A \otimes g^*,
  \end{equation*}
  where \eqref{eq:internal-End-X-M} was used at the third isomorphism. The proof is done.
\end{proof}

\section{Exact module categories and comodule algebras}
\label{sec:exact-mod-and-comod-alg}

\subsection{Notation and convention}

In this section, {\em an algebra, a coalgebra and a Hopf algebra are always assumed to be finite-dimensional}. Given coalgebras $C$ and $D$, we denote by ${}^C\Mod$, $\Mod^D$ and ${}^C\Mod^D$ the category of finite-dimensional left $C$-comodules, right $D$-comodules and $C$-$D$-bicomodules, respectively. To be consistent, given algebras $A$ and $B$, we write $\lmod{A}$, $\rmod{B}$ and $\bimod{A}{B}$ as ${}_A\Mod$, $\Mod_B$ and ${}_A\Mod_B$, respectively.

Let $H$ be a Hopf algebra. By our convention that $H$ is finite-dimensional, the categories ${}_H\Mod$ and ${}^H\Mod$ are finite tensor categories. A {\em left $H$-comodule algebra} is nothing but an algebra in ${}^H\Mod$. Given a left $H$-comodule algebras $A$ and $B$, the categories ${}_A\mathcal{C}$, $\mathcal{C}_B$ and ${}_A\mathcal{C}_B$ with $\mathcal{C} = {}^H\Mod$ will be denoted by ${}^H_A\Mod$, ${}^H\Mod_B$ and ${}^H_A\Mod_B$, respectively. 

The comultiplication and the counit of a coalgebra $C$ are denoted by $\Delta_C$ and $\varepsilon_C$ or, for short, by $\Delta$ and $\varepsilon$, respectively. We define
\begin{equation*}
  \Grp(C) = \{ c \in C \mid \Delta(c) = c \otimes c, \varepsilon(c) = 1 \}
\end{equation*}
and call an element of this set a {\em grouplike} element. We use the Sweedler notation, such as $\Delta(c) = c_{(1)} \otimes c_{(2)}$ and $\Delta(c_{(1)}) \otimes c_{(2)} = c_{(1)} \otimes c_{(2)} \otimes c_{(3)} = c_{(1)} \otimes \Delta(c_{(2)})$, to express the comultiplication of $c \in C$. The Sweedler notation is also used for a comodule: If $M$ is a left $C$-comodule, then the coaction $\delta_M : M \to C \otimes M$ is expressed as $\delta_M(m) = m_{(-1)} \otimes m_{(0)}$.

\subsection{Terminologies on comodule algebras}
\label{subsec:term-comod-alg}

We first introduce several terminologies on comodule algebras. Let $H$ be a Hopf algebra (which is finite-dimensional by our convention). We recall that a subspace of a left $H$-comodule is said to be {\em $H$-costable} if it is a subcomodule.

\begin{definition}
  \label{def:H-comod-alg-basics}
  Let $A$ be a left $H$-comodule algebra.
  \begin{enumerate}
  \item $A$ is said to be {\em $H$-simple} (respectively, {\em left $H$-simple}, {\em right $H$-simple}) if it is a non-zero algebra with no non-trivial $H$-costable ideal (respectively, left ideal, right ideal).
  \item $A$ is said to be {\em $H$-indecomposable} if it cannot be written as $A = I \oplus J$ for some non-trivial $H$-costable ideals $I$ and $J$ of $A$.
  \item $A$ is said to {\em have trivial coinvariants} if ${}^{\mathrm{co}\,H}\!A$ is one-dimensional.
    Here, ${}^{\mathrm{co}\,H}\!M$ for $M \in {}^H\Mod$ is the space of left $H$-coinvariants defined by
    \begin{equation*}
      {}^{\mathrm{co}\,H} \! M := \{ m \in M \mid \delta_M(m) = 1 \otimes m \}.
    \end{equation*}
  \end{enumerate}
\end{definition}

By definition, $A$ is $H$-simple, left $H$-simple and right $H$-simple if and only if it is simple as an object of the category ${}^H_A\Mod_A^{}$, ${}^H_A\Mod$ and ${}^H_{}\Mod_A^{}$, respectively. Thus, by Lemma~\ref{lem:exact-alg-left-and-right-simple}, $A$ is left $H$-simple if and only if it is right $H$-simple, if and only if it is a division algebra in ${}^H\Mod$. Also, $A$ is $H$-indecomposable if and only if it is an indecomposable algebra in ${}^H\Mod$ in the sense of the previous section.

The category ${}_A\Mod$ is a finite left $\mathcal{C}$-module category by the action $\catactl$ given as follows: For $X \in \mathcal{C}$ and $M \in {}_A\Mod$, we define $X \catactl M = X \otimes_{\bfk} M$ as a vector space. We make it a left $A$-module by the action given by
\begin{equation*}
  a \cdot (x \otimes m) = a_{(-1)} x \otimes a_{(0)} m
  \quad (a \in A, x \in X, m \in M).
\end{equation*}

\begin{definition}
  \label{def:exact-H-comod-alg}
  An {\em exact left $H$-comodule algebra} is a left $H$-comodule algebra $A$ such that the left ${}_H\Mod$-module category ${}_A\Mod$ is exact.
\end{definition}

This terminology is compatible with Definition~\ref{def:exact-alg-in-FTC}.
Namely, a left $H$-comodule algebra $A$ is exact in the above sense if and only if it is an exact algebra in ${}^H\Mod$ in the sense of Definition~\ref{def:exact-alg-in-FTC}; see Lemma~\ref{lem:H-comod-alg-exactness}.

Andruskiewitsch and Mombelli proved that every indecomposable exact left module category over ${}_H\Mod$ is equivalent to ${}_A\Mod$ for some right $H$-simple left $H$-comodule algebra $A$ \cite[Theorem 3.3]{MR2331768}.
One of aims of this section is to give a proof of their result with emphasis on the viewpoint of the theory of finite tensor categories. We then give several supplemental results for right $H$-simple left $H$-comodule algebras which will be used in the later sections.

\subsection{Finite module categories over ${}_H\Mod$}

Let $H$ be a Hopf algebra. We first show that every finite left module category over $\mathcal{C} := {}_H\Mod$ is given by a left comodule algebra:

\begin{theorem}[{\it cf}. {\cite[Proposition 1.19]{MR2331768}}]
  \label{thm:finite-mod-cat-over-H-Mod}
  For every finite left $\mathcal{C}$-module category $\mathcal{M}$, there is a left $H$-comodule algebra $A$ such that $\mathcal{M} \approx {}_A\Mod$ as left $\mathcal{C}$-module categories.
\end{theorem}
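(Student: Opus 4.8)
The plan is to realize $\mathcal{M}$ inside $\Vect$ by means of a fiber functor that is compatible with the forgetful functor of $\mathcal{C}$, and then to reconstruct the comodule algebra by a relative Tannakian argument. Write $U \colon \mathcal{C} = {}_H\Mod \to \Vect$ for the forgetful functor, which is exact, faithful and monoidal. The motivation is the following observation: a left $H$-comodule algebra $A$ gives rise to the left $\mathcal{C}$-module category ${}_A\Mod$ whose action has underlying functor $X \catactl (-) = U(X) \otimes_{\bfk} (-)$, so the forgetful functor $\omega \colon {}_A\Mod \to \Vect$ is faithful, exact, and carries natural isomorphisms $\omega(X \catactl M) \cong U(X) \otimes_{\bfk} \omega(M)$ compatible with the monoidal structure of $U$. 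The strategy is to produce such an $\omega$ for an arbitrary $\mathcal{M}$ and then run this correspondence backwards to recover $A$.

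First I would reduce to a standard model. Since $\mathcal{M}$ is a finite abelian category it admits a projective generator $G$, and by the Morita theorem together with the remark that a projective generator yields an equivalence, $\iHom_{\mathcal{M}}(G,-) \colon \mathcal{M} \to \mathcal{C}_R$ with $R = \iEnd_{\mathcal{M}}(G)$ is an equivalence of left $\mathcal{C}$-module categories. It therefore suffices to treat $\mathcal{M} = \mathcal{C}_R$ for an algebra $R$ in $\mathcal{C}$. For this model the forgetful functor $\omega \colon \mathcal{C}_R \to \Vect$, which forgets both the $H$-action and the $R$-action, is manifestly faithful and exact; and since $X \catactl N = X \otimes N$ carries the diagonal $H$-action with the $R$-action on the second tensorand, the underlying vector space is $U(X) \otimes_{\bfk} \omega(N)$, giving canonical isomorphisms $\omega(X \catactl N) \cong U(X) \otimes_{\bfk} \omega(N)$. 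Thus $\omega$ is a faithful exact $U$-module functor. (Alternatively, for a general $\mathcal{M}$ one may take $\omega = U \circ \iHom_{\mathcal{M}}(G,-)$ and use $\iHom_{\mathcal{M}}(G, X \catactl M) \cong X \otimes \iHom_{\mathcal{M}}(G,M)$.)

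Second, I would reconstruct $A$. As $\omega$ is left exact and faithful on a finite abelian category it is representable by a projective generator $P$, so that $A := \End_{\mathcal{M}}(P)^{\op}$ satisfies $\mathcal{M} \approx {}_A\Mod$ as abelian categories, with $\omega$ corresponding to the underlying-vector-space functor. The point is to upgrade this to an equivalence of left $\mathcal{C}$-module categories, which amounts to endowing $A$ with a left $H$-comodule structure that realizes the action. Here I would use that $U$ exhibits $H$ as the coend $H \cong \int^{X \in \mathcal{C}} U(X)^{*} \otimes U(X)$ with its Hopf structure; the coherent isomorphisms $\omega(X \catactl M) \cong U(X) \otimes \omega(M)$ then induce, via the universal property of this coend, a coaction $\delta \colon A \to H \otimes A$. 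One checks that $\delta$ is coassociative, counital and multiplicative, so that $A$ is a left $H$-comodule algebra, and that under $\mathcal{M} \approx {}_A\Mod$ the given action $X \catactl (-)$ corresponds to the canonical one $a \cdot (x \otimes m) = a_{(-1)} x \otimes a_{(0)} m$.

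The main obstacle is precisely this last step: verifying that the $U$-module structure of $\omega$ assembles into a genuine comodule-algebra structure on $A$ and that the two module-category structures coincide. No hard computation is involved, but it demands careful bookkeeping of the dinatural transformations defining the coend and of the coherence constraints of the module functor $\omega$. This is the relative Tannakian reconstruction underlying the Morita duality developed later in this section, and I expect the clean formulation of that duality to be what ultimately streamlines the argument.
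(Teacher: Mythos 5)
Your strategy is sound and, after the common first step, genuinely different from the paper's. Both arguments begin by invoking the Morita theorem to reduce to $\mathcal{M} \approx \mathcal{C}_R$ for an algebra $R$ in $\mathcal{C}$. At that point the paper simply writes down the answer: $A$ is the smash product $R^{\op} \# H$, that is, $A = R \otimes H$ with multiplication $(r \otimes h)(r' \otimes h') = (h_{(2)} \triangleright r') r \otimes h_{(1)} h'$ and coaction $\delta_A(r \otimes h) = h_{(1)} \otimes (h_{(2)} \otimes r)$, and the identification ${}_A\Mod \cong \mathcal{C}_R$ is a direct unwinding of definitions (a left $A$-module is exactly a left $H$-module equipped with an $H$-equivariant right $R$-action). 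You instead re-derive $A$ abstractly by relative Tannakian reconstruction from the fiber functor $\omega$ and its module-functor constraint. That route is viable and is indeed what underlies the Morita duality of Subsection~\ref{subsec:morita-duality}, but it is heavier machinery for the same output: unwinding your reconstruction on $\mathcal{C}_R$ (where $\omega \cong \Hom_{\mathcal{C}_R}(H \otimes R, -)$, so that $A \cong \End_{\mathcal{C}_R}(H \otimes R)^{\op}$) recovers the paper's smash product. The paper's choice to exhibit $A$ by an explicit formula makes the final verification essentially trivial, whereas in your version all the real work is deferred to the ``bookkeeping'' you describe.

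One technical point must be corrected before that bookkeeping can be carried out. The coend $\int^{X \in \mathcal{C}} U(X)^* \otimes U(X)$ is not $H$; it is the coefficient coalgebra $H^*$. What is canonically isomorphic to $H$ is the \emph{end} $\End(U) = \int_{X \in \mathcal{C}} \Hom(U(X), U(X))$, since $U \cong \Hom_{\mathcal{C}}(H, -)$ is represented by the regular module. Moreover, a coaction $\delta \colon A \to H \otimes A$ is a map \emph{into} $H \otimes A$, whereas the universal property of a coend produces maps \emph{out of} it, so your construction of $\delta$ does not parse as stated. The repair is standard in this finite setting: either work with ends, noting $H \otimes A \cong \End(U) \otimes \End(\omega) \cong \End(U \otimes \omega)$ and defining $\delta(a)$ to be the natural family $s_{X,M} \circ a_{X \catactl M} \circ s_{X,M}^{-1}$, where $s$ is the module constraint of $\omega$; or use the coend $H^*$ to define an $H^*$-action on $A$ (maps out of $H^* \otimes A$ do correspond to dinatural families) and dualize, which is harmless because $H$ is finite-dimensional. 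With either fix, coassociativity follows from the coherence axiom of $\omega$, counitality from its unit axiom, and multiplicativity from naturality, and your argument goes through.
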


The proof is essentially the same as that of \cite[Proposition 1.19]{MR2331768}, where $\mathcal{M}$ is assumed to be indecomposable and exact. 

\begin{proof}
  By the Morita theorem, there is an algebra $R$ in $\mathcal{C}$ such that $\mathcal{M} \approx \mathcal{C}_R$. Now we introduce the left $H$-comodule algebra $A$ as follows: As a vector space, $A = R \otimes H$. The multiplication and the left $H$-coaction are given respectively by
  \begin{equation*}
    (r \otimes h) (r' \otimes h') = (h_{(2)} \triangleright r') r \otimes h_{(1)} h'
    \quad \text{and}
    \quad \delta_A(r \otimes h) = h_{(1)} \otimes (h_{(2)} \otimes r)
  \end{equation*}
  for $h, h' \in H$ and $r, r' \in R$, where $\triangleright$ is the left action of $H$ on $R$ (namely, $A$ is the smash product $R^{\op} \# H$). The category $\mathcal{C}_R$ is identified with ${}_A\Mod$, and therefore $\mathcal{M} \approx {}_A\Mod$. The proof is done.
\end{proof}

\subsection{Equivariant Eilenberg-Watts theorem}

Let $H$ be a Hopf algebra, and let $A$ and $B$ be left $H$-comodule algebras.
A bimodule $M \in {}_A\Mod_B$ defines a right exact functor $M \otimes_B (-) : {}_B\Mod \to {}_A\Mod$. If $M \in {}^H_A\Mod_B$, then this functor is an (oplax) left module functor over $\mathcal{C} := {}_H\Mod$ with the structure morphism
\begin{align*}
  \xi_{X,W} : M \otimes_B (X \catactl W) & \to X \catactl (M \otimes_B W), \\
  m \otimes_B (x \otimes w) & \mapsto m_{(-1)} x \otimes (m_{(0)} \otimes_B w),
\end{align*}
where $m \in M$, $x \in X \in {}_H\Mod$ and $w \in W \in {}_B\Mod$.

\begin{theorem}
  \label{thm:equivariant-EW}
  For left $H$-comodule algebras $A$ and $B$, the functor
  \begin{equation*}
    \EW(A,B) : {}^H_A\Mod_B^{} \to \Rex_{\mathcal{C}}({}_B\Mod, {}_A\Mod),
    \quad M \mapsto M \otimes_B (-)
  \end{equation*}
  is an equivalence.
\end{theorem}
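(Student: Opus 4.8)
The plan is to deduce the equivalence from the ordinary ($\bfk$-linear) Eilenberg--Watts equivalence ${}_A\Mod_B \approx \Rex({}_B\Mod, {}_A\Mod)$, $M\mapsto M\otimes_B(-)$, by upgrading it to the $H$-equivariant setting. Writing $U : {}^H_A\Mod_B \to {}_A\Mod_B$ and $U' : \Rex_{\mathcal{C}}({}_B\Mod,{}_A\Mod) \to \Rex({}_B\Mod,{}_A\Mod)$ for the evident forgetful functors, the point is that composing $\EW(A,B)$ with $U'$ reproduces the ordinary equivalence composed with $U$, so all three properties of $\EW(A,B)$---well-definedness, full faithfulness, and essential surjectivity---can be obtained by controlling the extra structure lying over the ordinary equivalence. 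Throughout I would use that $\mathcal{C}={}_H\Mod$ is rigid, so by \cite[Lemma 2.10]{2014arXiv1406.4204D} every oplax module functor is strong; hence each structure morphism $\xi_{X,W}$ appearing below is automatically invertible.

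First I would check that $\EW(A,B)$ is well defined: for $M\in{}^H_A\Mod_B$ the formula for $\xi_{X,W}$ displayed above makes $M\otimes_B(-)$ a right exact left $\mathcal{C}$-module functor, and a morphism in ${}^H_A\Mod_B$---that is, an $A$-$B$-bimodule map that is simultaneously a comodule map---induces a module natural transformation. Faithfulness is then immediate: if $\EW(A,B)(f)=\EW(A,B)(f')$ then in particular $f\otimes_B(-)=f'\otimes_B(-)$, so $f=f'$ by faithfulness of the ordinary equivalence (equivalently, evaluate at $B$ and use $M\otimes_B B\cong M$). For fullness, given a module natural transformation $\eta : M\otimes_B(-)\Rightarrow N\otimes_B(-)$, the ordinary theorem produces a unique bimodule map $f=\eta_B : M\to N$ with $\eta = f\otimes_B(-)$; it remains to see that $f$ is a comodule map, and this follows by evaluating the module-functor compatibility of $\eta$ at $X=H$, $W=B$ and comparing with the coaction formula recorded in the next paragraph.

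The heart of the proof is essential surjectivity. Given $F\in\Rex_{\mathcal{C}}({}_B\Mod,{}_A\Mod)$ with structure morphisms $\xi$, set $M:=F(B)$; the ordinary Eilenberg--Watts equivalence gives an isomorphism $\theta : M\otimes_B(-)\xrightarrow{\sim}F$ of plain functors. I would equip $M$ with an $H$-coaction as follows. The coaction $\delta_B : B\to H\catactl B$ of the comodule algebra $B$ is a morphism in ${}_B\Mod$ (its left $B$-linearity is exactly the statement that $\delta_B$ is an algebra map), so one may define
\[
  \delta_M := \xi_{H,B}\circ F(\delta_B) : M = F(B)\longrightarrow F(H\catactl B)\longrightarrow H\catactl F(B)=H\otimes M .
\]
Being a composite of left $A$-module maps, $\delta_M$ is left $A$-linear; I would then verify that $\delta_M$ is coassociative and counital and compatible with the right $B$-action, so that $M\in{}^H_A\Mod_B$, and finally that $\theta$ upgrades to an isomorphism of module functors between $M\otimes_B(-)$, with the structure morphism induced by $\delta_M$, and $(F,\xi)$.

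The main obstacle is precisely this last verification: translating the coherence axioms of the module functor $(F,\xi)$ into the comodule-algebra axioms for $\delta_M$. Coassociativity and counitality of $\delta_M$ should come from the associativity and unit axioms for $\xi$ together with the coassociativity and counit of $\delta_B$, while compatibility of $\delta_M$ with the right $B$-action should follow from naturality of $\xi$ in the second variable applied to the right multiplication maps of $B$. To identify the reconstructed module structure with the original $\xi$ one uses naturality of $\xi$ together with right exactness: every $W\in{}_B\Mod$ is a cokernel of a map between free modules $B^{\oplus k}$, so a module-functor structure on $M\otimes_B(-)$ is determined by its component at $W=B$, which is governed by $\delta_M$ by construction. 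Assembling these checks yields that $\EW(A,B)$ is an equivalence.
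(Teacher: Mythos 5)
Your proposal is correct and follows essentially the same route as the paper's own proof (Appendix~\ref{appendix:equiv-EW}): there, too, the coaction is reconstructed by evaluating the module-functor structure at $(H,B)$ on the canonical element, namely $\delta_{\xi}(p)=\xi_{H,B}\bigl(p\otimes_B(1_H\otimes 1_B)\bigr)$, which is exactly your $\xi_{H,B}\circ F(\delta_B)$, and the remaining data are recovered by naturality, with the ordinary Eilenberg--Watts theorem supplying essential surjectivity and full faithfulness checked directly. The only cosmetic differences are that the paper works with oplax module functors over an arbitrary bialgebra and deduces strongness from an opantipode (rather than citing rigidity of ${}_H\Mod$ as you do), and that it pins down $\xi_{X,W}$ from $\xi_{H,B}$ via the $H$-module maps $\phi_x\colon H\to X$ and $B$-module maps $\psi_w\colon B\to W$ instead of your reduction by free presentations.
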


This theorem, which we call {\em the $H$-equivariant Eilenberg-Watts theorem}, is due to Andruskiewitsch and Mombelli \cite[Proposition 1.23]{MR2331768}. We give a proof of a generalization of this theorem in Appendix~\ref{appendix:equiv-EW}.

Let $A$, $B$ and $C$ be left $H$-comodule algebras.
As a supplement to the above theorem, we note that the functor $\EW(-,-)$ is compatible with the composition of module functors in the sense that the following diagram is commutative:
\begin{equation}
  \label{eq:equivariant-EW-compatibility}
  \begin{tikzcd}[column sep = 64pt]
    {}^H_A\Mod_B^{} \times {}^H_B\Mod_C^{}
    \arrow[r, "{\otimes_B}"]
    \arrow[d, "{\EW(A,B) \times \EW(B,C)}"']
    & {}^H_A\Mod_C^{} \arrow[d, "{\EW(A,C)}"] \\
    \Rex_{\mathcal{C}}({}_B\Mod, {}_A\Mod)
    \times \Rex_{\mathcal{C}}({}_C\Mod, {}_B\Mod)
    \arrow[r, "\text{composition}"]
    & \Rex_{\mathcal{C}}({}_C\Mod, {}_A\Mod).
  \end{tikzcd}
\end{equation}

\subsection{The Morita duality}
\label{subsec:morita-duality}

For a finite tensor category $\mathcal{C}$, we have introduced the 2-category $\ullMod{\mathcal{C}}$ in Subsection \ref{subsec:finite-mod-cat}.
Let $H$ be a Hopf algebra. In this subsection, we deploy the idea of Morita duality of finite tensor categories \cite{MR3242743} and establish a duality between $\ullMod{\mathcal{C}}$ and $\ullMod{\mathcal{D}}$, where $\mathcal{C} = {}_H\Mod$ and $\mathcal{D} = {}^H\Mod$.

We note that both $\mathcal{C}$ and $\mathcal{D}$ act on $\Vect$ through the forgetful functor to $\Vect$.
By the $H$-equivariant Eilenberg-Watts theorem, we have an equivalence
\begin{equation*}
  \EW(\bfk, \bfk) : \mathcal{D} \to \Rex_{\mathcal{C}}(\Vect, \Vect)
\end{equation*}
of tensor categories.
For $\mathcal{M} \in \ullMod{\mathcal{C}}$, we set
\begin{equation*}
  \mathcal{M}^* = \Rex_{\mathcal{C}}(\mathcal{M}, \Vect)
\end{equation*}
and make it a left $\mathcal{D}$-module category by
\begin{equation*}
  \mathcal{D} \times \mathcal{M}^*
  \xrightarrow{\quad \EW(\bfk, \bfk) \times \id \quad}
  \Rex_{\mathcal{C}}(\Vect, \Vect)
  \times \Rex_{\mathcal{C}}(\mathcal{M}, \Vect)
  \xrightarrow{\quad \mathrm{comp} \quad}
  \mathcal{M}^*,
\end{equation*}
where $\mathrm{comp}$ means the composition of module functors. Since the composition corresponds to the tensor product over an algebra in the bimodule side, the action of $\mathcal{D}$ on $\mathcal{M}^*$ is right exact in each variable. Hence $\mathcal{M}^*$ is a finite left $\mathcal{D}$-module category.

\begin{lemma}
  \label{lem:dual-of-A-Mod}
  For a left $H$-comodule algebra $A$, there is an equivalence
  \begin{equation*}
    ({}_A\Mod)^* \approx {}^H\Mod_A
  \end{equation*}
  of left $\mathcal{D}$-module categories.
\end{lemma}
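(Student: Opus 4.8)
The plan is to deduce the statement directly from the $H$-equivariant Eilenberg--Watts theorem (Theorem~\ref{thm:equivariant-EW}) by taking the base field $\bfk$, equipped with its trivial $H$-coaction, as one of the two comodule algebras. First I would record the identifications that make this possible. Regarding $\bfk$ as the trivial left $H$-comodule algebra, the category ${}_{\bfk}\Mod$ is nothing but $\Vect$, and its canonical $\mathcal{C}$-module structure $X \catactl V = X \otimes_{\bfk} V$ agrees with the action of $\mathcal{C} = {}_H\Mod$ on $\Vect$ through the forgetful functor. Likewise, $\bfk$-$\bfk$-bimodules in ${}^H\Mod$ are just objects of $\mathcal{D} = {}^H\Mod$, so ${}^H_{\bfk}\Mod_{\bfk} = \mathcal{D}$, while a left $\bfk$-module structure is vacuous, whence ${}^H_{\bfk}\Mod_A = {}^H\Mod_A$.

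With these identifications in hand, Theorem~\ref{thm:equivariant-EW} applied to the pair $(\bfk, A)$ yields an equivalence
\[
  \EW(\bfk, A) : {}^H\Mod_A \longrightarrow \Rex_{\mathcal{C}}({}_A\Mod, \Vect) = ({}_A\Mod)^*
\]
of $\bfk$-linear categories, sending a right $A$-module $M$ in ${}^H\Mod$ to the functor $M \otimes_A (-)$. This already supplies the underlying equivalence of categories; what remains is to upgrade it to an equivalence of left $\mathcal{D}$-module categories.

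For this I would invoke the compatibility of $\EW(-,-)$ with composition recorded in diagram~\eqref{eq:equivariant-EW-compatibility}. By construction, the left $\mathcal{D}$-module structure on the target $({}_A\Mod)^*$ is composition with $\EW(\bfk, \bfk)(N)$ for $N \in \mathcal{D}$, whereas the left $\mathcal{D}$-module structure on the source ${}^H\Mod_A = \mathcal{D}_A$ is $N \catactl M = N \otimes M$, which is precisely the operation $\otimes_{\bfk}$ on ${}^H_{\bfk}\Mod_{\bfk} \times {}^H_{\bfk}\Mod_A$. Specializing \eqref{eq:equivariant-EW-compatibility} to $(A,B,C) = (\bfk,\bfk,A)$ then produces a natural isomorphism $\EW(\bfk,A)(N \otimes M) \cong \EW(\bfk,\bfk)(N) \circ \EW(\bfk,A)(M)$ compatible with the associativity and unit constraints, which is exactly the data of a left $\mathcal{D}$-module functor structure on $\EW(\bfk, A)$. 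Since $\EW(\bfk, A)$ is already an equivalence of categories, it is an equivalence of left $\mathcal{D}$-module categories.

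I expect the only point requiring genuine care to be the bookkeeping of these identifications: in particular, checking that the canonical $\mathcal{D}$-module structure on $\mathcal{D}_A = {}^H\Mod_A$ from Subsection~\ref{subsec:Morita-theory} really coincides with the $\otimes_{\bfk}$-action appearing in \eqref{eq:equivariant-EW-compatibility}, and that the coherence axioms for the module functor are all subsumed in the commutativity of that diagram. No substantive new computation is needed; the content lies entirely in correctly specializing the equivariant Eilenberg--Watts machinery to the trivial comodule algebra $\bfk$.
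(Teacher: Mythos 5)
Your proof is correct and follows essentially the same route as the paper: the paper's proof likewise obtains the linear equivalence $\EW(\bfk,A) : {}^H\Mod_A \to ({}_A\Mod)^*$ from Theorem~\ref{thm:equivariant-EW} and then cites the commutativity of diagram~\eqref{eq:equivariant-EW-compatibility} to upgrade it to an equivalence of left $\mathcal{D}$-module categories. Your write-up simply makes explicit the identifications and the specialization $(A,B,C) = (\bfk,\bfk,A)$ that the paper leaves implicit.
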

\begin{proof}
  There is an equivalence $\EW(\bfk, A) : {}^H\Mod_A \to ({}_A\Mod)^*$ of linear categories. The commutative diagram~\eqref{eq:equivariant-EW-compatibility} says that $\EW(\bfk,A)$ is in fact an equivalence of left module categories over $\mathcal{D}$.
\end{proof}

Given a 1-morphism $F: \mathcal{M} \to \mathcal{N}$ in $\ullMod{\mathcal{C}}$, we define the functor $F^* : \mathcal{N}^* \to \mathcal{M}^*$ by $F^*(T) = T \circ F$.
It is obvious that $F^*$ commutes with the actions of $\mathcal{D}$.
If we take left $H$-comodule algebras $A$ and $B$ such that $\mathcal{M} \approx {}_A\Mod$ and $\mathcal{N} \approx {}_B\Mod$ and identify $\mathcal{M}^*$ and $\mathcal{N}^*$ with ${}^{H}\Mod_A$ and ${}^{H}\Mod_B$, respectively, then $F^*$ is isomorphic to $(-) \otimes_B M$, where $M \in {}^{H}_B\Mod_A$ is the object corresponding to $F$ via the equivariant Eilenberg-Watts equivalence.
Thus $F^*$ is a 1-morphism in $\ullMod{\mathcal{D}}$.
Also, given a 2-morphism $\xi : (F : \mathcal{M} \to \mathcal{N}) \Rightarrow (G : \mathcal{M} \to \mathcal{N})$ in $\ullMod{\mathcal{C}}$, we define
\begin{equation*}
  \xi^*_T(N) := T(\xi_N) : F^*(T)(M)
  = T F(M) \to T G(M) = G^*(T)(M)
\end{equation*}
for $T \in \mathcal{N}^*$ and $M \in \mathcal{M}$. The family $\{ \xi^*_T(M) \}_{M \in \mathcal{M}}$ of maps is in fact a morphism $\xi^*_T : F^*(T) \to G^*(T)$ in $\mathcal{M}^*$ that is natural in $T \in \mathcal{N}^*$.
By the above discussion, the assignment $\mathcal{M} \mapsto \mathcal{M}^*$ extends to a 2-functor
\begin{equation*}
  (-)^* : \ullMod{\mathcal{C}} \to (\ullMod{\mathcal{D}})^{\op},
\end{equation*}
where $\mathcal{K}^{\op}$ for a 2-category $\mathcal{K}$ means the 2-category obtained from $\mathcal{K}$ by reversing the direction of 1-morphisms but not 2-morphisms.

There is a similar construction for finite left $\mathcal{D}$-module categories.
We note that there is an isomorphism ${}^H\Mod \cong {}_{H^{*\op}}\Mod$ of tensor categories. By applying the above argument to $H^{*\op}$, we obtain a 2-functor
\begin{equation*}
  (-)^* : \ullMod{\mathcal{D}} \to (\ullMod{\mathcal{C}})^{\op},
  \quad \mathcal{N}^* := \Rex_{\mathcal{D}}(\mathcal{N}, \Vect),
\end{equation*}
where $\mathcal{C}$ acts on $\mathcal{N}^*$ through the equivalence $\mathcal{C} \approx \Rex_{\mathcal{D}}(\Vect, \Vect)$.

\begin{theorem}
  \label{thm:Morita-duality}
  For $\mathcal{M} \in \ullMod{\mathcal{C}}$, the canonical functor
  \begin{equation*}
    \phi_{\mathcal{M}} : \mathcal{M} \to \mathcal{M}^{**},
    \quad \phi_{\mathcal{M}}(M)(F) = F(M)
    \quad (M \in \mathcal{M}, F \in \mathcal{M}^*)
  \end{equation*}
  is an equivalence of left $\mathcal{C}$-module categories. The same holds for finite left $\mathcal{D}$-module categories. Hence we have an equivalence
  \begin{equation}
    \label{eq:Morita-duality}
    \ullMod{\mathcal{C}} \approx (\ullMod{\mathcal{D}})^{\op}
  \end{equation}
  of 2-categories.
\end{theorem}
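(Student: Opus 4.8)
The plan is to deduce the 2-equivalence \eqref{eq:Morita-duality} from the single statement that the evaluation functor $\phi_{\mathcal{M}} \colon \mathcal{M} \to \mathcal{M}^{**}$ is an equivalence of left $\mathcal{C}$-module categories for every $\mathcal{M} \in \ullMod{\mathcal{C}}$ (together with its mirror for $\mathcal{D}$). Because $\phi$ is pseudo-natural in $\mathcal{M}$ and $(-)^*$ is a 2-functor preserving equivalences, whether $\phi_{\mathcal{M}}$ is an equivalence depends only on the equivalence class of $\mathcal{M}$; hence, by Theorem~\ref{thm:finite-mod-cat-over-H-Mod}, it suffices to treat $\mathcal{M} = {}_A\Mod$ for a left $H$-comodule algebra $A$. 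My strategy is then to compute $\mathcal{M}^*$ and $\mathcal{M}^{**}$ by applying the $H$-equivariant Eilenberg-Watts theorem twice and to verify that $\phi_{\mathcal{M}}$ is carried to the identity functor under the resulting identifications.

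For the first dual, Lemma~\ref{lem:dual-of-A-Mod} already supplies an equivalence ${}^H\Mod_A \to \mathcal{M}^*$ of left $\mathcal{D}$-module categories sending $N$ to the functor $N \otimes_A (-)$. For the second dual I would use the mirror of this construction: since $\mathcal{D} = {}^H\Mod \cong {}_{H^{*\op}}\Mod$, the entire setup for $\mathcal{D}$-module categories is formally the setup for $\mathcal{C}$ with $H$ replaced by $H^{*\op}$, so the Eilenberg-Watts theorem yields an equivalence $({}^H\Mod_A)^* \approx {}_A\Mod$ of left $\mathcal{C}$-module categories, whence $\mathcal{M}^{**} \approx {}_A\Mod = \mathcal{M}$. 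To pin down $\phi_{\mathcal{M}}$ I would trace an object $M \in {}_A\Mod$ through these identifications: by definition $\phi_{\mathcal{M}}(M)$ sends $F \in \mathcal{M}^*$ to $F(M)$, so under $\mathcal{M}^* \approx {}^H\Mod_A$ it becomes the functor $N \mapsto N \otimes_A M$ on ${}^H\Mod_A$; the mirror Eilenberg-Watts equivalence identifies $(-) \otimes_A M$ precisely with the object $M$, so $\phi_{\mathcal{M}}$ becomes the identity on ${}_A\Mod$ and is in particular an equivalence.

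The corresponding statement for finite left $\mathcal{D}$-module categories then follows verbatim after replacing $H$ by $H^{*\op}$ and using $\mathcal{C} \approx \Rex_{\mathcal{D}}(\Vect, \Vect)$. With both families $\{ \phi_{\mathcal{M}} \}_{\mathcal{M} \in \ullMod{\mathcal{C}}}$ and $\{ \phi_{\mathcal{N}} \}_{\mathcal{N} \in \ullMod{\mathcal{D}}}$ known to consist of equivalences, the two 2-functors $(-)^* \colon \ullMod{\mathcal{C}} \to (\ullMod{\mathcal{D}})^{\op}$ and $(-)^* \colon \ullMod{\mathcal{D}} \to (\ullMod{\mathcal{C}})^{\op}$ become mutually pseudo-inverse, these families providing the required pseudo-natural equivalences with the identity 2-functors; this gives \eqref{eq:Morita-duality}. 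The pseudo-naturality needed here is immediate from the pointwise formula $\phi_{\mathcal{M}}(M)(F) = F(M)$.

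I expect the main obstacle to lie in the second paragraph: setting up the mirror Eilenberg-Watts identification with conventions matching the left $\mathcal{D}$-module structures defined via $\EW(\bfk,\bfk)$ and composition, and checking that the oplax module-functor structure morphisms $\xi$ are transported so that $\phi_{\mathcal{M}}$ is literally the composite of the two Eilenberg-Watts equivalences rather than merely abstractly isomorphic to an equivalence. A variant would be to argue instead that $\phi_{\mathcal{M}}$ is fully faithful and essentially surjective by computing $\Hom$-spaces and essential images through the two Eilenberg-Watts equivalences, but since one still has to recognize the outcome as $\phi_{\mathcal{M}}$, this does not really circumvent the core bookkeeping.
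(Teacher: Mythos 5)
Your overall architecture---reduce to $\mathcal{M} = {}_A\Mod$ by naturality of $\phi$, compute $\mathcal{M}^*$ via Lemma~\ref{lem:dual-of-A-Mod}, compute $\mathcal{M}^{**}$ by a second Eilenberg-Watts-type identification, then trace $\phi_{\mathcal{M}}$ through---is the same as the paper's. But the central step of your second paragraph has a genuine gap: the claim that ``the mirror Eilenberg-Watts theorem yields $({}^H\Mod_A)^* \approx {}_A\Mod$'' is not a formal mirror-image application of anything you have available. The equivariant Eilenberg-Watts theorem for $\mathcal{D} \cong {}_{H^{*\op}}\Mod$ computes $\Rex_{\mathcal{D}}({}_B\Mod, {}_{B'}\Mod)$ for left $H^{*\op}$-comodule algebras $B, B'$; its $B' = \bfk$ case is the mirror of Lemma~\ref{lem:dual-of-A-Mod} and computes $({}_B\Mod)^*$. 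However, $\mathcal{M}^* \approx {}^H\Mod_A$ is not handed to you in the form ${}_B\Mod$: it is the category of right modules in $\mathcal{D}$ over the algebra $A \in \mathcal{D}$. To put it in that form you must first realize ${}^H\Mod_A \approx {}_B\Mod$ (e.g.\ via a smash product, as in the proof of Theorem~\ref{thm:finite-mod-cat-over-H-Mod}) and then identify ${}^{H^{*\op}}\Mod_B$ back with ${}_A\Mod$---and that identification is exactly the fundamental theorem for Hopf modules, which appears nowhere in your proposal. Equivalently, if you prefer to apply the plain Eilenberg-Watts equivalence \eqref{eq:EW-equivalence-in-FTC} inside $\mathcal{D}$ to compute $\Rex_{\mathcal{D}}({}^H\Mod_A, \Vect)$, you must first exhibit $\Vect$ as $\mathcal{D}_B$ for some algebra $B$ in $\mathcal{D}$; this is the content of the fundamental theorem, namely $\Vect \approx {}^H\Mod_H$ as left $\mathcal{D}$-module categories, after which $\Rex_{\mathcal{D}}({}^H\Mod_A, {}^H\Mod_H) \approx {}^H_A\Mod_H^{} \approx {}_A\Mod$, the last step being the relative version $\HM(A)$. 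This chain is precisely the paper's proof.

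The gap is not mere convention bookkeeping, as your final paragraph suggests: the fundamental theorem for Hopf modules is the one place where the antipode of $H$ enters (for a general bialgebra, $\Vect$ is not equivalent to ${}^H\Mod_H$ and the duality \eqref{eq:Morita-duality} fails), so no purely formal ``mirror'' argument can supply this step. Once that ingredient is inserted, the rest of your plan is sound and matches the paper: the paper likewise reduces to ${}_A\Mod$ by naturality, composes $\HM(A)$, the Eilenberg-Watts equivalence in $\mathcal{D}$, and Lemma~\ref{lem:dual-of-A-Mod}, and then verifies that the composite equivalence is isomorphic to $\phi_{\mathcal{M}}$ by an explicit natural isomorphism $f_{M,N} : N \otimes_A (H \otimes M) \to H \otimes (N \otimes_A M)$, $n \otimes_A (h \otimes m) \mapsto n_{(-1)} h \otimes (n_{(0)} \otimes_A m)$---this is the ``tracing'' you describe, and it is indeed a routine but necessary check.
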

\begin{proof}
  We only give a sketch of the proof here; see Appendix \ref{appendix:morita-duality} for the detail.
  Since the functor $\phi_{\mathcal{M}}$ is natural in $\mathcal{M}$, we may assume that $\mathcal{M} = {}_A\Mod$ for some left $H$-comodule algebra $A$.
  There are equivalences
  \begin{equation*}
    \mathcal{M} = {}_A\Mod \approx {}_A\mathcal{D}_H
    \approx \Rex_{\mathcal{D}}(\mathcal{D}_A, \mathcal{D}_H)
    \approx (\mathcal{D}_A)^* \approx \mathcal{M}^{**},
  \end{equation*}
  where the first and the third equivalences are given by the fundamental theorem for Hopf modules \eqref{eq:fundamental-thm-of-Hopf-modules}, the second one follows from the Eilenberg-Watts theorem in $\mathcal{D}$ and the last one follows from Lemma~\ref{lem:dual-of-A-Mod}. The proof is completed by verifying that the above equivalence is isomorphic to $\phi_{\mathcal{M}}$.
\end{proof}

We call the 2-equivalence \eqref{eq:Morita-duality} of the above theorem the {\em Morita duality} between the finite tensor categories $\mathcal{C} = {}_H\Mod$ and $\mathcal{D} = {}^H\Mod$.
A bijective correspondence between finite left module categories over $\mathcal{C}$ and those over $\mathcal{D}$ up to equivalence has been pointed out in \cite[Proposition 3.9]{MR2331768}, however, it is not formulated as an equivalence of 2-categories.
The duality yields several useful consequences on module categories and comodule algebras. For example,

\begin{lemma}[{\it cf}. {\cite[Theorem 3.10]{MR2331768}}]
  \label{lem:duality-exactness}
  A finite left $\mathcal{C}$-module category $\mathcal{M}$ is exact if and only if the finite left $\mathcal{D}$-module category $\mathcal{M}^*$ is exact.
\end{lemma}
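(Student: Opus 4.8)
The plan is to exploit the Morita duality established in Theorem~\ref{thm:Morita-duality} together with the module-categorical characterization of exactness provided by Lemma~\ref{lem:exact-module-cat-endo}. The key point is that exactness of a finite module category is detected by the exactness (in each variable) of the tensor product on its category of module endofunctors, and this endofunctor category is precisely the data that the Morita duality 2-functor preserves.

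First I would recall that, by Lemma~\ref{lem:exact-module-cat-endo}, the finite left $\mathcal{C}$-module category $\mathcal{M}$ is exact if and only if the tensor product of the monoidal category $\Rex_{\mathcal{C}}(\mathcal{M}, \mathcal{M})$ is exact in each variable; the analogous statement holds for $\mathcal{M}^*$ with $\mathcal{C}$ replaced by $\mathcal{D}$. So it suffices to show that the two monoidal categories $\Rex_{\mathcal{C}}(\mathcal{M}, \mathcal{M})$ and $\Rex_{\mathcal{D}}(\mathcal{M}^*, \mathcal{M}^*)$ have the same behavior of their tensor products with respect to exactness. The Morita duality 2-functor $(-)^* : \ullMod{\mathcal{C}} \to (\ullMod{\mathcal{D}})^{\op}$ is a 2-equivalence, and in particular it restricts to an equivalence of the endomorphism categories $\Rex_{\mathcal{C}}(\mathcal{M},\mathcal{M}) \to \Rex_{\mathcal{D}}(\mathcal{M}^*,\mathcal{M}^*)^{\rev}$, where the superscript $\rev$ records that composition of 1-morphisms is reversed by $(-)^{\op}$ on the target 2-category. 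Concretely, $F \mapsto F^*$ sends a module endofunctor of $\mathcal{M}$ to a module endofunctor of $\mathcal{M}^*$, and by the compatibility of $(-)^*$ with composition this assignment turns composition of functors into composition in the reversed order.

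The heart of the argument is then the observation that reversing the tensor product of a monoidal category does not affect whether that tensor product is exact in each variable: a monoidal category $\mathcal{E}$ has tensor product exact in each variable if and only if $\mathcal{E}^{\rev}$ does, simply because the bifunctor $\otimes^{\rev}(X,Y) = \otimes(Y,X)$ is exact in the first (resp.\ second) variable precisely when $\otimes$ is exact in the second (resp.\ first). Combining this with the equivalence of monoidal categories $\Rex_{\mathcal{C}}(\mathcal{M},\mathcal{M}) \approx \Rex_{\mathcal{D}}(\mathcal{M}^*,\mathcal{M}^*)^{\rev}$ furnished by the duality, we conclude that the tensor product of $\Rex_{\mathcal{C}}(\mathcal{M},\mathcal{M})$ is exact in each variable if and only if the same holds for $\Rex_{\mathcal{D}}(\mathcal{M}^*,\mathcal{M}^*)$. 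By Lemma~\ref{lem:exact-module-cat-endo} applied on both sides, this is exactly the statement that $\mathcal{M}$ is exact if and only if $\mathcal{M}^*$ is exact.

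The main obstacle I anticipate is purely bookkeeping: making precise that the 2-equivalence $(-)^*$ genuinely induces a \emph{monoidal} equivalence $\Rex_{\mathcal{C}}(\mathcal{M},\mathcal{M}) \approx \Rex_{\mathcal{D}}(\mathcal{M}^*,\mathcal{M}^*)^{\rev}$, rather than merely a linear equivalence of the underlying Hom-categories. This requires checking that the structure isomorphisms witnessing $(G \circ F)^* \cong F^* \circ G^*$ are coherent, so that $(-)^*$ respects the monoidal (i.e.\ composition) structure up to the reversal. Since $(-)^*$ is already known to be a 2-functor which is a 2-equivalence, this coherence is automatic from its being a strong 2-functor; one only needs to unwind the definition of the monoidal structure on an endomorphism category inside a 2-category as composition of 1-endomorphisms. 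Once this identification is in place, the exactness transfer is immediate from the variable-swapping remark above, and no further computation is needed.
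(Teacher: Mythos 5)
Your proposal is correct and follows essentially the same route as the paper: both invoke the Morita duality to obtain the monoidal equivalence $\Rex_{\mathcal{C}}(\mathcal{M},\mathcal{M}) \approx \Rex_{\mathcal{D}}(\mathcal{M}^*,\mathcal{M}^*)^{\rev}$ and then apply Lemma~\ref{lem:exact-module-cat-endo} on both sides. Your additional remarks --- that reversal of the tensor product is harmless for exactness in each variable, and that the coherence of $(-)^*$ as a 2-functor supplies the monoidal structure --- merely make explicit what the paper leaves implicit.
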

\begin{proof}
  The Morita duality gives an equivalence $\Rex_{\mathcal{C}}(\mathcal{M}, \mathcal{M}) \approx \Rex_{\mathcal{D}}(\mathcal{M}^*, \mathcal{M}^*)^{\rev}$ of linear monoidal categories, where $(-)^{\rev}$ means that the order of the monoidal product is reversed.
  The exactness of $\mathcal{M}$ is equivalent to the exactness of the tensor product of the left-hand side by Lemma \ref{lem:exact-module-cat-endo}, while the exactness of $\mathcal{M}^*$ is equivalent to the exactness of the tensor product of the right-hand side by the same lemma. Thus $\mathcal{M}$ is exact if and only if $\mathcal{M}^*$ is. The proof is done.
\end{proof}

\begin{lemma}
  \label{lem:H-comod-alg-exactness}
  For a left $H$-comodule algebra $A$, the following are equivalent:
  \begin{enumerate}
  \item $A$ is an exact algebra in ${}^H\Mod$ in the sense of Definition~\ref{def:exact-alg-in-FTC}.
  \item $A$ is an exact left $H$-comodule algebra in the sense of Definition~\ref{def:exact-H-comod-alg}.
  \end{enumerate}
\end{lemma}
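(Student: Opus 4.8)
The plan is to obtain this equivalence as an immediate consequence of the Morita duality, by combining Lemma~\ref{lem:duality-exactness} with the identification of Morita duals supplied by Lemma~\ref{lem:dual-of-A-Mod}. No genuine computation is involved; the argument is purely formal.

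First I would unwind both conditions into statements about exactness of module categories. Condition (2) is, by Definition~\ref{def:exact-H-comod-alg}, precisely the assertion that $\mathcal{M} := {}_A\Mod$ is exact as a left $\mathcal{C}$-module category, where $\mathcal{C} = {}_H\Mod$. Condition (1), by Definition~\ref{def:exact-alg-in-FTC} applied to the finite tensor category $\mathcal{D} = {}^H\Mod$, is the assertion that $\mathcal{D}_A = {}^H\Mod_A$ is exact as a left $\mathcal{D}$-module category.

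Next I would invoke Lemma~\ref{lem:duality-exactness}, which states that $\mathcal{M}$ is exact over $\mathcal{C}$ if and only if its Morita dual $\mathcal{M}^*$ is exact over $\mathcal{D}$. It then remains only to identify $\mathcal{M}^*$, and this is exactly the content of Lemma~\ref{lem:dual-of-A-Mod}: there is an equivalence $({}_A\Mod)^* \approx {}^H\Mod_A$ of left $\mathcal{D}$-module categories. Since exactness of a module category is preserved under equivalence, $\mathcal{M}^*$ is exact over $\mathcal{D}$ if and only if ${}^H\Mod_A = \mathcal{D}_A$ is. Chaining these equivalences gives (2) $\Longleftrightarrow$ ($\mathcal{M}$ exact over $\mathcal{C}$) $\Longleftrightarrow$ ($\mathcal{M}^*$ exact over $\mathcal{D}$) $\Longleftrightarrow$ ($\mathcal{D}_A$ exact over $\mathcal{D}$) $\Longleftrightarrow$ (1), which is the desired conclusion.

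Since the proof is formal, I do not anticipate a substantive obstacle. The only care required is in the bookkeeping of left versus right modules and in confirming that the category $\mathcal{D}_A$ appearing in Definition~\ref{def:exact-alg-in-FTC} coincides with the category ${}^H\Mod_A$ produced by Lemma~\ref{lem:dual-of-A-Mod}. This coincidence is immediate from the notational conventions fixed at the start of Section~\ref{sec:exact-mod-and-comod-alg}, so the two statements match verbatim and no further argument is needed.
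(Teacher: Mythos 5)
Your proposal is correct and is exactly the paper's own argument: the paper's proof reads ``Apply Lemma~\ref{lem:duality-exactness} to $\mathcal{M} = {}_A\Mod$ and use Lemma~\ref{lem:dual-of-A-Mod},'' which is precisely the chain of equivalences you spell out. Your version just makes the bookkeeping explicit.
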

\begin{proof}
  Apply Lemma~\ref{lem:duality-exactness} to $\mathcal{M} = {}_A\Mod$ and use Lemma~\ref{lem:dual-of-A-Mod}.
\end{proof}

\begin{lemma}
  \label{lem:H-comod-alg-indec}
  For a left $H$-comodule algebra $A$, the following are equivalent:
  \begin{enumerate}
  \item ${}_A\Mod$ is indecomposable as a left $\mathcal{C}$-module category.
  \item ${}^H\Mod_A$ is indecomposable as a left $\mathcal{D}$-module category.
  \item $A$ is $H$-indecomposable in the sense of Definition~\ref{def:H-comod-alg-basics}.
  \end{enumerate}
\end{lemma}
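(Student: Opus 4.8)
The plan is to follow the exact template of the proof of Lemma~\ref{lem:H-comod-alg-exactness} for exactness, replacing the role played there by Lemma~\ref{lem:duality-exactness} with an analogous statement about indecomposability. Concretely, I would establish the two equivalences $(2) \Leftrightarrow (3)$ and $(1) \Leftrightarrow (2)$ separately: the first purely inside the finite tensor category $\mathcal{D} = {}^H\Mod$, and the second by transporting indecomposability across the Morita duality of Theorem~\ref{thm:Morita-duality}.

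For $(2) \Leftrightarrow (3)$ I would regard $A$ as an algebra in $\mathcal{D} = {}^H\Mod$, which is legitimate since a left $H$-comodule algebra is by definition an algebra in ${}^H\Mod$. Then ${}^H\Mod_A$ is precisely the category $\mathcal{D}_A$ of right $A$-modules in $\mathcal{D}$, viewed as a left $\mathcal{D}$-module category, so condition~(2) is the indecomposability of $\mathcal{D}_A$ as a left $\mathcal{D}$-module category. By the remark following Definition~\ref{def:H-comod-alg-basics}, the $H$-indecomposability of $A$ in condition~(3) coincides with the indecomposability of $A$ as an algebra in $\mathcal{D}$. Hence $(2) \Leftrightarrow (3)$ is nothing but the equivalence $(1) \Leftrightarrow (2)$ of Lemma~\ref{lem:alg-in-C-indecomposability}, applied with $\mathcal{C}$ there replaced by $\mathcal{D}$; no further work is needed.

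For $(1) \Leftrightarrow (2)$ I would first record the indecomposability analogue of Lemma~\ref{lem:duality-exactness}: a finite left $\mathcal{C}$-module category $\mathcal{M}$ is indecomposable if and only if $\mathcal{M}^*$ is indecomposable as a left $\mathcal{D}$-module category. To prove this I would use the characterization, already isolated in the proof of Lemma~\ref{lem:alg-in-C-indecomposability}, that a finite module category is indecomposable precisely when its identity endofunctor is indecomposable as an object of its category of right exact module endofunctors. The Morita duality supplies an equivalence $\Rex_{\mathcal{C}}(\mathcal{M}, \mathcal{M}) \approx \Rex_{\mathcal{D}}(\mathcal{M}^*, \mathcal{M}^*)^{\rev}$ of linear monoidal categories (the very one used in Lemma~\ref{lem:duality-exactness}); since a monoidal equivalence carries the unit object $\id_{\mathcal{M}}$ to the unit object $\id_{\mathcal{M}^*}$, and since passing to $(-)^{\rev}$ leaves the underlying abelian category untouched, $\id_{\mathcal{M}}$ is indecomposable if and only if $\id_{\mathcal{M}^*}$ is. Applying this to $\mathcal{M} = {}_A\Mod$ and identifying $\mathcal{M}^* \approx {}^H\Mod_A$ through Lemma~\ref{lem:dual-of-A-Mod} then yields $(1) \Leftrightarrow (2)$.

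The only genuinely non-formal point, and the step I would treat most carefully, is the preservation of indecomposability under the Morita duality in the previous paragraph. The subtlety is that decomposability is a property of objects, whereas $(-)^{\rev}$ reverses the monoidal composition; the key observation is that neither the underlying abelian structure nor the identity object is altered by $\rev$, so the argument reduces to the fact that an ordinary equivalence of abelian categories preserves indecomposability of objects. Everything else is a direct transcription of the corresponding steps for exactness, and I expect the write-up to be short.
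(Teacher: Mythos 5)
Your proposal is correct and follows essentially the same route as the paper: the equivalence $(2) \Leftrightarrow (3)$ is exactly Lemma~\ref{lem:alg-in-C-indecomposability} applied in $\mathcal{D} = {}^H\Mod$, and $(1) \Leftrightarrow (2)$ is obtained by transporting indecomposability across the Morita duality and invoking Lemma~\ref{lem:dual-of-A-Mod}. The only difference is that the paper dismisses the duality-invariance of (in)decomposability as obvious, whereas you justify it explicitly via the monoidal equivalence $\Rex_{\mathcal{C}}(\mathcal{M},\mathcal{M}) \approx \Rex_{\mathcal{D}}(\mathcal{M}^*,\mathcal{M}^*)^{\rev}$ and the characterization of indecomposability through $\id_{\mathcal{M}}$ --- a more careful write-up of the same idea.
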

\begin{proof}
  It is obvious from the duality that $\mathcal{M} \in \ullMod{\mathcal{C}}$ is decomposable if and only if $\mathcal{M}^*$ is.
  The equivalence between (1) and (2) follows from this observation and Lemma~\ref{lem:dual-of-A-Mod}.
  The equivalence between (2) and (3) follows from Lemma \ref{lem:alg-in-C-indecomposability}.
\end{proof}

\subsection{Equivariant Morita equivalence}

Let $H$ be a Hopf algebra.
Following \cite{MR2331768}, we say that two left $H$-comodule algebras $A$ and $B$ are {\em $H$-equivariant Morita equivalent} (or {\em $H$-Morita equivalent} for short) if ${}_A\Mod$ and ${}_B\Mod$ are equivalent as left module categories over ${}_H\Mod$.

\begin{theorem}
  \label{thm:H-Morita}
  For two left $H$-comodule algebras $A$ and $B$, the following are equivalent:
  \begin{enumerate}
  \item $A$ and $B$ are $H$-Morita equivalent.
  \item $A$ and $B$ are Morita equivalent in ${}^H\Mod$ in the sense of Definition~\ref{def:Morita-in-FTC}.
  \end{enumerate}
\end{theorem}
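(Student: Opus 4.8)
The plan is to deduce the equivalence of (1) and (2) directly from the Morita duality established in Theorem~\ref{thm:Morita-duality}. Recall that the duality is a 2-equivalence $(-)^* : \ullMod{\mathcal{C}} \to (\ullMod{\mathcal{D}})^{\op}$, where $\mathcal{C} = {}_H\Mod$ and $\mathcal{D} = {}^H\Mod$. Since a 2-equivalence of 2-categories both preserves and reflects equivalences of objects, and since reversing the direction of 1-morphisms does not affect whether two objects are equivalent, two finite left $\mathcal{C}$-module categories $\mathcal{M}$ and $\mathcal{N}$ are equivalent in $\ullMod{\mathcal{C}}$ if and only if $\mathcal{M}^*$ and $\mathcal{N}^*$ are equivalent in $\ullMod{\mathcal{D}}$. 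First I would apply this to $\mathcal{M} = {}_A\Mod$ and $\mathcal{N} = {}_B\Mod$.

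Next I would compute the images of these module categories under $(-)^*$. By Lemma~\ref{lem:dual-of-A-Mod}, there are equivalences $({}_A\Mod)^* \approx {}^H\Mod_A$ and $({}_B\Mod)^* \approx {}^H\Mod_B$ of left $\mathcal{D}$-module categories, that is, equivalences $({}_A\Mod)^* \approx \mathcal{D}_A$ and $({}_B\Mod)^* \approx \mathcal{D}_B$ in $\ullMod{\mathcal{D}}$, in the notation of Subsection~\ref{subsec:Morita-theory}. Combining this with the previous paragraph, ${}_A\Mod \approx {}_B\Mod$ in $\ullMod{\mathcal{C}}$ holds if and only if $\mathcal{D}_A \approx \mathcal{D}_B$ in $\ullMod{\mathcal{D}}$.

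Finally I would match the two sides against the definitions. The condition ${}_A\Mod \approx {}_B\Mod$ in $\ullMod{\mathcal{C}}$ is precisely the statement that $A$ and $B$ are $H$-Morita equivalent, i.e.\ condition (1). On the other side, $A$ and $B$ are left $H$-comodule algebras, hence algebras in the finite tensor category $\mathcal{D} = {}^H\Mod$, and $\mathcal{D}_A \approx \mathcal{D}_B$ in $\ullMod{\mathcal{D}}$ is exactly the assertion that $A$ and $B$ are Morita equivalent in ${}^H\Mod$ in the sense of Definition~\ref{def:Morita-in-FTC}, i.e.\ condition (2). This chain of equivalences yields (1) $\Leftrightarrow$ (2).

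The argument is short because the substantial work is already carried by the Morita duality and by Lemma~\ref{lem:dual-of-A-Mod}, so there is no genuine obstacle beyond bookkeeping. The one point requiring care is the left/right convention: the dual $({}_A\Mod)^*$ produces the category of \emph{right} $A$-modules ${}^H\Mod_A = \mathcal{D}_A$, which is exactly the category appearing in Definition~\ref{def:Morita-in-FTC}, so no left-right switch is needed; should the conventions be reversed, one could instead invoke the left-right symmetry of Morita equivalence noted after~\eqref{eq:Morita-context}. One should also confirm that the equivalences of Lemma~\ref{lem:dual-of-A-Mod} are equivalences of $\mathcal{D}$-module categories, so that the resulting $\mathcal{D}_A \approx \mathcal{D}_B$ genuinely lives in $\ullMod{\mathcal{D}}$ rather than being a mere linear equivalence, but this is already part of the statement of that lemma.
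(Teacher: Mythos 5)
Your proposal is correct and follows essentially the same route as the paper: both apply the Morita duality 2-functor $(-)^*$ together with Lemma~\ref{lem:dual-of-A-Mod} to translate ${}_A\Mod \approx {}_B\Mod$ in $\ullMod{\mathcal{C}}$ into ${}^H\Mod_A \approx {}^H\Mod_B$ in $\ullMod{\mathcal{D}}$. The only cosmetic difference is that you handle both directions at once by noting that a 2-equivalence preserves and reflects equivalences, whereas the paper spells out one implication and remarks that the converse is similar.
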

\begin{proof}
  If (1) holds, then we have ${}^H\Mod_A \approx ({}_A\Mod)^* \approx ({}_B\Mod)^* \approx {}^H\Mod_B$ as left module categories over ${}^H\Mod$. Namely, (2) holds. The converse is proved in a similar way by using the Morita duality.
\end{proof}

A {\em pointed Hopf algebra} is a Hopf algebra whose every simple subcoalgebra is one-dimensional. It is obvious that a Hopf algebra $H$ is pointed if and only if the finite tensor category ${}^H\Mod$ is pointed in the sense of Subsection~\ref{subsec:pointed-FTCs}.
As an application of Theorem~\ref{thm:Morita-in-pointed-FTC}, we prove:

\begin{theorem}
  \label{thm:H-Morita-pointed}
  Let $H$ be a pointed Hopf algebra, and let $A$ and $B$ be right $H$-simple left $H$-comodule algebras. Then $A$ and $B$ are $H$-equivariant Morita equivalent if and only if there is an element $g \in \Grp(H)$ such that $B \cong g A g^{-1}$ as left $H$-comodule algebras. Here, $g A g^{-1}$ is the algebra $A$ with the new left $H$-coaction
  \begin{equation*}
    A \to H \otimes A,
    \quad a \mapsto g a_{(-1)} g^{-1} \otimes a_{(0)}
    \quad (a \in A).
  \end{equation*}
\end{theorem}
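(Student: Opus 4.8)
The plan is to realize this statement as a concrete translation of Theorem~\ref{thm:Morita-in-pointed-FTC} into the language of comodule algebras. Set $\mathcal{D} = {}^H\Mod$. Since $H$ is pointed, $\mathcal{D}$ is a pointed finite tensor category, and by the discussion following Definition~\ref{def:H-comod-alg-basics} (together with Lemma~\ref{lem:exact-alg-left-and-right-simple}), the right $H$-simple comodule algebras $A$ and $B$ are precisely division algebras in $\mathcal{D}$. By Theorem~\ref{thm:H-Morita}, the $H$-equivariant Morita equivalence of $A$ and $B$ coincides with their Morita equivalence in $\mathcal{D}$. Thus Theorem~\ref{thm:Morita-in-pointed-FTC} applies and tells us that $A$ and $B$ are $H$-Morita equivalent if and only if there is a simple object $g$ of $\mathcal{D}$ with $B \cong g \otimes A \otimes g^{*}$ as algebras in $\mathcal{D}$.

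It then remains to make the object $g$ and the algebra $g \otimes A \otimes g^{*}$ explicit. First I would recall that, since $H$ is pointed, every simple object of $\mathcal{D} = {}^H\Mod$ is one-dimensional and hence of the form $\bfk_g$ for a unique $g \in \Grp(H)$, where $\bfk_g = \bfk$ carries the coaction $1 \mapsto g \otimes 1$; conversely each such $\bfk_g$ is invertible in $\mathcal{D}$. A short computation with the antipode shows that the dual object is $(\bfk_g)^{*} \cong \bfk_{g^{-1}}$, using $S(g) = g^{-1}$ for a grouplike $g$.

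The core of the proof is the identification $\bfk_g \otimes A \otimes \bfk_{g^{-1}} \cong g A g^{-1}$ as algebras in $\mathcal{D}$. Fixing a basis vector $v$ of $\bfk_g$ and the dual basis vector $w$ of $\bfk_{g^{-1}}$, I would identify the underlying space $\bfk_g \otimes A \otimes \bfk_{g^{-1}}$ with $A$ via $v \otimes a \otimes w \leftrightarrow a$. Because $\eval_{\bfk_g}(w \otimes v) = 1$ and $\coev_{\bfk_g}(1) = v \otimes w$, the multiplication and unit of $\bfk_g \otimes A \otimes \bfk_{g^{-1}}$ defined in Subsection~\ref{subsec:Morita-theory} reduce under this identification to those of $A$. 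On the other hand, the codiagonal coaction of $\mathcal{D}$ sends $v \otimes a \otimes w$ to $g\, a_{(-1)}\, g^{-1} \otimes (v \otimes a_{(0)} \otimes w)$, which is exactly the coaction defining $g A g^{-1}$. Combining the two descriptions identifies $\bfk_g \otimes A \otimes \bfk_{g^{-1}}$ with $g A g^{-1}$ as left $H$-comodule algebras, and substituting this into the conclusion of Theorem~\ref{thm:Morita-in-pointed-FTC} yields the theorem. The only delicate point is this bookkeeping of conventions—verifying that the abstract dual and tensor product in $\mathcal{D}$ produce precisely the conjugated coaction $a \mapsto g a_{(-1)} g^{-1} \otimes a_{(0)}$—but no genuinely new idea beyond Theorem~\ref{thm:Morita-in-pointed-FTC} is required.
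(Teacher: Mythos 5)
Your proposal is correct and takes essentially the same route as the paper's own proof: both translate $H$-equivariant Morita equivalence into Morita equivalence in ${}^H\Mod$ via Theorem~\ref{thm:H-Morita}, apply Theorem~\ref{thm:Morita-in-pointed-FTC} for division algebras in the pointed finite tensor category ${}^H\Mod$, and use pointedness of $H$ to identify the simple object with $\bfk g$ for some $g \in \Grp(H)$. The only (harmless) difference is that you spell out the identification $\bfk g \otimes A \otimes (\bfk g)^* \cong g A g^{-1}$ explicitly, which the paper asserts without computation.
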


This result has been given as \cite[Theorem 4.2]{MR2860429} under the assumption that $\bfk$ is an algebraically closed field of characteristic zero. We give a shorter proof with assuming nothing on $\bfk$ by using techniques of finite tensor categories.

\begin{proof}
  Since $g A g^{-1} \cong (\bfk g) \otimes A \otimes (\bfk g)^{*}$ as algebras in ${}^H\Mod$ for $g \in \Grp(H)$, the `if' part follows from Theorem~\ref{lem:Morita-eq-matrix-alg}. To prove the converse, we assume that $A$ and $B$ are $H$-equivariant Morita equivalent. Then, by Theorem~\ref{thm:H-Morita}, $A$ and $B$ are Morita equivalent in the finite tensor category $\mathcal{C} := {}^H\Mod$. Hence, by Theorem~\ref{thm:Morita-in-pointed-FTC}, there is a simple object $X \in \mathcal{C}$ such that $B \cong X \otimes A \otimes X^*$. Since $H$ is pointed, $X \cong \bfk g$ for some $g \in \Grp(H)$. For this $g$, the algebra $X \otimes A \otimes X^*$ is isomorphic to $g A g^{-1}$. The proof is done.
\end{proof}

\subsection{Indecomposable exact module categories over ${}_H\Mod$}

Let $H$ be a Hopf algebra.
The following theorem, due to Andruskiewitsch and Mombelli \cite{MR2331768}, is fundamental for the classification of indecomposable exact module categories over the finite tensor category $\mathcal{C} := {}_H\Mod$.

\begin{theorem}[{\cite[Theorem 3.3]{MR2331768}}]
  \label{thm:AM-exact-mod-cat}
  A finite left module category $\mathcal{M}$ over $\mathcal{C}$ is indecomposable and exact if and only if $\mathcal{M} \approx {}_A\Mod$ for some right $H$-simple left $H$-comodule algebra $A$.
\end{theorem}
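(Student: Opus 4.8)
The plan is to prove both implications by transporting the problem through the Morita duality of Theorem~\ref{thm:Morita-duality} into the finite tensor category $\mathcal{D} = {}^H\Mod$, where right $H$-simplicity is precisely the condition of being a division algebra and the Morita-theoretic machinery of Section~\ref{sec:FTC-and-modules} applies directly. The guiding principle is that $\mathcal{M} \mapsto \mathcal{M}^*$ converts the module category ${}_A\Mod$ over $\mathcal{C}$ into the category ${}^H\Mod_A$ of right $A$-modules in $\mathcal{D}$ (Lemma~\ref{lem:dual-of-A-Mod}), in which the role of ``right $H$-simple'' becomes transparent.

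For the ``if'' direction I would start from $\mathcal{M} \approx {}_A\Mod$ with $A$ right $H$-simple. By the remark following Definition~\ref{def:H-comod-alg-basics}, this means exactly that $A$ is a division algebra in $\mathcal{D} = {}^H\Mod$. Since $A$ is trivially Morita equivalent to the division algebra $A$, Lemma~\ref{lem:exact-algebra-indec-Morita-equiv-to-right-simple} shows that $A$ is an indecomposable exact algebra in $\mathcal{D}$. Then Lemma~\ref{lem:H-comod-alg-exactness} converts exactness of $A$ in ${}^H\Mod$ into exactness of the $\mathcal{C}$-module category ${}_A\Mod$, and Lemma~\ref{lem:H-comod-alg-indec} converts $H$-indecomposability of $A$ into indecomposability of ${}_A\Mod$; together these give that $\mathcal{M}$ is indecomposable and exact.

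For the ``only if'' direction I would first use Theorem~\ref{thm:finite-mod-cat-over-H-Mod} to realize $\mathcal{M}$ by a comodule algebra, and then pass to the dual module category $\mathcal{M}^* \in \ullMod{\mathcal{D}}$. Lemma~\ref{lem:duality-exactness} shows $\mathcal{M}^*$ is exact, and the observation in the proof of Lemma~\ref{lem:H-comod-alg-indec} that the duality preserves (in)decomposability shows $\mathcal{M}^*$ is indecomposable; it is nonzero because $\mathcal{M}$ is. Working now inside the finite tensor category $\mathcal{D}$, I would choose a simple object $M \in \mathcal{M}^*$ and set $A := \iEnd(M)$, the internal endomorphism algebra computed in $\mathcal{D}$. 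By Lemma~\ref{lem:exact-algebra-indecomposability} the object $M$ is a $\mathcal{D}$-generator, and exactness makes it $\mathcal{D}$-projective, so the Morita theorem gives an equivalence $\iHom(M,-) \colon \mathcal{M}^* \to {}^H\Mod_A$ of left $\mathcal{D}$-module categories sending $M$ to $A$. As $M$ is simple, so is $A$ in ${}^H\Mod_A$; that is, $A$ is right $H$-simple. Finally I would dualize back: applying $(-)^*$ to $\mathcal{M}^* \approx {}^H\Mod_A$ and using the double-dual equivalence of Theorem~\ref{thm:Morita-duality} together with Lemma~\ref{lem:dual-of-A-Mod} in the form $({}^H\Mod_A)^* \approx ({}_A\Mod)^{**} \approx {}_A\Mod$ yields $\mathcal{M} \approx {}_A\Mod$ with $A$ right $H$-simple.

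The steps themselves are short invocations of earlier results, so the only genuinely delicate point is the final bookkeeping in the ``only if'' direction: one must confirm that the equivalence $\mathcal{M}^* \approx {}^H\Mod_A$ of \emph{left} $\mathcal{D}$-module categories dualizes back to $\mathcal{M} \approx {}_A\Mod$, rather than to some right-module or opposite-comodule variant. This is where Lemma~\ref{lem:dual-of-A-Mod} and the naturality of $\phi_{\mathcal{M}}$ in Theorem~\ref{thm:Morita-duality} must be applied with care to ensure the sides and the choice of comodule algebra match up correctly.
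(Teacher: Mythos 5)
Your proposal is correct and follows essentially the same route as the paper: both directions are transported through the Morita duality into $\mathcal{D}={}^H\Mod$, where right $H$-simplicity is identified with being a division algebra, and the conclusion is drawn from the Morita-theoretic results of Section~\ref{sec:FTC-and-modules}. The only differences are cosmetic: in the ``only if'' part you inline the proof of Lemma~\ref{lem:exact-algebra-indec-Morita-equiv-to-right-simple} (simple object, internal End, Morita theorem) where the paper simply cites that lemma, and your invocation of Theorem~\ref{thm:finite-mod-cat-over-H-Mod} is harmless but not needed.
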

\begin{proof}
  We first prove the `if' part. Suppose that there exists a right $H$-simple left $H$-comodule algebra $A$ such that $\mathcal{M} \approx {}_A\Mod$. By the fact recalled at the below of Definition~\ref{def:simple-alg-in-FTC}, $A$ is an exact algebra in $\mathcal{D} := {}^H\Mod$. By Lemmas~\ref{lem:H-comod-alg-exactness} and \ref{lem:H-comod-alg-indec}, $\mathcal{M}$ is indecomposable and exact.

  To prove the converse, we assume that $\mathcal{M}$ be indecomposable and exact.
  Then, by the duality, $\mathcal{M}^*$ is an indecomposable exact module category over $\mathcal{D} := {}^H\Mod$.
  By Lemma~\ref{lem:exact-algebra-indec-Morita-equiv-to-right-simple}, there exists a division algebra $A$ in $\mathcal{D}$ such that $\mathcal{M}^* \approx \mathcal{D}_A$ as left $\mathcal{D}$-module categories.
  In the Hopf-algebraic language, $A$ is a right $H$-simple left $H$-comodule algebra.
  Again by the duality, we have $\mathcal{M} \approx \mathcal{M}^{**} \approx {}_A\Mod$ of left $\mathcal{C}$-module categories.
\end{proof}

The original result \cite[Theorem 3.3]{MR2331768} assumes that the base field $\bfk$ is an algebraically closed field.
If this is the case, the comodule algebra $A$ in the statement has trivial coinvariants by Lemma~\ref{lem:division-alg-is-haploid}, as stated in \cite[Theorem 3.3]{MR2331768}.

By Theorem~\ref{thm:AM-exact-mod-cat}, the classification of indecomposable exact module categories over $\mathcal{C}$ reduces to the classification of right $H$-simple left $H$-comodule algebras up to $H$-Morita equivalence.

\subsection{Skryabin's theorems}
\label{subsec:Skryabin}

Skryabin gave several criteria for relative Hopf modules over a comodule algebra $A$ to be free or projective as a module over $A$ \cite{MR2286047}. As these results are essential in this paper, we recall some of his results.
Let $H$ be a Hopf algebra, and let $A$ be an $H$-simple left $H$-comodule algebra (we note that $H$ and $A$ are finite-dimensional by our convention). Then we have:
\begin{enumerate}
\item For every object $M \in {}^H\Mod_A$, there exists an integer $r \ge 1$ such that $M^{\oplus r}$ is free as a right $A$-module. In particular, $M$ is projective as a right $A$-module \cite[Lemma 3.4 and Theorem 3.5]{MR2286047}.
\item The algebra $A$ is Frobenius \cite[Theorem 4.2]{MR2286047}. Hence, by (1), every object of ${}^H\Mod_A$ is also injective as a right $A$-module.
\item If there is a surjective algebra map from $A$ to a division algebra, then every object of ${}^H\Mod_A$ is free as a right $A$-module \cite[Theorem 3.5]{MR2286047}.
\end{enumerate}

A left coideal subalgebra is a subalgebra $L$ of $H$ such that $\Delta(L) \subset H \otimes L$. It is obvious that a left coideal subalgebra of $H$ is a left $H$-comodule algebra by the coaction given by the restriction of the comultiplication of $H$. Skryabin noted that a left coideal subalgebra of $H$ is $H$-simple in the proof of \cite[Theorem 6.1]{MR2286047}. Just a little bit more discussion shows:

\begin{lemma}
  Every left coideal subalgebra of $H$ is right $H$-simple.
\end{lemma}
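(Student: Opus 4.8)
The plan is to deduce the claim from Skryabin's freeness criterion recalled in Subsection~\ref{subsec:Skryabin}. Let $L$ be a left coideal subalgebra of $H$. As recalled just above the statement, Skryabin observed that $L$ is $H$-simple, so all three of the results collected in Subsection~\ref{subsec:Skryabin} are available for $A = L$. Since right $H$-simplicity means precisely that $L$ is simple as an object of ${}^H\Mod_L$ (equivalently, that $L$ has no nontrivial $H$-costable right ideal, cf. Definition~\ref{def:H-comod-alg-basics}), it suffices to rule out nonzero proper $H$-costable right ideals of $L$.

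First I would produce a surjective algebra map from $L$ onto a division algebra, in order to invoke the third of Skryabin's results (freeness of relative Hopf modules). The natural candidate is the restriction $\varepsilon|_L \colon L \to \bfk$ of the counit of $H$. Since $L$ is a unital subalgebra of $H$ and $\varepsilon = \varepsilon_H$ is an algebra map, $\varepsilon|_L$ is again an algebra map, and it is surjective because $\varepsilon(1) = 1$; its target $\bfk$ is a (one-dimensional) division algebra. Hence Skryabin's criterion applies and every object of ${}^H\Mod_L$ is free as a right $L$-module.

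The remaining step is an elementary dimension count. Let $I$ be a nonzero $H$-costable right ideal of $L$; equivalently, a nonzero subobject of $L$ in ${}^H\Mod_L$. By the freeness just established, $I \cong L^{\oplus r}$ as right $L$-modules for some integer $r \ge 1$, so $\dim_{\bfk} I = r \dim_{\bfk} L \ge \dim_{\bfk} L$. Combined with the inclusion $I \subseteq L$, this forces $\dim_{\bfk} I = \dim_{\bfk} L$ and hence $I = L$. Therefore $L$ admits no nontrivial $H$-costable right ideal, that is, $L$ is right $H$-simple.

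I do not anticipate a serious obstacle: the substantive input (that $L$ is $H$-simple, and that an $H$-simple comodule algebra admitting a surjection onto a division algebra has only free relative Hopf modules) is precisely Skryabin's, and the ``little bit more'' amounts only to noticing that the counit supplies such a surjection, together with the trivial rank inequality for free modules. The one point to verify carefully is that the hypotheses of Skryabin's freeness theorem genuinely hold for $A = L$, which is exactly what the two observations above secure. As an alternative route, one could instead establish left $H$-simplicity by the mirror argument and then invoke Lemma~\ref{lem:exact-alg-left-and-right-simple} to pass to right $H$-simplicity; but the direct argument above seems shortest.
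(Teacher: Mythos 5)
Your proof is correct and takes essentially the same route as the paper: establish that every object of ${}^H\Mod_L$ is free as a right $L$-module, then conclude by a rank count that every nonzero subobject of $L$ in ${}^H\Mod_L$ equals $L$. The only difference is that the paper cites Skryabin's Theorem~6.1 directly for the freeness statement, while you recover it from the freeness criterion recalled as item~(3) of Subsection~\ref{subsec:Skryabin} together with the $H$-simplicity of $L$ and the counit surjection $\varepsilon|_L \colon L \to \bfk$, which is in substance how that theorem is obtained.
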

\begin{proof}
  Let $A$ be a left coideal subalgebra of $H$.
  Then, by \cite[Theorem 6.1]{MR2286047}, every object of ${}^H\Mod_A$ is free over $A$.
  Hence, by considering the rank over $A$, we see that every subobject of $A \in {}^H\Mod_A$ is 0 or $A$. This means that $A$ is right $H$-simple.
\end{proof}

\subsection{Cocycle deformation}
\label{subsec:cocycle-deform}

Let $H$ be a Hopf algebra. A {\em Hopf 2-cocycle} of $H$ is a convolution-invertible linear map $\sigma : H \otimes H \to \bfk$ such that the equations
\begin{equation}
  \label{eq:Hopf-2-cocycle}
  \sigma(x_{(1)}, y_{(1)}) \sigma(x_{(2)} y_{(2)}, z)
  = \sigma(y_{(1)}, z_{(1)}) \sigma(x, y_{(2)} z_{(2)})
\end{equation}
and $\sigma(x, 1) = \varepsilon(x) = \sigma(1, x)$ hold for all elements $x, y, z \in H$ (where $\sigma$ is regarded as a bilinear form on $H$ in a natural way).
Given a 2-cocycle $\sigma$ of $H$, we define new multiplication $*^{\sigma}$ of $H$ by
\begin{equation*}
  x *^{\sigma} y = \sigma(x_{(1)}, y_{(1)}) x_{(2)} y_{(2)} \sigma^{-1}(x_{(3)}, y_{(3)})
  \quad (x, y \in H),
\end{equation*}
where $\sigma^{-1}$ is the inverse of $\sigma$ with respect to the convolution product. It is known that the coalgebra $H$ equipped with the new multiplication $*^{\sigma}$ becomes a Hopf algebra \cite[Theorem 1.6]{MR1213985}.
We denote the resulting Hopf algebra by $H^{\sigma}$ and call it {\em the 2-cocycle deformation} of $H$ by $\sigma$.

A left $H$-comodule $M$ is written as ${}_{\sigma}M$ when it is viewed as a left comodule over $H^{\sigma}$. The assignment $M \mapsto {}_{\sigma}M$ gives rise to an isomorphism ${}^H\Mod \to {}^{H^{\sigma}}\Mod$ of tensor categories together with the natural isomorphism given by
\begin{equation}
  \label{eq:Hopf-2-cocycle-tensor-equiv}
  {}_{\sigma}M \otimes {}_{\sigma}N \to {}_{\sigma}(M \otimes N),
  \quad m \otimes n \mapsto \sigma(m_{(-1)}, n_{(-1)}) m_{(0)} \otimes n_{(0)}
\end{equation}
for $M, N \in {}^H\Mod$, $m \in M$ and $n \in N$. Hence the assignment $A \mapsto {}_{\sigma}A$ gives an isomorphism between the category of left $H$-comodule algebras and the category of left $H^{\sigma}$-comodule algebras. Specifically, given a left $H$-comodule algebra $A$, the left $H^{\sigma}$-comodule algebra ${}_{\sigma}A$ is given explicitly as follows: As a left $H$-comodule, we have ${}_{\sigma}A = A$ (we note that $H = H^{\sigma}$ as coalgebras). The multiplication $*_{\sigma}$ of ${}_{\sigma}A$ is given by the composition
\begin{equation*}
  *_{\sigma}: {}_{\sigma}A \otimes {}_{\sigma}A
  \xrightarrow{\quad \eqref{eq:Hopf-2-cocycle-tensor-equiv} \quad}
  {}_{\sigma}(A \otimes A)
  \xrightarrow{\quad m \quad}
  {}_{\sigma}A,
\end{equation*}
where $m : A \otimes A \to A$ is the multiplication of $A$. Namely,
\begin{equation*}
  a *_{\sigma} b = \sigma(a_{(-1)}, b_{(-1)}) a_{(0)} b_{(0)} \quad (a, b \in A).
\end{equation*}

The isomorphism ${}^H\Mod \cong {}^{H^{\sigma}}\Mod$ of tensor categories induces an equivalence
\begin{equation}
  \label{eq:Hopf-2-cocycle-Morita-equiv}
  \ullMod{{}^{H}\Mod} \approx \ullMod{{}^{H^{\sigma}}\Mod}
\end{equation}
of 2-categories. By the Morita duality, we also have an equivalence
\begin{equation}
  \label{eq:Hopf-2-cocycle-Morita-equiv-2}
  \ullMod{{}_{H}\Mod} \xrightarrow{\ (-)^* \ }
  \ullMod{{}^{H}\Mod}
  \xrightarrow{\ \eqref{eq:Hopf-2-cocycle-Morita-equiv} \ }
  \ullMod{{}^{H^{\sigma}}\Mod} \xrightarrow{\ (-)^* \ }
  \ullMod{{}_{H^{\sigma}}\Mod}
\end{equation}
of 2-categories. Now let $A$ be a left $H$-comodule algebra. It is easy to see that the 2-equivalence \eqref{eq:Hopf-2-cocycle-Morita-equiv} sends ${}^H\Mod_A$ to ${}^{H^{\sigma}}\Mod_{{}_{\sigma}A}$. Hence, by Lemma~\ref{lem:dual-of-A-Mod}, the 2-equivalence \eqref{eq:Hopf-2-cocycle-Morita-equiv-2} sends ${}_A\Mod$ to ${}_{{}_{\sigma}A}\Mod$. The discussion is schematized as follows:
\begin{equation*}
  \begin{tikzcd}
    {}_A\Mod \arrow[d, mapsto]
    & \ullMod{{}_{H}\Mod} \arrow[rr, "{(-)^*}"]
    \arrow[d, "{\eqref{eq:Hopf-2-cocycle-Morita-equiv-2}}"']
    \arrow[rrd, phantom, "{\circlearrowright}"]
    & & \ullMod{{}^{H}\Mod}
    \arrow[d, "{\eqref{eq:Hopf-2-cocycle-Morita-equiv}}"]
    & {}^H\Mod_A \arrow[d, mapsto] \\
    {}_{{}_{\sigma}A}\Mod
    & \ullMod{{}_{H^{\sigma}}\Mod} \arrow[rr, "{(-)^*}"']
    & & \ullMod{{}^{H^{\sigma}}\Mod}
    & {}^{H^{\sigma}}\Mod_{{}_{\sigma}A}
  \end{tikzcd}
\end{equation*}

We also note that the isomorphism ${}^H\Mod \cong {}^{H^{\sigma}}\Mod$ of tensor categories induces an isomorphism ${}^H\Mod_A \cong {}^{H^{\sigma}}\Mod_{{}_{\sigma}A}$ of categories \cite[Lemma 2.1]{MR2678630}.
Lemmas~\ref{lem:properties-preserved-by-cocycle-deform} and \ref{lem:cocycle-deform-H-Morita} below are obvious from the above observation:

\begin{lemma}
  \label{lem:properties-preserved-by-cocycle-deform}
  Let $P(H, A)$ be one of the following propositions:
  \begin{enumerate}
  \item $A$ is an $H$-indecomposable left $H$-comodule algebra.
  \item $A$ is a right $H$-simple left $H$-comodule algebra.
  \item $A$ is an exact left $H$-comodule algebra.
  \end{enumerate}
  For a Hopf algebra $H$, a left $H$-comodule algebra $A$ and a Hopf 2-cocycle $\sigma$ of $H$, the proposition $P(H, A)$ is equivalent to $P(H^{\sigma}, {}_{\sigma}A)$.
\end{lemma}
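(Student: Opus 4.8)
The plan is to exploit the categorical reformulation of each of the three propositions together with the observation already recorded above, namely that the cocycle $\sigma$ induces an isomorphism $F\colon {}^H\Mod \to {}^{H^\sigma}\Mod$ of tensor categories carrying the algebra $A$ to ${}_\sigma A$. This isomorphism produces an isomorphism of abelian categories ${}^H\Mod_A \cong {}^{H^\sigma}\Mod_{{}_\sigma A}$ sending the regular object $A$ to ${}_\sigma A$, and, through the Morita duality, the $2$-equivalence \eqref{eq:Hopf-2-cocycle-Morita-equiv-2} sending ${}_A\Mod$ to ${}_{{}_\sigma A}\Mod$. Since each property $P(H,A)$ is equivalent to a statement formulated purely in terms of one of these categorical structures, it will be transported along the appropriate (iso)equivalence. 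I would treat the three cases in turn.

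For (1), I would invoke the remark below Definition~\ref{def:H-comod-alg-basics}, which identifies $H$-indecomposability of $A$ with indecomposability of $A$ as an object of ${}^H_A\Mod_A$; since $F$ restricts to an equivalence ${}^H_A\Mod_A \approx {}^{H^\sigma}_{{}_\sigma A}\Mod_{{}_\sigma A}$ matching $A \leftrightarrow {}_\sigma A$, this indecomposability transports. (Alternatively one may argue through Lemma~\ref{lem:H-comod-alg-indec} and the transport of module-category indecomposability.) For (2), the same remark identifies right $H$-simplicity of $A$ with simplicity of $A$ as an object of ${}^H\Mod_A$; the isomorphism ${}^H\Mod_A \cong {}^{H^\sigma}\Mod_{{}_\sigma A}$ carries $A$ to ${}_\sigma A$ and induces a bijection between their subobject lattices, so $A$ is simple there iff ${}_\sigma A$ is, which is exactly right $H^\sigma$-simplicity of ${}_\sigma A$.

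For (3), I would first rewrite exactness of the comodule algebra $A$ as exactness of ${}_A\Mod$ as a left ${}_H\Mod$-module category via Lemma~\ref{lem:H-comod-alg-exactness}, and then use Lemma~\ref{lem:exact-module-cat-endo}, which recasts this as the exactness in each variable of the tensor product of $\Rex_{{}_H\Mod}({}_A\Mod, {}_A\Mod)$. The $2$-equivalence \eqref{eq:Hopf-2-cocycle-Morita-equiv-2} sends ${}_A\Mod$ to ${}_{{}_\sigma A}\Mod$ and hence yields a monoidal equivalence between the associated categories of right exact endofunctors, under which exactness of the tensor product is preserved; this gives $P(H,A)\Leftrightarrow P(H^\sigma,{}_\sigma A)$. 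The only point requiring care — and essentially already settled by the preceding discussion — is the bookkeeping that these transports of structure genuinely match the distinguished objects $A \leftrightarrow {}_\sigma A$ and ${}_A\Mod \leftrightarrow {}_{{}_\sigma A}\Mod$; granting this, each property is a manifest categorical invariant and no further computation is needed.
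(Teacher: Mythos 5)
Your proposal is correct and follows essentially the same route as the paper: the paper's proof consists precisely of noting that the tensor isomorphism ${}^H\Mod \cong {}^{H^\sigma}\Mod$ sends $A$ to ${}_\sigma A$, induces ${}^H\Mod_A \cong {}^{H^\sigma}\Mod_{{}_\sigma A}$, and (via the Morita duality) sends ${}_A\Mod$ to ${}_{{}_\sigma A}\Mod$, from which all three properties are declared ``obvious'' as categorical invariants. Your write-up merely makes this transport explicit for each property (via the remark after Definition~\ref{def:H-comod-alg-basics}, Lemma~\ref{lem:H-comod-alg-indec}, and Lemmas~\ref{lem:H-comod-alg-exactness} and~\ref{lem:exact-module-cat-endo}), which is exactly the intended argument.
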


\begin{lemma}
  \label{lem:cocycle-deform-H-Morita}
  Two left $H$-comodule algebras $A$ and $B$ are $H$-Morita equivalent if and only if ${}_{\sigma}A$ and ${}_{\sigma}B$ are $H^{\sigma}$-Morita equivalent.
\end{lemma}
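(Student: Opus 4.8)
The plan is to read the statement directly off the 2-equivalence \eqref{eq:Hopf-2-cocycle-Morita-equiv-2} constructed just above. By the very definition of $H$-Morita equivalence, $A$ and $B$ are $H$-Morita equivalent if and only if ${}_A\Mod$ and ${}_B\Mod$ are equivalent as objects of the 2-category $\ullMod{{}_H\Mod}$; symmetrically, ${}_\sigma A$ and ${}_\sigma B$ are $H^\sigma$-Morita equivalent if and only if ${}_{{}_\sigma A}\Mod$ and ${}_{{}_\sigma B}\Mod$ are equivalent as objects of $\ullMod{{}_{H^\sigma}\Mod}$. Thus the lemma amounts to transporting equivalences of objects across \eqref{eq:Hopf-2-cocycle-Morita-equiv-2}.

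First I would invoke the observation recorded before the statement that the 2-functor \eqref{eq:Hopf-2-cocycle-Morita-equiv-2} carries ${}_A\Mod$ to ${}_{{}_\sigma A}\Mod$, and likewise ${}_B\Mod$ to ${}_{{}_\sigma B}\Mod$. Since \eqref{eq:Hopf-2-cocycle-Morita-equiv-2} is an equivalence of 2-categories, it both preserves and reflects equivalences between objects; hence ${}_A\Mod \approx {}_B\Mod$ in $\ullMod{{}_H\Mod}$ if and only if ${}_{{}_\sigma A}\Mod \approx {}_{{}_\sigma B}\Mod$ in $\ullMod{{}_{H^\sigma}\Mod}$. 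Combining this with the two reformulations above gives the claim. An equivalent route runs through Theorem~\ref{thm:H-Morita}: $H$-Morita equivalence of $A$ and $B$ coincides with Morita equivalence of $A$ and $B$ as algebras in ${}^H\Mod$, and the tensor isomorphism ${}^H\Mod \cong {}^{H^\sigma}\Mod$ sending $A \mapsto {}_\sigma A$ and $B \mapsto {}_\sigma B$ preserves Morita equivalence of algebras, since the latter is phrased purely monoidal-categorically in Definition~\ref{def:Morita-in-FTC}.

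I expect no genuine obstacle here: all the substantive work---producing the tensor isomorphism from the cocycle $\sigma$, setting up the Morita duality, and checking that ${}^H\Mod_A$ and ${}_A\Mod$ are sent to their $\sigma$-deformations---has already been completed in the surrounding discussion. The single point deserving a line of care is that an equivalence of 2-categories \emph{reflects}, and not merely preserves, equivalences of objects; but this is immediate from the existence of a quasi-inverse 2-functor, under which an equivalence ${}_{{}_\sigma A}\Mod \approx {}_{{}_\sigma B}\Mod$ pulls back to an equivalence ${}_A\Mod \approx {}_B\Mod$.
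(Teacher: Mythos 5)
Your proposal is correct and is essentially the paper's own argument: the paper gives no written proof, declaring the lemma ``obvious from the above observation,'' and that observation is precisely your first route --- the 2-equivalence \eqref{eq:Hopf-2-cocycle-Morita-equiv-2} sends ${}_A\Mod$ to ${}_{{}_{\sigma}A}\Mod$ and, being an equivalence of 2-categories, both preserves and reflects equivalences of objects. Your alternative route via Theorem~\ref{thm:H-Morita} and the tensor isomorphism ${}^H\Mod \cong {}^{H^{\sigma}}\Mod$ is equally valid but adds nothing beyond the first argument.
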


Although it is not directly related to our main purpose of this paper, the following consequence of Lemma~\ref{lem:properties-preserved-by-cocycle-deform} is noteworthy:

\begin{theorem}
  \label{thm:cocycle-deform-semisimplicity}
  Suppose that $\bfk$ is of characteristic zero and $H$ is a semisimple Hopf algebra.
  Let $\sigma$ be a Hopf 2-cocycle of $H$, and let $A$ be a left $H$-comodule algebra. Then ${}_{\sigma}A$ is semisimple if and only if $A$ is.
\end{theorem}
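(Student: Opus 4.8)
The plan is to translate the purely ring-theoretic statement about semisimplicity into a statement about exactness of module categories, a tensor-categorical property that Lemma~\ref{lem:properties-preserved-by-cocycle-deform} already shows to be invariant under cocycle deformation. The bridge I would use is the well-known principle that over a semisimple finite tensor category, exactness and semisimplicity of a module category coincide; once this bridge is in place, the theorem follows formally.

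First I would settle the Hopf-algebraic input. By the Larson--Radford theorem, a finite-dimensional Hopf algebra over a field of characteristic zero is semisimple if and only if it is cosemisimple. Since $H^{\sigma}$ shares its underlying coalgebra with $H$, the two have identical cosemisimplicity, so the hypothesis that $H$ is semisimple forces $H^{\sigma}$ to be semisimple as well. Consequently both finite tensor categories $\mathcal{C} := {}_H\Mod$ and $\mathcal{C}^{\sigma} := {}_{H^{\sigma}}\Mod$ are semisimple.

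Next I would establish, for any left $H$-comodule algebra $A$, the chain of equivalences
\begin{equation*}
  A \text{ semisimple} \iff {}_A\Mod \text{ semisimple} \iff {}_A\Mod \text{ exact over } \mathcal{C} \iff A \text{ exact in } {}^H\Mod .
\end{equation*}
The first equivalence is the standard fact that a finite-dimensional algebra is semisimple exactly when its module category is a semisimple abelian category. The middle equivalence exploits semisimplicity of $\mathcal{C}$: since $\unitobj \in \mathcal{C}$ is projective, every object $M$ of an exact module category satisfies that $M = \unitobj \catactl M$ is projective, so ${}_A\Mod$ is semisimple; conversely, if ${}_A\Mod$ is semisimple then $P \catactl M$ is projective for every $P \in \mathcal{C}$, so every object is $\mathcal{C}$-projective and ${}_A\Mod$ is exact (cf.\ \cite[Chapter 7]{MR3242743}). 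The last equivalence is precisely Lemma~\ref{lem:H-comod-alg-exactness}.

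Finally I would splice the two sides together. Running the same chain over the semisimple category $\mathcal{C}^{\sigma}$ gives that ${}_{\sigma}A$ is semisimple if and only if ${}_{\sigma}A$ is exact in ${}^{H^{\sigma}}\Mod$, while Lemma~\ref{lem:properties-preserved-by-cocycle-deform}, applied to the proposition ``$A$ is exact'', says that $A$ is exact in ${}^H\Mod$ if and only if ${}_{\sigma}A$ is exact in ${}^{H^{\sigma}}\Mod$. Concatenating these equivalences yields the desired conclusion. I expect the only real obstacle to be the conceptual identification of semisimplicity with exactness: it is valid solely because the ambient category is semisimple, and this in turn rests on the characteristic-zero hypothesis through Larson--Radford, since without it $H^{\sigma}$ need not be semisimple and the equivalence would fail on the deformed side.
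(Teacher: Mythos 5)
Your proposal is correct and follows essentially the same route as the paper's proof: Larson--Radford applied to the shared coalgebra $H = H^{\sigma}$ to get semisimplicity of $H^{\sigma}$, the identification of exactness with semisimplicity for module categories over a semisimple finite tensor category, and Lemma~\ref{lem:properties-preserved-by-cocycle-deform} to transport exactness across the cocycle deformation. The only difference is that you spell out the intermediate equivalences (including the use of Lemma~\ref{lem:H-comod-alg-exactness}) that the paper leaves implicit, which is harmless.
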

\begin{proof}
  Since $H = H^{\sigma}$ as a coalgebra, the Larson-Radford theorem \cite{MR957441} shows that $H^{\sigma}$ is also semisimple. In general, a finite module category $\mathcal{M}$ over a semisimple finite tensor category is exact if and only if $\mathcal{M}$ is semisimple. Thus we have
  \begin{gather*}
    \text{${}_{\sigma}A$ is a semisimple algebra} \iff
    \text{${}_{\sigma}A$ is an exact left $H^{\sigma}$-comodule algebra} \\
    \iff \text{$A$ is an exact left $H$-comodule algebra}
    \iff \text{$A$ is a semisimple algebra},
  \end{gather*}
  where we have used Lemma~\ref{lem:properties-preserved-by-cocycle-deform} at the second equivalence.
\end{proof}

Theorem~\ref{thm:cocycle-deform-semisimplicity} fails without the semisimplicity of $H$; see Remark~\ref{rem:A4-semisimplicity}.

\subsection{Examples: Group algebras}

At the end of this section, we recall from \cite{MR3242743} how right $H$-simple left $H$-comodule algebras are given in the case where $H$ is the group Hopf algebra of a finite group.

Let $G$ be a group. For a non-negative integer $n$, we define $\mathrm{C}^n(G)$ to be the set of all maps $f$ from $G^n$ to $\bfk^{\times}$ such that $f(x_1, \cdots, x_n) = 1$ whenever one of $x_i$'s is the identity element of $G$, and make it an abelian group by the pointwise multiplication. For $f_i \in \mathrm{C}^i(G)$ ($i = 1, 2$), we define $\partial_i(f_i) \in \mathrm{C}^{i+1}(G)$ by $\partial_1(f_1)(a, b) = f_1(a) f_1(a b)^{-1} f_1(b)$ and
\begin{equation*}
  \partial_2(f_2)(a, b, c) = f_2(a, b) f_2(a, b c)^{-1} f_2(a b, c) f_2(b, c)^{-1}
\end{equation*}
for $a, b, c \in G$. The maps $\partial_1$ and $\partial_2$ are group homomorphisms. We set
\begin{equation*}
  \mathrm{Z}^2(G) = \Ker(\partial_2), \quad
  \mathrm{B}^2(G) = \Img(\partial_1) \quad \text{and} \quad
  \mathrm{H}^2(G) = \mathrm{Z}^2(G)/\mathrm{B}^2(G).
\end{equation*}

The group $\mathrm{H}^2(G)$ is nothing but the second cohomology group of $G$ with coefficients in $\bfk^{\times}$, where $G$ acts on $\bfk^{\times}$ trivially. The group $\mathrm{Z}^2(G)$ is identified with the set of Hopf 2-cocycle of the group Hopf algebra $\bfk G$. The left $\bfk G$-comodule algebra ${}_{\psi}\bfk G$ for $\psi \in \mathrm{Z}^2(G)$ is called the {\em twisted group algebra} of $G$ by $\psi$.
It is easy to see that the isomorphism class of the comodule algebra ${}_{\psi}\bfk G$ depends on the class of $\psi$ in the cohomology group $\mathrm{H}^2(G)$.

\begin{theorem}[{\cite[Corollary 7.12.20]{MR3242743}}]
  \label{thm:exact-H-comod-over-group-alg}
  Suppose that $\bfk$ is algebraically closed.
  Let $G$ be a finite group, and let $H = \bfk G$ be the group Hopf algebra.
  Then every right $H$-simple left $H$-comodule algebra is isomorphic to ${}_{\psi}\bfk F$ for some subgroup $F$ of $G$ and a 2-cocycle $\psi \in \mathrm{Z}^2(F)$.
\end{theorem}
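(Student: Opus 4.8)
The plan is to work through the standard dictionary between $H = \bfk G$-comodules and $G$-graded vector spaces: a left $H$-comodule algebra is the same as a $G$-graded algebra $A = \bigoplus_{g \in G} A_g$ with $A_g A_h \subseteq A_{gh}$, an $H$-costable subspace is precisely a graded subspace, the coinvariants ${}^{\mathrm{co}\,H}\!A$ coincide with the identity component $A_e$, and the unit object of $\mathcal{D} := {}^H\Mod$ is $\bfk$ placed in degree $e$. Under this dictionary, $A$ being right $H$-simple means exactly that $0$ and $A$ are its only graded right ideals; by the discussion following Definition~\ref{def:H-comod-alg-basics} together with Lemma~\ref{lem:exact-alg-left-and-right-simple}, this is equivalent to $A$ being a division algebra in $\mathcal{D}$, and in particular $A$ is also left $H$-simple.

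First I would pin down the identity component. Since $\bfk$ is algebraically closed and $A$ is a division algebra in $\mathcal{D}$, Lemma~\ref{lem:division-alg-is-haploid} shows $A$ is haploid, that is $\dim_{\bfk} \Hom_{\mathcal{D}}(\unitobj, A) = 1$. As a morphism $\unitobj \to A$ in $\mathcal{D}$ is determined by the image of $1$, which must be a coinvariant, we have $\Hom_{\mathcal{D}}(\unitobj, A) \cong {}^{\mathrm{co}\,H}\!A = A_e$; hence $A_e = \bfk\, 1_A$ is one-dimensional.

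The structural heart is to prove that every nonzero homogeneous element is invertible. Given $0 \neq a \in A_g$, the right ideal $aA = \bigoplus_h a A_h$ is graded and nonzero, so right $H$-simplicity forces $aA = A$; comparing identity components then exhibits a homogeneous right inverse of $a$ lying in $A_{g^{-1}}$. Because $A$ is also left $H$-simple by Lemma~\ref{lem:exact-alg-left-and-right-simple}, the left-handed version of the same argument yields a left inverse, so $a$ is invertible with $a^{-1} \in A_{g^{-1}}$. With invertibility in hand, the rest is bookkeeping: setting $F = \{ g \in G \mid A_g \neq 0 \}$, closure under products and inverses (products and inverses of invertible homogeneous elements are again nonzero homogeneous) shows $F$ is a subgroup of $G$, and for each $g \in F$ left multiplication by an invertible element of $A_g$ gives a linear isomorphism $A_e \xrightarrow{\sim} A_g$, whence $\dim A_g = 1$. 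Choosing $0 \neq u_g \in A_g$ with $u_e = 1_A$, the products $u_g u_h = \psi(g,h)\, u_{gh}$ define a map $\psi \colon F \times F \to \bfk^{\times}$; associativity of $A$ is the $2$-cocycle identity and the normalization $u_e = 1_A$ gives $\psi(e, h) = \psi(g, e) = 1$, so $\psi \in \mathrm{Z}^2(F)$, and $u_g \mapsto g$ is an isomorphism $A \cong {}_{\psi}\bfk F$ of left $H$-comodule algebras.

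I expect the crux to be the invertibility step in the third paragraph: this is where the abstract division-algebra hypothesis is converted into the concrete graded-division structure, and where the left--right symmetry supplied by Lemma~\ref{lem:exact-alg-left-and-right-simple} is genuinely needed. Once it is established, the identification of $A$ with a twisted group algebra is routine. The single place where the algebraic closedness of $\bfk$ is essential is the one-dimensionality of $A_e$ in the second paragraph, via the haploidness of division algebras.
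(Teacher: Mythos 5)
Your proof is correct, but note that the paper offers no proof of Theorem~\ref{thm:exact-H-comod-over-group-alg} at all: it simply cites \cite[Corollary 7.12.20]{MR3242743}, where the statement is obtained inside the general theory of module categories (the classification of indecomposable semisimple module categories over pointed fusion categories by pairs consisting of a subgroup and a $2$-cocycle). Your argument is a genuinely different, self-contained route: you translate left $\bfk G$-comodule algebras into $G$-graded algebras, use Lemma~\ref{lem:division-alg-is-haploid} (this is exactly where algebraic closedness enters) to see that $A_e = \bfk\, 1_A$, show that every nonzero homogeneous element is invertible with homogeneous inverse, and then read off the subgroup $F$ and the cocycle $\psi$ from the resulting graded-division structure. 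Each step is justified, including the identification $\Hom_{\mathcal{D}}(\unitobj, A) \cong {}^{\mathrm{co}\,H}\!A = A_e$ and the verification that associativity yields the normalized cocycle condition matching the paper's definition of $\mathrm{Z}^2(F)$. What your approach buys is an elementary proof that stays entirely within the paper's own toolkit (Lemmas~\ref{lem:exact-alg-left-and-right-simple} and~\ref{lem:division-alg-is-haploid}); what the citation buys the paper is brevity and the placement of the result in the module-category framework it uses elsewhere.

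One refinement: the left--right symmetry of Lemma~\ref{lem:exact-alg-left-and-right-simple} is not actually needed at the invertibility step, contrary to your remark that it is ``genuinely needed.'' Once every nonzero homogeneous element has a homogeneous right inverse, two-sided invertibility is automatic: if $a b = 1_A$ with $0 \ne b \in A_{g^{-1}}$, then $b$ in turn has a right inverse $c$, and $a = a(bc) = (ab)c = c$, whence $b a = b c = 1_A$. So right $H$-simplicity alone suffices for the whole argument; your appeal to left $H$-simplicity is correct but avoidable.
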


\section{Exact comodule algebras over pointed Hopf algebras}
\label{sec:exact-comod-alg-over-pointed}

\subsection{Filtered and graded vector spaces}

Mombelli \cite{MR2678630} gave useful results on exact comodule algebras over pointed Hopf algebras. In this section, we give a proof of some of his results in a slightly generalized form and then introduce a strategy for classifying right $H$-simple left $H$-comodule algebras up to $H$-Morita equivalence in the case where $H$ is a finite-dimensional pointed Hopf algebra whose group of grouplike elements satisfies some technical assumptions. For this purpose, we begin with reviewing basic results on filtered or graded Hopf algebras and comodule algebras over them.

Unless otherwise noted, a filtration and a grading on a vector space is assumed to be indexed by the monoid $\mathbb{Z}_{\ge 0}$ of non-negative integers.
Thus a {\em filtered vector space} is a vector space $V$ equipped with a family $\{ V_{[i]} \}_{i \ge 0}$ of subspaces of $V$ such that $V_{[0]} \subset V_{[1]} \subset \cdots$ and $V = \bigcup_{i \ge 0} V_{[i]}$. We denote by $\FiltVect$ the category of filtered vector spaces and linear maps respecting the filtrations. If $V$ and $W$ are filtered vector spaces, then their tensor product $V \otimes W$ is also filtered by $(V \otimes W)_{[n]} = \sum_{i = 0}^n V_{[i]} \otimes W_{[n-i]}$. The category $\FiltVect$ is a symmetric monoidal category with respect to this operation. A filtered algebra, a filtered coalgebra and a filtered Hopf algebra are an algebra, a coalgebra and a Hopf algebra in $\FiltVect$, respectively.

Graded vector spaces and linear maps respecting the grading also form a symmetric monoidal category, which we denote by $\GrVect$. A graded algebra, a graded coalgebra and a graded Hopf algebra are an algebra, a coalgebra and a Hopf algebra in $\GrVect$, respectively. For a filtered vector space $V$, the associated graded vector space $\gr(V)$ is defined by $\gr(V) = \bigoplus_{i \ge 0} \gr_i(V)$, where $\gr_i(V) = V_{[i]}/V_{[i-1]}$ ($i \in \mathbb{Z}_{\ge 0}$) with convention $V_{[-1]} = 0$.
The assignment $V \mapsto \gr(V)$ extends to a $\bfk$-linear functor from $\FiltVect$ to $\GrVect$.

Now let $V, W \in \FiltVect$. For $i, j \in \mathbb{Z}_{\ge 0}$, there is a well-defined linear map
\begin{equation*}
  \phi_{i j} : \gr_i(V) \otimes \gr_j(W) \to \gr_{i + j}(V \otimes W)
\end{equation*}
given by $\phi_{i j}((v + V_{[i-1]}) \otimes (w + W_{[j-1]})) = v \otimes w + (V \otimes W)_{[i+j-1]}$ for $v \in V_{[i]}$ and $w \in W_{[j]}$. By bunching linear maps $\phi_{i j}$, we obtain a linear map
\begin{equation*}
  \gr(V) \otimes \gr(W) = \bigoplus_{i, j = 0}^{\infty} \gr_i(V) \otimes \gr_j(W)
  \to\bigoplus_{k = 0}^{\infty} \gr_k(V \otimes W) = \gr(V \otimes W),
\end{equation*}
which is in fact an isomorphism natural in $V, W \in \FiltVect$. Moreover, this makes the functor $\gr : \FiltVect \to \GrVect$ a symmetric monoidal functor. An important consequence of this observation is that the functor $\gr$ makes a filtered X into a graded X, where X is `algebra', `coalgebra', `Hopf algebra', or other algebraic objects defined by commutative diagrams in a symmetric monoidal category.

\subsection{Filtered and graded comodules algebras}

Let $C$ be a filtered coalgebra with filtration $\{ C_{[i]} \}_{i \ge 0}$. A {\em filtered left $C$-comodule} is a filtered vector space endowed with a structure of a left $C$-comodule such that the coaction respects the filtration. We note that any (non-filtered) $C$-comodule admits a canonical filtration:

\begin{lemma}[{\it cf}. {\cite[Lemma 4.1]{MR2678630}}]
  \label{lem:comodule-filtration}
  For a left $C$-comodule $M$, we define
  \begin{equation*}
    M_{[n]} = \delta_M^{-1}(C_{[n]} \otimes M)
    \quad (n \in \mathbb{Z}_{\ge 0}),
  \end{equation*}
  where $\delta_M : M \to C \otimes M$ is the coaction. Then the filtration $\{ M_{[n]} \}_{n \ge 0}$ makes $M$ a filtered left $C$-comodule.
\end{lemma}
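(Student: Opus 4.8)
The plan is to verify the three conditions defining a filtered left $C$-comodule: that $\{ M_{[n]} \}_{n \ge 0}$ is an increasing exhaustive filtration of $M$, and that the coaction $\delta_M$ respects this filtration in the sense that $\delta_M(M_{[n]}) \subset (C \otimes M)_{[n]} = \sum_{i=0}^n C_{[i]} \otimes M_{[n-i]}$. First I would dispatch the two easy conditions. Monotonicity $M_{[n]} \subset M_{[n+1]}$ is immediate from $C_{[n]} \subset C_{[n+1]}$ and the fact that taking preimages under $\delta_M$ preserves inclusions. Exhaustiveness $M = \bigcup_n M_{[n]}$ follows because for each $m \in M$ the element $\delta_M(m)$ is a finite sum lying in $C_{[n]} \otimes M$ for $n$ large enough (using $C = \bigcup_n C_{[n]}$), so $m \in M_{[n]}$ for that $n$.

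The substance of the lemma is the compatibility of $\delta_M$ with the filtration, for which the key inputs are the coassociativity $(\Delta \otimes \id_C \otimes \id_M)\delta_M = (\id_C \otimes \delta_M)\delta_M$ and the defining property of a filtered coalgebra, $\Delta(C_{[n]}) \subset \sum_{i=0}^n C_{[i]} \otimes C_{[n-i]}$. Fix $m \in M_{[n]}$, so that $\delta_M(m) \in C_{[n]} \otimes M$. To organize the bookkeeping I would choose a basis $\{ c_\alpha \}$ of $C$ adapted to the filtration, i.e.\ compatible with a splitting $C_{[j]} = C_{[j-1]} \oplus C^{(j)}$, assigning to each index a degree $d_\alpha$ with $c_\alpha \in C^{(d_\alpha)}$, and write $\delta_M(m) = \sum_\alpha c_\alpha \otimes m_\alpha$ with uniquely determined $m_\alpha \in M$. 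The hypothesis $\delta_M(m) \in C_{[n]} \otimes M$ then says precisely that $m_\alpha = 0$ whenever $d_\alpha > n$, and the target inclusion reduces to the claim that $m_\alpha \in M_{[n - d_\alpha]}$ for every $\alpha$ with $d_\alpha \le n$, that is, $\delta_M(m_\alpha) \in C_{[n - d_\alpha]} \otimes M$.

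This last claim is the main obstacle and the point where coassociativity genuinely enters. I would expand both sides of coassociativity as $(\id_C \otimes \delta_M)\delta_M(m) = \sum_\alpha c_\alpha \otimes \delta_M(m_\alpha)$ and $(\Delta \otimes \id_M)\delta_M(m) = \sum_\alpha \Delta(c_\alpha) \otimes m_\alpha$, and then pair the first tensor slot with the dual basis element $c_\beta^*$. On the left this isolates $\delta_M(m_\beta)$ (no counit is needed, since $c_\beta^*(c_\alpha) = \delta_{\alpha\beta}$), while on the right it produces $\sum_\alpha (c_\beta^* \otimes \id_C)(\Delta(c_\alpha)) \otimes m_\alpha$. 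The filtered-coalgebra condition forces every basis term $c_\gamma \otimes c_{\gamma'}$ occurring in $\Delta(c_\alpha)$ to satisfy $d_\gamma + d_{\gamma'} \le d_\alpha$, whence $(c_\beta^* \otimes \id_C)(\Delta(c_\alpha)) \in C_{[d_\alpha - d_\beta]}$; combined with $m_\alpha = 0$ for $d_\alpha > n$, this yields $\delta_M(m_\beta) \in C_{[n - d_\beta]} \otimes M$, i.e.\ $m_\beta \in M_{[n - d_\beta]}$, as required.

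I expect the only genuinely delicate point to be the translation of the filtration conditions into the degree inequality $d_\gamma + d_{\gamma'} \le d_\alpha$ for the basis terms of $\Delta(c_\alpha)$, which rests on the observation that $\sum_i C_{[i]} \otimes C_{[d_\alpha - i]}$ is spanned exactly by those $c_\gamma \otimes c_{\gamma'}$ with $d_\gamma + d_{\gamma'} \le d_\alpha$; once this is in place the remaining degree counting is routine. A coordinate-free alternative would apply $\id_C$ and $\delta_M$ to $\delta_M(m)$, use coassociativity, and extract the needed degree information by applying $\varepsilon$ to a suitable factor via the counit axiom, but I anticipate the basis-adapted argument above to be the cleanest route to the compatibility of the coaction with $\{ M_{[n]} \}_{n \ge 0}$.
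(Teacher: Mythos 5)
Your proof is correct and follows essentially the same route as the paper's: both arguments expand $\delta_M(m)$ in a basis of $C$ adapted to the filtration, pair the first tensor factor with dual functionals, and combine coassociativity with the filtered-comultiplication degree bound to conclude that each component $m_\alpha$ lies in $M_{[n - d_\alpha]}$. The only cosmetic difference is that the paper works with a basis of $C_{[n]}$ alone (phrasing the degree bound as "$x_{\lambda_i}^*$ vanishes on $C_{[a]}$ for $a < n_i$"), whereas you use a basis of all of $C$ with graded splittings; the underlying mechanism is identical.
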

\begin{proof}
  It is obvious that $M = \bigcup_{n \ge 0} M_{[n]}$.
  We fix elements $n \in \mathbb{Z}_{\ge 0}$ and $m \in M_{[n]}$ with $m \ne 0$, and aim to show $\delta_M(m) \in \sum_{a + b = n} C_{[a]} \otimes M_{[b]}$.
  For that, we fix a basis $\{ x_{\lambda} \}_{\lambda \in \Lambda}$ of $C_{[n]}$ with the property that, for each integer $a$ with $0 \le a < n$, there is a subset $\Lambda_a$ of $\Lambda$ such that $\{ x_{\lambda} \}_{\lambda \in \Lambda_a}$ is a basis of $C_{[a]}$.
  There are linear maps $x_{\lambda}^* : C \to \bfk$ such that $\langle x_{\lambda}^*, x_{\mu} \rangle = \delta_{\lambda, \mu}$ for all $\lambda, \mu \in \Lambda$, where $\delta$ is Kronecker's.
  Since $\delta_M(m) \in C_{[n]} \otimes M$, we can write it as $\delta_M(m) = \sum_{j = 1}^r x_{\lambda_j} \otimes m_j$ for some elements $\lambda_j \in \Lambda$ and $m_j \in M$ ($j = 1, \cdots, r$) with $\lambda_j$'s distinct. Now let $n_j$ be the smallest non-negative integer satisfying $x_{\lambda_j} \in C_{[n_j]}$. Then $x_{\lambda_j}^*$ vanishes on $C_{[a]}$ for $a < n_j$ since $C_{[a]}$ is spanned by $x_{\lambda}$ ($\lambda \in \Lambda_a$) and $\lambda_j \not \in \Lambda_a$. Since $n_{j} \le n$, we have
  \begin{equation*}
    \langle x_{\lambda_i}^*, (x_{\lambda_{j}})_{(1)} \rangle (x_{\lambda_{j}})_{(2)}
    \in \sum_{a = 0}^{n_j} x_{\lambda_i}^*(C_{[a]}) C_{[n_{j} - a]}
    \subset \sum_{a \ge n_i} C_{[n_{j} - a]} \subset C_{[n-n_{i}]}
  \end{equation*}
  with convention $C_{[r]} = 0$ for $r < 0$.
  By the coassociativity of $\delta_M$, we have
  \begin{align*}
    \delta_M(m_i)
    & = (x_{\lambda_i}^* \otimes \delta_M) \delta_M(m)
      = (x_{\lambda_i}^* \otimes \id_C \otimes \id_M) (\Delta \otimes \id_M) \delta_M(m) \\
    & = \sum_{j = 1}^r \langle x_{\lambda_i}^*, (x_{\lambda_{j}})_{(1)} \rangle
      (x_{\lambda_{j}})_{(2)} \otimes m_j \in C_{[n - n_{i}]} \otimes M.
  \end{align*}
  This means $m_i \in M_{[n - n_i]}$. Hence,
  \begin{equation*}
    \delta_M(m)
    = \sum_{i = 1}^r x_{\lambda_i} \otimes m_i
    \in \sum_{i = 1}^r C_{[n_i]} \otimes M_{[n-n_i]}
    \subset \sum_{a + b = n} C_{[a]} \otimes M_{[b]}.
    \qedhere
  \end{equation*}
\end{proof}

Let $H$ be a filtered Hopf algebra. A {\em filtered left $H$-comodule algebra} is a filtered algebra $A$ endowed with a structure of a filtered left $H$-comodule such that the coaction $\delta_A : A \to H \otimes A$ is a morphism of filtered algebras.

\begin{lemma}[{\it cf}. {\cite[Lemma 4.1]{MR2678630}}]
  Let $H$ be a filtered Hopf algebra with filtration $\{ H_{[n]} \}_{n \ge 0}$, and let $A$ be a left $H$-comodule algebra. Then the filtration
  \begin{equation*}
    A_{[n]} = \delta_A^{-1}(H_{[n]} \otimes M)
    \quad (n \in \mathbb{Z}_{\ge 0})
  \end{equation*}
  of Lemma~\ref{lem:comodule-filtration} makes $A$ a filtered left $H$-comodule algebra.
\end{lemma}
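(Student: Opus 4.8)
The plan is to verify the two conditions that upgrade the filtered comodule $A$ to a filtered left $H$-comodule \emph{algebra}: first, that the canonical filtration is an algebra filtration (that is, $1 \in A_{[0]}$ and $A_{[i]} A_{[j]} \subseteq A_{[i+j]}$), and second, that the coaction $\delta_A$ is a morphism of filtered algebras. Lemma~\ref{lem:comodule-filtration}, applied to $A$ regarded as a left $H$-comodule, already tells us that the filtration $\{ A_{[n]} \}$ makes $A$ a filtered left $H$-comodule; in particular $\delta_A$ respects the filtration in the sense that $\delta_A(A_{[n]}) \subseteq (H \otimes A)_{[n]} = \sum_{a + b = n} H_{[a]} \otimes A_{[b]}$. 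Consequently, once the algebra filtration is established, the statement that $\delta_A$ is a morphism of filtered algebras will be immediate, because $\delta_A$ is by hypothesis an algebra homomorphism (as $A$ is an algebra in ${}^H\Mod$) into the tensor-product algebra $H \otimes A$, and it respects the filtration.

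The substantive point is therefore to check multiplicativity of the filtration. First I would note that $\delta_A(1) = 1 \otimes 1$ lies in $H_{[0]} \otimes A$, since the unit of the filtered Hopf algebra $H$ sits in $H_{[0]}$; hence $1 \in A_{[0]}$. Next, for $a \in A_{[i]}$ and $b \in A_{[j]}$, I would use that $\delta_A$ is an algebra map to write $\delta_A(ab) = \delta_A(a)\, \delta_A(b)$. By the definition of the filtration we have $\delta_A(a) \in H_{[i]} \otimes A$ and $\delta_A(b) \in H_{[j]} \otimes A$, so in the tensor-product algebra their product lies in $H_{[i]} H_{[j]} \otimes A$. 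Since $H$ is in particular a filtered algebra, $H_{[i]} H_{[j]} \subseteq H_{[i+j]}$, and therefore $\delta_A(ab) \in H_{[i+j]} \otimes A$, which is precisely the assertion that $ab \in A_{[i+j]}$.

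With the algebra filtration in hand, I would conclude as follows: $\delta_A$ is an algebra homomorphism that respects the filtration, hence a morphism of filtered algebras, and together with the comodule statement of Lemma~\ref{lem:comodule-filtration} this shows that $A$ equipped with $\{ A_{[n]} \}$ is a filtered left $H$-comodule algebra. I do not expect any genuine obstacle here: unlike the proof of Lemma~\ref{lem:comodule-filtration}, no delicate basis argument is needed. The only point requiring care is to keep track of which structure is invoked at each step, namely the algebra-homomorphism property of $\delta_A$ for multiplicativity and the comodule-filtration conclusion of Lemma~\ref{lem:comodule-filtration} for the filtration compatibility of the coaction.
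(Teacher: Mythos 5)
Your proof is correct and follows essentially the same route as the paper's: multiplicativity of the filtration is deduced from $\delta_A$ being an algebra map (using $H_{[i]}H_{[j]} \subseteq H_{[i+j]}$), and Lemma~\ref{lem:comodule-filtration} supplies the filtered-comodule compatibility. The paper's proof is just a terse version of exactly this argument.
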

\begin{proof}
  It is easy to verify $A_{[i]} \cdot A_{[j]} \subset A_{[i+j]}$ for all $i, j \in \mathbb{Z}_{\ge 0}$ by the assumption that $\delta_A$ is an algebra map. Lemma~\ref{lem:comodule-filtration} completes the proof.
\end{proof}

The definition of a {\em graded left comodule algebra} over a graded Hopf algebra is obtained by replacing the word `filtered' with `graded' in the definition of filtered left comodule algebra. If $A$ is a filtered left comodule algebra over a filtered Hopf algebra $H$, then $\gr(A)$ is a graded left comodule algebra over $\gr(H)$.

\subsection{The coradical and the Loewy filtration}

For a coalgebra $C$, the {\em coradical} of $C$ is defined to be the sum of all simple subcoalgebras of $C$. The {\em coradical filtration} of $C$ is defined inductively by $C_n = \Delta^{-1}(C_0 \otimes C + C \otimes C_{n-1})$ for $n \ge 1$, where $C_0$ is the coradical of $C$. It is well-known that the coradical filtration makes $C$ a filtered coalgebra \cite[Chapter 5]{MR1243637}. We denote by $\grc(C)$ the associated graded coalgebra of $C$ with respect to the coradical filtration.

There is a closely related filtration on a comodule. Given a left $C$-comodule $M$, we define subspaces $M_0 \subset M_1 \subset M_2 \subset \cdots$ by $M_0 = \socle(M)$ and $M_{n}/M_{n-1} = \socle(M/M_{n-1})$ for $n \ge 1$, where $\socle(-)$ denotes the socle, that is, the sum of simple subcomodules.
We call $\{ M_n \}_{n \ge 0}$ the {\em Loewy filtration} on $M$ and denote by $\grL(M)$ the associated graded vector space.
Following \cite[Section 1]{MR1953053}, we have
\begin{equation*}
  M_n = \delta_M^{-1}(C_n \otimes M)
  \quad \text{and} \quad
  \delta_M(M_n) \subset \sum_{i = 0}^n C_{n-i} \otimes M_i
\end{equation*}
for all $n \in \mathbb{Z}_{\ge 0}$. The latter means that $M$ is a filtered left $C$-comodule with respect to the Loewy filtration. Since the functor $\gr$ is symmetric monoidal, $\grL(M)$ is a left comodule over $\grc(C)$, as has been noted in \cite[Section 1]{MR1953053}.

The operations $\grc$ and $\grL$ are idempotent: For a coalgebra $C$, the coradical filtration of $\grc(C)$ is identical to the natural filtration associated to the grading of $\grc(C)$ and thus $\grc(\grc(C)) \cong \grc(C)$ as graded coalgebras. A similar statement holds for comodules.

A graded coalgebra $C = \bigoplus_{n = 0}^{\infty} C(n)$ is said to be {\em coradically graded} if the $n$-th layer of its coradical filtration is given by $C(0) \oplus \dotsb \oplus C(n)$ for all $n$. We assume that $C$ is coradically graded. Let $M$ be a left $C$-comodule with Loewy filtration $\{ M_n \}_{n \ge 0}$. Then $\grL(M)$ is also a left $C$-comodule algebra since $\grc(C) \cong C$.
Now let $f, g \in \Grp(C)$, $x \in C$, $v_f \in M_0$ ($\subset \grL(M)$) and $m \in M_1/M_0$ ($\subset \grL(M)$).
We assume that they satisfy the following equations:
\begin{equation*}
  \Delta(x) = x \otimes f + g \otimes x,
  \quad \delta(v_f) = f \otimes v_f,
  \quad \delta(m) = x \otimes v_f + g \otimes m,
\end{equation*}
where $\delta$ is the coaction of $\grL(M)$. As pointed out in the proof of \cite[Lemma 5.5]{MR2678630}, the element $m$ can be lifted to $M$ in the following sense:

\begin{lemma}
  \label{lem:lifting-0}
  Under the above assumption, there exists an element $\tilde{m} \in M_1$ such that $\delta_M(\tilde{m}) = x \otimes v_f + g \otimes \tilde{m}$ and $\pi(\tilde{m}) = m$, where $\pi : M_1 \to M_1 / M_0$ is the projection.
\end{lemma}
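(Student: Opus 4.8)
The plan is to reduce the construction of $\tilde m$ to the splitting of a short exact sequence of comodules over the coradical $C_0 = C(0)$, which is automatic because $C_0$ is cosemisimple.

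First I would fix an arbitrary lift $m' \in M_1$ of $m$ and unravel $\delta_M(m')$ using the filtration. Since $\delta_M(M_1) \subset C_1 \otimes M_0 + C_0 \otimes M_1$ and $C_1 = C(0) \oplus C(1)$, the coaction decomposes uniquely as $\delta_M(m') = A + B$ with $A \in C(1) \otimes M_0$ and $B \in C(0) \otimes M_1$. By construction the graded coaction $\delta(m) = x \otimes v_f + g \otimes m$ is the image of $\delta_M(m')$ under $\gr_1(C \otimes M) \cong C(1) \otimes M_0 \oplus C(0) \otimes (M_1/M_0)$ (here $x \in C(1)$, being skew primitive of degree one). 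Matching components gives $A = x \otimes v_f$ and $(\id \otimes \pi)(B) = g \otimes m$, and the latter forces $B = g \otimes m' + \gamma$ for a unique $\gamma \in C(0) \otimes M_0$. Thus
\[
  \delta_M(m') = x \otimes v_f + g \otimes m' + \gamma, \qquad \gamma \in C(0) \otimes M_0 .
\]

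The next step is to note that it suffices to find $u \in M_0$ with $\gamma = \delta_M(u) - g \otimes u$: then $\tilde m := m' - u$ satisfies $\pi(\tilde m) = m$ and, by direct substitution, $\delta_M(\tilde m) = x \otimes v_f + g \otimes \tilde m$. To see that such $u$ exists I would extract the constraints on $\gamma$. Feeding the displayed formula into the coassociativity of $\delta_M$ and using $\Delta(x) = x \otimes f + g \otimes x$ together with $\delta_M(v_f) = f \otimes v_f$, all terms involving $x$ and $m'$ cancel and leave
\[
  (\Delta \otimes \id)(\gamma) = g \otimes \gamma + (\id \otimes \delta_M)(\gamma) ,
\]
while the counit axiom gives $(\varepsilon \otimes \id)(\gamma) = 0$ (note $\varepsilon(x) = 0$).

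Finally I would read this off representation-theoretically. As $M_0 = \socle(M)$ and $g \in \Grp(C)$, both $M_0$ and the one-dimensional comodule $\bfk g$ are comodules over $C_0 = C(0)$, and the displayed cocycle identity is exactly the condition that $E := M_0 \oplus \bfk e$, with $\delta_E(e) = g \otimes e + \gamma$ and $\delta_E|_{M_0} = \delta_M|_{M_0}$, be a $C_0$-comodule sitting in a short exact sequence $0 \to M_0 \to E \to \bfk g \to 0$. Since the coradical $C_0$ is cosemisimple, its comodule category is semisimple, so this sequence splits; a splitting is precisely an element $u \in M_0$ with $\gamma = \delta_M(u) - g \otimes u$, which is what we need, and $\tilde m = m' - u$ finishes the argument. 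The one genuine input is this last identification of $\gamma$ as an extension class that must vanish — the cocycle and counit conditions are exactly what makes $E$ a comodule, and cosemisimplicity of $C_0$ does the rest; the remaining work (the filtration bookkeeping producing $\gamma$) is routine but is the fiddliest part to write out carefully.
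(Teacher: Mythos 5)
Your proof is correct, and every step checks out: the unique decomposition $\delta_M(m') = x \otimes v_f + g \otimes m' + \gamma$ with $\gamma \in C(0) \otimes M_0$, the identity $(\Delta \otimes \id)(\gamma) = g \otimes \gamma + (\id \otimes \delta_M)(\gamma)$ together with $(\varepsilon \otimes \id)(\gamma) = 0$ obtained from coassociativity and the counit axiom, and the reduction to finding $u \in M_0$ with $\gamma = \delta_M(u) - g \otimes u$ are all valid. (Your parenthetical assumption $x \in C(1)$ is harmless: it is forced by the hypothesis $\delta(m) = x \otimes v_f + g \otimes m$ landing in the degree-one part of $C \otimes \grL(M)$, except in the trivial case $v_f = 0$, where your argument goes through with the $x$-terms absent.) However, your route differs from the paper's at the decisive step. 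The paper implicitly uses that $C_0$ is spanned by grouplike elements: it writes the error term as $\sum_{h \in \Grp(C)} h \otimes a_h$ with $a_h \in M_0$, feeds this into coassociativity, and compares coefficients of $g$ in the first tensor leg to conclude directly that $\tilde{m} = m' + a_g$ works --- no splitting argument, the correcting element is read off explicitly. You instead keep $\gamma$ abstract, interpret the cocycle identity as the statement that $E = M_0 \oplus \bfk e$ with $\delta_E(e) = g \otimes e + \gamma$ is a $C_0$-comodule extension of $\bfk g$ by $M_0$, and kill the extension class using cosemisimplicity of $C_0$. What each buys: the paper's argument is shorter and constructive, but its expansion of $\gamma$ over grouplikes requires $C_0 = \bfk\,\Grp(C)$, a pointedness hypothesis that is true in the paper's application but not stated in the lemma's setting; your argument uses only that the coradical of any coalgebra is cosemisimple, so it proves the lemma in the generality actually stated (an arbitrary coradically graded coalgebra). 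The two proofs match up via $u = -a_g$: in the pointed case the splitting you invoke can be exhibited explicitly, and its correction term is precisely the paper's element $a_g$.
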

\begin{proof}
  Let $m' \in M_1$ be an element such that $\pi(m') = m$. By the definition of the coaction of $C$ on $\grL(M)$, we have
  \begin{equation*}
    \delta_M(m') = x \otimes v_f + g \otimes m'
    + \sum_{h \in \Grp(C)} h \otimes a_h
  \end{equation*}
  for some $a_h \in M_0$ with $a_h = 0$ for all but finitely many $h \in \Grp(C)$. By the coassociativity of the coaction of $M$, we have
  \begin{gather*}
    x \otimes f \otimes v_f + g \otimes \delta_M(m')
    + \sum_{h \in \Grp(C)} h \otimes \delta_M(a_h) \\
    = x \otimes f \otimes v_f
    + g \otimes x \otimes v_f
    + g \otimes g \otimes m'
    + \sum_{h \in \Grp(C)} h \otimes h \otimes a_h
  \end{gather*}
  From this, we obtain
  \begin{equation*}
    \delta_M(m')
    + \delta_M(a_g)
    =  x \otimes v_f
    + g \otimes m'
    + g \otimes a_g
  \end{equation*}
  Therefore $\tilde{m} = m' + a_g$ meets the requirements. The proof is done.
\end{proof}

\subsection{$H$-simplicity of filtered comodule algebras}

In the rest of this section, we assume that all algebras are assumed to be finite-dimensional as in the previous section. We now give the following useful criterion for the $H$-simplicity of a filtered comodule algebra:

\begin{theorem}
  \label{thm:H-simplicity-comod-alg}
  Let $H$ be a Hopf algebra with filtration $\{ H_{[i]} \}_{i \ge 0}$, and let $A$ be a non-zero filtered left $H$-comodule algebra with filtration $\{ A_{[i]} \}_{i \ge 0}$. Then the following are equivalent:
  \begin{enumerate}
  \item $A_{[0]}$ is right $H_{[0]}$-simple.
  \item $A$ is right $H$-simple.
  \item $\mathrm{gr}(A)$ is right $\mathrm{gr}(H)$-simple.
  \end{enumerate}
\end{theorem}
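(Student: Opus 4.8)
The plan is to compare $H$-costable right ideals across the three objects $A_{[0]}$, $A$ and $\gr(A)$, and to establish the cycle $(3) \Rightarrow (2) \Rightarrow (1) \Rightarrow (3)$. Two of these implications run in the ``easy'' direction, where a costable right ideal of the larger object is \emph{restricted} to one of the smaller, while the third runs in the ``hard'' direction, where a costable right ideal must be \emph{produced} in $A$; this is where the genuine work lies. Throughout I would use the canonical filtration $A_{[n]} = \delta_A^{-1}(H_{[n]} \otimes A)$ of Lemma~\ref{lem:comodule-filtration}, under which $A_{[0]}$ is a left $H_{[0]}$-comodule algebra (its coaction lands in $H_{[0]} \otimes A_{[0]}$), and the fact that, by the idempotency of $\grc$ and $\grL$ recorded above, $\gr(A)$ is coradically graded.

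For $(3) \Rightarrow (2)$ I would argue contrapositively: a nonzero proper $H$-costable right ideal $I \subsetneq A$, equipped with the induced filtration $I_{[n]} = I \cap A_{[n]}$, yields an embedding $\gr(I) \hookrightarrow \gr(A)$ onto a nonzero $\gr(H)$-costable right ideal, proper since $\dim_{\bfk}\gr(I) = \dim_{\bfk} I < \dim_{\bfk} A = \dim_{\bfk}\gr(A)$, contradicting $(3)$. For the equivalence $(1) \Leftrightarrow (3)$ I would pass to the graded picture. Since $\gr(A)$ is already graded, any $\gr(H)$-costable right ideal gives rise, by taking the associated graded with respect to its grading filtration, to a homogeneous one of the same dimension, so it suffices to treat homogeneous costable right ideals $\bar I = \bigoplus_n \bar I(n)$. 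The decisive point is that, because $\gr(A)$ is coradically graded, an element whose coaction lands in $\gr_0(H) \otimes \gr(A)$ must lie in $\gr_0(A) = A_{[0]}$; hence a nonzero homogeneous $\bar I$ must meet degree $0$ (if $m \ge 1$ were minimal with $\bar I(m) \ne 0$, then $0 \ne b \in \bar I(m)$ would have coaction forced into $\gr_0(H) \otimes \bar I(m)$, placing $b$ in degree $0$). Combined with the observation that for a proper $H_{[0]}$-costable right ideal $J$ of $A_{[0]}$ the generated ideal $J \cdot \gr(A)$ is again proper (its degree-$0$ component is $J \cdot A_{[0]} = J \ne A_{[0]}$), this gives both directions of $(1) \Leftrightarrow (3)$.

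The main obstacle is the remaining implication $(2) \Rightarrow (1)$, equivalently: from a proper nonzero $H_{[0]}$-costable right ideal $J \subsetneq A_{[0]}$, manufacture a proper nonzero $H$-costable right ideal of $A$. The natural candidate is $JA$; from $\delta(J) \subseteq H_{[0]} \otimes J$ one checks at once that $JA$ is $H$-costable, and it is nonzero since $J = J \cdot 1 \subseteq JA$. The whole difficulty is concentrated in showing $JA$ is \emph{proper}: unlike the graded case there is no degree bookkeeping preventing cancellation of higher-filtration terms from forcing $JA = A$. I plan to resolve this by showing that $A$ is faithfully flat --- indeed free --- as a left module over the subalgebra $A_{[0]}$, which yields $A/JA \cong (A_{[0]}/J) \otimes_{A_{[0]}} A \ne 0$ out of $A_{[0]}/J \ne 0$. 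This is exactly the place where the hypothesis of $(2)$ and the field-independent input enter: since $A$ is right $H$-simple and hence, by Lemma~\ref{lem:exact-alg-left-and-right-simple}, also left $H$-simple (so two-sided $H$-simple), Skryabin's theorems on relative Hopf modules recalled in \S\ref{subsec:Skryabin} apply, and I would invoke them to deduce the required freeness of $A$ over $A_{[0]}$.

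I expect the faithful-flatness step to be the delicate one to pin down, as it is what really replaces the dimension/degree arguments that suffice in the graded setting. The payoff is that, once this is in hand, the three implications close up with no hypothesis on $\bfk$ whatsoever, which is precisely the improvement over \cite{MR2678630}, where the base field is assumed algebraically closed of characteristic zero.
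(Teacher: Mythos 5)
Your steps $(3) \Rightarrow (2)$ and $(1) \Leftrightarrow (3)$ are essentially sound, but note that they really do require your stipulation that the filtration is the canonical one, $A_{[n]} = \delta_A^{-1}(H_{[n]} \otimes A)$, of Lemma~\ref{lem:comodule-filtration}: your ``degree-zero detection'' step is not a consequence of coradical gradedness (the filtrations in the theorem are \emph{not} assumed to be the coradical/Loewy ones, so the idempotency of $\grc$ and $\grL$ is unavailable), but it does hold for the canonical filtration, because then $\delta_A(b) \in H_{[m-1]} \otimes A$ forces $b \in A_{[m-1]}$. The paper's own proof of $(1) \Rightarrow (2)$ is the unfiltered version of this observation: the coradical of $H$ lies in $H_{[0]}$, hence $\socle(A) \subseteq A_{[0]}$, hence any nonzero $H$-costable right ideal $J$ satisfies $0 \neq \socle(J) \subseteq J \cap A_{[0]}$, so $J \ni 1$; and the paper obtains $(1) \Leftrightarrow (3)$ simply by applying $(1) \Leftrightarrow (2)$ to $\gr(A)$ over $\gr(H)$, whose degree-zero parts are $A_{[0]}$ and $H_{[0]}$.

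The genuine gap is exactly where you anticipated difficulty: properness of $JA$ in $(2) \Rightarrow (1)$, and your proposed fix is circular. The theorems recalled in \S\ref{subsec:Skryabin} say that relative Hopf modules over an $H$-simple comodule algebra $B$ are projective (free up to a multiple) \emph{as $B$-modules}. Knowing $(2)$ lets you take $B = A$, which yields statements about $A$-modules and says nothing about $A$ as a module over the subalgebra $A_{[0]}$. To extract ``$A$ is free/faithfully flat over $A_{[0]}$'' from Skryabin you would have to take $B = A_{[0]}$ and $M = A \in {}^H\Mod_{A_{[0]}}$, and the hypothesis needed is then $H_{[0]}$-simplicity of $A_{[0]}$ --- which is precisely the statement $(1)$ you are trying to prove; two-sided $H$-simplicity of $A$ (via Lemma~\ref{lem:exact-alg-left-and-right-simple}) gives no control over the pair $A_{[0]} \subseteq A$. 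The paper breaks this circle in a different way. First it applies the $JA$-trick only to $J = \Jac^{H_{[0]}}(A_{[0]})$, for which properness of $JA$ is automatic from \emph{nilpotency} (Nakayama), so $(2)$ forces $J = 0$. Then it invokes Skryabin's structure theorem for $H$-semiprime algebras \cite[Theorem 1.1]{MR2832259} --- a result different from those in \S\ref{subsec:Skryabin}, and the field-independent replacement for Linchenko's theorem \cite{MR1995055} used by Mombelli \cite{MR2678630} --- to write $A_{[0]} = R_1 \times \dots \times R_m$ as a product of $H_{[0]}$-simple algebras; the decomposition $A = R_1 A \oplus \dots \oplus R_m A$ into $H$-costable right ideals and $(2)$ force $m = 1$, so $A_{[0]}$ is two-sided $H_{[0]}$-simple. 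Only at that point does the Frobenius/injectivity part of \cite{MR2286047} legitimately apply to $A_{[0]}$: the inclusion $A_{[0]} \hookrightarrow A$ splits as left $A_{[0]}$-modules, $A = A_{[0]} \oplus B$, and Mombelli's projection argument ($IA = A$ together with $IB \subseteq B$ gives $I = A_{[0]}$) upgrades two-sided to right simplicity. Your faithful-flatness mechanism could replace this last projection step once two-sided simplicity of $A_{[0]}$ is known, but it cannot serve as the first step, and without the $H$-semiprime structure theorem your proof of $(2) \Rightarrow (1)$ does not close.
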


The filtration on $H$ is not necessarily the coradical filtration.
Also, the filtration on $A$ is not necessarily the Loewy filtration.
Mombelli proved this theorem under the assumption that $H_{[0]}$ is semisimple and $\bfk$ is an algebraically closed field of characteristic zero \cite[Proposition 4.4 and Corollary 4.5]{MR2678630}. Linchenko's result \cite[Theorem 3.1]{MR1995055} is crucial in the proof given in \cite{MR2678630}. In this paper, we achieve extending the result of Mombelli by using a refinement of \cite[Theorem 3.1]{MR1995055} due to Skryabin \cite{MR2832259}.

Before we give a proof of this theorem, we explain Skryabin's result that we will use. Let $H$ be a Hopf algebra, and let $A$ be a left $H$-comodule algebra. We note that $A$ can be viewed as a left module algebra over $U := H^{*\op}$. We define $\Jac^H(A)$ to be the sum of all $H$-costable ideals of $A$ contained in the Jacobson radical $\Jac(A)$ of $A$. Since $\Jac(A)$ is the largest nilpotent ideal of $A$, $\Jac^H(A)$ is the largest $H$-costable nilpotent ideal of $A$. Hence $A$ is $U$-semiprime (in the sense used in \cite{MR2832259}) if and only if $\Jac^H(A) = 0$. According to Skryabin's result \cite[Theorem 1.1]{MR2832259} on $U$-semiprime $U$-module algebras to $A$, we have that $A$ is a finite product of $H$-simple algebras precisely if $\Jac^H(A) = 0$.

\begin{proof}[Proof of Theorem~\ref{thm:H-simplicity-comod-alg}]
  We first show that (1) implies (2).
  We assume that (1) holds and let $J$ be a non-zero $H$-costable right ideal of $A$.
  Since the coradical of $H$ is contained in $H_{[0]}$ \cite[Lemma 5.3.4]{MR1243637}, $\mathrm{soc}(A)$ is contained in $A_{[0]}$ (where $\mathrm{soc}(-)$ means the socle as a left $H$-comodule). Hence we have
  \begin{equation*}
    0 \ne \mathrm{soc}(J) = J \cap \mathrm{soc}(A) \subset J \cap A_{[0]}
  \end{equation*}
  and, in particular, $J \cap A_{[0]} \ne 0$.
  Since $J \cap A_{[0]}$ is an $H_{[0]}$-costable right ideal of $A_{[0]}$, we have $J \cap A_{[0]} = A_{[0]}$ by the assumption. Hence $1 \in A_{[0]}$. Therefore $J = A$.

  Next, we show that (2) implies (1).
  We assume that $A$ is right $H$-simple.
  Then the $H_{[0]}$-Jacobson radical $J := \Jac^{H_{[0]}}(A_{[0]})$ is zero.
  Indeed, since $J$ is $H_{[0]}$-costable, $J A$ is an $H$-costable right ideal of $A$.
  By the assumption that $A$ is right $H$-simple, $J A$ is either $0$ or $J A$.
  If the latter is the case, then the Nakayama lemma implies $A = 0$, which is a contradiction.
  Thus $J A = 0$. Therefore $J = 0$.

  By Skryabin's result \cite[Theorem 1.1]{MR2832259} mentioned in the above, $A_{[0]}$ is decomposed into the finite product of $H_{[0]}$-simple algebras, as $A_{[0]} = R_1 \times \dotsb \times R_m$. Then $A$ is decomposed as the direct sum of non-zero $H$-costable right ideals, as $A = R_1 A \oplus \dotsb \oplus R_m A$. Since $A$ is right $H$-simple, we have $m = 1$, that is, $A_{[0]}$ is $H_{[0]}$-simple.

  We now prove that $A_{[0]}$ is in fact right $H_{[0]}$-simple.
  Since $A_{[0]}$ is $H_{[0]}$-simple as we have just proved, $A_{[0]}$ is also $H$-simple. Hence, by Skryabin's result mentioned in Subsection~\ref{subsec:Skryabin}, $A_{[0]}$ is injective as a left $A_{[0]}$-module.
  Hence the inclusion map $A_{[0]} \hookrightarrow A$ splits as a morphism of left $A_{[0]}$-modules. In other words, $A$ has a left $A_{[0]}$-submodule $B$ such that $A = A_{[0]} \oplus B$.
  The rest of the proof is basically the same as the argument of Mombelli \cite[Section 4]{MR2678630}.
  Let $I$ be a non-zero $H_{[0]}$-costable right ideal of $A_{[0]}$. Since $I$ is a right ideal of $A_{[0]}$, we have $I A_{[0]} = I$. Since $B$ is left $A_{[0]}$-submodule, we have $I B \subset B$. Since $A$ is right $H$-simple, we have $I A = A$. Hence we have $A_{[0]} \oplus B = A = I A = I A_{[0]} \oplus I B \subset I \oplus B$, which yields $A_{[0]} = I$. Therefore $A_{[0]}$ is right $H_{[0]}$-simple.

  We have proved (1) $\Leftrightarrow$ (2).
  Since the 0-th component of $\mathrm{gr}(A)$ and $\mathrm{gr}(H)$ are $A_{[0]}$ and $H_{[0]}$, respectively, the equivalence (1) $\Leftrightarrow$ (3) follows from (1) $\Leftrightarrow$ (2). The proof is done.
\end{proof}

\subsection{Strategy of the classification}
\label{subsec:classification-strategy}

Mombelli and Garc\'ia Iglesias \cite{MR2678630,MR3264686,MR2860429} proposed methods for classifying right $H$-simple left $H$-comodule algebras for a pointed Hopf algebra $H$. Here we give a modification of their strategy to give a complete list of indecomposable exact module categories over $\lmod{u_q(\mathfrak{sl}_2)}$ in Section~\ref{sec:exact-comod-alg-uqsl2}.

We first introduce Mombelli's criterion \cite[Lemma 5.5]{MR3264686} for an $H$-simple left $H$-comodule algebra to be a left coideal subalgebra of $H$, which plays a central role in our classification strategy of exact comodule algebras. An essential part of his criterion comes from the following observation:

\begin{lemma}
  \label{lem:coideal-sub-criterion}
  Let $H$ be a Hopf algebra, and let $L$ be a left $H$-comodule algebra.
  Suppose that $L$ is $H$-simple and there is an algebra map $\alpha : L \to \bfk$. Then
  \begin{equation*}
    \phi_{\alpha} := (\id_H \otimes \alpha) \circ \delta_L : L \to H
  \end{equation*}
  is an injective homomorphism of left $H$-comodule algebras, and thus $L$ can be regarded as a left coideal subalgebra of $H$.
\end{lemma}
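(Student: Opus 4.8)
The plan is to show that $\phi_{\alpha}$ is a morphism of left $H$-comodule algebras, deduce that its kernel is an $H$-costable ideal of $L$, and then invoke the $H$-simplicity of $L$ to force injectivity; the image will then be visibly a left coideal subalgebra of $H$.

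First I would verify that $\phi_{\alpha}$ is an algebra map. Since $L$ is a comodule algebra, the coaction $\delta_L : L \to H \otimes L$ is an algebra homomorphism, and since $\alpha$ is an algebra map, so is $\id_H \otimes \alpha : H \otimes L \to H$; hence the composite $\phi_{\alpha} = (\id_H \otimes \alpha) \circ \delta_L$ is an algebra map. I would also record the computation $\phi_{\alpha}(1_L) = 1_H \cdot \alpha(1_L) = 1_H$, which follows from $\delta_L(1_L) = 1_H \otimes 1_L$ and the unitality of $\alpha$; in particular $\phi_{\alpha} \neq 0$.

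Next I would check that $\phi_{\alpha}$ intertwines the coactions, where $H$ is regarded as a left $H$-comodule via $\Delta$. Writing $\delta_L(a) = a_{(-1)} \otimes a_{(0)}$, coassociativity of the coaction gives the identity $(a_{(-1)})_{(1)} \otimes (a_{(-1)})_{(2)} \otimes a_{(0)} = a_{(-1)} \otimes (a_{(0)})_{(-1)} \otimes (a_{(0)})_{(0)}$; applying $\id_H \otimes \id_H \otimes \alpha$ to both sides and comparing yields exactly $\Delta(\phi_{\alpha}(a)) = a_{(-1)} \otimes \phi_{\alpha}(a_{(0)})$, that is, $\Delta \circ \phi_{\alpha} = (\id_H \otimes \phi_{\alpha}) \circ \delta_L$. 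This is the only step requiring genuine manipulation, but it is routine Sweedler bookkeeping.

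Finally, since $\phi_{\alpha}$ is a morphism of comodule algebras, $\Ker(\phi_{\alpha})$ is an $H$-costable ideal of $L$. By the $H$-simplicity of $L$ it must be $0$ or $L$, and $\phi_{\alpha}(1_L) = 1_H \neq 0$ rules out the latter, so $\phi_{\alpha}$ is injective. The comodule-map identity established above shows $\Delta(\phi_{\alpha}(L)) \subseteq H \otimes \phi_{\alpha}(L)$, so the image $\phi_{\alpha}(L) \cong L$ is a left coideal subalgebra of $H$. I expect the main obstacle to be conceptual rather than computational: the crux is recognizing that $H$-simplicity is precisely what upgrades the nonzero morphism $\phi_{\alpha}$ to an embedding, since without it the kernel need not vanish.
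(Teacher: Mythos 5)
Your proof is correct and follows essentially the same route as the paper: verify that $\phi_{\alpha}$ is a homomorphism of left $H$-comodule algebras (algebra map as a composite of algebra maps, colinearity by coassociativity), observe that $\Ker(\phi_{\alpha})$ is then a proper $H$-costable ideal, and invoke $H$-simplicity to conclude injectivity. Your explicit remark that $\phi_{\alpha}(1_L) = 1_H$ rules out $\Ker(\phi_{\alpha}) = L$ is a detail the paper leaves implicit, but otherwise the two arguments coincide.
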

\begin{proof}
  We have $\Delta \phi_{\alpha}(a)
  = a_{(-2)} \otimes a_{(-1)} \alpha(a_{(0)})
  = (\id_H \otimes \phi_{\alpha}) \delta_L(a)$ for all $a \in L$, which means that $\phi_{\alpha}$ is $H$-colinear.
  Since $\phi$ is written as a composition of algebra maps, $\phi_{\alpha}$ is an algebra map.
  We have proved that $\phi_{\alpha}$ is a homomorphism of left $H$-comodule algebras.
  This implies that $\Ker(\phi_{\alpha})$ is an $H$-costable ideal of $L$ such that $\Ker(\phi_{\alpha}) \subsetneq L$.
  By the assumption that $L$ is $H$-simple, $\Ker(\phi_{\alpha})$ is zero, that is, $\phi_{\alpha}$ is injective. The proof is done.
\end{proof}

The above lemma has a graded variant:

\begin{lemma}[{\it cf.} {\cite[Lemma 5.5]{MR3264686}}]
  \label{lem:coideal-sub-criterion-graded}
  Let $H = \bigoplus_{n \ge 0} H(n)$ be a graded Hopf algebra, and let $L = \bigoplus_{n \ge 0} L(n)$ be a graded left $H$-comodule algebra. If $L$ is $H$-simple and $L(0)$ has a one-dimensional representation, then there is an injective homomorphism $L \to H$ of graded left $H$-comodule algebras, and hence $L$ can be regarded as a homogeneous left coideal subalgebra of $H$.
\end{lemma}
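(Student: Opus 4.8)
The plan is to reduce everything to the ungraded statement, Lemma~\ref{lem:coideal-sub-criterion}, by manufacturing an algebra character of all of $L$ out of the given one-dimensional representation of $L(0)$, and then checking that the resulting comodule-algebra map is homogeneous of degree $0$.

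First I would promote the one-dimensional representation $\alpha_0 \colon L(0) \to \bfk$ to an algebra map $\alpha \colon L \to \bfk$. The key observation is that the augmentation ideal $L_+ := \bigoplus_{n \ge 1} L(n)$ is a two-sided ideal of $L$: the grading condition $L(i) L(j) \subseteq L(i+j)$ forces any product involving a factor of positive degree to land in $L_+$. Moreover $L = L(0) \oplus L_+$ as vector spaces and $L / L_+ \cong L(0)$ as algebras. Hence $\alpha := \alpha_0 \circ p$, where $p \colon L \to L/L_+ \cong L(0)$ is the canonical projection, is an algebra map $L \to \bfk$. Concretely, $\alpha$ restricts to $\alpha_0$ on $L(0)$ and vanishes on $L(n)$ for every $n \ge 1$.

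With this $\alpha$ in hand, Lemma~\ref{lem:coideal-sub-criterion} applies verbatim (using the $H$-simplicity of $L$) and shows that $\phi_{\alpha} := (\id_H \otimes \alpha) \circ \delta_L \colon L \to H$ is an injective homomorphism of left $H$-comodule algebras. It then remains only to check that $\phi_{\alpha}$ is homogeneous of degree $0$, which is exactly what upgrades the resulting coideal subalgebra to a homogeneous one. Since $L$ is a graded left $H$-comodule, the coaction is degree-preserving, so $\delta_L(L(n)) \subseteq (H \otimes L)(n) = \bigoplus_{i + j = n} H(i) \otimes L(j)$. Applying $\id_H \otimes \alpha$ annihilates every summand with $j \ge 1$, leaving only the $j = 0$ term $H(n) \otimes L(0)$; after the identification $H(n) \otimes \bfk \cong H(n)$ we obtain $\phi_{\alpha}(L(n)) \subseteq H(n)$. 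Therefore $\phi_{\alpha}$ is a graded injective homomorphism of graded left $H$-comodule algebras, and its image is a homogeneous left coideal subalgebra of $H$ isomorphic to $L$.

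I expect no deep obstacle: beyond the ungraded Lemma~\ref{lem:coideal-sub-criterion}, the content of the graded case is entirely degree bookkeeping. The one point deserving care is the very first step, namely that a character of $L(0)$ genuinely extends to an algebra character of $L$. This is where the grading is essential: it is precisely the existence of the augmentation ideal $L_+$ (equivalently, the decomposition $L = L(0) \oplus L_+$ with $L_+$ an ideal) that allows one to extend $\alpha_0$ by zero, and it is the very same vanishing of $\alpha$ on positive degrees that forces $\phi_{\alpha}$ to be homogeneous. Thus the two nontrivial requirements — being an algebra map and being graded — are governed by the same structural fact.
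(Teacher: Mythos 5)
Your proposal is correct and follows essentially the same route as the paper: extend the character of $L(0)$ to $L$ by composing with the projection $L \to L(0)$ (killing positive degrees), invoke Lemma~\ref{lem:coideal-sub-criterion}, and observe that vanishing in positive degrees forces $\phi_{\alpha}$ to be homogeneous. The paper states the last two points without detail, so your degree bookkeeping and the observation that $L_+$ is an ideal simply fill in what the paper calls obvious.
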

\begin{proof}
  We choose an algebra map $\alpha' : L(0) \to \bfk$ and define $\alpha : L \to \bfk$ to be the composition of $\alpha'$ and the projection $L \to L(0)$.
  Then we have an injective homomorphism $\phi_{\alpha} : L \to H$ of left $H$-comodule algebras by Lemma~\ref{lem:coideal-sub-criterion}.
  It is obvious from the definition that $\phi_{\alpha}$ preserves the grading. The proof is done.
\end{proof}

Now let $U$ be a pointed Hopf algebra and consider:

\begin{assumption}
  \label{assumptions-for-classification-strategy}
  The following two conditions are satisfied:
  \begin{enumerate}
  \item There is a Hopf 2-cocycle $\sigma$ on $H := \grc(U)$ such that $U \cong H^{\sigma}$.
  \item The cohomology group $\mathrm{H}^2(F)$ vanishes for all subgroups $F < \Grp(U)$.
  \end{enumerate}
\end{assumption}

The assumption (1) has been verified for many pointed Hopf algebras (see, {\it e.g.}, \cite{MR3133699} and references therein). The assumption (2) is restrictive but imposed to avoid the technical difficulty dealing with twisted group algebras.
Although we will not mention them in this paper, some ideas for remedying the assumption (2) were discussed in \cite{MR2989520,MR3264686,MR2860429}.

Now we make Assumption \ref{assumptions-for-classification-strategy}.
If $\mathcal{A}$ is a right $U$-simple left $U$-comodule algebra, then $\mathcal{A} = {}_{\sigma}\mathcal{L}$ for some right $H$-simple left $H$-comodule algebra $\mathcal{L}$ by Lemma~\ref{lem:properties-preserved-by-cocycle-deform}, where $H = \grc(U)$. We note that the 0-th component of $H$ is the group algebra of $\Grp(U)$. By Theorems~\ref{thm:exact-H-comod-over-group-alg} and~\ref{thm:H-simplicity-comod-alg}, and Assumption~\ref{assumptions-for-classification-strategy} (2), we find that the 0-th layer of the Loewy filtration of $\mathcal{L}$ is isomorphic to $\bfk F$ for some $F < \Grp(U)$. Hence, by Lemma~\ref{lem:coideal-sub-criterion-graded}, $\grL(\mathcal{L})$ is a homogeneous left coideal subalgebra of $H$. In view of the above discussion, we now have the following strategy to classify right $U$-simple left $U$-comodule algebras up to $U$-Morita equivalence:
\begin{description}
  \setlength{\itemsep}{5pt}
\item[Step 1] Classify all graded left coideal subalgebras of $H$.
  By Skryabin's result (see Subsection~\ref{subsec:Skryabin}), all of them are right $H$-simple.
\item[Step 2] For each coideal subalgebra $L$ obtained in Step 1, we view it as a left $\grc(H)$-comodule algebra via the canonical isomorphism $\grc(H) \cong H$, and then classify all left $H$-comodule algebras $\mathcal{L}$ such that $\grL(\mathcal{L}) \cong L$ as graded $\grc(H)$-comodule algebras. By Theorem~\ref{thm:H-simplicity-comod-alg}, all of them are right $H$-simple.
\item[Step 3] Now every indecomposable exact $H$-comodule algebra is $H$-Morita equivalent to one of comodule algebras given in Step 2. Determine whether they are $H$-Morita equivalent.
\item[Step 4] For each representative $\mathcal{L}$ of $H$-Morita equivalence class of left $H$-comodule algebras obtained in Step 2, compute ${}_{\sigma}\mathcal{L}$.
  By Lemma~\ref{lem:properties-preserved-by-cocycle-deform}, all of them are right $U$-simple.
  By Lemma~\ref{lem:cocycle-deform-H-Morita}, all of them are not mutually $U$-Morita equivalent.
\end{description}

In Section \ref{sec:exact-comod-alg-uqsl2}, we will apply the above strategy for $U = u_q(\mathfrak{sl}_2)$.
Even Step 1 is non-trivial in general, however, a classification of coideal subalgebras of $\grc(u_q(\mathfrak{sl}_2))$ is already known \cite{2024arXiv241010064S}.
In our experience, it is hard to find left $U$-comodule algebras $\mathcal{L}$ such that $\grL(\mathcal{L}) \cong L$ for some graded left coideal subalgebras $L$ of $H$.
This is the reason why we obtain left $U$-comodule algebra in two steps, Steps 2 and 4.

For Step 2, we will use Lemma~\ref{lem:lifting-skew-primitive-type-element}. The setting is as follows: Let $H$ be a coradically graded pointed Hopf algebra, and let $A$ be a left $H$-comodule algebra such that $A_0 = \bfk F$ for some subgroup $F$ of $\Grp(H)$. Let $g \in \Grp(H)$ be a grouplike element, let $x \in H$ and $y \in A_1/A_0$ be non-zero elements, and let $\chi : F \to \bfk^{\times}$ be a group homomorphism. We assume that $|F|$ is non-zero in $\bfk$, the element $g$ commutes with all elements of $F$, and the following equations are satisfied:
\begin{gather*}
  \Delta(x) = x \otimes 1 + g \otimes x,
  \quad \delta_{\grL(A)}(y) = x \otimes 1 + g \otimes y, \\
  a x = \chi(a) x,
  \quad a y = \chi(a) y a
  \quad (a \in F).
\end{gather*}

\begin{lemma}[{\cite[Lemma 5.5]{MR2678630}}]
  \label{lem:lifting-skew-primitive-type-element}
  Under the above assumptions, there exists an element $\tilde{y} \in A_1$ such that
  \begin{equation}
    \label{eq:lifting-skew-primitive-type-element}
    \Delta(\tilde{y}) = x \otimes 1 + g \otimes \tilde{y},
    \quad a \tilde{y} = \chi(a) \tilde{y} a \quad (a \in F)
    \quad \pi(\tilde{y}) = y,
  \end{equation}
  where $\pi : A_1 \to A_1 / A_0$ is the projection.
\end{lemma}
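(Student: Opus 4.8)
The plan is to construct $\tilde{y}$ in two stages: first arrange the correct coaction while ignoring the commutation relation, and then repair the commutation relation by averaging over $F$ without disturbing the coaction. For the first stage I would apply Lemma~\ref{lem:lifting-0} with $f$ taken to be the unit grouplike and $v_f = 1 \in A_0$. Since $\delta_{\grL(A)}(y) = x \otimes 1 + g \otimes y$ and $\delta_A(1) = 1 \otimes 1$, all of its hypotheses are met, so it produces an element $\hat{y} \in A_1$ with $\delta_A(\hat{y}) = x \otimes 1 + g \otimes \hat{y}$ and $\pi(\hat{y}) = y$. After this stage only the relation $a \tilde{y} = \chi(a) \tilde{y} a$ remains to be enforced.

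For the second stage I would set
\begin{equation*}
  \tilde{y} = \frac{1}{|F|} \sum_{a \in F} \chi(a)^{-1} \, a \, \hat{y} \, a^{-1},
\end{equation*}
which makes sense precisely because $|F|$ is invertible in $\bfk$. This is the projection of $\hat{y}$ onto the $\chi$-isotypic component for the conjugation action of $F$, so the required relation holds essentially by construction: reindexing the sum by $c = b a$ and using that $\chi$ is a homomorphism yields $b \tilde{y} b^{-1} = \chi(b) \tilde{y}$, that is, $b \tilde{y} = \chi(b) \tilde{y} b$, for every $b \in F$. The membership $\tilde{y} \in A_1$ is immediate since $A_1$ is stable under left and right multiplication by the grouplike elements $a, a^{-1} \in A_0$. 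Finally $\pi(\tilde{y}) = y$: because $\pi$ is a map of $A_0$-bimodules, $\pi(a \hat{y} a^{-1}) = a y a^{-1}$ in $\grL(A)$, and the hypothesis $a y = \chi(a) y a$ gives $a y a^{-1} = \chi(a) y$, so each summand contributes $\chi(a)^{-1} \chi(a) y = y$ and the normalized sum is $y$.

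The step I expect to be the crux is verifying that averaging does not spoil the coaction, i.e. that $\delta_A(\tilde{y}) = x \otimes 1 + g \otimes \tilde{y}$. Applying the algebra map $\delta_A$ to a single summand and using $\delta_A(a) = a \otimes a$ gives
\begin{equation*}
  \delta_A(a \hat{y} a^{-1}) = a x a^{-1} \otimes 1 + a g a^{-1} \otimes a \hat{y} a^{-1}.
\end{equation*}
Here the two commutation hypotheses do exactly the work needed: $g$ commutes with $F$, so $a g a^{-1} = g$, while the relation on $x$ scales it under conjugation, $a x a^{-1} = \chi(a) x$. Consequently the weighted summand $\chi(a)^{-1} a \hat{y} a^{-1}$ has coaction $x \otimes 1 + g \otimes (\chi(a)^{-1} a \hat{y} a^{-1})$, with the \emph{same} leading term $x \otimes 1$ for every $a \in F$; averaging over $F$ then returns $x \otimes 1 + g \otimes \tilde{y}$. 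The only delicate point is this uniformity of the leading term across the sum, which is exactly the compatibility that makes the two given commutation relations consistent with the prescribed coaction.
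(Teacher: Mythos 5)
Your proof is correct, and it shares the paper's two main ingredients: Lemma~\ref{lem:lifting-0} (applied exactly as you say, with $f = 1$ and $v_f = 1_A$) to produce a lift $\hat{y}$ with the exact coaction, and the invertibility of $|F|$ in $\bfk$ to repair $F$-equivariance. The difference is in how the second step is carried out. The paper makes $A_1$ a $\bfk F$-module by plain conjugation $a \triangleright z = a z a^{-1}$, and since this action rescales the leading term $x \otimes 1$ by $\chi(a)$, it must work with the subspaces $\mathcal{P} = \{ z \in A_1 \mid \delta_A(z) - g \otimes z \in \bfk (x \otimes 1) \}$ and $\mathcal{Q} = \{ z \in A_1 \mid \pi(z) \in \bfk y \}$, defined only up to scalar; it then invokes Maschke abstractly to split the $\bfk F$-module surjection $\mathcal{P} \cap \mathcal{Q} \to \bfk y$, and finally has to argue that the coefficient $\mu$ of $x \otimes 1$ in the coaction of the resulting eigenvector is nonzero before renormalizing. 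Your $\chi^{-1}$-weighted average $\tilde{y} = |F|^{-1} \sum_{a \in F} \chi(a)^{-1} a \hat{y} a^{-1}$ is the explicit form of the same Maschke projection, but attached to the twisted action $z \mapsto \chi(a)^{-1} a z a^{-1}$, which (by $a x a^{-1} = \chi(a) x$ and $a g a^{-1} = g$) preserves the \emph{exact} conditions $\delta_A(z) = x \otimes 1 + g \otimes z$ and $\pi(z) = y$ on the nose. This buys a shorter argument: the set of exact lifts is stable under the twisted action, averaging produces a fixed point, and the paper's normalization step ($\mu \neq 0$) disappears entirely.
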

\begin{proof}
  We provide a detailed proof, as the one given in \cite{MR2678630} appears to require some additional clarification.
  We make $A_1$ a left $\bfk F$-module by the action given by $a \triangleright v = a v a^{-1}$ for $a \in F$ and $v \in A_1$.
  Then $A_0$ is a submodule of $A_1$, and thus $A_1 / A_0$ has a canonical structure of a left $\bfk F$-module. Moreover, $\bfk y$ is a one-dimensional submodule of $A_1/A_0$.
  It is easy to see that the subspaces
  \begin{equation*}
    \mathcal{P} = \{ z \in A_1 \mid \delta_A(z) - g \otimes z \in \bfk (x \otimes 1) \},
    \quad \mathcal{Q} = \{ z \in A_1 \mid \pi(z) \in \bfk y \}
  \end{equation*}
  are stable under the action of $\bfk F$. Lemma~\ref{lem:lifting-0} says that $\pi(z) = y$ for some $z \in \mathcal{R} := \mathcal{P} \cap \mathcal{Q}$. Thus $\pi$ induces a surjective homomorphism $\pi' : \mathcal{R} \to \bfk y$ of $\bfk F$-modules. By the assumption on the order of $F$, the map $\pi'$ splits as a homomorphism of $\bfk F$-modules. We let $s : \bfk y \to \mathcal{R}$ be the section and set $y' = s(y)$. Then, we have $a y' a^{-1} = a \triangleright y' = s(a \triangleright y) = \chi(a) y'$ for all $a \in F$. By definition, there is an element $\mu \in \bfk$ such that $\delta_A(y') - g \otimes y' = \mu x \otimes 1$.
  If $\mu = 0$, then $y' \in A_0$ and thus we have $y = \pi(y') = 0$, a contradiction. Therefore $\mu \ne 0$. The element $\tilde{y} = \mu^{-1} y'$ meets the requirements. The proof is done.
\end{proof}

\section{Exact comodule algebras over $u_q(\mathfrak{sl}_2)$}
\label{sec:exact-comod-alg-uqsl2}

\subsection{The Hopf algebras $u_q(\mathfrak{sl}_2)$ and $\gr(u_q(\mathfrak{sl}_2))$}

Throughout this section, $\bfk$ is an algebraically closed field of characteristic zero. We also fix an odd integer $N > 1$ and a root of unity $q \in \bfk$ of order $N$.
For an indeterminate $t$ and integers $m$ and $r$ with $m \ge r \ge 0$, we define
\begin{equation*}
  (m)_t = \sum_{i = 0}^{m-1} t^i,
  \quad (m)_t! = \prod_{i = 1}^{m} (i)_t
  \quad \text{and} \quad
  \binom{m}{r}_{\!\!t} = \frac{(m)_t!}{(r)_t! (m-r)_t!}
\end{equation*}
with convention $(0)_t = 0$ and $(0)_t! = 1$. Since they are in fact Laurent polynomials of $t$, we may substitute $t$ for any non-zero element of $\bfk$. The following variant of the binomial formula is used frequently: If $A$ and $B$ are elements of the same algebra subject to $B A = \lambda A B$ for some $\lambda \in \bfk^{\times}$, then we have
\begin{equation*}
  (A + B)^m = \sum_{r = 0}^m \binom{m}{r}_{\!\!\lambda} A^r B^{m-r}
\end{equation*}
for all integers $m \ge 0$. In particular, we have $(A + B)^N = A^N + B^N$ when $\lambda$ is a root of unity of order $N$. We refer to these identities as the {\em $q$-binomial formula}, even when the parameter $\lambda$ is not $q$.

The Hopf algebra $u_q := u_q(\mathfrak{sl}_2)$ is defined as follows: As an algebra, it is generated by $E$, $F$ and $K$ subject to the relations $K E = q^2 E K$, $K F = q^{-2} F K$, $K^N = 1$, $E^N = F^N = 0$ and $E F - F E = (K-K^{-1})/(q-q^{-1})$. The comultiplication $\Delta$, the counit $\varepsilon$ and the antipode $S$ are given by
\begin{gather*}
  \Delta(E) = E \otimes K + 1 \otimes E,
  \quad \Delta(F) = F \otimes 1 + K^{-1} \otimes F,
  \quad \Delta(K) = K \otimes K, \\
  \varepsilon(K) = 1, \ \varepsilon(E) = \varepsilon(F) = 0,
  \ S(K) = K^{-1},
  \ S(E) = -EK^{-1},
  \ S(F) = -KF
\end{gather*}
on the generators. We introduce $\tilde{E} := (q-q^{-1}) K^{-1} E$. It is easy to see that $u_q$ is generated by $\tilde{E}$, $F$ and $K$, and we have
\begin{equation*}
  \tilde{E}{}^N = 0,
  \ K \tilde{E} = q^2 \tilde{E} K,
  \ \tilde{E} F - q^2 F \tilde{E} = 1 - K^{-2},
  \ \Delta(\tilde{E}) = \tilde{E} \otimes 1 + K^{-1} \otimes \tilde{E}.
\end{equation*}

It is well-known that $u_q$ is a pointed Hopf algebra whose coradical is the subalgebra generated by $K$, and the coradical filtration of $u_q$ agrees with the algebra filtration given by $\deg(E) = \deg(F) = 1$ and $\deg(K) = 0$. We always understand $\gr(u_q)$ as the associated graded Hopf algebra of $u_q$ with respect to the coradical filtration. The Hopf algebra $\gr(u_q)$ is generated by $E$, $F$ and $K$ subject to the same relations as $u_q$ but with the last one replaced with $E F - F E = 0$. For notational reasons, we denote by $x$, $y$ and $g$ the element $(q-q^{-1}) K^{-1} E$, $F$ and $K$ of $\gr(u_q)$, respectively. Then the Hopf algebra $\gr(u_q)$ can alternatively be defined as the algebra generated by $x$, $y$ and $g$ subject to the relations
\begin{equation}
  \label{eq:gr-uq-sl2-relations}
  x^N = y^N = 0,
  \quad g^N = 1,
  \quad g x = q^2 x g,
  \quad g y = q^{-2} y g,
  \quad x y = q^2 y x
\end{equation}
endowed with the comultiplication determined by
\begin{equation}
  \label{eq:gr-uq-sl2-comultiplication}
  \Delta(x) = x \otimes 1 + g^{-1} \otimes x, \quad
  \Delta(y) = y \otimes 1 + g^{-1} \otimes y, \quad
  \Delta(g) = g \otimes g.
\end{equation}

\begin{lemma}
  \label{lem:gr-uq-sl2-cocycle-deform}
  There is a Hopf 2-cocycle $\sigma$ on $\gr(u_q)$ given by
  \begin{equation}
    \label{eq:gr-uq-sl2-cocycle-deform}
    \sigma(x^{i_1} y^{j_1} g^{k_1}, x^{i_2} y^{j_2} g^{k_2}) =
    \delta_{i_1, j_2} \delta_{j_1, 0} \delta_{i_2, 0} (i_1)_{q^2}! \, q^{-2 i_1 k_1}.
  \end{equation}
  for $i_1, j_1, k_1, i_2, j_2, k_2 \in \{ 0, 1, \cdots, N - 1 \}$.
  The Hopf algebra $\gr(u_q)^{\sigma}$ is isomorphic to $u_q$ via the algebra map
  \begin{equation}
    \label{eq:gr-uq-sl2-cocycle-deform-2}
    \gr(u_q)^{\sigma} \to u_q, \quad x \mapsto \tilde{E}, \quad y \mapsto F, \quad g \mapsto K.
  \end{equation}
\end{lemma}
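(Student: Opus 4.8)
The plan is to establish the two assertions separately: that $\sigma$ is a Hopf $2$-cocycle, and that \eqref{eq:gr-uq-sl2-cocycle-deform-2} is an isomorphism of Hopf algebras. For the first assertion, the cocycle identity \eqref{eq:Hopf-2-cocycle}, the normalization $\sigma(-,1)=\varepsilon=\sigma(1,-)$, and convolution-invertibility can be checked directly on the PBW basis $\{x^i y^j g^k\}$; this is the computation already carried out in \cite{2010arXiv1010.4976G}, which I would cite rather than reproduce. The normalization and the vanishing pattern are immediate from \eqref{eq:gr-uq-sl2-cocycle-deform}: $\sigma$ vanishes unless its first argument lies in the sub-bialgebra $T_+$ generated by $x,g$ and its second argument lies in the sub-bialgebra $T_-$ generated by $y,g$, with matching $x$- and $y$-degrees. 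In particular one reads off that $\sigma(g^a,-)=\varepsilon=\sigma(-,g^b)$, that $\sigma$ restricts to $\varepsilon\otimes\varepsilon$ on $T_+\otimes T_+$ and on $T_-\otimes T_-$, and that the same holds for $\sigma^{-1}$; these facts carry all the weight in the second part.

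For the isomorphism, I would compute the deformed product $a*^\sigma b=\sigma(a_{(1)},b_{(1)})\,a_{(2)}b_{(2)}\,\sigma^{-1}(a_{(3)},b_{(3)})$ on the generators, using the iterated coproducts of the skew-primitives, e.g. $\Delta^{(2)}(x)=x\otimes 1\otimes 1+g^{-1}\otimes x\otimes 1+g^{-1}\otimes g^{-1}\otimes x$ and similarly for $y$. Since $\sigma$ and $\sigma^{-1}$ are trivial whenever a grouplike enters either slot, it follows at once that $g*^\sigma z=gz$ and $z*^\sigma g=zg$, whence $g^{*N}=g^N=1$ and the commutation relations $g*^\sigma x=q^2\,x*^\sigma g$ and $g*^\sigma y=q^{-2}\,y*^\sigma g$ hold. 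Because $*^\sigma$ agrees with the original product on each of $T_\pm$ (as $\sigma$ and $\sigma^{-1}$ restrict to $\varepsilon\otimes\varepsilon$ there), we get $x^{*N}=x^N=0$ and $y^{*N}=y^N=0$. The only genuinely new relation comes from the mixed products: expanding the nine Sweedler terms and discarding those killed by the support of $\sigma$, and using the low-degree values $\sigma(x,y)=1$ and $\sigma^{-1}(x,y)=-1$, $\sigma^{-1}(y,x)=0$ (which follow from $\sigma*\sigma^{-1}=\varepsilon\otimes\varepsilon$), I expect to obtain $x*^\sigma y=xy+1-g^{-2}$ and $y*^\sigma x=yx$.

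Combining these with the relation $xy=q^2 yx$ from \eqref{eq:gr-uq-sl2-relations} gives $x*^\sigma y-q^2\,y*^\sigma x=1-g^{-2}$, which matches the defining relation $\tilde{E} F-q^2 F\tilde{E}=1-K^{-2}$ of $u_q$. Thus under $x\mapsto\tilde{E}$, $y\mapsto F$, $g\mapsto K$ the generators of $\gr(u_q)^\sigma$ satisfy exactly the defining relations of $u_q$, so \eqref{eq:gr-uq-sl2-cocycle-deform-2} is a well-defined algebra homomorphism. It is a coalgebra map because cocycle deformation leaves the coalgebra structure unchanged and the coproducts of $\tilde{E},F,K$ coincide with those of $x,y,g$; it is bijective since both algebras have dimension $N^3$ and it sends generators to generators. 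A bijective bialgebra map between finite-dimensional Hopf algebras is automatically a Hopf algebra isomorphism, which finishes the argument.

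The main obstacle is the verification that $\sigma$ satisfies the cocycle identity \eqref{eq:Hopf-2-cocycle}: although a finite check, it is lengthy because the iterated coproducts $\Delta(x^i)$ and $\Delta(y^j)$ involve $q$-binomial coefficients, so I would lean on the explicit computation in \cite{2010arXiv1010.4976G}. A secondary, more manageable difficulty is the bookkeeping of the triple-coproduct expansions and of the values of $\sigma^{-1}$ on low-degree arguments; here the support conditions on $\sigma$ are decisive, since they annihilate all but one or two of the nine terms in each product.
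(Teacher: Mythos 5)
Your proposal is correct and follows essentially the same route as the paper's Appendix~C proof: the support properties of $\sigma$ and $\sigma^{-1}$ (triviality on $\langle g,x\rangle^{\otimes 2}$, $\langle g,y\rangle^{\otimes 2}$, and whenever a power of $g$ occupies a slot), the resulting computations $g\ast x=gx$, $x^{\ast N}=x^N=0$, $x\ast y = xy+1-g^{-2}$, $y\ast x=yx$, and the surjectivity-plus-dimension-count conclusion all match the paper's argument step for step. The only divergence is that for the cocycle property you cite \cite{2010arXiv1010.4976G}, whereas the paper gives a self-contained verification by realizing $\sigma=\exp_{q^2}(\xi_1\otimes\xi_2)$ for explicit skew-primitive functionals $\xi_1,\xi_2\in H^*$ and using the $q$-exponential identity; note that invoking \cite{2010arXiv1010.4976G} rigorously still requires this identification (the explicit basis formula \eqref{eq:gr-uq-sl2-cocycle-deform} is not itself verified there, only the exponential construction), which is a small but genuine extra computation that the paper carries out.
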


The Hopf 2-cocycle $\sigma$ of this lemma is obtained by a general method for constructing Hopf 2-cocycles on pointed Hopf algebras given in \cite{2010arXiv1010.4976G}. For reader's convenience, we provide the detail in Appendix~\ref{appendix:detail-Hopf-2-cocycle}.

\subsection{Step 1. List graded coideal subalgebras of $\gr(u_q(\mathfrak{sl}_2))$}

It is well-known that $\mathrm{H}^2(G) = 0$ for a finite cyclic group $G$. Since $\Grp(u_q)$ is the cyclic group of order $N$ (generated by $K$), Assumption \ref{assumptions-for-classification-strategy} is fulfilled in our case. Now we start deploying the classification strategy explained in Subsection~\ref{subsec:classification-strategy}.

The first step is to list all the graded left coideal subalgebras of $\gr(u_q)$. We have already classified right coideal subalgebras of $\gr(u_q)$ in \cite{2024arXiv241010064S}. Since left coideal subalgebras and right coideal subalgebras of a Hopf algebra are in bijection via the antipode, we know from \cite[Example 3.5]{2024arXiv241010064S} that a homogeneous left coideal subalgebra of $\gr(u_q)$ is one of subalgebras
\begin{equation}
  \label{eq:homogeneous-CSA}
  \begin{gathered}
    L_0(r) := \langle g^{N/r} \rangle,
    \quad L_1(r) := \langle g^{N/r}, x \rangle,
    \quad L_2(r) := \langle g^{N/r}, y \rangle, \\
    \quad L_3(r) := \langle g^{N/r}, x, y \rangle
    \quad \text{or}
    \quad L_4(\alpha, \beta) := \langle \alpha x + \beta y \rangle
  \end{gathered}
\end{equation}
of $\gr(u_q)$, where $r$ is a positive divisor of $N$, and $\alpha$ and $\beta$ are elements of $\bfk$ with at least one of $\alpha$ and $\beta$ non-zero.

The above list have duplicates. For example, we have $L_1(1) = L_4(1, 0)$, $L_2(1) = L_4(0, 1)$ and $L_4(\alpha, \beta) = L_4(\alpha', \beta')$ if $\alpha \beta' = \alpha' \beta$. We do not need to worry about these duplicates now, as they will be resolved when we discuss $\gr(u_q)$-Morita equivalence at the later stage of the classification.

\subsection{Step 2. Find liftings as comodule algebras over $\gr(u_q(\mathfrak{sl}_2))$}

Let $L$ be a graded left coideal subalgebra of $H := \gr(u_q)$. Following the terminology used in the study of pointed Hopf algebras, by a {\em lifting} of $L$, we mean a left $H$-comodule algebra $\mathcal{L}$ such that $\gr(\mathcal{L}) \cong L$ as graded left $H$-comodule algebras, where $\gr$ is taken with respect to the Loewy filtration. In this subsection, for each coideal subalgebras in the list \eqref{eq:homogeneous-CSA}, we find their liftings. We first introduce several families of left $H$-comodule algebras:

\begin{definition}
  \label{def:list-of-liftings}
  For a positive divisor $r$ of $N$ and parameters $\alpha, \beta, \xi, \zeta, \eta \in \bfk$ with $(\alpha, \beta) \ne (0,0)$, we introduce the following algebras:
  \begin{enumerate}
    \setcounter{enumi}{-1}
  \item The algebra $\mathscr{L}_0(r)$ is generated by $G$ subject to
    \begin{equation*}
      G^r = 1.
    \end{equation*}
  \item The algebra $\mathscr{L}_1(r; \xi)$ is generated by $G$ and $X$ subject to
    \begin{equation*}
      G^r = 1, \quad X^N = \xi, \quad G X = q^{2N/r} X G.
    \end{equation*}
  \item The algebra $\mathscr{L}_2(r; \zeta)$ is generated by $G$ and $Y$ subject to
    \begin{equation*}
      G^r = 1, \quad
      Y^N = \zeta, \quad G Y = q^{-2N/r} Y G.
    \end{equation*}
  \item The algebra $\mathscr{L}_3(r; \xi, \zeta)$ is generated by $G$, $X$ and $Y$ subject to the relations for $\mathscr{L}_1(r; \xi)$ and $\mathscr{L}_2(r; \zeta)$, and
    \begin{equation*}
      X Y - q^2 Y X = 0.
    \end{equation*}
  \item[($3'$)] The algebra $\mathscr{L}_3(N; \xi, \zeta, \eta)$ is generated by $G$, $X$ and $Y$ subject to the same relations as $\mathscr{L}_3(N; \xi, \zeta)$ but with the last one replaced with
    \begin{equation*}
      X Y - q^2 Y X = - \eta G^{-2}.
    \end{equation*}
  \item The algebra $\mathscr{L}_4(\alpha, \beta; \xi)$ is generated by $W$ subject to
    \begin{equation*}
      W^N = \xi.
    \end{equation*}
  \end{enumerate}
  The left $H$-comodule structures of these algebras are given by
  \begin{equation}
    \label{eq:gr-uq-coaction}
    \begin{gathered}
      \delta(X) = x \otimes 1 + g^{-1} \otimes X,
      \quad \delta(Y) = y \otimes 1 + g^{-1} \otimes Y, \\
      \delta(W) = (\alpha x + \beta y) \otimes 1 + g^{-1} \otimes W,
      \quad \delta(G) = g^{N/r} \otimes G
    \end{gathered}
  \end{equation}
  on the generators, where $r$ is taken to be $N$ for $\mathscr{L}_3(N; \xi, \zeta, \eta)$.
\end{definition}

With the help of the $q$-binomial formula, one can verify that \eqref{eq:gr-uq-coaction} indeed make them left $H$-comodule algebras.
By Theorem~\ref{thm:H-simplicity-comod-alg}, all the left $H$-comodule algebras in Definition~\ref{def:list-of-liftings} are right $H$-simple.

\begin{remark}
  \label{rem:Mombelli-list-gr-uq-comod-alg}
  Mombelli presented a list of right $H$-simple left $H$-comodule algebras up to $H$-Morita equivalence in \cite[\S8.4]{MR2678630}. With his notation, we have
  \begin{gather*}
    \mathscr{L}_0(d) \cong \bfk C_d,
    \quad \mathscr{L}_1(d; \xi) \cong \mathcal{A}_0(d, \xi),
    \quad \mathscr{L}_2(d; \xi) \cong \mathcal{A}_1(d, \xi), \\
    \mathscr{L}_4(\alpha, \beta; \xi)
    \cong \mathcal{A}(\xi \beta^{-N}, \alpha \beta^{-1})
    \quad (\beta \ne 0)
    \quad \text{and}
    \quad \mathscr{L}_4(\alpha, 0; \xi) \cong \mathcal{A}_0(1, \xi).
  \end{gather*}
  The algebras of type $\mathscr{L}_3$ do not appear in Mombelli's list. In Step 4 (Subsection~\ref{subsec:Step-4}), we will compute the cocycle deformation ${}_{\sigma}\mathcal{L}$ for each left $H$-comodule algebra $\mathcal{L}$ of Definition \ref{def:list-of-liftings}. The result will show
  \begin{equation*}
    {}_{\sigma}(\mathscr{L}_3(r; \xi, \zeta)) \cong \mathcal{B}(r, \zeta, \xi)
    \quad \text{and} \quad
    {}_{\sigma}(\mathscr{L}_3(N; \xi, \zeta, \eta))
    \cong \mathcal{C}(N, \zeta, \xi, -\eta, N-2)
  \end{equation*}
  with the notation of \cite[\S8.4]{MR2678630}. In fact, the algebras of type $\mathcal{B}$ and $\mathcal{C}$ in \cite[\S8.4]{MR2678630} are not well-defined as left $H$-comodule algebras, but well-defined as left $u_q$-comodule algebras.
\end{remark}

\begin{remark}
  The minus sign of the right hand side of one of defining relations of the algebra $\mathscr{L}_3(N; \xi, \zeta, \eta)$, $X Y - q^2 Y X = -\eta G^{-2}$, is attached so that the cocycle deformation of $\mathscr{L}_3(N; 0, 0, 1)$ becomes $u_q(\mathfrak{sl}_2)$.
\end{remark}

Our task is to show that the above list exhaust all liftings.
We first examine the coideal subalgebra $L_3(N)$ ($=H$), which is in fact the most complicated case.

\begin{lemma}
  \label{lem:lifting-L3N}
  A lifting of $L_3(N)$ is isomorphic to $\mathscr{L}_3(N; \xi, \zeta, \eta)$ for some $\xi, \zeta, \eta \in \bfk$.
\end{lemma}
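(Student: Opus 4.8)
The plan is to take an arbitrary lifting $\mathcal{L}$ of $L_3(N) = H$ and reconstruct the presentation of $\mathscr{L}_3(N; \xi, \zeta, \eta)$ generator by generator using the lifting lemma (Lemma~\ref{lem:lifting-skew-primitive-type-element}) for the skew-primitive generators $x$ and $y$, together with the grouplike generator $g$. By definition $\grL(\mathcal{L}) \cong L_3(N) = H$ as graded left $H$-comodule algebras, so the 0-th Loewy layer $\mathcal{L}_0$ is $\bfk\Grp(u_q) = \bfk\langle g \rangle$. Thus there is a grouplike-type generator $G \in \mathcal{L}_0$ with $G^N = 1$ and $\delta(G) = g \otimes G$, realizing the $\mathscr{L}_0$-part. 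Next, since the associated graded contains the skew-primitive elements $x, y \in \gr_1$, I would apply Lemma~\ref{lem:lifting-skew-primitive-type-element} (the relevant hypotheses---$g$ central in $F = \langle g \rangle$, the character conditions $g x = q^2 x g$ and $g y = q^{-2} y g$ coming from \eqref{eq:gr-uq-sl2-relations}, and $|F| = N$ invertible in $\bfk$) to produce lifts $X, Y \in \mathcal{L}_1$ with
\begin{gather*}
  \delta(X) = x \otimes 1 + g^{-1} \otimes X, \quad \delta(Y) = y \otimes 1 + g^{-1} \otimes Y, \\
  G X = q^{2} X G, \quad G Y = q^{-2} Y G.
\end{gather*}

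Having fixed the generators, the task is to determine the defining relations. First I would analyze $X^N$ and $Y^N$. The key point is that these must be coinvariant: since $\delta(X) = x \otimes 1 + g^{-1}\otimes X$ and $x^N = 0$ in $H$ with $g x = q^2 xg$, the $q$-binomial formula gives $\delta(X^N) = x^N \otimes 1 + g^{-N}\otimes X^N = g^{-N}\otimes X^N = 1 \otimes X^N$, so $X^N \in {}^{\mathrm{co}\,H}\mathcal{L}$. Because $\grL(\mathcal{L}) \cong H$ has one-dimensional coinvariants, $\mathcal{L}$ has trivial coinvariants, forcing $X^N = \xi$ for some scalar $\xi \in \bfk$, and similarly $Y^N = \zeta$. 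The genuinely novel relation is the commutation $XY - q^2 YX$. I would compute its coaction: using the two coactions above and the relation $xy = q^2 yx$ in $H$, the principal (degree-2) terms cancel, and one finds $\delta(XY - q^2 YX) = g^{-2}\otimes(XY - q^2 YX)$ up to lower-filtration contributions, which after the same coinvariance argument shows $XY - q^2 YX = -\eta\, G^{-2}$ for some $\eta \in \bfk$ (the $G^{-2}$ appearing because the element lives in the $g^{-2}$-isotypic component rather than the coinvariants). The sign is a normalization choice, as the surrounding remark explains.

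The main obstacle I expect is the careful bookkeeping in this last commutation computation: one must verify that no other terms (e.g.\ scalar multiples of $G^{-1}X$, $G^{-1}Y$, or other monomials) can appear, which requires using that $\grL(\mathcal{L}) \cong H$ precisely to pin down the dimension and the Loewy-graded structure, together with the explicit coaction to isolate the isotypic component in which $XY - q^2 YX$ must sit. Once the three relations $X^N = \xi$, $Y^N = \zeta$, $XY - q^2 YX = -\eta G^{-2}$ (plus $G^N = 1$ and the $G$-commutation relations) are established, a dimension count finishes the proof: the surjection from the abstract algebra $\mathscr{L}_3(N; \xi, \zeta, \eta)$ onto $\mathcal{L}$ sending generators to generators is well-defined by construction, and since both sides have the same dimension $N^3$ (the abstract algebra has PBW-type basis $\{X^i Y^j G^k\}$ and $\dim \mathcal{L} = \dim \grL(\mathcal{L}) = \dim H = N^3$), the surjection is an isomorphism of left $H$-comodule algebras.
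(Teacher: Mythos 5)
Your proposal follows essentially the same route as the paper's proof: identify $\mathcal{L}_0$ with $\bfk\langle g\rangle$, lift $x$ and $y$ using Lemma~\ref{lem:lifting-skew-primitive-type-element}, apply the $q$-binomial formula to show $X^N$ and $Y^N$ are coinvariant hence scalars, locate $X Y - q^2 Y X$ in the $g^{-2}$-isotypic component, and finish by comparing with the abstract algebra $\mathscr{L}_3(N;\xi,\zeta,\eta)$ via a dimension count. Two points of comparison are worth making. First, your hedge ``up to lower-filtration contributions'' (and the ensuing worry about stray terms like $G^{-1}X$ or $G^{-1}Y$) is unnecessary: the coaction computation is exact, because the cross terms are $(x g^{-1} - q^2 g^{-1} x) \otimes Y$ and $(g^{-1} y - q^2 y g^{-1}) \otimes X$, which vanish by the relations \eqref{eq:gr-uq-sl2-relations}, and $(x y - q^2 y x) \otimes 1 = 0$; so $\delta(z) = g^{-2} \otimes z$ on the nose for $z = XY - q^2 YX$, which forces $z \in \delta^{-1}(H_0 \otimes \mathcal{L}) = \mathcal{L}_0$, and the $g^{-2}$-isotypic part of $\mathcal{L}_0 = \mathrm{span}\{1, G, \dots, G^{N-1}\}$ is exactly $\bfk G^{-2}$. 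Second, and this is the only real gap, your final step asserts that the map $\mathscr{L}_3(N;\xi,\zeta,\eta) \to \mathcal{L}$ is a surjection ``by construction''; but surjectivity is precisely the claim that $G$, $X$, $Y$ generate $\mathcal{L}$ as an algebra, which is not automatic and requires the (standard, but not free) filtered-algebra argument that lifts of algebra generators of $\gr(\mathcal{L})$ generate $\mathcal{L}$. The paper sidesteps this entirely by arguing in the opposite direction: the kernel of the homomorphism is a proper $H$-costable right ideal, and $\mathscr{L}_3(N;\xi,\zeta,\eta)$ is right $H$-simple by Theorem~\ref{thm:H-simplicity-comod-alg}, so the map is injective, and equality of dimensions then gives bijectivity. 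Either patch completes your argument, but as written the surjectivity claim is unjustified, and the injectivity route is the cleaner fix.
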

\begin{proof}
  Let $\mathcal{L}$ be a lifting of $L_3(N)$, and let $\mathcal{L}_i$ ($i = 0, 1, \cdots$) be the $i$-th layer of the Loewy filtration of $\mathcal{L}$. By definition, we may, and do, identify
  \begin{equation*}
    \mathcal{L}_{i}/\mathcal{L}_{i-1}
    = \mathrm{span}\{ g^{t} x^{u} y^{v} \mid t, u, v = 0, \cdots, N - 1; u + v = i \}.
  \end{equation*}
  In particular, $\mathcal{L}_0$ is the subalgebra of $L_3(N)$ generated by $g$. To avoid confusion, we denote by $G$ the element of $\mathcal{L}_0$ corresponding to $g \in L_3(N)$. Let $\pi : \mathcal{L}_1 \to \mathcal{L}_1/\mathcal{L}_0$ be the projection. By Lemma~\ref{lem:lifting-skew-primitive-type-element}, there are elements $\tilde{X}$ and $\tilde{Y}$ of $\mathcal{L}$ such that
  \begin{gather*}
    \delta_{\mathcal{L}}(\tilde{X}) = x \otimes 1 + g^{-1} \otimes \tilde{X},
    \quad G \tilde{X} = q^{2} \tilde{X} G,
    \quad \pi(\tilde{X}) = x, \\
    \delta_{\mathcal{L}}(\tilde{Y}) = y \otimes 1 + g^{-1} \otimes \tilde{Y},
    \quad G \tilde{Y} = q^{-2} \tilde{Y} G,
    \quad \pi(\tilde{Y}) = y.
  \end{gather*}
  By the $q$-binomial formula, we have
  \begin{equation*}
    \delta_{\mathcal{L}}(\tilde{X}^N) = (x \otimes 1 + g^{-1} \otimes \tilde{X})^N
    = (x \otimes 1)^N + (g^{-1} \otimes \tilde{X})^N = 1 \otimes \tilde{X}^N.
  \end{equation*}
  This means that $\tilde{X}^N$ belongs to the space of coinvariants of $\mathcal{L}$, which is spanned by the unit of $\mathcal{L}$. Hence $\tilde{X}^N = \xi$ for some $\xi \in \bfk$. By the same argument, we have $\tilde{Y}^N = \zeta$ for some $\zeta \in \bfk$. Now we consider the subspace
  \begin{equation}
    \label{eq:lifting-L3N}
    V := \{ v \in \mathcal{L} \mid \delta_{\mathcal{L}}(v) = g^{-2} \otimes v \}
  \end{equation}
  of $\mathcal{L}_0$. Since $\mathcal{L}_0$ is the subalgebra generated by $G$, it is easy to see that $V$ is spanned by $G^{-2}$.
  The element $z := \tilde{X} \tilde{Y} - q^2 \tilde{Y} \tilde{X}$ belongs to $V$. Indeed,
  \begin{equation*}
    \delta_{\mathcal{L}}(z) = (x y - q^2 y x) \otimes 1 + g^{-2} \otimes (\tilde{X}\tilde{Y} - q^2 \tilde{Y} \tilde{X}) = g^{-2} \otimes z.
  \end{equation*}
  Thus $z = -\eta G^{-2}$ for some $\eta \in \bfk$. We now obtain a homomorphism
  \begin{equation*}
    \phi : \mathscr{L}_3(N; \xi, \zeta, \eta) \to \mathcal{L},
    \quad X \mapsto \tilde{X},
    \quad Y \mapsto \tilde{Y},
    \quad G \mapsto G
  \end{equation*}
  of left $\gr(u_q)$-comodule algebras. Since $\mathscr{L}_3(N; \xi, \zeta, \eta)$ is right $\gr(u_q)$-simple, $\phi$ is injective. Since the source and the target of $\phi$ have the same dimension, $\phi$ is in fact an isomorphism. The proof is done.
\end{proof}

If $\mathcal{L}$ is a lifting of $L_3(r)$ with $r < N$, then we may identify
\begin{equation*}
  \mathcal{L}_i/\mathcal{L}_{i-1}
  = \mathrm{span}\{ (g^{N/r})^t x^{u} y^{v} \mid r, s, t = 0, \cdots, N - 1; u + v = i \}
\end{equation*}
and take $G \in \mathcal{L}_0$ to be the element corresponding to $g^{N/r} \in H$ as in the above proof of Lemma~\ref{lem:lifting-L3N}. The classification of liftings of $L_3(r)$ is completed along the almost same way as above. The difference to the case where $r = N$ is that the subspace $V$, defined by \eqref{eq:lifting-L3N}, is zero when $r < N$. Due to this, there is no room for the parameter $\eta$ to appear, and the result is as follows:

\begin{lemma}
  Let $r$ be a positive divisor of $N$ with $r < N$. Then a lifting of $L_3(r)$ is isomorphic to $\mathscr{L}_3(r; \xi, \zeta)$ for some $\xi, \zeta \in \bfk$.
\end{lemma}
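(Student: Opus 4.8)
The plan is to follow the proof of Lemma~\ref{lem:lifting-L3N} almost verbatim and isolate the single place where the hypothesis $r < N$ alters the conclusion. Let $\mathcal{L}$ be a lifting of $L_3(r)$ with Loewy filtration $\{\mathcal{L}_i\}_{i \ge 0}$, identify $\mathcal{L}_i/\mathcal{L}_{i-1}$ with the span of the $(g^{N/r})^t x^u y^v$ with $u+v=i$, and let $G \in \mathcal{L}_0$ be the element corresponding to $g^{N/r}$, so that $G^r = 1$ and $\delta_{\mathcal{L}}(G) = g^{N/r} \otimes G$. Applying Lemma~\ref{lem:lifting-skew-primitive-type-element} to the skew-primitive generators $x$ and $y$, exactly as in the case $r = N$, I obtain elements $\tilde{X}, \tilde{Y} \in \mathcal{L}_1$ with
\begin{gather*}
  \delta_{\mathcal{L}}(\tilde{X}) = x \otimes 1 + g^{-1} \otimes \tilde{X}, \quad
  G \tilde{X} = q^{2N/r} \tilde{X} G, \quad \pi(\tilde{X}) = x, \\
  \delta_{\mathcal{L}}(\tilde{Y}) = y \otimes 1 + g^{-1} \otimes \tilde{Y}, \quad
  G \tilde{Y} = q^{-2N/r} \tilde{Y} G, \quad \pi(\tilde{Y}) = y,
\end{gather*}
where $\pi : \mathcal{L}_1 \to \mathcal{L}_1/\mathcal{L}_0$ is the projection.

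Next I would run the $q$-binomial argument of Lemma~\ref{lem:lifting-L3N}: from $(g^{-1} \otimes \tilde{X})(x \otimes 1) = q^{-2}(x \otimes 1)(g^{-1} \otimes \tilde{X})$ and the fact that $q^{-2}$ has order $N$, one gets $\delta_{\mathcal{L}}(\tilde{X}^N) = x^N \otimes 1 + g^{-N} \otimes \tilde{X}^N = 1 \otimes \tilde{X}^N$, so $\tilde{X}^N$ is coinvariant and hence a scalar $\xi$; similarly $\tilde{Y}^N = \zeta$. I then compute the coaction of $z := \tilde{X}\tilde{Y} - q^2 \tilde{Y}\tilde{X}$. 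Using $xy = q^2 yx$ in $H$ together with $g^{-1} x = q^{-2} x g^{-1}$ and $g^{-1} y = q^2 y g^{-1}$, the $\otimes 1$ component and all the mixed terms cancel, leaving
\begin{equation*}
  \delta_{\mathcal{L}}(z) = g^{-2} \otimes z,
\end{equation*}
so that $z$ lies in the subspace $V$ of \eqref{eq:lifting-L3N}.

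The decisive step, and the only place $r < N$ enters, is to show $V = 0$. Since $g^{-2}$ is grouplike, any nonzero element of $V$ spans a one-dimensional subcomodule, so $V \subseteq \socle(\mathcal{L}) = \mathcal{L}_0 = \langle G \rangle$. Writing an element of $V$ as $\sum_k c_k G^k$ and using $\delta_{\mathcal{L}}(G^k) = g^{kN/r} \otimes G^k$, the defining equation $\delta_{\mathcal{L}}(v) = g^{-2} \otimes v$ forces $g^{kN/r} = g^{-2}$, i.e. $kN/r \equiv -2 \pmod{N}$, for every $k$ with $c_k \ne 0$. The left-hand side is a multiple of $N/r$, so a solution can exist only if $N/r$ divides $2$; but $N$ is odd and $r < N$, so $N/r$ is an odd divisor of $N$ with $N/r \ge 3$, and no such $k$ exists. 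Hence $V = 0$, giving $z = 0$ and $\tilde{X}\tilde{Y} = q^2 \tilde{Y}\tilde{X}$. With all defining relations of $\mathscr{L}_3(r; \xi, \zeta)$ now verified, the assignment $X \mapsto \tilde{X}$, $Y \mapsto \tilde{Y}$, $G \mapsto G$ defines a homomorphism $\phi : \mathscr{L}_3(r; \xi, \zeta) \to \mathcal{L}$ of left $\gr(u_q)$-comodule algebras; it is injective because $\mathscr{L}_3(r; \xi, \zeta)$ is right $\gr(u_q)$-simple (Theorem~\ref{thm:H-simplicity-comod-alg}) and an isomorphism by a dimension count. The main obstacle is exactly this vanishing of $V$: it is the number-theoretic input (using $N$ odd and $N/r \ge 3$) that rules out the extra parameter $\eta$ present in the case $r = N$, and everything else transcribes the proof of Lemma~\ref{lem:lifting-L3N}.
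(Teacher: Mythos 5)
Your proof is correct and is essentially the paper's own argument: the paper proves this lemma by running the proof of Lemma~\ref{lem:lifting-L3N} verbatim and observing that the only difference is that the subspace $V$ of \eqref{eq:lifting-L3N} vanishes when $r < N$, which kills the parameter $\eta$. Your number-theoretic verification that $V = 0$ (the congruence $kN/r \equiv -2 \pmod{N}$ forces $N/r \mid 2$, impossible for $N$ odd and $r < N$) correctly supplies the one detail the paper leaves implicit.
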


Lemmas~\ref{lem:Step-2-lifting-L0}--\ref{lem:Step-2-lifting-L4} below are also proved in a similar way.

\begin{lemma}
  \label{lem:Step-2-lifting-L0}
  All liftings of $L_0(r)$ are isomorphic to $\mathscr{L}_0(r)$.
\end{lemma}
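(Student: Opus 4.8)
The plan is to exploit the feature that distinguishes $L_0(r)$ from the coideal subalgebras $L_3(r)$ handled in Lemma~\ref{lem:lifting-L3N}: the subalgebra $L_0(r) = \langle g^{N/r}\rangle$ is spanned by grouplike elements and hence is concentrated in Loewy degree $0$. Thus for any lifting $\mathcal{L}$ of $L_0(r)$, the isomorphism $\gr(\mathcal{L}) \cong L_0(r)$ forces $\gr_n(\mathcal{L}) = 0$ for all $n \ge 1$, so the Loewy filtration of $\mathcal{L}$ is trivial, $\mathcal{L} = \mathcal{L}_0$. In particular $\dim_{\bfk}\mathcal{L} = \dim_{\bfk} L_0(r) = r$, and the coaction of $H := \gr(u_q)$ factors through the coradical $\bfk\,\Grp(H)$. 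Since $H$ is pointed, $\mathcal{L}$ then splits as a direct sum of one-dimensional $H$-subcomodules indexed by grouplikes, and comparison with $L_0(r)$ shows that the grouplikes occurring are exactly $g^{kN/r}$ for $k = 0, 1, \dots, r-1$, each with multiplicity one. In particular the component attached to the identity grouplike is $\bfk\cdot 1$, so $\mathcal{L}$ has trivial coinvariants.

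First I would fix a nonzero $G \in \mathcal{L}$ with $\delta_{\mathcal{L}}(G) = g^{N/r}\otimes G$ and show that $G$ is invertible. The right ideal $G\mathcal{L}$ is nonzero and $H$-costable, since its coaction lands in $H \otimes G\mathcal{L}$; as every lifting is right $H$-simple by Theorem~\ref{thm:H-simplicity-comod-alg}, this forces $G\mathcal{L} = \mathcal{L}$, whence $G$ has a right inverse and, by finite-dimensionality, is invertible. Because $\delta_{\mathcal{L}}(G^r) = g^N \otimes G^r = 1 \otimes G^r$, the element $G^r$ is coinvariant, so $G^r = \lambda$ for some $\lambda \in \bfk$ by triviality of the coinvariants; invertibility gives $\lambda \ne 0$, and since $\bfk$ is algebraically closed I may rescale $G$ so that $G^r = 1$.

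Having arranged $G^r = 1$ together with $\delta_{\mathcal{L}}(G) = g^{N/r}\otimes G$, I would define the homomorphism $\mathscr{L}_0(r) \to \mathcal{L}$ of left $H$-comodule algebras sending the generator of $\mathscr{L}_0(r)$ to $G$; both the relation $G^r = 1$ and the coaction formula~\eqref{eq:gr-uq-coaction} are respected by construction. This map is injective, either because $\mathscr{L}_0(r)$ is right $H$-simple so that its kernel is an $H$-costable proper ideal and hence zero (exactly as in the proof of Lemma~\ref{lem:lifting-L3N}), or simply because the images $1, G, \dots, G^{r-1}$ lie in distinct grouplike-components and are therefore linearly independent. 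As source and target both have dimension $r$, it is an isomorphism.

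The only genuinely non-formal step is the invertibility of $G$, which is where right $H$-simplicity (equivalently the division-algebra property recorded after Definition~\ref{def:H-comod-alg-basics}) enters; everything else is bookkeeping with the trivial Loewy filtration. In fact one could bypass this step entirely: triviality of the filtration already gives $\mathcal{L} \cong \gr(\mathcal{L}) \cong L_0(r)$ as $H$-comodule algebras, and $L_0(r) \cong \mathscr{L}_0(r)$ via $g^{N/r} \mapsto G$. I would nevertheless present the constructive argument above, both to keep the proof parallel to that of Lemma~\ref{lem:lifting-L3N} and because the same template (find generators, detect their powers as coinvariants, match against the presentation) is what will be reused for the remaining coideal subalgebras $L_1(r)$, $L_2(r)$ and $L_4(\alpha,\beta)$.
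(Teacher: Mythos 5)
Your proof is correct and follows essentially the same route as the paper, which proves this lemma ``in a similar way'' to Lemma~\ref{lem:lifting-L3N}: your constructive argument (pick the coaction-eigenvector $G$, identify $G^r$ as a nonzero coinvariant via right $H$-simplicity, rescale, and conclude by injectivity plus a dimension count) is exactly that template specialized to the degree-zero case. The shortcut you note at the end --- triviality of the Loewy filtration forces $\mathcal{L} = \mathcal{L}_0 = \gr(\mathcal{L}) \cong L_0(r) \cong \mathscr{L}_0(r)$ outright --- is precisely the paper's intended argument in its most economical form, so either presentation is fine.
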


\begin{lemma}
  \label{lem:Step-2-lifting-L1}
  A lifting of $L_1(r)$ is isomorphic to $\mathscr{L}_1(r; \xi)$ for some $\xi \in \bfk$.
\end{lemma}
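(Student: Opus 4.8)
The plan is to follow the proof of Lemma~\ref{lem:lifting-L3N} almost verbatim, exploiting that $L_1(r)$ carries only a single skew-primitive generator $x$ on top of its group-like part. This makes the argument strictly simpler than the $L_3(N)$ case and, in particular, leaves no room for an analogue of the extra parameter $\eta$.

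First I would let $\mathcal{L}$ be a lifting of $L_1(r)$, write $\mathcal{L}_i$ for the $i$-th layer of its Loewy filtration, and use the isomorphism $\gr(\mathcal{L}) \cong L_1(r)$ to identify
\[
  \mathcal{L}_i/\mathcal{L}_{i-1} = \mathrm{span}\{ (g^{N/r})^t x^i \mid t = 0, \dots, r-1 \}.
\]
In particular $\mathcal{L}_0$ is the group algebra generated by the element $G \in \mathcal{L}_0$ corresponding to $g^{N/r}$, which satisfies $G^r = 1$.

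Next I would lift the generator $x$. Applying Lemma~\ref{lem:lifting-skew-primitive-type-element} with $F = \langle g^{N/r}\rangle$, grouplike element $g^{-1}$, character $\chi$ determined by $\chi(g^{N/r}) = q^{2N/r}$, and $y = x \in \mathcal{L}_1/\mathcal{L}_0$ produces an element $\tilde{X} \in \mathcal{L}_1$ with
\[
  \delta_{\mathcal{L}}(\tilde{X}) = x \otimes 1 + g^{-1}\otimes \tilde{X}, \quad G\tilde{X} = q^{2N/r}\tilde{X}G, \quad \pi(\tilde{X}) = x.
\]
The hypotheses of that lemma hold because $\Grp(H)$ is abelian, $|F| = r$ is invertible in $\bfk$, and the required relations are read off from those of $L_1(r)$. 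Since $g^{-1}x = q^{-2}xg^{-1}$ and $q^{-2}$ has order $N$, the $q$-binomial formula gives $\delta_{\mathcal{L}}(\tilde{X}^N) = x^N\otimes 1 + g^{-N}\otimes \tilde{X}^N = 1\otimes\tilde{X}^N$, so $\tilde{X}^N$ is a coinvariant of $\mathcal{L}$. As the coinvariants of $\mathcal{L}$ lie in $\mathcal{L}_0 = \bfk\langle G\rangle$ and are spanned there by the unit, we obtain $\tilde{X}^N = \xi$ for some $\xi \in \bfk$.

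Finally, the assignment $G \mapsto G$, $X \mapsto \tilde{X}$ respects all defining relations of $\mathscr{L}_1(r;\xi)$ as well as the coactions, hence defines a homomorphism $\phi \colon \mathscr{L}_1(r;\xi)\to\mathcal{L}$ of left $H$-comodule algebras. Because $\mathscr{L}_1(r;\xi)$ is right $H$-simple by Theorem~\ref{thm:H-simplicity-comod-alg} and $\phi(1)=1$, the kernel of $\phi$ is a proper $H$-costable ideal and therefore zero, so $\phi$ is injective; comparing dimensions, $\dim\mathscr{L}_1(r;\xi) = rN$ (with the evident basis $\{G^tX^u\}_{0\le t<r,\,0\le u<N}$) equals $\dim L_1(r) = \dim\mathcal{L}$, which upgrades $\phi$ to an isomorphism. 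I expect no serious obstacle: the only points needing care are verifying the hypotheses of Lemma~\ref{lem:lifting-skew-primitive-type-element} and confirming that the coinvariants of $\mathcal{L}$ are one-dimensional, both of which are immediate in the present graded setting.
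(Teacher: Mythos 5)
Your proposal is correct and follows essentially the same route as the paper: the paper disposes of this lemma by remarking that it is ``proved in a similar way'' as Lemma~\ref{lem:lifting-L3N}, and your argument (lifting $x$ via Lemma~\ref{lem:lifting-skew-primitive-type-element} with $F=\langle g^{N/r}\rangle$ and $\chi(g^{N/r})=q^{2N/r}$, using the $q$-binomial formula to see that $\tilde{X}^N$ is a coinvariant and hence a scalar $\xi$, then invoking right $H$-simplicity of $\mathscr{L}_1(r;\xi)$ and a dimension count to upgrade the resulting comodule-algebra map to an isomorphism) is precisely that adaptation. You also correctly note why no analogue of the parameter $\eta$ arises, matching the paper's discussion of how the $L_3(N)$ case differs from the others.
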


\begin{lemma}
  \label{lem:Step-2-lifting-L2}
  A lifting of $L_2(r)$ is isomorphic to $\mathscr{L}_2(r; \zeta)$ for some $\zeta \in \bfk$.
\end{lemma}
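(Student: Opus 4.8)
The plan is to mimic the proof of Lemma~\ref{lem:lifting-L3N}, which is in fact lighter here because $L_2(r)$ has only one skew-primitive generator, so no commutator relation and no extra parameter will intervene. First I would take a lifting $\mathcal{L}$ of $L_2(r)$ with Loewy filtration $\{\mathcal{L}_i\}_{i\ge 0}$. By the definition of a lifting we may identify $\grL(\mathcal{L}) \cong L_2(r)$, so that $\mathcal{L}_i/\mathcal{L}_{i-1}$ is spanned by the classes of $(g^{N/r})^t y^v$ with $v = i$; in particular $\mathcal{L}_0$ is generated by a single element $G$ corresponding to $g^{N/r}$, whence $G^r = 1$.

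Next I would lift the generator $y$. Taking $F = \langle g^{N/r} \rangle < \Grp(H)$, the grouplike element $g^{-1}$ (which commutes with $F$), the element $y \in H$, and the character $\chi$ with $\chi(g^{N/r}) = q^{-2N/r}$, all hypotheses of Lemma~\ref{lem:lifting-skew-primitive-type-element} hold: $|F| = r$ is invertible since $\bfk$ has characteristic zero, and the relation $g^{N/r} y = q^{-2N/r} y g^{N/r}$ in $H$ supplies the required eigenvalue. The lemma then yields an element $\tilde Y \in \mathcal{L}_1$ with $\delta_{\mathcal{L}}(\tilde Y) = y \otimes 1 + g^{-1} \otimes \tilde Y$, with $G \tilde Y = q^{-2N/r} \tilde Y G$, and with $\pi(\tilde Y) = y$.

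Then I would pin down $\tilde Y^N$. Since $g^{-1} y = q^2 y g^{-1}$ in $H$ and $q^2$ has order $N$, the $q$-binomial formula gives $\delta_{\mathcal{L}}(\tilde Y^N) = (y \otimes 1 + g^{-1}\otimes \tilde Y)^N = (y\otimes 1)^N + (g^{-1}\otimes \tilde Y)^N = 1 \otimes \tilde Y^N$, using $y^N = 0$ and $g^{-N} = 1$. Hence $\tilde Y^N$ is a coinvariant of $\mathcal{L}$; as the coinvariants are one-dimensional and spanned by the unit (exactly as in the proof of Lemma~\ref{lem:lifting-L3N}), we get $\tilde Y^N = \zeta$ for some $\zeta \in \bfk$.

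Finally, $G$ and $\tilde Y$ satisfy precisely the defining relations of $\mathscr{L}_2(r; \zeta)$, so there is a homomorphism $\phi : \mathscr{L}_2(r; \zeta) \to \mathcal{L}$ of left $H$-comodule algebras with $G \mapsto G$ and $Y \mapsto \tilde Y$. Because $\mathscr{L}_2(r; \zeta)$ is right $H$-simple, $\Ker \phi$ is a proper $H$-costable ideal and hence zero, so $\phi$ is injective; comparing dimensions, $\dim \mathscr{L}_2(r;\zeta) = rN = \dim L_2(r) = \dim \mathcal{L}$, shows $\phi$ is an isomorphism. The step needing the most care is the application of Lemma~\ref{lem:lifting-skew-primitive-type-element}: one must choose the grouplike $g^{-1}$ and the character $\chi(g^{N/r}) = q^{-2N/r}$ correctly so that the lifted $\tilde Y$ carries the right commutation relation with $G$. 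Everything else is easier than in the $L_3(N)$ case, since there is no second generator and hence no bracket $z$ or parameter $\eta$ to control.
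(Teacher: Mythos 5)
Your proof is correct and takes essentially the same route as the paper, which handles this lemma only by remarking that it is ``proved in a similar way'' to Lemma~\ref{lem:lifting-L3N}: your adaptation (lifting $y$ via Lemma~\ref{lem:lifting-skew-primitive-type-element} with $F=\langle g^{N/r}\rangle$ and $\chi(g^{N/r})=q^{-2N/r}$, identifying $\tilde{Y}^N$ as a scalar through the $q$-binomial formula and the one-dimensionality of the coinvariants, then concluding by right $H$-simplicity of $\mathscr{L}_2(r;\zeta)$ and a dimension count) is exactly the intended argument. Note only that, just as in the paper's proof of Lemma~\ref{lem:lifting-L3N}, the final step tacitly uses $\dim_{\bfk}\mathscr{L}_2(r;\zeta)=rN$, which holds by the standard spanning-set/skew group algebra argument.
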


\begin{lemma}
  \label{lem:Step-2-lifting-L4}
  A lifting of $L_4(\alpha, \beta)$ is isomorphic to $\mathscr{L}_4(\alpha, \beta; \xi)$ for some $\xi \in \bfk$.
\end{lemma}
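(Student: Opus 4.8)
The plan is to mirror the proof of Lemma~\ref{lem:lifting-L3N}, replacing the pair of generators $x,y$ by the single skew-primitive generator $w = \alpha x + \beta y$. Let $\mathcal{L}$ be a lifting of $L_4(\alpha,\beta)$ and let $\{\mathcal{L}_i\}$ be its Loewy filtration. Since $\gr(\mathcal{L}) \cong L_4(\alpha,\beta) = \bfk[w]/(w^N)$ as graded $H$-comodule algebras, with $w$ sitting in degree $1$, I would identify $\mathcal{L}_i/\mathcal{L}_{i-1} = \bfk\, w^i$; in particular $\mathcal{L}_0 = \bfk \cdot 1$ and $\dim_{\bfk}\mathcal{L} = N$. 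The class $w \in \mathcal{L}_1/\mathcal{L}_0$ satisfies $\delta(w) = (\alpha x + \beta y)\otimes 1 + g^{-1}\otimes w$ in $\gr(\mathcal{L})$, so Lemma~\ref{lem:lifting-0} (equivalently Lemma~\ref{lem:lifting-skew-primitive-type-element} with the trivial subgroup $F = \{1\}$) produces an element $\tilde{W} \in \mathcal{L}_1$ with $\delta_{\mathcal{L}}(\tilde{W}) = (\alpha x + \beta y)\otimes 1 + g^{-1}\otimes \tilde{W}$ and $\pi(\tilde{W}) = w$, where $\pi : \mathcal{L}_1 \to \mathcal{L}_1/\mathcal{L}_0$ is the projection.

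Next I would prove that $\tilde{W}^N \in \bfk\cdot 1$. Writing $a = x\otimes 1$, $b = y\otimes 1$ and $c = g^{-1}\otimes\tilde{W}$ in $H \otimes \mathcal{L}$, we have $\delta_{\mathcal{L}}(\tilde{W}^N) = (\alpha a + \beta b + c)^N$, and the relations of $\gr(u_q)$ yield $ab = q^2 ba$, $ac = q^2 ca$ and $bc = q^{-2} cb$. I expect this computation to be the only real obstacle, and the point where the argument genuinely differs from the $\mathscr{L}_3$ case: here $(\alpha x + \beta y)\otimes 1$ does \emph{not} $q$-commute with $g^{-1}\otimes\tilde{W}$, so the $q$-binomial formula cannot be applied to the two summands $w \otimes 1$ and $g^{-1}\otimes\tilde{W}$ directly. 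The remedy is to group the terms in the correct order. First, since $b$ and $c$ $q$-commute, the $q$-binomial formula gives $(\beta b + c)^N = \beta^N b^N + c^N$. Second, since $a$ $q$-commutes with both $b$ and $c$ with the \emph{same} factor $q^2$, it $q$-commutes with the whole combination $u := \beta b + c$, so a second application of the $q$-binomial formula gives $(\alpha a + u)^N = \alpha^N a^N + u^N$. Using $x^N = y^N = 0$ and $g^{-N} = 1$, this collapses to $1 \otimes \tilde{W}^N$, so $\tilde{W}^N$ is coinvariant. As the coinvariants of $\mathcal{L}$ lie in $\socle(\mathcal{L}) = \mathcal{L}_0 = \bfk\cdot 1$, I conclude $\tilde{W}^N = \xi$ for some $\xi \in \bfk$.

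Finally, sending $W \mapsto \tilde{W}$ defines a homomorphism $\phi : \mathscr{L}_4(\alpha,\beta;\xi) \to \mathcal{L}$ of left $H$-comodule algebras: it is well defined because $\tilde{W}^N = \xi$, and $H$-colinear by the choice of $\tilde{W}$. Since $\mathscr{L}_4(\alpha,\beta;\xi)$ is right $H$-simple and $\phi(1) = 1 \neq 0$, the kernel of $\phi$ is a proper $H$-costable ideal and hence zero, so $\phi$ is injective; comparing dimensions, $\dim\mathscr{L}_4(\alpha,\beta;\xi) = N = \dim\mathcal{L}$, shows $\phi$ is an isomorphism. Apart from the two-step $q$-binomial computation of the second paragraph, every step is a verbatim analogue of the corresponding step in Lemma~\ref{lem:lifting-L3N}.
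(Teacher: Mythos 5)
Your proof is correct and is exactly the argument the paper intends: the paper gives no separate proof of this lemma, stating only that it is ``proved in a similar way'' to Lemma~\ref{lem:lifting-L3N}, and your proposal carries out precisely that argument (Loewy filtration, lifting of the skew-primitive generator via Lemma~\ref{lem:lifting-0}, coinvariance of $\tilde{W}^N$, then injectivity from right $H$-simplicity plus a dimension count). Your nested $q$-binomial computation --- first $(\beta(y\otimes 1) + g^{-1}\otimes\tilde{W})^N$, then adding $\alpha(x\otimes 1)$, which $q$-commutes with that whole sum by the factor $q^{2}$ --- correctly supplies the one point where the $\mathscr{L}_3$ computation does not transfer verbatim, since $(\alpha x+\beta y)\otimes 1$ and $g^{-1}\otimes\tilde{W}$ indeed fail to $q$-commute when $\alpha\beta\neq 0$.
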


\subsection{Step 3. Detect $\gr(u_q(\mathfrak{sl}_2))$-Morita equivalence}
\label{subsec:Step-3}

As in the previous subsection, we write $H = \gr(u_q)$. Here we accomplish Step 3 of the strategy, that is, detect $H$-Morita equivalence between left $H$-comodule algebras obtained in the previous step. We first fix parameters $\xi, \zeta, \eta, \xi', \zeta', \eta' \in \bfk$ and prove:

\begin{claim}
  \label{claim:H-Morita-L3-N}
  $\mathscr{L}_{3}(N; \xi, \zeta, \eta)$ and $\mathscr{L}_{3}(N; \xi', \zeta', \eta')$ are $H$-Morita equivalent if and only if there exists an integer $k$ such that $(\xi', \zeta', \eta') = (\xi, \zeta, \eta q^{2k})$.
\end{claim}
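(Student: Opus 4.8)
The plan is to reduce everything to Theorem~\ref{thm:H-Morita-pointed}. Since $H = \gr(u_q)$ is pointed with $\Grp(H)$ the cyclic group $\langle g \rangle$ of order $N$, and each $\mathscr{L}_3(N; \xi, \zeta, \eta)$ is right $H$-simple (Theorem~\ref{thm:H-simplicity-comod-alg}), that theorem says the two algebras are $H$-Morita equivalent if and only if there is an integer $k$ with $\mathscr{L}_3(N; \xi', \zeta', \eta') \cong g^{k} \mathscr{L}_3(N; \xi, \zeta, \eta) g^{-k}$ as left $H$-comodule algebras. My first task is therefore to make the conjugated coaction explicit. Using the commutation rules $g^{k} x g^{-k} = q^{2k} x$ and $g^{k} y g^{-k} = q^{-2k} y$ in $H$ together with \eqref{eq:gr-uq-coaction}, one finds that $g^{k} \mathscr{L}_3(N; \xi, \zeta, \eta) g^{-k}$ has the same underlying algebra but coaction $\delta(X) = q^{2k} x \otimes 1 + g^{-1} \otimes X$, $\delta(Y) = q^{-2k} y \otimes 1 + g^{-1} \otimes Y$ and $\delta(G) = g \otimes G$.

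For the ``if'' part I would avoid the conjugation picture and simply exhibit an isomorphism. Given $(\xi', \zeta', \eta') = (\xi, \zeta, \eta q^{2k})$, the assignment $X \mapsto X$, $Y \mapsto Y$, $G \mapsto q^{k} G$ defines a homomorphism $\mathscr{L}_3(N; \xi, \zeta, \eta q^{2k}) \to \mathscr{L}_3(N; \xi, \zeta, \eta)$ of left $H$-comodule algebras: colinearity is immediate, the relation $G^N = 1$ is preserved since $q^{kN} = 1$, and the only other relation affected is $X Y - q^2 Y X = -\eta q^{2k} G^{-2}$, whose right-hand side maps to $-\eta q^{2k} (q^{k} G)^{-2} = -\eta G^{-2}$, matching the target. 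As $G \mapsto q^{k} G$ is invertible this is an isomorphism, and isomorphic comodule algebras are a fortiori $H$-Morita equivalent.

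For the ``only if'' part, suppose $\phi : \mathscr{L}_3(N; \xi', \zeta', \eta') \to g^{k} \mathscr{L}_3(N; \xi, \zeta, \eta) g^{-k}$ is an isomorphism of comodule algebras; write $X, Y, G$ for the generators of the source and let $\mathcal{L}_0 \subset \mathcal{L}_1 \subset \cdots$ be the Loewy filtration. Since this filtration is defined intrinsically through the coradical filtration of $H$, and conjugation by a grouplike preserves the latter, $\phi$ respects it, so $\phi(\mathcal{L}_0) = \mathcal{L}_0 = \bfk\langle G \rangle$; colinearity then forces $\phi(G) = c G$ with $c^N = 1$, because the only elements of $\mathcal{L}_0$ with coaction $g \otimes (-)$ are the scalar multiples of $G$. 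The crucial step is to show $\phi(X)$ and $\phi(Y)$ are scalar multiples of $X$ and $Y$. For this I would decompose $\mathcal{L}_1$ into weight spaces for the inner action $v \mapsto G v G^{-1}$: the defining relations give $G X G^{-1} = q^2 X$ and $G Y G^{-1} = q^{-2} Y$ while $G$ acts trivially on $\mathcal{L}_0$, so the weights occurring on $\mathcal{L}_1$ are $1, q^2, q^{-2}$, which are pairwise distinct precisely because $N$ is odd. Applying $\phi$ to $G X = q^2 X G$ shows $\phi(X)$ lies in the $q^2$-eigenspace, which, comparing with the associated graded $L_3(N)$, equals $\mathcal{L}_0 \cdot X$; writing $\phi(X) = p(G) X$ and imposing colinearity against the twisted coaction from the first paragraph then pins $p(G)$ down to a constant $a$ with $a q^{2k} = 1$. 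Symmetrically $\phi(Y) = b Y$ with $b q^{-2k} = 1$.

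It remains to read off the parameters. From $\xi' = \phi(X^N) = \phi(X)^N = a^N \xi$ and $a^N = q^{-2kN} = 1$ we get $\xi' = \xi$, and likewise $\zeta' = \zeta$. Applying $\phi$ to $X Y - q^2 Y X = -\eta' G^{-2}$ gives $a b (X Y - q^2 Y X) = -\eta' c^{-2} G^{-2}$; since $a b = 1$ and $X Y - q^2 Y X = -\eta G^{-2}$ in the target, this yields $\eta' = c^2 \eta$. Finally $c$ is an $N$-th root of unity, hence $c = q^{j}$ for some $j$, so $c^2 = q^{2j}$ and $(\xi', \zeta', \eta') = (\xi, \zeta, \eta q^{2j})$, as required. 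The main obstacle is the rigidity argument of the third paragraph: one must rule out any lower-order corrections to $\phi(X)$ and $\phi(Y)$ (for instance a $G^{-1}$ term, which the relation $G X = q^2 X G$ excludes), and the cleanest way to organize this is the weight-space decomposition, whose success hinges on the distinctness of $1, q^2, q^{-2}$ — that is, on the hypothesis that $N$ is odd.
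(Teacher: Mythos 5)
Your proof is correct, and its skeleton coincides with the paper's: both reduce, via Theorem~\ref{thm:H-Morita-pointed}, to classifying comodule-algebra isomorphisms, then use rigidity coming from the Loewy filtration to force any isomorphism to send $G$, $X$, $Y$ to scalar multiples of themselves, and finally read the constraints on $(\xi,\zeta,\eta)$ off the defining relations. Two points of execution differ. First, the paper begins by observing that $\mathcal{L} := \mathscr{L}_3(N;\xi,\zeta,\eta)$ is itself isomorphic to $g^m \mathcal{L} g^{-m}$ as a left $H$-comodule algebra (rescale $X \mapsto q^{\pm 2m}X$, $Y \mapsto q^{\mp 2m}Y$, fix $G$), so the conjugation disappears at the outset and one only classifies isomorphisms $\mathcal{L}' \cong \mathcal{L}$ with the untwisted coaction; you instead carry the twisted coaction of $g^k \mathcal{L} g^{-k}$ through the whole computation, which is valid but heavier bookkeeping --- tellingly, your conjugation parameter $k$ cancels out (via $ab=1$) and only the scalar $c = q^j$ on $G$ survives into the answer. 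Second, for the rigidity step the paper decomposes $\mathcal{L}_1 = V_0 \oplus \cdots \oplus V_{N-1}$ into the subcomodules $V_i = \mathrm{span}\{G^i, G^i X, G^i Y\}$ and computes the Hom-spaces ${}^H\Hom(V_i, V_j)$, which allows $\phi(X') = b_1 X + b_2 G^{-1}$, and then kills $b_2$ using $G'X' = q^2 X' G'$; you apply the same two ingredients in the opposite order, using the eigenspace decomposition of $\mathcal{L}_1$ under conjugation by $G$ (weights $1, q^2, q^{-2}$, pairwise distinct exactly because $N$ is odd) to confine $\phi(X)$ to $\mathcal{L}_0 X$ first, and then colinearity to pin down the constant. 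The two mechanisms are equivalent in content; yours avoids the explicit Hom-space computation, while the paper's upfront untwisting makes all scalars on $X$ and $Y$ come out as $1$, so the parameter comparison at the end is slightly cleaner.
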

\begin{proof}
  We set $\mathcal{L} = \mathscr{L}_{3}(N; \xi, \zeta, \eta)$ and $\mathcal{L}' = \mathscr{L}_{3}(N; \xi', \zeta', \eta')$. For $m \in \mathbb{Z}$, there is an isomorphism $\mathcal{L} \to g^{m} \mathcal{L} g^{-m}$ of left $H$-comodule algebras given by sending the generators $G$, $X$ and $Y$ of $\mathcal{L}$ to $G$, $q^{2m} X$ and $q^{-2m} Y$, respectively. Thus, by Theorem~\ref{thm:H-Morita-pointed}, $\mathcal{L}$ and $\mathcal{L}'$ are $H$-Morita equivalent if and only if they are isomorphic as left $H$-comodule algebras.

  We discuss when $\mathcal{L} \cong \mathcal{L}'$ as left $H$-comodule algebras. The first layer of the Loewy filtration of $\mathcal{L}$ is decomposed as $\mathcal{L}_1 = V_0 \oplus \dotsb \oplus V_{N - 1}$, where $V_i$ is the subspace of $\mathcal{L}$ spanned by $G^i$, $G^i X$ and $G^i Y$. Each $V_i$ is a subcomodule of $\mathcal{L}$. Given left $H$-comodules $V$ and $W$, we denote by ${}^H\Hom(V, W)$ the space of left $H$-comodule maps from $V$ to $W$. It is straightforward to verify
  \begin{gather*}
    {}^H\Hom(V_i, V_i) = \bfk \, \id_{V_i},
    \quad {}^H\Hom(V_i, V_{i - 1}) = \{ f_{a,b}^{(i)} \mid a, b \in \bfk \}
  \end{gather*}
  and ${}^H\Hom(V_i, V_j) = 0$ ($j \ne i, i - 1$) for all $i, j = 0, 1, \cdots, N - 1$ with convention $V_{-1} = V_{N-1}$. Here, $f_{a, b}^{(i)} : V_i \to V_{i - 1}$ ($a, b \in \bfk$) is the left $H$-comodule map sending $G^i$, $G^i X$ and $G^i Y$ to $0$, $a G^{i-1}$ and $b G^{i-1}$, respectively.

  Now we suppose that there is an isomorphism $\phi : \mathcal{L}' \to \mathcal{L}$ of left $H$-comodule algebras. To avoid confusion, we denote the generators $G$, $X$ and $Y$ of $\mathcal{L}'$ by $G'$, $X'$ and $Y'$, respectively, and define $V_i' \subset \mathcal{L}'$ by the same way as $V_i$.
  By considering the isomorphism $\socle(\mathcal{L}) \cong \socle(\mathcal{L}')$ of left $H$-comodules induced by $\phi$, we have $\phi(G') = a G$ for some $a \in \bfk$. The isomorphism also induces a morphism $V_1' \to \mathcal{L}_1$ of left $H$-comodules. We note that $V_1'$ is identified with $V_1$ as a left $H$-comodule. By the above computation of Hom-spaces between $V_i$'s, we have
  \begin{equation*}
    {}^H\Hom(V_0', \mathcal{L}_1)
    \cong \bigoplus_{i = 0}^{N - 1} {}^H\Hom(V_0', V_i)
    = {}^H\Hom(V_0', V_0) \oplus {}^H\Hom(V_0', V_{N-1}).
  \end{equation*}
  This implies that $\phi(X') = b_1 X + b_2 G^{-1}$ and $\phi(Y') = c_1 Y + c_2 G^{-1}$ for some $b_1, b_2, c_1, c_2 \in \bfk$.
  Since both $G$ and $G'$ are of order $N$, we have $a^N = 1$. Thus $a = q^{k}$ for some $k \in \mathbb{Z}$. We also have
  \begin{gather*}
    x \otimes 1 + g^{-1} \otimes (b_1 X + b_2 G^{-1})
    = (\id_H \otimes \phi) \delta_{\mathcal{L}'}(X')
    = \delta_{\mathcal{L}} \phi(X') \\
    = x \otimes b_1 + g^{-1} \otimes (b_1 X + b_2 G^{-1}),
  \end{gather*}
  which implies $b_1 = 1$. Moreover, since
  \begin{equation*}
    q^2 X + b_2 G^{-1}
    = \phi(G') \phi(X') \phi(G'{}^{-1})
    = q^2 \phi(X') = q^2 X + q^2 b_2 G^{-1},
  \end{equation*}
  we have $b_2 = 0$. In a similar way, we obtain $c_1 = 1$ and $c_2 = 0$. Summarizing, $\phi$ is given by $\phi(G') = q^{k} G$, $\phi(X') = X$ and $\phi(Y') = Y$ for some $k \in \mathbb{Z}$. Hence,
  \begin{gather*}
    \xi = X^N = \phi(X')^N = \phi(\xi') = \xi', \quad
    \zeta = Y^N = \phi(Y')^N = \phi(\zeta') = \zeta', \\
    - \eta G^{-2} = X Y - q^2 Y X
    = \phi(X' Y' - q^2 Y' X')
    = \phi(- \eta' G'^{-2})
    = - \eta' q^{-2k} G^{-2}.
  \end{gather*}
  In conclusion, we have $(\xi', \zeta', \eta') = (\xi, \zeta, \eta q^{2k})$.
  Hence the `only if' part is proved. The converse follows from that there is an isomorphism
  \begin{equation*}
    \mathscr{L}_{3}(N; \xi, \zeta, \eta q^{2k})
    \to \mathscr{L}_{3}(N; \xi, \zeta, \eta),
    \quad G \mapsto q^{k} G, \quad X \mapsto X, \quad Y \mapsto Y
  \end{equation*}
  of left $H$-comodule algebras. The proof is done.
\end{proof}

By the same way as Claim~\ref{claim:H-Morita-L3-N}, we have:

\begin{claim}
  \label{claim:H-Morita-L3-r}
  $\mathscr{L}_{3}(r; \xi, \zeta)$ and $\mathscr{L}_{3}(r'; \xi', \zeta')$ are $H$-Morita equivalent if and only if $(r', \xi', \zeta') = (r, \xi, \zeta)$.
\end{claim}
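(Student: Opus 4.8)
The plan is to follow the proof of Claim~\ref{claim:H-Morita-L3-N} almost verbatim, the only---crucial---difference being that the extra parameter and the attendant $q^{2k}$-freedom disappear when $r<N$. As a first step I would reduce $H$-Morita equivalence to isomorphism of left $H$-comodule algebras. All the algebras in play are right $H$-simple (noted after Definition~\ref{def:list-of-liftings}), so Theorem~\ref{thm:H-Morita-pointed} applies and tells us that $\mathscr{L}_{3}(r;\xi,\zeta)$ and $\mathscr{L}_{3}(r';\xi',\zeta')$ are $H$-Morita equivalent iff the latter is isomorphic to some grouplike conjugate $g^{m}\mathscr{L}_{3}(r;\xi,\zeta)g^{-m}$. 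Rescaling the generators $X$ and $Y$ by mutually inverse powers of $q^{2m}$ (exactly as in the proof of Claim~\ref{claim:H-Morita-L3-N}, and using $q^{N}=1$ to preserve $X^{N}=\xi$ and $Y^{N}=\zeta$) produces an isomorphism $\mathscr{L}_{3}(r;\xi,\zeta)\cong g^{m}\mathscr{L}_{3}(r;\xi,\zeta)g^{-m}$, so $H$-Morita equivalence is the same as comodule-algebra isomorphism here.

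Next I would analyze an arbitrary isomorphism $\phi\colon\mathscr{L}_{3}(r';\xi',\zeta')\to\mathcal{L}$, where I write $\mathcal{L}=\mathscr{L}_{3}(r;\xi,\zeta)$. Comparing $\bfk$-dimensions (each algebra has a PBW basis $G^{i}X^{u}Y^{v}$ and hence dimension $rN^{2}$) forces $r=r'$. Restricting $\phi$ to the zeroth Loewy layer $\mathcal{L}_{0}=\socle(\mathcal{L})=\bfk C_{r}$ and comparing grouplike weights, the generator $G'$ (of weight $g^{N/r}$) must go to $aG$ with $a^{r}=1$. For $X'$ and $Y'$, $H$-colinearity of $\phi$ together with the coactions in \eqref{eq:gr-uq-coaction} gives $\delta_{\mathcal{L}}(\phi(X'))=x\otimes 1+g^{-1}\otimes\phi(X')$, so that $w:=\phi(X')-X$ spans a one-dimensional subcomodule of weight $g^{-1}$ and therefore lies in $\socle(\mathcal{L})=\bfk C_{r}$.

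This is where the hypothesis $r<N$ enters and is, I expect, the main obstacle to get right. The grouplike weights occurring in $\bfk C_{r}$ are precisely the powers $g^{iN/r}$ with $0\le i<r$, i.e.\ the subgroup $\langle g^{N/r}\rangle$ of order $r$; and $g^{-1}=g^{N-1}$ does \emph{not} belong to it when $r<N$, since $iN/r\equiv N-1\pmod{N}$ would force $N/r$ to divide $N-1$, which is impossible as $N/r$ divides $N$ and $\gcd(N,N-1)=1$. Hence $w=0$ and $\phi(X')=X$ exactly; the same argument gives $\phi(Y')=Y$. Unlike in the case $r=N$ of Claim~\ref{claim:H-Morita-L3-N}, there is no $g^{-1}$-weight vector available in the socle to serve as a $G^{-1}$-tail, so no scaling freedom on $X$ or $Y$ is introduced.

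It then remains to read off the parameters. Applying $\phi$ to $(X')^{N}=\xi'$ and $(Y')^{N}=\zeta'$ yields $\xi=\xi'$ and $\zeta=\zeta'$, and the defining relation $X'Y'=q^{2}Y'X'$ carries no parameter, so it imposes no further condition and in particular produces no analogue of the $\eta\mapsto\eta q^{2k}$ ambiguity of Claim~\ref{claim:H-Morita-L3-N}. Thus $\mathscr{L}_{3}(r;\xi,\zeta)\cong\mathscr{L}_{3}(r';\xi',\zeta')$ forces $(r,\xi,\zeta)=(r',\xi',\zeta')$, while the converse is witnessed by the identity assignment on generators; combined with the first paragraph this gives the claim. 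The one delicate point throughout is the socle-weight computation isolating the absence of the weight $g^{-1}$, which is exactly what rigidifies all three parameters.
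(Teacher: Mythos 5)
Your proof is correct and takes essentially the same route as the paper, whose own proof of this claim is literally ``by the same way as Claim~\ref{claim:H-Morita-L3-N}'': you reduce $H$-Morita equivalence to comodule-algebra isomorphism via Theorem~\ref{thm:H-Morita-pointed} and the grouplike-conjugation isomorphism, then pin down the isomorphism on generators. Your key observation — that for $r<N$ the socle $\bfk\langle G\rangle$ contains no vector of weight $g^{-1}$, so $\phi(X')=X$ and $\phi(Y')=Y$ exactly, with no $G^{-1}$-tail and hence no residual parameter freedom — is precisely the point that makes the adaptation of Claim~\ref{claim:H-Morita-L3-N} work, phrased slightly more directly than the paper's Hom-space computation between the subcomodules $V_i$.
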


For notational convenience, we set $\mathscr{L}_3(r; \xi, \zeta, 0) := \mathscr{L}_3(r; \xi, \zeta)$ even for a positive divisor $r$ of $N$ with $r < N$. Claims \ref{claim:H-Morita-L3-N} and~\ref{claim:H-Morita-L3-r} are combined into the following one lemma:

\begin{lemma}
  \label{lem:H-Morita-L3}
  Let $r$ and $r'$ be positive divisors of $N$, and let $\xi, \xi', \zeta, \zeta', \eta, \eta' \in \bfk$. We assume that $\eta = 0$ if $r < N$, and also assume that $\eta' = 0$ if $r' < N$. Then the left $H$-comodule algebras $\mathscr{L}_3(r; \xi, \zeta, \eta)$ and $\mathscr{L}_3(r'; \xi', \zeta', \eta')$ are $H$-Morita equivalent if and only if there exists $k \in \mathbb{Z}$ such that
  \begin{equation*}
    (r', \xi', \zeta', \eta') = (r, \xi, \zeta, q^{2k} \eta).
  \end{equation*}
\end{lemma}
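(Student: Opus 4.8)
The plan is to derive Lemma~\ref{lem:H-Morita-L3} by stitching together Claims~\ref{claim:H-Morita-L3-N} and~\ref{claim:H-Morita-L3-r} after first forcing $r = r'$. Claim~\ref{claim:H-Morita-L3-N} already handles the case where both parameters equal $N$, and Claim~\ref{claim:H-Morita-L3-r} handles every pair in which both algebras have vanishing last parameter (in particular all cases with $r, r' < N$). The one configuration that neither claim touches is the comparison of $\mathscr{L}_3(N; \xi, \zeta, \eta)$ with $\eta \neq 0$ against $\mathscr{L}_3(r'; \xi', \zeta')$ with $r' < N$, and I expect disposing of this gap to be the crux of the proof.

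The key observation I would use is that, among right $H$-simple left $H$-comodule algebras, $H$-Morita equivalence preserves the $\bfk$-dimension. Indeed, by Theorem~\ref{thm:H-Morita-pointed} two such algebras $A$ and $B$ are $H$-Morita equivalent exactly when $B \cong g A g^{-1}$ as left $H$-comodule algebras for some $g \in \Grp(H)$, and $g A g^{-1}$ has the very same underlying algebra as $A$; hence $\dim_{\bfk} A = \dim_{\bfk} B$. All the algebras of Definition~\ref{def:list-of-liftings} are right $H$-simple by Theorem~\ref{thm:H-simplicity-comod-alg}, so this applies. Since $\mathscr{L}_3(r; \xi, \zeta, \eta)$ is a lifting of $L_3(r)$, we have $\dim_{\bfk} \mathscr{L}_3(r; \xi, \zeta, \eta) = \dim_{\bfk} L_3(r) = r N^2$, so an $H$-Morita equivalence between $\mathscr{L}_3(r; \ldots)$ and $\mathscr{L}_3(r'; \ldots)$ forces $r N^2 = r' N^2$, that is, $r = r'$. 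This single step disposes of every mismatched pair at once, and in particular of the gap case above, since $\dim_{\bfk} \mathscr{L}_3(N; \xi, \zeta, \eta) = N^3 > r' N^2$ whenever $r' < N$.

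Once $r = r'$ is secured I would finish by splitting into two cases. If $r = r' = N$, Claim~\ref{claim:H-Morita-L3-N} supplies an integer $k$ with $(\xi', \zeta', \eta') = (\xi, \zeta, q^{2k}\eta)$, which together with $r = r'$ is precisely the asserted condition. If $r = r' < N$, then $\eta = \eta' = 0$ by hypothesis, and Claim~\ref{claim:H-Morita-L3-r} gives $(\xi', \zeta') = (\xi, \zeta)$; since $q^{2k}\eta = 0 = \eta'$ for every $k$, the equality $(r', \xi', \zeta', \eta') = (r, \xi, \zeta, q^{2k}\eta)$ holds with $k = 0$. For the converse, the stated condition already yields $r = r'$, and I would simply invoke the `if' direction of Claim~\ref{claim:H-Morita-L3-N} when $r = N$, or of Claim~\ref{claim:H-Morita-L3-r} (noting $\eta = \eta' = 0$) when $r < N$. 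I expect the dimension invariance to be the only genuinely new ingredient beyond the two claims, and it is exactly here that the right $H$-simplicity (equivalently, the division-algebra property) of these comodule algebras is used in an essential way.
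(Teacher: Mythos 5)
Your proposal is correct and follows essentially the same route as the paper: the paper likewise reduces to Claims~\ref{claim:H-Morita-L3-N} and~\ref{claim:H-Morita-L3-r} and disposes of the mismatched cases ($r \ne r'$) by comparing dimensions, with the dimension (indeed the pair $d(L)$ of \eqref{eq:H-Morita-invariant-d}) being an $H$-Morita invariant precisely because of Theorem~\ref{thm:H-Morita-pointed}, exactly as you argue. Your only deviation is organizational --- you force $r = r'$ first and then invoke the claims, rather than splitting into cases at the outset --- which changes nothing of substance.
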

\begin{proof}
  The case where $r = r' = N$ and the case where $r, r' < N$ are already discussed in Claim \ref{claim:H-Morita-L3-N} and~\ref{claim:H-Morita-L3-r}, respectively. The claim of this lemma is easily verified in other cases just by comparing the dimensions of the algebras.
\end{proof}

We omit proofs of Lemmas~\ref{lem:H-Morita-L0}--\ref{lem:H-Morita-L3} below since they are proved in the same way as Claim~\ref{claim:H-Morita-L3-N}. Let $r$ be a positive divisor of $N$, and let $\xi, \xi', \zeta, \zeta' \in \bfk$.

\begin{lemma}
  \label{lem:H-Morita-L0}
  $\mathscr{L}_0(r)$ and $\mathscr{L}_0(r')$ are $H$-Morita equivalent if and only if $r = r'$.
\end{lemma}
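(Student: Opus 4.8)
The plan is to follow the same scheme as in Claim~\ref{claim:H-Morita-L3-N}, but the argument will be considerably shorter because the comodule algebras $\mathscr{L}_0(r)$ carry no `off-diagonal' structure. First I would record that each $\mathscr{L}_0(r)$ is right $H$-simple (this is already noted after Definition~\ref{def:list-of-liftings}, and in any case follows from Theorem~\ref{thm:H-simplicity-comod-alg}), so that Theorem~\ref{thm:H-Morita-pointed} applies: $\mathscr{L}_0(r)$ and $\mathscr{L}_0(r')$ are $H$-Morita equivalent if and only if there is an element $g_0 \in \Grp(H)$ with $\mathscr{L}_0(r') \cong g_0 \mathscr{L}_0(r) g_0^{-1}$ as left $H$-comodule algebras.

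Next I would simplify the right-hand condition. Since $\Grp(H)$ is the cyclic group generated by $g$, every element $g_0 \in \Grp(H)$ is a power of $g$. The coaction of $\mathscr{L}_0(r)$ is $\delta(G) = g^{N/r} \otimes G$, whose $H$-component $g^{N/r}$ is itself a power of $g$ and hence commutes with $g_0$. Therefore the twisted coaction $G \mapsto g_0 \, g^{N/r} g_0^{-1} \otimes G = g^{N/r} \otimes G$ coincides with the original one, so $g_0 \mathscr{L}_0(r) g_0^{-1} = \mathscr{L}_0(r)$ for every $g_0 \in \Grp(H)$. Consequently the $H$-Morita equivalence of $\mathscr{L}_0(r)$ and $\mathscr{L}_0(r')$ is equivalent to the existence of an isomorphism $\mathscr{L}_0(r) \cong \mathscr{L}_0(r')$ of left $H$-comodule algebras.

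Finally I would settle this isomorphism question by a dimension count: $\mathscr{L}_0(r)$ has $\bfk$-basis $\{ 1, G, \dots, G^{r-1} \}$, so $\dim_{\bfk} \mathscr{L}_0(r) = r$. An algebra isomorphism is in particular a linear isomorphism, forcing $r = r'$; the converse is immediate, as $r = r'$ yields the identity isomorphism. I expect essentially no obstacle here — the only point that genuinely needs care is the verification that conjugation by grouplike elements acts trivially on $\mathscr{L}_0(r)$, which rests entirely on the commutativity (indeed cyclicity) of $\Grp(H)$. Once that is in hand, the statement reduces to the elementary observation that $\dim_{\bfk} \mathscr{L}_0(r) = r$.
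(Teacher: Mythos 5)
Your proof is correct and is essentially the paper's own approach: the paper omits the proof of Lemma~\ref{lem:H-Morita-L0}, stating it is proved in the same way as Claim~\ref{claim:H-Morita-L3-N}, namely by using Theorem~\ref{thm:H-Morita-pointed} (plus the fact that conjugation by grouplikes does not change the comodule algebra here) to reduce $H$-Morita equivalence to isomorphism of left $H$-comodule algebras, and then comparing the two algebras. Your observation that the comparison step collapses to the dimension count $\dim_{\bfk}\mathscr{L}_0(r) = r$ is exactly what makes this case trivial relative to the Loewy-filtration analysis needed in Claim~\ref{claim:H-Morita-L3-N}.
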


\begin{lemma}
  \label{lem:H-Morita-L1}
  $\mathscr{L}_1(r; \xi)$ and $\mathscr{L}_1(r'; \xi')$ are $H$-Morita equivalent if and only if
  \begin{equation*}
    (r', \xi') = (r, \xi).
  \end{equation*}
\end{lemma}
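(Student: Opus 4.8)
The plan is to imitate the proof of Claim~\ref{claim:H-Morita-L3-N} almost verbatim, the present situation being simpler since only the two generators $G$ and $X$ occur. The first step is to reduce $H$-Morita equivalence to isomorphism of left $H$-comodule algebras. Both $\mathscr{L}_1(r;\xi)$ and $\mathscr{L}_1(r';\xi')$ are right $H$-simple (as noted after Definition~\ref{def:list-of-liftings}), so by Theorem~\ref{thm:H-Morita-pointed} they are $H$-Morita equivalent if and only if $\mathscr{L}_1(r';\xi') \cong g^m\, \mathscr{L}_1(r;\xi)\, g^{-m}$ as left $H$-comodule algebras for some $m \in \mathbb{Z}$, where $\Grp(H) = \langle g \rangle$. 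Since $g^m x g^{-m} = q^{2m} x$ in $H = \gr(u_q)$, the coaction of $g^m \mathscr{L}_1(r;\xi) g^{-m}$ sends $X \mapsto q^{2m} x \otimes 1 + g^{-1} \otimes X$ and leaves $\delta(G)$ unchanged; hence scaling $X$ by a suitable $N$-th root of unity (namely $X \mapsto q^{-2m} X$, which is an algebra automorphism because $q^{2mN} = 1$) gives an isomorphism $\mathscr{L}_1(r;\xi) \cong g^m \mathscr{L}_1(r;\xi) g^{-m}$ of comodule algebras. Consequently $H$-Morita equivalence amounts to isomorphism as left $H$-comodule algebras.

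Next I would decide when $\mathscr{L}_1(r;\xi) \cong \mathscr{L}_1(r';\xi')$ as comodule algebras. Comparing dimensions, which equal $rN$ and $r'N$ respectively, forces $r = r'$. Assuming $r = r'$, let $\phi$ be such an isomorphism, with source generators written $G'$ and $X'$. Restricting $\phi$ to the socle (the span of the grouplikes $G^i$) and matching coactions, the only socle element whose coaction is $g^{N/r} \otimes (-)$ is a scalar multiple of $G$; thus $\phi(G') = a G$ with $a^r = 1$ coming from $(G')^r = 1$. To pin down $\phi(X')$, I would determine the space of $v$ in the first Loewy layer $\mathcal{L}_1 = \mathrm{span}\{ G^i, G^i X \}$ satisfying $\delta(v) = x \otimes 1 + g^{-1} \otimes v$, exactly as in Claim~\ref{claim:H-Morita-L3-N}: decomposing $\mathcal{L}_1$ into the subcomodules $V_i = \mathrm{span}\{G^i, G^i X\}$ and matching coefficients shows that $v = X$ is the unique such element when $r < N$, while for $r = N$ an additional term $c G^{-1}$ is permitted and is then eliminated by the relation $G' X' = q^2 X' G'$ together with $q^2 \neq 1$. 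Either way $\phi(X') = X$, so $\xi' = (X')^N \mapsto X^N = \xi$, giving $\xi = \xi'$. The converse is immediate, since $(r,\xi) = (r',\xi')$ makes the two comodule algebras identical.

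The technical heart, and the only genuine obstacle, is the computation of the skew-primitive-type elements in the first Loewy layer and, in the boundary case $r = N$, the removal of the spurious $G^{-1}$-term via the $q$-commutation relation. This is precisely the calculation already carried out for Claim~\ref{claim:H-Morita-L3-N}, so I would organize the argument to reuse that computation rather than reproduce it, and present the conclusion in the uniform form $(r',\xi') = (r,\xi)$.
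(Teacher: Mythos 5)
Your proposal is correct and takes essentially the same approach as the paper: the paper omits the proof of this lemma, saying only that it is proved in the same way as Claim~\ref{claim:H-Morita-L3-N}, and your argument is precisely that adaptation (reduction to isomorphism of left $H$-comodule algebras via Theorem~\ref{thm:H-Morita-pointed} and the conjugation isomorphism, the dimension count forcing $r=r'$, the socle/first-Loewy-layer analysis pinning down $\phi(G')=aG$ and $\phi(X')=X$ up to a possible $G^{-1}$-term when $r=N$, and the elimination of that term using $G'X'=q^2X'G'$ and $q^2\neq 1$). All the details you supply check out.
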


\begin{lemma}
  \label{lem:H-Morita-L2}
  $\mathscr{L}_2(r; \zeta)$ and $\mathscr{L}_2(r'; \zeta')$ are $H$-Morita equivalent if and only if
  \begin{equation*}
    (r', \zeta') = (r, \zeta).
  \end{equation*}
\end{lemma}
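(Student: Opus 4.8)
The plan is to mirror the proof of Claim~\ref{claim:H-Morita-L3-N}. Since every $\mathscr{L}_2(r;\zeta)$ is right $H$-simple, Theorem~\ref{thm:H-Morita-pointed} is available, so my first move is to reduce $H$-Morita equivalence to isomorphism of left $H$-comodule algebras. For each $m\in\mathbb{Z}$ I would check that the map $G\mapsto G$, $Y\mapsto q^{2m}Y$ defines an isomorphism $\mathscr{L}_2(r;\zeta)\to g^m\mathscr{L}_2(r;\zeta)g^{-m}$ of left $H$-comodule algebras: it visibly respects the defining relations, and it is $H$-colinear because conjugation by $g^m$ rescales the leading term $y\otimes 1$ of $\delta(Y)$ by $q^{-2m}$, which is cancelled by the factor $q^{2m}$. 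Hence, by Theorem~\ref{thm:H-Morita-pointed}, $\mathscr{L}_2(r;\zeta)$ and $\mathscr{L}_2(r';\zeta')$ are $H$-Morita equivalent precisely when they are isomorphic as left $H$-comodule algebras. A comparison of dimensions, $\dim_{\bfk}\mathscr{L}_2(r;\zeta)=rN$, then forces $r=r'$, so it remains to analyze isomorphisms $\phi\colon\mathscr{L}_2(r;\zeta')\to\mathscr{L}_2(r;\zeta)$ and to prove $\zeta=\zeta'$.

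Next I would decompose the first layer of the Loewy filtration as $\mathcal{L}_1=\bigoplus_{i=0}^{r-1}V_i$, where $V_i=\mathrm{span}\{G^i,G^iY\}$ is a subcomodule, and compute the comodule Hom-spaces ${}^H\Hom(V_i,V_j)$ exactly as in Claim~\ref{claim:H-Morita-L3-N} (here each $V_i$ is $2$-dimensional rather than $3$-dimensional). Because $\phi$ preserves the socle and is colinear, $\phi(G')$ lies in the $g^{N/r}$-isotypic part of $\socle(\mathscr{L}_2(r;\zeta))=\mathrm{span}\{G^i\}$, which pins it down to $\phi(G')=aG$ with $a^N=1$. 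Writing $\phi(Y')=\sum_i(p_iG^i+q_iG^iY)$ and imposing $\delta(\phi(Y'))=y\otimes 1+g^{-1}\otimes\phi(Y')$, a term-by-term match against the $q$-binomial expansion of $\delta(G^iY)$ yields $q_0=1$, $p_0=0$, and $q_i=0=p_i$ for $i\neq 0$, except that a single coefficient $p_{r-1}$ may survive, and only when $r=N$. Thus $\phi(Y')=Y+p\,G^{N-1}$ when $r=N$ and $\phi(Y')=Y$ when $r<N$.

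The last and most delicate step is to rule out the surviving correction $p\,G^{N-1}$ in the case $r=N$, where the coaction alone is insufficient. I would apply $\phi$ to the relation $G'Y'=q^{-2}Y'G'$ and use $G^N=1$ together with $GY=q^{-2}YG$; the mismatch between the two sides produces a term proportional to $p(1-q^{-2})$, and since $q^2\neq 1$ this forces $p=0$, so $\phi(Y')=Y$ in all cases. Consequently $\zeta'=\phi(Y'^N)=\phi(Y')^N=Y^N=\zeta$, which establishes the `only if' direction; the `if' direction is immediate, as $(r,\zeta)=(r',\zeta')$ gives the identity isomorphism. I expect the only genuine obstacle to be this final elimination of the degree-zero correction term in $\phi(Y')$ via the $q$-commutation relation, the remaining steps being routine bookkeeping with the Loewy filtration and the $q$-binomial formula.
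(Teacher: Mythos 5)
Your proposal is correct and follows essentially the same route as the paper: the paper omits the proof of this lemma precisely because it is "proved in the same way as Claim~\ref{claim:H-Morita-L3-N}", and your argument is exactly that adaptation — reduce $H$-Morita equivalence to comodule-algebra isomorphism via Theorem~\ref{thm:H-Morita-pointed} and the conjugation isomorphisms, pin down $\phi$ on the socle and on $\mathcal{L}_1$ using the Loewy filtration and comodule Hom-spaces, and kill the stray coefficient (your $p\,G^{N-1}$, the paper's $c_2 G^{-1}$) with the $q$-commutation relation before comparing $Y^N$. The only cosmetic slip is that $\phi(G')=aG$ gives $a^r=1$ rather than $a^N=1$ when $r<N$ (though the latter still holds since $r\mid N$), and this plays no role in the conclusion.
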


We choose parameters $\alpha, \beta, \xi, \alpha', \beta', \xi' \in \bfk$ with $(\alpha, \beta), (\alpha', \beta') \ne (0,0)$ and discuss when $\mathscr{L}_3(\alpha, \beta; \xi)$ and $\mathscr{L}_3(\alpha', \beta', \xi')$ are $H$-Morita equivalent. The result looks a little different from the previous cases, and is as follows:

\begin{lemma}
  \label{lem:H-Morita-L4}
  $\mathscr{L}_4(\alpha, \beta; \xi)$ and $\mathscr{L}_4(\alpha', \beta'; \xi')$ are $H$-Morita equivalent if and only if there are an integer $k$ and a non-zero element $\lambda \in \bfk$ such that
  \begin{equation*}
    (\alpha', \beta', \xi') = (\lambda q^{2k} \alpha, \lambda q^{-2k} \beta, \lambda^N \xi).
  \end{equation*}
\end{lemma}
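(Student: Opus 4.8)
The plan is to follow the same two-step reduction used in Claim~\ref{claim:H-Morita-L3-N}: first invoke the pointed Morita criterion (Theorem~\ref{thm:H-Morita-pointed}) to replace $H$-Morita equivalence by the existence of a grouplike conjugation together with a plain comodule-algebra isomorphism, and then analyze each of these two operations explicitly. Since both $\mathscr{L}_4(\alpha,\beta;\xi)$ and $\mathscr{L}_4(\alpha',\beta';\xi')$ are right $H$-simple (noted after Definition~\ref{def:list-of-liftings}) and $\Grp(H) = \langle g \rangle$ is cyclic of order $N$, Theorem~\ref{thm:H-Morita-pointed} tells us the two algebras are $H$-Morita equivalent if and only if there is an integer $k$ with $\mathscr{L}_4(\alpha',\beta';\xi') \cong g^{k}\,\mathscr{L}_4(\alpha,\beta;\xi)\,g^{-k}$ as left $H$-comodule algebras. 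First I would compute this conjugation directly on the generator $W$: since $g^k x g^{-k} = q^{2k}x$, $g^k y g^{-k} = q^{-2k}y$ and $g^k g^{-1} g^{-k} = g^{-1}$, the new coaction sends $W \mapsto (q^{2k}\alpha\, x + q^{-2k}\beta\, y)\otimes 1 + g^{-1}\otimes W$, whence $g^{k}\,\mathscr{L}_4(\alpha,\beta;\xi)\,g^{-k} \cong \mathscr{L}_4(q^{2k}\alpha,\, q^{-2k}\beta;\, \xi)$ on the nose.

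This reduces everything to a single remaining question, which I expect to be the heart of the argument: when are $\mathscr{L}_4(\alpha,\beta;\xi)$ and $\mathscr{L}_4(\alpha',\beta';\xi')$ isomorphic as left $H$-comodule algebras? The claim is that this happens exactly when $(\alpha',\beta',\xi') = (\lambda\alpha, \lambda\beta, \lambda^{N}\xi)$ for some $\lambda \in \bfk^{\times}$, and here the scaling parameter $\lambda$ is precisely the extra freedom that makes the statement look different from Lemmas~\ref{lem:H-Morita-L0}--\ref{lem:H-Morita-L3}. The ``if'' direction is the explicit colinear algebra map $W' \mapsto \lambda W$: it satisfies $(\lambda W)^N = \lambda^N\xi = \xi'$ and intertwines the coactions because the skew-primitive part rescales to $\lambda\omega = \omega'$. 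For the ``only if'' direction I would use that an isomorphism $\phi$ of $H$-comodules preserves the socle series, so it respects the Loewy filtration and hence $\phi(W') \in \mathcal{L}_1 = \bfk\,1 \oplus \bfk\,W$; write $\phi(W') = c_0 + c_1 W$ with $c_1 \neq 0$. Imposing colinearity and matching the two sides of $\delta_{\mathscr{L}_4}(\phi(W')) = \omega'\otimes 1 + g^{-1}\otimes\phi(W')$ yields, after cancelling the $\otimes W$ terms, the identity $c_0\cdot 1 + c_1\omega = \omega' + c_0\, g^{-1}$ in $H$. The main obstacle is exactly the bookkeeping of this identity, but it is resolved cleanly by the linear independence of $\{1,\, g^{-1},\, x,\, y\}$ in the PBW basis of $H = \gr(u_q)$ (valid since $N > 1$ forces $g^{-1} \neq 1$): comparing coefficients of $1$ and $g^{-1}$ forces $c_0 = 0$, while the coefficients of $x$ and $y$ give $\alpha' = c_1\alpha$ and $\beta' = c_1\beta$. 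Setting $\lambda := c_1$ and applying $\phi$ to $W'^N = \xi'$ gives $\xi' = \lambda^{N}\xi$.

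Finally I would combine the two steps. Composing the conjugation reduction with the isomorphism classification, the algebras are $H$-Morita equivalent if and only if there exist an integer $k$ and $\lambda \in \bfk^{\times}$ with $\mathscr{L}_4(\alpha',\beta';\xi') \cong \mathscr{L}_4(\lambda q^{2k}\alpha,\, \lambda q^{-2k}\beta;\, \lambda^{N}\xi)$ being the trivial isomorphism, i.e. $(\alpha',\beta',\xi') = (\lambda q^{2k}\alpha,\, \lambda q^{-2k}\beta,\, \lambda^{N}\xi)$, which is the asserted form. I do not anticipate the need for the $q$-binomial machinery here, since the Loewy-filtration preservation confines $\phi(W')$ to the first filtration layer and spares us from computing $\delta_{\mathscr{L}_4}(W^i)$ for $i \geq 2$; the only genuinely delicate point is ensuring that no additive constant ($c_0$) or higher-order term survives, and that is dispatched by the grouplike linear-independence argument above.
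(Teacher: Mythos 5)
Your proposal is correct and follows essentially the same route as the paper: reduce via Theorem~\ref{thm:H-Morita-pointed} and the conjugation formula $g^k \mathscr{L}_4(\alpha,\beta;\xi) g^{-k} \cong \mathscr{L}_4(q^{2k}\alpha, q^{-2k}\beta;\xi)$, then classify plain comodule-algebra isomorphisms by confining $\phi(W')$ to the first Loewy layer and comparing coefficients, with the converse given by $W \mapsto \lambda W$. Your explicit elimination of the constant term $c_0$ via colinearity and the linear independence of $1, g^{-1}, x, y$ is exactly the argument the paper compresses into the phrase ``by an argument using the Loewy filtration as in the proof of Claim~\ref{claim:H-Morita-L3-N}.''
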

\begin{proof}
  We set $\mathcal{L} = \mathscr{L}_{4}(\alpha, \beta; \xi)$ and $\mathcal{L}' = \mathscr{L}_{4}(\alpha', \beta'; \xi')$. For $k \in \mathbb{Z}$, there is an isomorphism $g^k \mathcal{L} g^{-k} \cong \mathscr{L}_{4}(q^{2k} \alpha, q^{-2k} \beta; \xi)$ of left $H$-comodule algebras.
  Thus, in view of Theorem~\ref{thm:H-Morita-pointed}, it suffices to show
  \begin{equation}
    \label{eq:H-Morita-L4-eq-1}
    \text{$\mathcal{L} \cong \mathcal{L}'$ as left $H$-comodule algebras}
    \iff \text{$(\alpha', \beta', \xi') = (\lambda \alpha, \lambda \beta, \lambda^N \xi)$}.
  \end{equation}
  We now prove \eqref{eq:H-Morita-L4-eq-1}. To avoid confusion, we denote the generator $W$ of $\mathcal{L}'$ by $W'$. Suppose that there is an isomorphism $\phi : \mathcal{L}' \to \mathcal{L}$. Then, by an argument using the Loewy filtration as in the proof of Claim~\ref{claim:H-Morita-L3-N}, we have $\phi(W') = \lambda W$ for some $\lambda \in \bfk$. Since $\phi$ is an isomorphism, $\lambda \ne 0$. We also have
  \begin{gather*}
    \alpha' x \otimes 1 + \beta' y \otimes 1 + g^{-1} \otimes \lambda W
    = (\id \otimes \phi) \delta_{\mathcal{L}'}(W') \\
    = \delta_\mathcal{L} \phi(W')
    = \lambda \alpha x \otimes 1 + \lambda \beta y \otimes 1 + g^{-1} \otimes \lambda W.
  \end{gather*}
  By comparing the coefficient of $x \otimes 1$ and $y \otimes 1$, we obtain $\alpha' = \lambda \alpha$ and $\beta' = \lambda \beta$, respectively. Furthermore, we have
  \begin{equation*}
    \xi' = \phi(W'{}^N)
    = \phi(W')^N = (\lambda W)^N = \lambda^N \xi.
  \end{equation*}
  The `only if' part ($\Rightarrow$) of \eqref{eq:H-Morita-L4-eq-1} has been verified. The converse follows from that there is an isomorphism
  \begin{equation*}
    \mathscr{L}_{4}(\alpha, \beta; \xi)
    \to \mathscr{L}_{4}(\lambda \alpha, \lambda \beta; \lambda^N \xi),
    \quad W \mapsto \lambda W
  \end{equation*}
  of left $H$-comodule algebras. The proof is done.
\end{proof}

To complete the classification, we define $d(L)$ for a left $H$-comodule algebra $L$ to be the pair $(m/r, r)$, where $m = \dim_{\bfk}(L)$ and $r = \dim_{\bfk}(\socle(L))$. Then we have
\begin{equation}
  \label{eq:H-Morita-invariant-d}
  \begin{gathered}
    d(\mathscr{L}_0(r)) = (1, r), \quad
    d(\mathscr{L}_1(r; \xi)) = (N, r), \quad
    d(\mathscr{L}_2(r; \zeta)) = (N, r), \\
    d(\mathscr{L}_3(r; \xi, \zeta, \eta)) = (N^2, r), \quad
    d(\mathscr{L}_4(\alpha, \beta; \xi)) = (N, 1).
  \end{gathered}
\end{equation}Theorem~\ref{thm:H-Morita-pointed} implies that $d(L)$ is an $H$-Morita invariant for right $H$-simple left $H$-comodule algebras $L$. Thus, for example, we know from \eqref{eq:H-Morita-invariant-d} that $\mathscr{L}_0(r)$ and $\mathscr{L}_1(r; \xi)$ are not $H$-Morita equivalent. A missing piece to complete our task is the following case:

\begin{lemma}
  \label{lem:H-Morita-L1-L2}
  $\mathscr{L}_1(r; \xi)$ and $\mathscr{L}_2(r'; \xi')$ are not $H$-Morita equivalent.
\end{lemma}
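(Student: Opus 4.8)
The plan is to reduce everything to Theorem~\ref{thm:H-Morita-pointed}. Since $H = \gr(u_q)$ is pointed with $\Grp(H) = \langle g \rangle$ cyclic of order $N$, the comodule algebras $\mathscr{L}_1(r;\xi)$ and $\mathscr{L}_2(r';\xi')$ are $H$-Morita equivalent if and only if there is an integer $m$ with $\mathscr{L}_2(r';\xi') \cong g^m\,\mathscr{L}_1(r;\xi)\,g^{-m}$ as left $H$-comodule algebras. So I would assume such an isomorphism $\phi$ exists and derive a contradiction, uniformly in $m$ and without assuming $r = r'$.

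First I would record the coaction of the conjugated algebra $A := g^m\,\mathscr{L}_1(r;\xi)\,g^{-m}$. Since conjugation by a grouplike leaves the underlying algebra unchanged and only twists the coaction by $a \mapsto g^m a_{(-1)} g^{-m} \otimes a_{(0)}$, using $g^m x g^{-m} = q^{2m} x$ (from $g x = q^2 x g$) and the commutativity of powers of $g$, the coaction becomes
\[
  \delta(X) = q^{2m} x \otimes 1 + g^{-1} \otimes X, \qquad \delta(G) = g^{N/r} \otimes G .
\]
The key structural point is that both of these lie in $B \otimes A$, where $B := \langle g, x \rangle$ is the subalgebra of $H$ generated by $g$ and $x$; since $B \otimes A$ is a subalgebra of $H \otimes A$, $\delta$ is an algebra map, and $A$ is generated by $G$ and $X$, the entire image satisfies $\delta(A) \subseteq B \otimes A$. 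Using $g x = q^2 x g$ together with $g^N = 1$ and $x^N = 0$, the algebra $B$ is spanned by the monomials $g^a x^b$; in terms of the standard PBW basis $\{ x^i y^j g^k \}$ of $H$ these are exactly the elements of $y$-degree $0$, so in particular $y \notin B$.

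Now I would exploit $H$-colinearity of $\phi$ on the generator $Y$ of $\mathscr{L}_2(r';\xi')$, whose coaction is $\delta(Y) = y \otimes 1 + g^{-1} \otimes Y$. Applying $(\id_H \otimes \phi)$ and colinearity gives
\[
  y \otimes 1 + g^{-1} \otimes \phi(Y) = \delta(\phi(Y)) \in B \otimes A .
\]
Because $g^{-1} = g^{N-1} \in B$, the summand $g^{-1} \otimes \phi(Y)$ already lies in $B \otimes A$, which would force $y \otimes 1 \in B \otimes A$. But $(B \otimes A) \cap (H \otimes \bfk 1) = B \otimes \bfk 1$ and $y \notin B$, a contradiction. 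Hence no such $\phi$ exists and the two algebras are not $H$-Morita equivalent.

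The argument is essentially bookkeeping of which part of $H$ can occur in the first tensor leg of the coaction, so I do not anticipate a genuine obstacle; the one point requiring care is the inclusion $\delta(A) \subseteq B \otimes A$, which I would justify by checking that $\delta(G)$ and $\delta(X)$ land in the subalgebra $B \otimes A$ and that conjugation rescales $x$ by $q^{2m}$ but can never introduce $y$ into the first leg. It is worth emphasizing that this reasoning is insensitive to $r, r', \xi, \xi'$, so it settles all parameter values at once; when $r \neq r'$ the non-equivalence also follows more cheaply from the $H$-Morita invariant $d$ of~\eqref{eq:H-Morita-invariant-d}.
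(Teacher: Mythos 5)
Your proof is correct and follows essentially the same route as the paper: both reduce, via Theorem~\ref{thm:H-Morita-pointed}, to ruling out an isomorphism with a grouplike conjugate $g^m\mathscr{L}_1(r;\xi)g^{-m}$, and then exploit the fact that the first tensor leg of that algebra's coaction lies in $\langle g, x\rangle$ while colinearity on the generator $Y$ of $\mathscr{L}_2(r';\xi')$ would force $y$ to appear there. Your explicit bookkeeping with $B \otimes A$ is just an unpacked version of the paper's one-line comparison of coefficient coalgebras ($x$ lies in the coefficient coalgebra of $\mathscr{L}_1(r;\xi)$ but not in that of $\mathscr{L}_2(r';\xi')$).
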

\begin{proof}
  We write $\mathcal{L} = \mathscr{L}_1(r; \xi)$ and $\mathcal{L}' = \mathscr{L}_2(r'; \xi')$.
  Since $g^m \mathcal{L} g^{-m} \cong \mathcal{L}$ for any $m \in \mathbb{Z}$, they are $H$-Morita equivalent if and only if $\mathcal{L} \cong \mathcal{L}'$. Given a left $H$-comodule $V$, we denote by $C(V)$ the coefficient coalgebra of $V$. Namely,
  \begin{equation*}
    C(V) = \{ (\id_H \otimes f) \delta_V(v) \mid v \in V, f \in V^* \}.
  \end{equation*}
  It is easy to see that $C(\mathcal{L})$ contains $x$, while $C(\mathcal{L}')$ does not. Hence $\mathcal{L}$ and $\mathcal{L}'$ are not isomorphic even as left $H$-comodules. The proof is done.
\end{proof}

Here is the conclusion of this subsection: Given a positive integer $m$, we denote by $\mathrm{Div}(m)$ the set of positive divisors of $m$.
Noting that $\mathscr{L}_1(1; \xi)$ and $\mathscr{L}_2(1; \xi)$ are isomorphic to $\mathscr{L}_4(1, 0; \xi)$ and $\mathscr{L}_4(0, 1; \xi)$, respectively, we define
\begin{align*}
  \mathscr{F}_0
  & = \{ \mathscr{L}_0(r) \mid r \in \mathrm{Div}(N) \}, \\
  \mathscr{F}_1
  & = \{ \mathscr{L}_1(r; \xi) \mid r \in \mathrm{Div}(N), r \ne 1; \xi \in \bfk \}, \\
  \mathscr{F}_2
  & = \{ \mathscr{L}_2(r; \xi) \mid r \in \mathrm{Div}(N), r \ne 1; \xi \in \bfk \}, \\
  \mathscr{F}_3
  & = \{ \mathscr{L}_3(r; \xi, \zeta, \eta)
    \mid r \in \mathrm{Div}(N); \xi, \zeta, \eta \in \bfk; \text{$\eta = 0$ if $r < N$} \}, \\
  \mathscr{F}_4
  & = \{ \mathscr{L}_4(\alpha, \beta; \xi)
    \mid \alpha, \beta, \xi \in \bfk, (\alpha, \beta) \ne (0,0) \}
\end{align*}
and call these sets {\em families}.

\begin{theorem}
  Every indecomposable left module category over $\lmod{H}$ is equivalent to $\lmod{A}$ for some $A \in \mathscr{F}_0 \cup \mathscr{F}_1 \cup \mathscr{F}_2 \cup \mathscr{F}_3 \cup \mathscr{F}_4$. By \eqref{eq:H-Morita-invariant-d} and Lemma~\ref{lem:H-Morita-L1-L2}, two left $H$-comodule algebras belonging to different families are not $H$-Morita equivalent. For two left $H$-comodule algebras belonging to the same family, whether they are $H$-Morita equivalence is determined by Lemmas \ref{lem:H-Morita-L3}--\ref{lem:H-Morita-L4}.
\end{theorem}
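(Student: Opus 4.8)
The plan is to assemble Steps 1--3 into the three assertions, handling completeness, distinctness of different families, and equivalence within a single family in turn.

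\textbf{Completeness.} Let $\mathcal{M}$ be an indecomposable exact left module category over $\lmod{H}$. By Theorem~\ref{thm:AM-exact-mod-cat} we may write $\mathcal{M} \approx \lmod{A}$ for a right $H$-simple left $H$-comodule algebra $A$, and since $\bfk$ is algebraically closed Lemma~\ref{lem:division-alg-is-haploid} shows that $A$ has trivial coinvariants. The reduction carried out in Subsection~\ref{subsec:classification-strategy}---combining the $H$-simplicity criterion of Theorem~\ref{thm:H-simplicity-comod-alg}, the group-algebra classification of Theorem~\ref{thm:exact-H-comod-over-group-alg} (with the vanishing $\mathrm{H}^2(F)=0$), and the coideal-subalgebra criterion of Lemma~\ref{lem:coideal-sub-criterion-graded}---forces $\grL(A)$ to be one of the homogeneous left coideal subalgebras listed in \eqref{eq:homogeneous-CSA}. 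Hence $A$ is a lifting of one of them, and the Step~2 lemmas (Lemmas~\ref{lem:lifting-L3N}--\ref{lem:Step-2-lifting-L4}) identify $A$ with one of the algebras $\mathscr{L}_i$ of Definition~\ref{def:list-of-liftings}. Absorbing the boundary cases via the isomorphisms $\mathscr{L}_1(1;\xi) \cong \mathscr{L}_4(1,0;\xi)$ and $\mathscr{L}_2(1;\xi) \cong \mathscr{L}_4(0,1;\xi)$, I conclude that $A$ lies in $\mathscr{F}_0 \cup \mathscr{F}_1 \cup \mathscr{F}_2 \cup \mathscr{F}_3 \cup \mathscr{F}_4$.

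\textbf{Different families.} The central observation is that $d(L) = (\dim_{\bfk}(L)/\dim_{\bfk}\socle(L),\, \dim_{\bfk}\socle(L))$ is an $H$-Morita invariant: by Theorem~\ref{thm:H-Morita-pointed} any $H$-Morita equivalence between right $H$-simple comodule algebras is realized by an isomorphism $B \cong g A g^{-1}$ with $g \in \Grp(H)$, and this operation preserves both $\dim_{\bfk}$ and $\dim_{\bfk}\socle$. Reading off \eqref{eq:H-Morita-invariant-d}, the first coordinate of $d$ equals $1$ on $\mathscr{F}_0$, equals $N$ on $\mathscr{F}_1 \cup \mathscr{F}_2 \cup \mathscr{F}_4$, and equals $N^2$ on $\mathscr{F}_3$, which separates these three blocks; inside the middle block the second coordinate singles out $\mathscr{F}_4$ (where it is $1$) from $\mathscr{F}_1, \mathscr{F}_2$ (where $r \neq 1$), and $\mathscr{F}_1$ is split off from $\mathscr{F}_2$ by Lemma~\ref{lem:H-Morita-L1-L2}.

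\textbf{Within one family, and the main obstacle.} For two algebras in the same family I would simply quote Lemmas~\ref{lem:H-Morita-L0}--\ref{lem:H-Morita-L4}, which record the precise condition for $H$-Morita equivalence in each $\mathscr{F}_i$. No single step here is deep; the genuine difficulty is the bookkeeping needed to confirm that the five families are exhaustive and pairwise disjoint, that is, that the exclusions $r \neq 1$ in $\mathscr{F}_1, \mathscr{F}_2$ together with the two boundary isomorphisms exactly cancel the overlaps among the $\mathscr{L}_i$. The most delicate point is that the invariant $d$ cannot distinguish $\mathscr{F}_1$ from $\mathscr{F}_2$ at a shared value of $r$, which is precisely why the coefficient-coalgebra argument of Lemma~\ref{lem:H-Morita-L1-L2} is indispensable for closing the final gap.
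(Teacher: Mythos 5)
Your proposal is correct and takes essentially the same route as the paper: the theorem is stated there without a separate proof, as a summary of Steps 1--3, and your assembly---Theorem~\ref{thm:AM-exact-mod-cat} together with the strategy of Subsection~\ref{subsec:classification-strategy} (via Theorems~\ref{thm:H-simplicity-comod-alg}, \ref{thm:exact-H-comod-over-group-alg} and Lemma~\ref{lem:coideal-sub-criterion-graded}) for completeness, the Morita invariant $d$ of \eqref{eq:H-Morita-invariant-d} justified by Theorem~\ref{thm:H-Morita-pointed} plus Lemma~\ref{lem:H-Morita-L1-L2} for separating families, and the within-family lemmas---is exactly the intended argument. The only cosmetic remarks are that the theorem's hypothesis should read ``indecomposable \emph{exact}'' (as you correctly assume) and that your range ``Lemmas~\ref{lem:H-Morita-L0}--\ref{lem:H-Morita-L4}'' should follow the paper's ordering, in which Lemma~\ref{lem:H-Morita-L3} comes first.
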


\subsection{Step 4. Compute cocycle deformations}
\label{subsec:Step-4}

We compute deformations of $\gr(u_q)$-comodule algebras obtained in Step 2 by the Hopf 2-cocycle given in Lemma~\ref{lem:gr-uq-sl2-cocycle-deform} to obtain $u_q$-comodule algebras.
For each comodule algebra $\mathcal{L}$ in Definition \ref{def:list-of-liftings}, we denote the cocycle deformation ${}_{\sigma}\mathcal{L}$ by the same symbol but with $\mathscr{L}$ replaced with $\mathscr{A}$. For example,
\begin{equation*}
  \mathscr{A}_3(N; \xi, \zeta, \eta) := {}_{\sigma}(\mathscr{L}_3(N; \xi, \zeta, \eta)).
\end{equation*}

\subsubsection{The cocycle deformation of $\mathscr{L}_3(N; \xi, \zeta, \eta)$}

It turns out that every comodule algebra $\mathcal{L}$ in Definition \ref{def:list-of-liftings} is a comodule subalgebra of $\mathscr{L}_3(N; \xi, \zeta, \eta)$ for some parameters $\xi$, $\zeta$ and $\eta$, and therefore the cocycle deformation ${}_{\sigma}\mathcal{L}$ is a comodule subalgebra of $\mathscr{A}_3(N; \xi, \zeta, \eta)$. Thus we begin by proving:

\begin{lemma}
  \label{lem:cocycle-deform-L3}
  In $\mathscr{A}_3(N; \xi, \zeta, \eta)$, the generators of $\mathscr{L}_3(N; \xi, \zeta, \eta)$ satisfy
  \begin{equation}
    \label{eq:cocycle-deform-L3-relations}
    \begin{gathered}
      G^N = 1, \quad X^N = \xi, \quad Y^N = \zeta, \quad G X = q^2 X G, \\
      G Y = q^{-2} Y G, \quad X Y - q^2 Y X = 1 - \eta G^{-2}.
    \end{gathered}
  \end{equation}
  The left $u_q$-comodule algebra $\mathscr{A}_3(N; \xi, \zeta, \eta)$ can also be defined to be the algebra generated by $G$, $X$ and $Y$ subject to the relations \eqref{eq:cocycle-deform-L3-relations} endowed with the left coaction $\delta$ determined by
  \begin{equation}
    \label{eq:cocycle-deform-L3-coaction}
    \delta(X) = \tilde{E} \otimes 1 + K^{-1} \otimes X,
    \ \delta(Y) = F \otimes 1 + K^{-1} \otimes Y,
    \ \delta(G) = K \otimes G.
  \end{equation}
\end{lemma}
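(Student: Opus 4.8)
The plan is to compute the deformed product $*_{\sigma}$ of $\mathscr{A}_3 = {}_{\sigma}(\mathscr{L}_3(N;\xi,\zeta,\eta))$ directly on the generators $G$, $X$, $Y$ from the formula $a *_{\sigma} b = \sigma(a_{(-1)}, b_{(-1)})\, a_{(0)} b_{(0)}$, reading all coactions off \eqref{eq:gr-uq-coaction}. The structural fact I would lean on throughout is that $\sigma(x^{i_1} y^{j_1} g^{k_1}, x^{i_2} y^{j_2} g^{k_2})$ vanishes unless $j_1 = 0$, $i_2 = 0$ and $i_1 = j_2$; in words, it dies whenever the first leg carries a $y$ or the second leg carries an $x$, and it equals $1$ on any pair of pure powers of $g$. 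Since the comodule structure of $\mathscr{A}_3$ is literally that of $\mathscr{L}_3$, every product reduces to evaluating $\sigma$ on a handful of terms.

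First I would dispatch the $G$-relations: as $\delta(G) = g \otimes G$ is pure-group, each of $G *_{\sigma} X$, $X *_{\sigma} G$, $G *_{\sigma} Y$, $Y *_{\sigma} G$ keeps only the summand in which both legs are powers of $g$ (where $\sigma = 1$), so these products agree with the undeformed ones and $G^N = 1$, $GX = q^2 XG$, $GY = q^{-2} YG$ persist verbatim. The crux is the deformed commutator: expanding $\delta(X)$ and $\delta(Y)$ into two terms each, the only surviving pairings are $(x \otimes 1,\, y \otimes 1)$ with $\sigma(x,y) = 1$ and $(g^{-1} \otimes X,\, g^{-1} \otimes Y)$ with $\sigma(g^{-1}, g^{-1}) = 1$, giving $X *_{\sigma} Y = 1 + XY$ and $Y *_{\sigma} X = YX$ (undeformed products on the right). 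Hence $X *_{\sigma} Y - q^2 Y *_{\sigma} X = 1 + (XY - q^2 YX) = 1 - \eta G^{-2}$, which is precisely where the deformation manufactures the constant term of the $u_q$-style relation; here $G^{-2}$ may be read as either power, the two coinciding. For the top relations I would note that $\delta(X^n) = g^{-n} \otimes X^n$ plus terms whose first leg involves $x$, so pairing against $\delta(X)$ through $\sigma$ annihilates everything but the pure-group leg and yields $X^n *_{\sigma} X = X^{n+1}$; inductively $X^{*_{\sigma} N} = X^N = \xi$, and symmetrically $Y^{*_{\sigma} N} = Y^N = \zeta$ and $G^{*_{\sigma} N} = 1$.

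For the presentation I would let $B$ be the algebra on $G, X, Y$ with relations \eqref{eq:cocycle-deform-L3-relations}. The computations above show these relations hold in $\mathscr{A}_3$, producing an algebra surjection $B \twoheadrightarrow \mathscr{A}_3$; reordering via the relations shows the $N^3$ monomials $X^a Y^b G^c$ $(0 \le a,b,c \le N-1)$ span $B$, so $\dim_{\bfk} B \le N^3 = \dim_{\bfk}\mathscr{A}_3$. To upgrade the surjection to an isomorphism I would compute, using the $q$-binomial expansions of $\delta(X^a)$ and $\delta(Y^b)$, that $X^a *_{\sigma} Y^b *_{\sigma} G^c = X^a Y^b G^c + (\text{strictly lower Loewy degree})$, the correction terms coming from the $r \ge 1$ values of $\sigma$; since $\{X^a Y^b G^c\}$ is the standard basis of the underlying space $\mathscr{A}_3 = \mathscr{L}_3$, this unitriangularity shows the images of the $N^3$ monomials form a basis, forcing $\dim_{\bfk} B = N^3$ and injectivity. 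Finally the coaction is inherited: $\mathscr{A}_3$ carries the same coaction as $\mathscr{L}_3$ but now over $H^{\sigma} \cong u_q$, so transporting \eqref{eq:gr-uq-coaction} along $x \mapsto \tilde{E}$, $y \mapsto F$, $g \mapsto K$ of Lemma~\ref{lem:gr-uq-sl2-cocycle-deform} gives \eqref{eq:cocycle-deform-L3-coaction}. I expect the only real obstacle to be cocycle bookkeeping — in particular keeping the off-diagonal ($r \ge 1$) contributions on the presentation side of the argument rather than contaminating the relation checks — with no input beyond the $q$-binomial formula, unlike the genuinely delicate $\mathscr{L}_4$ case.
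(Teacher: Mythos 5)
Your computations are correct, and the heart of your argument is the same as the paper's: both proofs evaluate the deformed product on the generators using exactly the vanishing pattern of $\sigma$ (trivial on $\langle g,x\rangle^{\otimes 2}$ and on $\langle g,y\rangle^{\otimes 2}$, with $\sigma(x,y)=1$ the only surviving off-diagonal value), arriving at $X*Y = 1+XY$, $Y*X = YX$, and undeformed $G$-, $X$- and $Y$-powers, hence the relations \eqref{eq:cocycle-deform-L3-relations}. Where you genuinely differ is the concluding step. The paper defines the abstract algebra on the relations \eqref{eq:cocycle-deform-L3-relations} with the coaction \eqref{eq:cocycle-deform-L3-coaction}, obtains the induced homomorphism into ${}_{\sigma}\mathscr{L}_3(N;\xi,\zeta,\eta)$, deduces injectivity from right $u_q$-simplicity of the source via Theorem~\ref{thm:H-simplicity-comod-alg}, and finishes by a dimension count; you instead prove injectivity and surjectivity simultaneously through the unitriangularity $X^a * Y^b * G^c = X^a Y^b G^c + (\text{lower-order terms})$, which shows that the images of the $N^3$ spanning monomials of your algebra $B$ are linearly independent. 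Your route is more elementary (no simplicity theorem is needed), and it makes explicit two points the paper's proof leaves implicit: that $G$, $X$, $Y$ generate ${}_{\sigma}\mathscr{L}_3$ under the deformed product (i.e.\ surjectivity of the comparison map), and that $\dim_{\bfk} B = N^3$ — note that injectivity alone only gives $\dim_{\bfk} B \le N^3$, so the paper's phrase ``same dimension'' is really an appeal to a PBW-type fact for the presentation, which your triangularity computation in effect proves. The trade-off is the extra $q$-binomial bookkeeping of the $r \ge 1$ contributions, which the paper's simplicity argument bypasses. Your handling of the coaction (transport along $x \mapsto \tilde{E}$, $y \mapsto F$, $g \mapsto K$ of Lemma~\ref{lem:gr-uq-sl2-cocycle-deform}) agrees with the paper's. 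Both arguments are valid.
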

\begin{proof}
  We set $\mathcal{L} = \mathscr{L}_3(N; \xi, \zeta, \eta)$.
  We define $\mathcal{A}$ to be the algebra generated by $X$, $Y$ and $G$ subject to the relations \eqref{eq:cocycle-deform-L3-relations}, and make it a left $u_q$-comodule by \eqref{eq:cocycle-deform-L3-coaction}. Our aim is to show that $\mathcal{A} \cong {}_{\sigma}\mathcal{L}$ as left $u_q$-comodule algebras. Let $*$ denote the twisted multiplication of ${}_{\sigma}\mathcal{L}$. We note that $\delta_{\mathcal{L}}(G)$ and $\delta_{\mathcal{L}}(X)$ belong to $\langle g, x \rangle \otimes \mathcal{L}$. If $a$ and $b$ are elements of $\mathcal{L}$ such that $\delta_{\mathcal{L}}(a), \delta_{\mathcal{L}}(b) \in \langle g, x \rangle \otimes \mathcal{L}$, then we have
  \begin{equation*}
    a * b = \sigma(a_{(-1)}, b_{(-1)}) a_{(0)} b_{(0)}
    = \varepsilon(a_{(-1)}) \varepsilon(b_{(-1)}) a_{(0)} b_{(0)} = a b
  \end{equation*}
  since $\sigma$ is equal to $\varepsilon \otimes \varepsilon$ on $\langle g, x \rangle^{\otimes 2}$. By this observation, we have $G * X = G X$, $X * G = X G$, $G^{*m} = G^m$ and $X^{*m} = X^m$ for any $m \in \mathbb{Z}_{\ge 0}$, where $(-)^{*m}$ denotes the $m$-th power with respect to $*$. Since $\sigma$ is equal to $\varepsilon \otimes \varepsilon$ on $\langle g, y \rangle$, we also have $G * Y = G Y$, $Y * G = Y G$ and $Y^{* m} = Y^m$. Finally, we have
\begin{align*}
  X * Y & = \sigma(x, y) 1 + \sigma(x, g^{-1}) Y + \sigma(g^{-1}, y) X + \sigma(g^{-1}, g^{-1}) X Y = X Y + 1, \\
  Y * X & = \sigma(y, x) 1 + \sigma(y, g^{-1}) X + \sigma(g^{-1}, x) Y + \sigma(g^{-1}, g^{-1}) Y X = Y X.
\end{align*}
By the above computation, we have
\begin{gather*}
  G * X = q^2 X * G, \quad G * Y = q^{-2} Y * G, \quad G^{*N} = 1, \\
  X^{*N} = \xi, \quad Y^{*N} = \zeta, \quad X * Y - q^2 Y * X = 1 - \eta G^{-1} * G^{-1}.
\end{gather*}
Thus we have a well-defined homomorphism
\begin{equation}
  \label{eq:cocycle-deform-L3-proof-1}
  \mathcal{A} \to {}_{\sigma}\mathcal{L}, \quad G \mapsto G, \quad X \mapsto X, \quad Y \mapsto Y
\end{equation}
of left $u_q$-comodule algebras. By Theorem~\ref{thm:H-simplicity-comod-alg}, $\mathcal{A}$ is right $u_q$-simple. Thus \eqref{eq:cocycle-deform-L3-proof-1} is injective. Since $\mathcal{A}$ and ${}_{\sigma}\mathcal{L}$ have the same dimension, we conclude that~\eqref{eq:cocycle-deform-L3-proof-1} is an isomorphism. The proof is done.
\end{proof}

\subsubsection{The cocycle deformation of $\mathscr{L}_i$ for $i = 0, 1, 2, 3$}

The left $\gr(u_q)$-comodule algebras $\mathscr{L}_0(r)$, $\mathscr{L}_1(r; \xi)$, $\mathscr{L}_2(r; \zeta)$ and $\mathscr{L}_3(r; \xi, \zeta)$ are embedded into $\mathscr{L}_3(N; \xi, \zeta, 0)$ by the algebra map sending the generators $G$, $X$ and $Y$ to $G^{N/r}$, $X$ and $Y$, respectively. By Lemma~\ref{lem:cocycle-deform-L3}, it is easy to obtain the following description of their cocycle deformations:

\begin{lemma}
  \label{lem:cocycle-deform-L0123}
  As algebras, we have
  \begin{equation*}
    \mathscr{A}_0(r) = \mathscr{L}_0(r), \quad
    \mathscr{A}_1(r; \xi) = \mathscr{L}_1(r; \xi) \quad \text{and} \quad
    \mathscr{A}_2(r; \zeta) = \mathscr{L}_2(r; \zeta).
  \end{equation*}
  The algebra $\mathscr{A}_3(r; \xi, \zeta)$ is generated by $G$, $X$ and $Y$ subject to the relations
  \begin{gather*}
    G^r = 1, \quad X^N = \xi, \quad Y^N = \zeta, \quad G X = q^{2N/r} X G, \\
    G Y = q^{-2N/r} Y G, \quad X Y - q^2 Y X = 1.
  \end{gather*}
  The left $u_q$-comodule structure are determined by
  \begin{equation*}
    \delta(X) = \tilde{E} \otimes 1 + K^{-1} \otimes X,
    \quad \delta(Y) = F \otimes 1 + K^{-1} \otimes Y,
    \quad \delta(G) = K^{N/r} \otimes G
  \end{equation*}
  on the generators.
\end{lemma}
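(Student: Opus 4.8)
The plan is to compute everything directly from the twisted multiplication $a \ast b = \sigma(a_{(-1)},b_{(-1)}) a_{(0)} b_{(0)}$, reusing the very same values of $\sigma$ that were already evaluated in the proof of Lemma~\ref{lem:cocycle-deform-L3}. Conceptually this is justified by the embeddings $\mathscr{L}_i \hookrightarrow \mathscr{L}_3(N;\xi,\zeta,0)$ described above, which (by functoriality of $A \mapsto {}_{\sigma}A$) deform to embeddings ${}_{\sigma}\mathscr{L}_i \hookrightarrow \mathscr{A}_3(N;\xi,\zeta,0)$ into the algebra whose relations Lemma~\ref{lem:cocycle-deform-L3} already records; so in each case it suffices to read off the $\ast$-products of the generators and then recognize the resulting algebra by its presentation.

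First I would dispose of the cases $i = 0,1,2$. For $\mathscr{L}_0(r)$, $\mathscr{L}_1(r;\xi)$ and $\mathscr{L}_2(r;\zeta)$ the image of the coaction lands in $\langle g \rangle \otimes \mathscr{L}_i$, $\langle g, x \rangle \otimes \mathscr{L}_i$ and $\langle g, y \rangle \otimes \mathscr{L}_i$, respectively, and on each of these $\sigma$ restricts to $\varepsilon \otimes \varepsilon$ (as noted in the proof of Lemma~\ref{lem:cocycle-deform-L3}). Hence $a \ast b = \varepsilon(a_{(-1)}) \varepsilon(b_{(-1)}) a_{(0)} b_{(0)} = a b$ for all $a, b$, so the deformed multiplication equals the original one and $\mathscr{A}_i = \mathscr{L}_i$ as algebras.

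For $i = 3$ the only products altered by $\sigma$ are those mixing the $x$- and $y$-parts. The same evaluation of $\sigma$ as in Lemma~\ref{lem:cocycle-deform-L3} gives $X \ast Y = X Y + 1$ and $Y \ast X = Y X$ (juxtaposition denoting the undeformed product), while $G \ast X = G X$ and $G \ast Y = G Y$ are unchanged. Since the original relations of $\mathscr{L}_3(r;\xi,\zeta)$ read $G^r = 1$, $X^N = \xi$, $Y^N = \zeta$, $G X = q^{2N/r} X G$, $G Y = q^{-2N/r} Y G$ and $X Y - q^2 Y X = 0$, the deformed generators satisfy the same relations except that the last one becomes $X \ast Y - q^2 Y \ast X = (X Y + 1) - q^2 Y X = 1$. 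This is exactly the asserted presentation of $\mathscr{A}_3(r;\xi,\zeta)$, and the stated coactions are the formulas of Definition~\ref{def:list-of-liftings} read over $u_q = \gr(u_q)^{\sigma}$ through the isomorphism \eqref{eq:gr-uq-sl2-cocycle-deform-2} sending $x \mapsto \tilde{E}$, $y \mapsto F$, $g \mapsto K$.

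The one point needing care is to confirm that the listed relations present $\mathscr{A}_3(r;\xi,\zeta)$ exactly rather than merely holding in it; I would argue precisely as in Lemma~\ref{lem:cocycle-deform-L3}. Let $\mathcal{A}$ denote the abstract algebra with the stated relations and coaction. Its degree-zero part is the group algebra $\bfk C_r$ spanned by the powers of $G$, which is a left coideal subalgebra of $\bfk C_N = H_{[0]}$ and hence right $H_{[0]}$-simple; by Theorem~\ref{thm:H-simplicity-comod-alg}, $\mathcal{A}$ is therefore right $u_q$-simple. The homomorphism $\mathcal{A} \to \mathscr{A}_3(r;\xi,\zeta)$ sending generators to generators is surjective by construction and injective because its kernel is a costable ideal of the $u_q$-simple algebra $\mathcal{A}$, so it is an isomorphism (a $q$-binomial normal-form count giving $\dim_{\bfk} \mathcal{A} \le r N^2$ offers an alternative confirmation). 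I expect this simplicity bookkeeping, rather than any genuinely new computation, to be the only mild obstacle, since Lemma~\ref{lem:cocycle-deform-L3} has already carried out the substantive work.
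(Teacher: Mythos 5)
Your proof is correct and follows essentially the same route as the paper: the paper states this lemma without a separate proof, prefacing it with precisely the embeddings $\mathscr{L}_i \hookrightarrow \mathscr{L}_3(N;\xi,\zeta,0)$ (via $G \mapsto G^{N/r}$, $X \mapsto X$, $Y \mapsto Y$) and the remark that the description then follows from Lemma~\ref{lem:cocycle-deform-L3}, whose proof you faithfully reproduce (triviality of $\sigma$ on $\langle g,x\rangle^{\otimes 2}$ and $\langle g,y\rangle^{\otimes 2}$, the computations $X*Y = XY+1$ and $Y*X = YX$, and the right-simplicity argument via Theorem~\ref{thm:H-simplicity-comod-alg} to confirm that the listed relations give a presentation). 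The remaining glosses in your write-up, such as the claim that the generators generate the deformed algebra so that the comparison map is surjective, occur at exactly the same level of detail in the paper's own proof of Lemma~\ref{lem:cocycle-deform-L3}.
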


\subsubsection{The cocycle deformation of $\mathscr{L}_4(\alpha, \beta; \xi)$}
\label{subsec:cocycle-deform-L4}

We fix parameters $\alpha, \beta, \xi \in \bfk$ with $(\alpha, \beta) \ne (0, 0)$.
The cocycle deformation of $\mathscr{L}_4(\alpha, \beta; \xi)$ is described as follows:

\begin{lemma}
  \label{lem:cocycle-deform-L4}
  The left $u_q$-comodule algebra $\mathscr{A}_4(\alpha, \beta; \xi)$ is generated by a single element $W$ subject to the relation $\phi_{\alpha, \beta, \xi}(W) = 0$, where
  \begin{equation}
    \label{eq:cocycle-deform-L4-min-pol}
    \phi_{\alpha, \beta, \xi}(T)
    = \sum_{k = 0}^{(N-1)/2} \left( \frac{N}{N - k} \binom{N - k}{k}
      \left( \frac{\alpha \beta}{q^2 - 1} \right)^{k} T^{N - 2 k} \right) - \xi.
  \end{equation}
  The left coaction of $u_q$ on $\mathscr{A}_4(\alpha, \beta; \xi)$ is given by
  \begin{equation}
    \label{eq:cocycle-deform-L4-generator-coaction}
    \delta(W) = (\alpha \tilde{E} + \beta F) \otimes 1 + K^{-1} \otimes W
  \end{equation}
  on the generator.
\end{lemma}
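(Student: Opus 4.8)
The plan is to realize $\mathscr{A}_4(\alpha,\beta;\xi)$ inside the already-understood comodule algebra $\mathscr{A}_3(N;\xi',\zeta')$ and then reduce everything to the computation of a single minimal polynomial. First I would record that, since cocycle deformation changes neither the underlying comodule nor the dimension (Subsection~\ref{subsec:cocycle-deform}) and preserves right $u_q$-simplicity (Lemma~\ref{lem:properties-preserved-by-cocycle-deform}), the algebra $\mathscr{A}_4(\alpha,\beta;\xi) = {}_{\sigma}(\mathscr{L}_4(\alpha,\beta;\xi))$ is an $N$-dimensional left $u_q$-comodule algebra generated by the single element $W$; being generated by one element it is commutative, hence isomorphic to $\bfk[T]/(p(T))$ for the minimal polynomial $p$ of $W$, which is monic of degree $\dim = N$. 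The coaction \eqref{eq:cocycle-deform-L4-generator-coaction} is then immediate. Since the polynomial $\phi_{\alpha,\beta,\xi}$ of \eqref{eq:cocycle-deform-L4-min-pol} is also monic of degree $N$, it suffices to prove the single relation $\phi_{\alpha,\beta,\xi}(W) = 0$.

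To make $W$ concrete, I would choose $\xi',\zeta' \in \bfk$ with $\alpha^N \xi' + \beta^N \zeta' = \xi$ (possible since $(\alpha,\beta)\neq(0,0)$) and observe that in the graded algebra $\mathscr{L}_3(N;\xi',\zeta')$ the element $\alpha X + \beta Y$ has the same coaction as $W$ and satisfies $(\alpha X + \beta Y)^N = \alpha^N\xi' + \beta^N\zeta' = \xi$ by the $q$-binomial formula (here $X Y = q^2 Y X$). Thus $W \mapsto \alpha X + \beta Y$ defines a homomorphism $\mathscr{L}_4(\alpha,\beta;\xi) \to \mathscr{L}_3(N;\xi',\zeta')$ of comodule algebras, injective since $\mathscr{L}_4(\alpha,\beta;\xi)$ is right $H$-simple. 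Applying the deformation functor and Lemma~\ref{lem:cocycle-deform-L3}, I would identify $\mathscr{A}_4(\alpha,\beta;\xi)$ with the subalgebra of $B := \mathscr{A}_3(N;\xi',\zeta')$ generated by $W = \alpha X + \beta Y$, where now $X Y - q^2 Y X = 1$, $X^N = \xi'$ and $Y^N = \zeta'$. Writing $u = \alpha X$, $v = \beta Y$ and $c = \alpha\beta$, the relation becomes $u v - q^2 v u = c$ with $u^N + v^N = \xi$ central, and the target relation \eqref{eq:cocycle-deform-L4-min-pol} is exactly the Dickson-polynomial identity $D_N(u+v,a) = u^N + v^N$ with $a = \alpha\beta/(1-q^2)$; indeed the coefficients $\frac{N}{N-k}\binom{N-k}{k}$ in \eqref{eq:cocycle-deform-L4-min-pol} are precisely those of the degree-$N$ Dickson polynomial $D_N(T,a) = \sum_{k} \frac{N}{N-k}\binom{N-k}{k}(-a)^k T^{N-2k}$ (a rescaled Chebyshev polynomial), and $(-a)^k = (\alpha\beta/(q^2-1))^k$.

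The hard part will be the noncommutative identity $D_N(u+v,a) = u^N + v^N$ for a pair satisfying $u v - q^2 v u = c$. In the commutative setting this is the defining functional equation of $D_N$; the essential point is that the noncommutative corrections cancel exactly at the $N$-th step. I would prove it by rewriting the relation as $u v - a = q^2(v u - a)$ and inducting with the three-term Chebyshev/Dickson recursion $D_n(u+v,a) = (u+v)D_{n-1}(u+v,a) - a\, D_{n-2}(u+v,a)$, tracking the mixed monomials and showing they telescope away because $q^2$ is a primitive $N$-th root of unity; equivalently, this rests on the vanishing of the $q$-binomial coefficients $\binom{N}{k}_{q^2}$ for $0 < k < N$, which simultaneously gives the $q$-binomial collapse $(u+v)^N = u^N + v^N$ at the top filtration degree. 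Alternatively, one may invoke the minimal-polynomial computation for such elements already carried out in \cite{2024arXiv241010064S} and match coefficients using the elementary properties of the Chebyshev polynomials. The degenerate cases $\alpha = 0$ or $\beta = 0$ (where $a = 0$, $D_N(T,0) = T^N$, and the identity is just the $q$-binomial collapse) are then covered uniformly by the same formula. Once this identity is established, $\phi_{\alpha,\beta,\xi}(W) = 0$ holds in $B$, hence in $\mathscr{A}_4(\alpha,\beta;\xi)$, and by the degree-versus-dimension count of the first paragraph it is the minimal polynomial of $W$, which completes the proof.
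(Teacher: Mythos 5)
Your reduction is sound in outline and genuinely different from the paper's route, but it has a real gap at the step you yourself flag as the hard part, plus a smaller one at the start. For comparison: the paper also reduces everything to one minimal-polynomial computation, but it embeds $\mathscr{A}_4(\alpha, \beta; \xi)$ into $u_q$ itself rather than into a deformed $\mathscr{L}_3$. It chooses scalars $u, v$ with $u^N + v^N = \xi$ and $u v = \alpha\beta/(1 - q^2)$ (equation \eqref{eq:cocycle-deform-L4-def-u-v}), uses a one-dimensional representation of $\mathscr{A}_3(1; (u/\alpha)^N, (v/\beta)^N)$ together with Lemma~\ref{lem:coideal-sub-criterion} to build an embedding $W \mapsto \alpha \tilde{E} + \beta F + (u + v)K^{-1}$ (Claim~\ref{claim:A4-embedding-into-uq}), proves generation via the coradical filtration (Claim~\ref{claim:A4-generator}), and then computes the minimal polynomial of $\alpha \tilde{E} + \beta F + \gamma K^{-1}$ (Lemma~\ref{lemma:A4-min-pol}) by producing $N$ one-dimensional representations $W \mapsto u q^{2k} + v q^{-2k}$, handling the generic case where these roots are distinct via the Chebyshev identity \eqref{eq:appendix-Chebyshev-3}, and extending to degenerate parameters by a Zariski-density argument. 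Your plan replaces all of this representation theory by a purported universal identity $D_N(u + v, a) = u^N + v^N$ in $\bfk\langle u, v\rangle/(uv - q^2 vu - c)$, $a = c/(1 - q^2)$; if proven, that would indeed be cleaner and would need no genericity argument.

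First gap (minor but real): you justify generation of $\mathscr{A}_4(\alpha,\beta;\xi)$ by $W$ by saying cocycle deformation ``changes neither the underlying comodule nor the dimension.'' That inference is invalid as stated, because generation is a statement about the multiplication, which is exactly what the deformation changes. The claim is true, but needs an argument: for instance, the subalgebra of $\mathscr{A}_4(\alpha,\beta;\xi)$ generated by $W$ is generated by the subcomodule $\bfk 1 + \bfk W$, hence is a comodule subalgebra; deforming back by $\sigma^{-1}$ exhibits it as a subalgebra of $\mathscr{L}_4(\alpha, \beta; \xi)$ containing $W$, hence it is everything. (The paper proves the same fact differently, as Claim~\ref{claim:A4-generator}.) Since your whole dimension-versus-degree count rests on generation, this cannot be waved through.

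Second gap (the essential one): the noncommutative Dickson identity is the entire mathematical content of the lemma, and your proposal does not prove it. The inductive mechanism you name is the right heuristic — setting $z = vu - a$ one gets $uz = q^2 z u$, $vz = q^{-2} z v$, $uv = q^2 z + a$, $vu = z + a$, which makes the recursion computable, and the identity is in fact true — but the bookkeeping is heavier than ``telescoping'': for intermediate $n$ the elements $D_n(u+v, a)$ contain, besides terms $z^k u^{n - 2k}$ and $z^k v^{n-2k}$ with $q$-integer coefficients, genuinely inhomogeneous lower-degree terms (already $D_4$ contains a term $a z$ with coefficient $q^{-2}(4)_{q^2}$, nonzero for $N \ge 5$), and one must show all of these vanish simultaneously at $n = N$. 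Your fallback — ``invoke the minimal-polynomial computation of \cite{2024arXiv241010064S}'' — is essentially the paper's Lemma~\ref{lemma:A4-min-pol}, but it is not available with your embedding: you choose $\xi', \zeta'$ subject only to $\alpha^N \xi' + \beta^N \zeta' = \xi$, whereas the representation-theoretic argument needs one-dimensional representations of $\langle X, Y \rangle \subseteq \mathscr{A}_3(N; \xi', \zeta')$, and a character must satisfy $\chi(X)\chi(Y)(1 - q^2) = 1$, forcing $\xi' \zeta' = (1 - q^2)^{-N}$; with your permitted choice $\zeta' = 0$ no such character exists. The paper's constraint \eqref{eq:cocycle-deform-L4-def-u-v} is precisely what guarantees these characters. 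So either you carry out the induction in full, or you must re-impose that constraint on $(\xi', \zeta')$ — at which point your argument becomes the paper's.
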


If $\alpha = 0$, then there is an isomorphism $\mathscr{L}_4(0, \beta; \xi) \to \mathscr{L}_2(1; \beta^{-N} \xi)$ of left $\gr(u_q)$-comodule algebras sending the generator $W$ to $\beta^{-1} Y$. If $\beta = 0$, then there is an isomorphism $\mathscr{L}_4(\alpha, 0; \xi) \cong \mathcal{L}_q(1; \alpha^{-N} \xi)$. Hence, when $\alpha = 0$ or $\beta = 0$, Lemma \ref{lem:cocycle-deform-L4} follows from Lemma~\ref{lem:cocycle-deform-L0123}. In what follows, we mainly consider the case where $\alpha \beta \ne 0$. The first step of our proof is to show that $\mathscr{A}_4(\alpha, \beta; \xi)$ is embedded into $u_q$ as a left $u_q$-comodule subalgebra. For this purpose, we note that there are elements $u$ and $v$ of $\bfk$ satisfying
\begin{equation}
  \label{eq:cocycle-deform-L4-def-u-v}
  u^N + v^N = \xi \quad \text{and} \quad
  u v = \frac{\alpha \beta}{1 - q^2}.
\end{equation}
Indeed, let $s$ and $t$ be the solutions of the quadratic equation $T^2 - \xi T + \zeta^N = 0$, where $\zeta = \alpha \beta (1 - q^2)^{-1}$, and let $v$ and $w$ be $N$-th root of $s$ and $t$, respectively. Then, since $(v w)^N = s t = \zeta^N$, we have $v w = q^k \zeta$ for some $k \in \mathbb{Z}$. The elements $u := q^{-k} w$ and $v$ of $\bfk$ satisfy \eqref{eq:cocycle-deform-L4-def-u-v}.

\begin{claim}
  \label{claim:A4-embedding-into-uq}
  We choose $u, v \in \bfk$ satisfying~\eqref{eq:cocycle-deform-L4-def-u-v}. Then there is an injective homomorphism $f : \mathscr{A}_4(\alpha, \beta; \xi) \to u_q$ of left $u_q$-comodule algebras such that
  \begin{equation}
    f(W) = \alpha \tilde{E} + \beta F + (u + v) K^{-1}.
  \end{equation}
\end{claim}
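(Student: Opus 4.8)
The plan is to construct the map $f$ explicitly, verify it is a well-defined homomorphism of left $u_q$-comodule algebras by checking it respects the defining relation of $\mathscr{A}_4(\alpha,\beta;\xi)$ and the coaction, and then deduce injectivity from right $u_q$-simplicity exactly as in the proofs of Lemma~\ref{lem:cocycle-deform-L3} and the earlier lifting lemmas. Since $\mathscr{A}_4(\alpha,\beta;\xi)$ is generated by the single element $W$ subject to $\phi_{\alpha,\beta,\xi}(W)=0$, the only real content is to show that the proposed image $f(W) = \alpha\tilde{E}+\beta F + (u+v)K^{-1}$ annihilates $\phi_{\alpha,\beta,\xi}$, i.e.\ that $\phi_{\alpha,\beta,\xi}(f(W))=0$ holds in $u_q$. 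The colinearity is the easy half: one computes $\delta_{u_q}(f(W))$ using $\Delta(\tilde{E})=\tilde{E}\otimes 1 + K^{-1}\otimes\tilde{E}$, $\Delta(F)=F\otimes 1 + K^{-1}\otimes F$, and $\Delta(K^{-1})=K^{-1}\otimes K^{-1}$, and checks this equals $(\alpha\tilde{E}+\beta F)\otimes 1 + K^{-1}\otimes f(W)$, matching~\eqref{eq:cocycle-deform-L4-generator-coaction}. The grouplike term $(u+v)K^{-1}$ is primitive-like in the required sense because $\Delta(K^{-1})=K^{-1}\otimes K^{-1}$ contributes only to the $K^{-1}\otimes(-)$ slot, so colinearity goes through cleanly.

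\textbf{The minimal polynomial computation is the crux.} First I would set $W_0 := f(W)$ and split off the grouplike part, writing $W_0 = P + (u+v)K^{-1}$ where $P := \alpha\tilde{E}+\beta F$. The key structural fact, to be imported from \cite{2024arXiv241010064S} (this is precisely the minimal-polynomial analysis referenced in the introduction), is that $P = \alpha\tilde{E}+\beta F$ satisfies a relation making $\alpha\tilde{E}+\beta F + (\text{scalar})K^{-1}$ behave like a Chebyshev-type variable. Concretely, using the commutation relation $\tilde{E}F - q^2 F\tilde{E} = 1-K^{-2}$ together with $K\tilde{E}=q^2\tilde{E}K$ and $KF=q^{-2}FK$, one shows that the subalgebra generated by $W_0$ inside $u_q$ is commutative and that $W_0$ has minimal polynomial of degree $N$ whose coefficients are governed by the quantity $uv = \alpha\beta/(1-q^2)$ appearing in~\eqref{eq:cocycle-deform-L4-def-u-v}. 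The computation is engineered so that $u$ and $v$ enter symmetrically: the point of choosing $u,v$ with $u^N+v^N=\xi$ and $uv=\alpha\beta/(1-q^2)$ is that $W_0$ factors (after passing to a suitable extension or via a quantum-plane argument) as a sum whose $N$-th power collapses by the $q$-binomial formula, giving $\phi_{\alpha,\beta,\xi}(W_0)=0$ with exactly the Chebyshev coefficients $\frac{N}{N-k}\binom{N-k}{k}$.

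\textbf{The main obstacle} will be establishing that the degree-$N$ polynomial annihilating $W_0$ is precisely $\phi_{\alpha,\beta,\xi}$ with the stated Chebyshev coefficients, rather than merely \emph{some} monic degree-$N$ relation. I expect this to reduce to the identity
\begin{equation*}
  \prod_{j=0}^{N-1}\bigl(T - (u q^{j} + v q^{-j})\bigr)
  = \sum_{k=0}^{(N-1)/2}\frac{N}{N-k}\binom{N-k}{k}(uv)^{k}\,T^{N-2k} - (u^N + v^N),
\end{equation*}
which is the defining generating identity for the (normalized) Chebyshev polynomials of the first kind; substituting $uv=\alpha\beta/(1-q^2)$ and $u^N+v^N=\xi$ then yields~\eqref{eq:cocycle-deform-L4-min-pol}. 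I would verify this Chebyshev identity separately as a polynomial identity in $\bfk[T]$ (it holds over $\mathbb{Z}[u,v]$ after clearing the root-of-unity structure), and show the $N$ values $uq^j + vq^{-j}$ are exactly the eigenvalues of $W_0$ acting on a faithful $u_q$-module. Once $\phi_{\alpha,\beta,\xi}(W_0)=0$ is established, the homomorphism $f$ is well-defined; injectivity follows because $\mathscr{A}_4(\alpha,\beta;\xi)$ is right $u_q$-simple (being a cocycle deformation of the right $H$-simple $\mathscr{L}_4$, via Lemma~\ref{lem:properties-preserved-by-cocycle-deform}), so the kernel, an $H$-costable ideal strictly smaller than the whole algebra, must vanish.
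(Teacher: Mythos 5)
Your proposal is circular relative to the logical structure of the paper, and the circularity is fatal rather than cosmetic. You write that ``since $\mathscr{A}_4(\alpha,\beta;\xi)$ is generated by the single element $W$ subject to $\phi_{\alpha,\beta,\xi}(W)=0$, the only real content is to show that the proposed image annihilates $\phi_{\alpha,\beta,\xi}$.'' But $\mathscr{A}_4(\alpha,\beta;\xi)$ is \emph{defined} as the cocycle deformation ${}_{\sigma}(\mathscr{L}_4(\alpha,\beta;\xi))$; the single-relation presentation is exactly the statement of Lemma~\ref{lem:cocycle-deform-L4}, and Claim~\ref{claim:A4-embedding-into-uq} is a step in the proof of that lemma. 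At the point where the Claim must be proved, all that is known about $\mathscr{A}_4$ is that it is the $N$-dimensional coalgebra $\mathscr{L}_4$ with multiplication twisted by $\sigma$; no presentation is available, so establishing $\phi_{\alpha,\beta,\xi}\bigl(\alpha\tilde{E}+\beta F+(u+v)K^{-1}\bigr)=0$ in $u_q$ --- even if done correctly --- produces no homomorphism out of $\mathscr{A}_4$ at all. The same circularity infects your ``crux'': the paper's proof that the minimal polynomial of $\alpha\tilde{E}+\beta F+\gamma K^{-1}$ has degree exactly $N$ (Lemma~\ref{lemma:A4-min-pol}) \emph{uses} the Claim together with Claim~\ref{claim:A4-generator} to get the upper bound $\dim_{\bfk}\langle f(W)\rangle = \dim_{\bfk}\mathscr{A}_4 = N$. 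Without that, your eigenvalue argument only shows the values $uq^{2k}+vq^{-2k}$ are \emph{roots} of the minimal polynomial (a lower bound); it does not pin the polynomial down, and in the degenerate case \eqref{eq:A4-non-semisimple-condition} the element is not even semisimple. (Also, your displayed identity has the wrong sign: it should be $(-uv)^k$, as in \eqref{eq:appendix-Chebyshev-3}; with $uv=\alpha\beta/(1-q^2)$ this sign is what yields the coefficients $\left(\alpha\beta/(q^2-1)\right)^k$ of \eqref{eq:cocycle-deform-L4-min-pol}.)

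The paper's actual proof needs no Chebyshev or minimal-polynomial input. It embeds $\mathscr{L}_4(\alpha,\beta;\xi)$ into $\mathscr{L}_3\bigl(1;(u/\alpha)^N,(v/\beta)^N\bigr)$ at the \emph{undeformed} level by $W\mapsto \alpha X+\beta Y$ (well defined by the $q$-binomial formula and the choice \eqref{eq:cocycle-deform-L4-def-u-v}), applies the deformation functor ${}_{\sigma}(-)$ to obtain $f_1:\mathscr{A}_4(\alpha,\beta;\xi)\to\mathscr{A}_3\bigl(1;(u/\alpha)^N,(v/\beta)^N\bigr)$, and then exploits the presentation of $\mathscr{A}_3(1;\cdot,\cdot)$ already computed in Lemma~\ref{lem:cocycle-deform-L0123} to write down a character $f_2$ of it (sending $X\mapsto u/\alpha$ and $Y\mapsto v/\beta$, so that $XY-q^2YX\mapsto (1-q^2)uv/(\alpha\beta)=1$). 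Finally, Lemma~\ref{lem:coideal-sub-criterion} applied to $f=(\id_{u_q}\otimes f_2f_1)\circ\delta_{\mathscr{A}_4}$ gives that $f$ is an injective homomorphism of left $u_q$-comodule algebras, injectivity coming for free from $u_q$-simplicity of $\mathscr{A}_4$ (which, as you correctly note, follows from Lemma~\ref{lem:properties-preserved-by-cocycle-deform}). This is the idea your proposal is missing: reduce to the \emph{known} deformation $\mathscr{A}_3$ and use a one-dimensional representation plus $H$-simplicity to manufacture the embedding, rather than trying to verify the (not-yet-established) defining relation on the image of $W$.
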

\begin{proof}
  The proof is easy when $\alpha = 0$ or $\beta = 0$. We assume that $\alpha \beta \ne 0$.
  There is an injective homomorphism
  \begin{equation*}
    \mathscr{L}_4(\alpha, \beta; \xi) \to \mathscr{L}_3(1; (u/\alpha)^N, (v/\beta)^N),
    \quad W \mapsto \alpha X + \beta Y
  \end{equation*}
  of left $\gr(u_q)$-comodule algebras. We denote by
  \begin{equation*}
    f_1 : \mathscr{A}_4(\alpha, \beta; \xi) \to \mathscr{A}_3(1; (u/\alpha)^N, (v/\beta)^N)
  \end{equation*}
  the induced homomorphism of left $u_q$-comodule algebras. We have already known that the target of $f_1$ is generated by $X$ and $Y$ subject to the relations $X^N = (u/\alpha)^N$, $Y^N = (v/\beta)^N$ and $X Y - q^2 Y X = 1$. Thus there is the following algebra map:
  \begin{equation*}
    f_2 : \mathscr{A}_3(1; (u/\alpha)^N, (v/\beta)^N) \to \bfk,
    \quad X \mapsto u,
    \quad Y \mapsto v.
  \end{equation*}
  By Lemma~\ref{lem:coideal-sub-criterion}, we obtain an injective homomorphism
  \begin{equation*}
    f = (\id_{u_q} \otimes f_2 f_1) \circ \delta_{\mathscr{A}_4(\alpha, \beta; \xi)}:
    \mathscr{A}_4(\alpha, \beta; \xi) \to u_q
  \end{equation*}
  of left $u_q$-comodule algebras, which meets the requirements.
\end{proof}

By the embedding given by Claim~\ref{claim:A4-embedding-into-uq}, we prove:

\begin{claim}
  \label{claim:A4-generator}
  $\mathscr{A}_4(\alpha, \beta; \xi)$ is generated by $W$ as an algebra.
\end{claim}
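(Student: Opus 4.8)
The plan is to prove the claim by a filtered--graded argument based on the Loewy filtration, rather than by working inside $u_q$ through the embedding of Claim~\ref{claim:A4-embedding-into-uq}. Write $\mathcal{A} = \mathscr{A}_4(\alpha, \beta; \xi) = {}_{\sigma}(\mathscr{L}_4(\alpha, \beta; \xi))$ and let $\{ \mathcal{A}_n \}_{n \ge 0}$ be its Loewy filtration. Since $u_q = \gr(u_q)^{\sigma}$ coincides with $\gr(u_q)$ as a coalgebra, the underlying comodule of $\mathcal{A}$ is literally the same as that of $\mathscr{L}_4(\alpha, \beta; \xi)$; in particular the two share the same Loewy filtration, so $\dim_{\bfk} \gr_n(\mathcal{A}) = 1$ for $0 \le n \le N - 1$ and $\gr_n(\mathcal{A}) = 0$ otherwise. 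The only difference between $\mathcal{A}$ and $\mathscr{L}_4(\alpha, \beta; \xi)$ lies in the multiplication.

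First I would identify the associated graded algebra. Because $\gr$ is a symmetric monoidal functor, the multiplication induced on $\grL(\mathcal{A})$ by the twisted product is the deformation of the multiplication of $\grL(\mathscr{L}_4(\alpha, \beta; \xi)) \cong L_4(\alpha, \beta)$ by the associated graded cocycle $\gr(\sigma)$. Viewing $\sigma : H \otimes H \to \bfk$ as a morphism in $\FiltVect$ with $\bfk$ placed in degree $0$, its associated graded is supported in degree $0$, where it restricts to $\sigma|_{H_0 \otimes H_0} = \varepsilon \otimes \varepsilon$ by the explicit formula \eqref{eq:gr-uq-sl2-cocycle-deform}. Hence $\gr(\sigma)$ is the trivial cocycle and $\grL(\mathcal{A}) \cong L_4(\alpha, \beta)$ as graded algebras, with the class $\bar{W} \in \gr_1(\mathcal{A})$ of $W$ corresponding to the generator $\alpha x + \beta y$. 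In particular $\bar{W}$ generates $\grL(\mathcal{A})$, so $\bar{W}^{\,n}$ spans $\gr_n(\mathcal{A})$ for $0 \le n \le N - 1$.

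Then I would run the standard lifting induction. Let $B \subseteq \mathcal{A}$ be the subalgebra generated by $W$ under the twisted product; it suffices to prove $\mathcal{A}_n \subseteq B$ for all $n$, which I do by induction on $n$. The case $n = 0$ holds since $\mathcal{A}_0 = \bfk$. For the inductive step, note that $W^{*n} \in B \cap \mathcal{A}_n$ and that its image in $\gr_n(\mathcal{A})$ equals $\bar{W}^{\,n}$, which is nonzero and hence spans $\gr_n(\mathcal{A})$ for $n \le N - 1$. Thus any $a \in \mathcal{A}_n$ satisfies $a - c\, W^{*n} \in \mathcal{A}_{n - 1}$ for a suitable scalar $c$; by the induction hypothesis $a - c\, W^{*n} \in B$, and therefore $a \in B$. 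This gives $B = \mathcal{A}$.

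The computational content is minimal, and the one place requiring care is the identification of the graded multiplication in the second step, namely that the degree-$0$ part of $\sigma$ is trivial; this is exactly what makes $\gr(\sigma)$ vanish and is immediate from \eqref{eq:gr-uq-sl2-cocycle-deform}. Concretely, one finds $W *_{\sigma} W = W^2 + \alpha\beta$, whose leading term is the undeformed square, illustrating that the cocycle only contributes in lower Loewy degree. The argument is uniform in $(\alpha, \beta)$: when $\alpha\beta = 0$ the element $\alpha x + \beta y$ still generates $L_4(\alpha, \beta)$, so no separate treatment of the degenerate cases is needed, although one may alternatively reduce them to $\mathscr{A}_1$ or $\mathscr{A}_2$, which are visibly generated by $W$.
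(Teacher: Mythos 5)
Your proof is correct, but it takes a genuinely different route from the paper's. The paper deduces the claim from Claim~\ref{claim:A4-embedding-into-uq}: it identifies $\langle W \rangle \subseteq \mathscr{A}_4(\alpha,\beta;\xi)$ with the subalgebra of $u_q$ generated by $W' = \alpha\tilde{E} + \beta F + (u+v)K^{-1}$, observes via the coradical filtration of $u_q$ that $1, W', \dots, (W')^{N-1}$ are linearly independent, and concludes by the dimension count $N \le \dim_{\bfk}\langle W'\rangle \le \dim_{\bfk} \mathscr{A}_4(\alpha,\beta;\xi) = N$. You instead stay inside $\mathcal{A} = {}_{\sigma}(\mathscr{L}_4(\alpha,\beta;\xi))$: since cocycle deformation does not change the underlying comodule, the Loewy filtrations agree; since every map in the composite defining $*_{\sigma}$ (the two coactions, the flip, $\sigma$, and the untwisted product) is filtered, monoidality of $\gr$ gives $\gr(*_{\sigma}) = *_{\gr(\sigma)}$; and the explicit formula \eqref{eq:gr-uq-sl2-cocycle-deform} shows $\gr(\sigma) = \varepsilon\otimes\varepsilon$, so $\grL(\mathcal{A}) \cong \grL(\mathscr{L}_4(\alpha,\beta;\xi)) \cong L_4(\alpha,\beta)$ as graded algebras and the standard lifting induction finishes. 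Both arguments are at bottom filtration arguments, but yours needs neither the embedding nor the auxiliary scalars $u, v$ of \eqref{eq:cocycle-deform-L4-def-u-v}, and it actually proves a more general and reusable statement: for any Hopf 2-cocycle $\sigma$ and any lifting $\mathcal{L}$, elements whose classes generate $\grL(\mathcal{L})$ also generate ${}_{\sigma}\mathcal{L}$, so the same argument covers $\mathscr{A}_0,\dots,\mathscr{A}_3$ uniformly. What the paper's route buys is economy: Claim~\ref{claim:A4-embedding-into-uq} is needed anyway for the minimal-polynomial computation in Lemma~\ref{lemma:A4-min-pol}, so once that embedding is in place the generation claim is a two-line corollary. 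The one step of yours requiring explicit justification is the identity $\gr(*_{\sigma}) = *_{\gr(\sigma)}$: it is correct, but you should state that the Loewy filtration makes $\mathscr{L}_4(\alpha,\beta;\xi)$ a filtered comodule algebra over the coradically filtered $H$ (as established in Section~\ref{sec:exact-comod-alg-over-pointed}), which is exactly what makes all maps in the composite filtered. Your sample computation $W *_{\sigma} W = W^2 + \alpha\beta$ is also correct and consistent with \eqref{eq:gr-uq-sl2-cocycle-deform}.
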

\begin{proof}
  Let $\mathcal{A}'$ be the subalgebra of $\mathcal{A} = \mathscr{A}_4(\alpha, \beta; \xi)$ generated by $W$.
  We choose $u, v \in \bfk$ satisfying \eqref{eq:cocycle-deform-L4-def-u-v} and set $W' = \alpha \tilde{E} + \beta F + (u + v) K^{-1}$.
  By Claim~\ref{claim:A4-embedding-into-uq}, $\mathcal{A}'$ is isomorphic to the subalgebra of $u_q$ generated by $W'$.
  With the help of the coradical filtration of $u_q$, it is easy to see that the elements $(W')^k$ ($0 \le k < N$) are linearly independent. Hence we have $N \le \dim_{\bfk} \langle W' \rangle = \dim_{\bfk} \mathcal{A}' \le \dim_{\bfk} \mathcal{A} = N$, which implies that $\mathcal{A}' = \mathcal{A}$. The proof is done.
\end{proof}

By Claims~\ref{claim:A4-embedding-into-uq} and~\ref{claim:A4-generator}, the minimal polynomial of $W \in \mathscr{A}_4(\alpha, \beta; \xi)$ is the same as that of $\alpha \tilde{E} + \beta F + (u + v) K^{-1}$, where $u, v \in \bfk$ satisfy \eqref{eq:cocycle-deform-L4-def-u-v}. Thus, generalizing the problem slightly, we compute the minimal polynomial of $\alpha \tilde{E} + \beta F + \gamma K$ for $\alpha, \beta, \gamma \in \bfk$ with $(\alpha, \beta, \gamma) \ne (0, 0, 0)$.

To proceed further, we note that the equation
\begin{equation}
  \label{eq:appendix-Chebyshev-3}
  \prod_{k = 0}^{n - 1} \left( z - (u \omega^k + v \omega^{-k}) \right)
  = \sum_{k = 0}^{\lfloor n/2 \rfloor} \left(
    \frac{n}{n - k} \binom{n - k}{k} (-uv)^k z^{n - 2k} \right)
  - u^n - v^n
\end{equation}
holds in the algebra $\bfk[u, v, z]$ of polynomials with variables $u$, $v$ and $z$, where $\omega \in \bfk$ is a root of unity of order $n$ and $\lfloor \ \rfloor$ is the floor function. This equation can be proved by using basic properties of Chebyshev polynomials of the first kind; see Appendix~\ref{appendix:Chebyshev} for the detail.

By the fundamental theorem of symmetric polynomials, for each positive integer $n$, there is a unique polynomial $P_n(s, t) \in \bfk[s, t]$ such that $P_n(u + v, u v) = u^n + v^n$ in the polynomial algebra $\bfk[u, v]$. For $n \ge 2$, we have
\begin{equation*}
  u^n + v^n = (u + v) (u^{n-1} + v^{n-1}) - u v (u^{n-2} + v^{n-2}).
\end{equation*}
By induction on $n$ using this identity, one can prove
\begin{equation}
  \label{eq:claim:A4-min-pol-1}
  P_n(s, t) = s^n + \text{(terms lower with respect to the degree in $s$)}.
\end{equation}

\begin{lemma}
  \label{lemma:A4-min-pol}
  The minimal polynomial of $\alpha \tilde{E} + \beta F + \gamma K^{-1}$ is
  \begin{equation}
    \label{eq:lemma:A4-min-pol}
    \sum_{k = 0}^{(N-1)/2} \left( \frac{N}{N - k} \binom{N - k}{k}
      \left( \frac{\alpha \beta}{q^2 - 1} \right)^{k} T^{N - 2 k} \right)
    - P_N(\gamma, \alpha \beta (1 - q^2)^{-1}),
  \end{equation}
  where $\alpha, \beta, \gamma \in \bfk$ with $(\alpha, \beta, \gamma) \ne (0, 0, 0)$.
\end{lemma}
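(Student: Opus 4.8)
The plan is to reduce the computation to a minimal-polynomial calculation inside the auxiliary comodule algebra $\mathscr{A}_3(1;\,\cdot\,,\,\cdot\,)$, where everything becomes a ``quantum plane'' computation, and then to match the answer with \eqref{eq:appendix-Chebyshev-3}. First I would dispose of the degenerate cases $\alpha\beta = 0$: if, say, $\beta = 0$, then $\gamma K^{-1}$ and $\alpha\tilde{E}$ satisfy $(\gamma K^{-1})(\alpha\tilde{E}) = q^2(\alpha\tilde{E})(\gamma K^{-1})$, so the $q$-binomial formula gives $(\alpha\tilde{E} + \gamma K^{-1})^N = \alpha^N\tilde{E}^N + \gamma^N K^{-N} = \gamma^N$, and \eqref{eq:lemma:A4-min-pol} collapses to $T^N - \gamma^N$ since $P_N(\gamma,0) = \gamma^N$ by \eqref{eq:claim:A4-min-pol-1}; the case $\alpha = 0$ is symmetric. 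So I assume $\alpha\beta \ne 0$ and choose $u,v \in \bfk$ with $u+v = \gamma$ and $uv = \alpha\beta/(1-q^2)$ as in \eqref{eq:cocycle-deform-L4-def-u-v}, which forces $u,v \ne 0$. By \eqref{eq:appendix-Chebyshev-3} applied with $n = N$ and the primitive $N$-th root of unity $\omega = q^2$, together with $P_N(\gamma, \alpha\beta(1-q^2)^{-1}) = u^N + v^N$, the polynomial \eqref{eq:lemma:A4-min-pol} is exactly $\prod_{k=0}^{N-1}\bigl(T - (uq^{2k} + vq^{-2k})\bigr)$. Thus it suffices to prove that this monic degree-$N$ polynomial is the minimal polynomial of $\alpha\tilde{E} + \beta F + \gamma K^{-1}$.

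Setting $\xi = u^N + v^N$ and arguing as in Claim~\ref{claim:A4-embedding-into-uq}, the element $\alpha\tilde{E} + \beta F + \gamma K^{-1}$ is the image of the generator $W$ of $\mathscr{A}_4(\alpha,\beta;\xi)$ under an injective homomorphism into $u_q$, and $\mathscr{A}_4(\alpha,\beta;\xi) \cong \bfk[W]$ has dimension $N$ by Claim~\ref{claim:A4-generator}; hence its minimal polynomial has degree exactly $N$, and it will be enough to exhibit one monic annihilating polynomial of degree $N$. Next I would transport the problem into $\mathcal{B} := \mathscr{A}_3(1; (u/\alpha)^N, (v/\beta)^N)$ via the injective homomorphism $W \mapsto \alpha X + \beta Y$, and rescale: putting $\hat{X} = \alpha X/u$ and $\hat{Y} = \beta Y/v$ yields $\hat{X}^N = \hat{Y}^N = 1$, $\hat{X}\hat{Y} - q^2\hat{Y}\hat{X} = 1 - q^2$, and $W = u\hat{X} + v\hat{Y}$. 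The decisive observation is that $\hat{Y}' := \hat{Y} - \hat{X}^{-1}$ satisfies the homogeneous relation $\hat{X}\hat{Y}' = q^2\hat{Y}'\hat{X}$, and then the $q$-binomial formula applied to $\hat{Y} = \hat{X}^{-1} + \hat{Y}'$, using $\hat{X}^{-N} = \hat{Y}^N = 1$, forces $\hat{Y}'^N = 0$. Writing $C := u\hat{X} + v\hat{X}^{-1}$, we have $W = C + v\hat{Y}'$, where $C$ is a polynomial in the order-$N$ element $\hat{X}$ whose spectral idempotents $\epsilon_k = \tfrac{1}{N}\sum_{j} q^{-2jk}\hat{X}^{j}$ satisfy $\sum_k \epsilon_k = 1$ and $C\epsilon_k = \lambda_k\epsilon_k$ with $\lambda_k := uq^{2k} + vq^{-2k}$.

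Finally I would prove $\prod_{k=0}^{N-1}(W - \lambda_k) = 0$ in $\mathcal{B}$. The relation $\hat{X}\hat{Y}' = q^2\hat{Y}'\hat{X}$ gives $\hat{Y}'\epsilon_k = \epsilon_{k+1}\hat{Y}'$ (indices mod $N$), hence $(W - \lambda_k)\epsilon_k = v\,\epsilon_{k+1}\hat{Y}'$; since the factors $W - \lambda_k$ are polynomials in $W$ they commute, so for each $j$ I may order the product cyclically so that its rightmost factor is $(W - \lambda_j)$ and telescope, obtaining $\prod_{k}(W - \lambda_k)\,\epsilon_j = v^N\,\epsilon_j\,\hat{Y}'^N = 0$. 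Summing over $j$ via $\sum_j\epsilon_j = 1$ yields $\prod_k(W - \lambda_k) = 0$, which transports back to the annihilation of $\alpha\tilde{E}+\beta F+\gamma K^{-1}$. Combined with the degree count this shows the minimal polynomial equals $\prod_k(T - \lambda_k)$, that is, \eqref{eq:lemma:A4-min-pol}. The main obstacle is precisely this middle step: recognizing the substitution $\hat{Y}' = \hat{Y} - \hat{X}^{-1}$ that converts the inhomogeneous $q$-commutation into the quantum-plane relation and makes $\hat{Y}'$ nilpotent, after which the eigenvalue structure of $W$ and the telescoping become transparent; checking $\hat{Y}'^N = 0$ and the idempotent bookkeeping are then the routine parts.
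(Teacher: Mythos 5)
Your proof is correct, but its decisive step is genuinely different from the paper's. Both arguments share the same outer scaffolding: the degenerate case $\alpha\beta=0$ via the $q$-binomial formula, the choice of $u,v$ with $u+v=\gamma$ and $uv=\alpha\beta(1-q^2)^{-1}$, the identification of \eqref{eq:lemma:A4-min-pol} with $\prod_{k=0}^{N-1}\bigl(T-(uq^{2k}+vq^{-2k})\bigr)$ through \eqref{eq:appendix-Chebyshev-3}, and the degree-$N$ count through Claims~\ref{claim:A4-embedding-into-uq} and~\ref{claim:A4-generator}. The difference is how one shows that this product annihilates the element. The paper stays inside $u_q$, sets $C=\alpha\tilde{E}+uK^{-1}$ and $D=\beta F+vK^{-1}$, and produces one-dimensional representations $\chi_k$ of $\langle C,D\rangle$ sending $W_{\alpha,\beta,\gamma}$ to $\mu_k=uq^{2k}+vq^{-2k}$; this shows the $\mu_k$ are roots of the minimal polynomial and settles the generic case where all $\mu_k$ are distinct, but the degenerate case \eqref{eq:A4-multiple-roots-condition} then requires a separate Zariski-density argument, embedding $u_q$ in a matrix algebra and regarding $\Phi_{\alpha,\beta,\gamma}(W_{\alpha,\beta,\gamma})=0$ as a polynomial identity in $(\alpha,\beta,\gamma)$. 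Your argument instead works in $\mathscr{A}_3(1;(u/\alpha)^N,(v/\beta)^N)$ and verifies the annihilation identity outright: the substitution $\hat{Y}'=\hat{Y}-\hat{X}^{-1}$ homogenizes the relation to $\hat{X}\hat{Y}'=q^2\hat{Y}'\hat{X}$, whence $\hat{Y}'^N=0$ by the $q$-binomial formula; then $W=C+v\hat{Y}'$ with $C$ diagonalized by the idempotents $\epsilon_k$ and $\hat{Y}'$ shifting them cyclically, so the cyclically ordered product telescopes to $v^N\epsilon_j\hat{Y}'{}^N=0$ on each $\epsilon_j$. This is uniform in the parameters---no genericity dichotomy and no density argument---and it makes the ``semisimple plus nilpotent'' structure of $W$ explicit, something the paper's proof only reflects indirectly (compare Remark~\ref{rem:A4-semisimplicity}, which extracts the multiple-root criterion from the same data). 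What the paper's route buys is that the one-dimensional representations $\chi_k$ are the device of \cite{2024arXiv241010064S}, keeping the exposition parallel to that reference and reusing the eigenvalue picture later. Two immaterial slips in your write-up: in the $\beta=0$ case the correct relation is $(\gamma K^{-1})(\alpha\tilde{E})=q^{-2}(\alpha\tilde{E})(\gamma K^{-1})$, not $q^{2}$ (harmless, since $q^{-2}$ also has order $N$); and $P_N(\gamma,0)=\gamma^N$ follows from evaluating the defining identity $P_N(u+v,uv)=u^N+v^N$ at $v=0$, not from \eqref{eq:claim:A4-min-pol-1}.
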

\begin{proof}
  This lemma is proved by the same idea as \cite{2024arXiv241010064S}.
  The claim is easily proved by the $q$-binomial formula when $\alpha \beta = 0$.
  Hence we assume $\alpha \beta \ne 0$.
  Let $\Psi_{\alpha,\beta,\gamma}(T)$ be the monic minimal polynomial of $W_{\alpha, \beta, \gamma} := \alpha \tilde{E} + \beta F + \gamma K^{-1}$, and let $\Phi_{\alpha, \beta, \gamma}(T)$ be the polynomial \eqref{eq:lemma:A4-min-pol}. Our goal is to show $\Psi_{\alpha,\beta,\gamma}(T) = \Phi_{\alpha,\beta,\gamma}(T)$.

  It is obvious that the degree of $\Phi_{\alpha,\beta,\gamma}(T)$ is $N$.
  The degree of $\Psi_{\alpha,\beta,\gamma}(T)$ is also $N$.
  Indeed, we choose $u, v \in \bfk$ satisfying $u + v = \gamma$ and $u v = \alpha \beta (1 - q^2)^{-1}$, and set $\xi = u^N + v^N$. Since $u$ and $v$ satisfy \eqref{eq:cocycle-deform-L4-def-u-v}, Claim~\ref{claim:A4-embedding-into-uq} gives an injective homomorphism $f : \mathscr{A}_4(\alpha, \beta; \xi) \to u_q$ of left $u_q$-comodule algebras such that $f(W) = W_{\alpha, \beta, \gamma}$. By Claim~\ref{claim:A4-generator}, we have $\mathrm{Im}(f) = \langle W_{\alpha, \beta, \gamma} \rangle$. Since $\mathscr{A}_4(\alpha, \beta; \xi)$ is of dimension $N$ as a cocycle deformation of $\mathscr{L}_4(\alpha, \beta; \xi)$, the degree of $\Psi_{\alpha,\beta,\gamma}(T)$ is also $N$.

  Now we set $C := \alpha \tilde{E} + u K^{-1}$ and $D := \beta F + v K^{-1}$. Then we have $W_{\alpha, \beta, \gamma} = C + D$, $C^N = u^N$, $D^N = v^N$ and $C D - q^2 D C = \alpha \beta$. By using the coradical filtration of $u_q$, it is easy to see that the latter three equations are defining relations of the subalgebra $\langle C, D \rangle$. Hence we have algebra maps $\chi_k : \langle C, D \rangle \to \bfk$ ($k \in \mathbb{Z}$) such that $\chi_k(C) = q^{2 k} u$ and $\chi_k(D) = q^{-2 k} v$. By restricting $\chi_k$ to the subalgebra $\langle W_{\alpha, \beta, \gamma} \rangle$, we obtain the following algebra maps:
  \begin{equation}
    \label{eq:A4-one-dim-reps}
    \langle W_{\alpha, \beta, \gamma} \rangle \to \bfk,
    \quad W_{\alpha, \beta, \gamma} \mapsto \mu_k := u q^{2 k} + v q^{-2 k}
    \quad (k \in \mathbb{Z}).
  \end{equation}

  It is obvious that $\mu_i = \mu_j$ if $i \equiv j \pmod{N}$.
  Suppose that $i$ and $j$ are integers such that $i \not \equiv j \pmod{N}$ and $\mu_i = \mu_j$. Then we have
  \begin{equation*}
    u - q^{-2i-2j} v = (q^{2i} - q^{2j})^{-1} (\mu_i - \mu_j) = 0
  \end{equation*}
  and therefore $u^N = v^N$. Since $u v = \alpha \beta (1 - q^2)^{-1}$ and
  \begin{equation*}
    u^N + v^N = P_N(u + v, u v) = P_N(\gamma, \alpha \beta (1 - q^2)^{-1}),
  \end{equation*}
  we have the following equation:
  \begin{equation}
    \label{eq:A4-multiple-roots-condition}
    P_N(\gamma, \alpha \beta (1-q^2)^{-1})^2 - 4 \alpha^N \beta^N (1-q^2)^{-N} = 0.
  \end{equation}

  The fact that \eqref{eq:A4-one-dim-reps} is a well-defined algebra map implies that the minimal polynomial of $W_{\alpha, \beta, \gamma}$ has $\mu_k$ ($k \in \mathbb{Z}$) as its roots. Now we assume that \eqref{eq:A4-multiple-roots-condition} does not hold. The discussion of the previous paragraph says that $\mu_0, \cdots, \mu_{N - 1}$ are distinct.
  Since $\Psi_{\alpha,\beta,\gamma}(T)$ is a monic polynomial of degree $N$, we have
  \begin{equation}
    \label{eq:A4-min-pol-decomposition}
    \Psi_{\alpha, \beta, \gamma}(T)
    = \prod_{k = 0}^{N-1} (T - \mu_k)
    = \prod_{k = 0}^{N-1} (T - (u q^k + v q^{-k})) = \Phi_{\alpha, \beta, \gamma}(T),
  \end{equation}
  where the last equality follows from \eqref{eq:appendix-Chebyshev-3} with $\omega = q$, $n = N$ and $z = T$. We note that we have assumed that $\alpha \beta \ne 0$, however, the equation \eqref{eq:A4-min-pol-decomposition} can also be verified in the case where $\alpha \beta = 0$.
  
  Finally, we discuss the general case where the equation \eqref{eq:A4-multiple-roots-condition} may hold. Using a finite-dimensional faithful representation of $u_q$, we regard $u_q$ as a subalgebra of a matrix algebra of sufficiently large degree. Then the equation
  \begin{equation}
    \label{eq:A4-min-pol-proof-3}
    \Phi_{\alpha, \beta, \gamma}(W_{\alpha, \beta, \gamma}) = 0
  \end{equation}
  is viewed as a system of polynomial equations with variables $\alpha$, $\beta$ and $\gamma$.
  By \eqref{eq:claim:A4-min-pol-1}, the left hand side of \eqref{eq:A4-multiple-roots-condition} is a polynomial of degree $2 N$ in $\gamma$ and, in particular, it is a non-zero polynomial of $\alpha$, $\beta$ and $\gamma$.
  The discussion of the previous paragraph implies that \eqref{eq:A4-min-pol-proof-3} holds when $\alpha$, $\beta$ and $\gamma$ do not satisfy \eqref{eq:A4-multiple-roots-condition}.
  Thus the equation \eqref{eq:A4-min-pol-proof-3} actually holds for any $\alpha, \beta, \gamma \in \bfk$. Hence $\Phi_{\alpha,\beta,\gamma}(T)$ divides $\Psi_{\alpha, \beta, \gamma}(T)$. Since both $\Psi_{\alpha,\beta,\gamma}(T)$ and $\Phi_{\alpha, \beta, \gamma}(T)$ are monic polynomials of degree $N$, we conclude $\Psi_{\alpha, \beta, \gamma}(T) = \Phi_{\alpha,\beta,\gamma}(T)$. The proof is done.
\end{proof}

\begin{proof}[Proof of Lemma~\ref{lem:cocycle-deform-L4}]
  As we have explained after Claim~\ref{claim:A4-generator}, the minimal polynomial of $W \in \mathscr{A}_4(\alpha, \beta; \xi)$ is equal to that of $\alpha \tilde{E} + \beta F + (u + v) K^{-1}$, where $u, v \in \bfk$ satisfy \eqref{eq:cocycle-deform-L4-def-u-v}. Lemma~\ref{lemma:A4-min-pol} completes the proof.
\end{proof}

\begin{remark}
  \label{rem:A4-semisimplicity}
  Equation \eqref{eq:A4-min-pol-decomposition} in the proof of Lemma \ref{lemma:A4-min-pol} gives the decomposition of $\phi_{\alpha, \beta, \xi}(T)$. Namely, fixing $u, v \in \bfk$ satisfying \eqref{eq:cocycle-deform-L4-def-u-v}, we have
  \begin{equation*}
    \phi_{\alpha, \beta, \xi}(T) = \prod_{k = 0}^{N-1} (T - (u q^k + v q^{-k})).
  \end{equation*}
  By an argument similar to the proof of Lemma \ref{lemma:A4-min-pol}, we see that $\phi_{\alpha, \beta, \xi}(T)$ has multiple roots if and only if the following equation holds:
  \begin{equation}
    \label{eq:A4-non-semisimple-condition}
    \xi^2 = 4 \alpha^N \beta^N (1-q^2)^{-N}.
  \end{equation}
  This means that $\mathscr{A}_4(\alpha, \beta; \xi)$ is semisimple if and only if \eqref{eq:A4-non-semisimple-condition} does not hold. On the other hand, $\mathscr{L}_4(\alpha, \beta; \xi)$ is semisimple if and only if $\xi \ne 0$. Thus we have examples of semisimple left $\gr(u_q)$-comodule algebras whose cocycle deformations are not semisimple ({\it cf}. Theorem~\ref{thm:cocycle-deform-semisimplicity}).
\end{remark}

\subsection{Conclusion}

Thus, we present the following conclusion based on our findings.
In Step 4 (Subsection \ref{subsec:Step-4}), we have obtained the following families of left $u_q$-comodule algebras: Let $r \in \mathrm{Div}(N)$ and $\alpha, \beta, \eta, \xi, \zeta \in \bfk$ with $(\alpha, \beta) \ne (0, 0)$ be parameters. Then,
\begin{enumerate}
  \setcounter{enumi}{-1}
\item The algebra $\mathscr{A}_0(r)$ is generated by $G$ subject to
  \begin{equation*}
    G^r = 1.
  \end{equation*}
\item The algebra $\mathscr{A}_1(r; \xi)$ is generated by $G$ and $X$ subject to
  \begin{equation*}
    G^r = 1, \quad X^N = \xi, \quad G X = q^{2N/r} X G.
  \end{equation*}
\item The algebra $\mathscr{A}_2(r; \zeta)$ is generated by $G$ and $Y$ subject to
  \begin{equation*}
    G^r = 1, \quad Y^N = \zeta, \quad G Y = q^{-2N/r} Y G.
  \end{equation*}
\item The algebra $\mathscr{A}_3(r; \xi, \zeta)$ is generated by $G$, $X$ and $Y$ subject to the relations for $\mathscr{A}_1(r; \xi)$ and $\mathscr{A}_2(r; \zeta)$, and
  \begin{equation*}
    X Y - q^2 Y X = 1.
  \end{equation*}
\item[$(3')$] The algebra $\mathscr{A}_3(N; \xi, \zeta, \eta)$ is generated by $G$, $X$ and $Y$ subject to the same relations as $\mathscr{A}_3(N; \xi, \zeta)$ but with the last one replaced with
  \begin{equation*}
    X Y - q^2 Y X = 1 - \eta G^{-2}.
  \end{equation*}
\item The algebra $\mathscr{A}_4(\alpha, \beta; \xi)$ is generated by $W$ subject to $\phi_{\alpha, \beta, \xi}(W) = 0$, where $\phi_{\alpha, \beta, \xi}(T)$ is the polynomial defined by \eqref{eq:cocycle-deform-L4-min-pol}.
\end{enumerate}
The left $u_q$-coaction $\delta$ of each of them is given by
\begin{equation*}
  \begin{gathered}
    \delta(X) = \tilde{E} \otimes 1 + K^{-1} \otimes X,
    \quad \delta(Y) = F \otimes 1 + K^{-1} \otimes Y, \\
    \delta(W) = (\alpha \tilde{E} + \beta F) \otimes 1 + K^{-1} \otimes W,
    \quad \delta(G) = K^{N/r} \otimes G
  \end{gathered}
\end{equation*}
on the generators, where $\tilde{E} = (q-q^{-1}) K^{-1} E$, and $r$ is taken to be $N$ for the case of the algebra $\mathscr{A}_3(N; \xi, \zeta, \eta)$. For notational convenience, we set
\begin{equation*}
  \mathscr{A}_3(r; \xi, \zeta, 0) := \mathscr{A}_3(r; \xi, \zeta)
\end{equation*}
for $r \in \mathrm{Div}(N)$ with $r < N$. Now we define
\begin{align*}
  \mathscr{F}_0
  & = \{ \mathscr{A}_0(r) \mid r \in \mathrm{Div}(N) \}, \\
  \mathscr{F}_1
  & = \{ \mathscr{A}_1(r; \xi) \mid r \in \mathrm{Div}(N), r \ne 1; \xi \in \bfk \}, \\
  \mathscr{F}_2
  & = \{ \mathscr{A}_2(r; \xi) \mid r \in \mathrm{Div}(N), r \ne 1; \xi \in \bfk \}, \\
  \mathscr{F}_3
  & = \{ \mathscr{A}_3(r; \xi, \zeta, \eta)
    \mid r \in \mathrm{Div}(N); \xi, \zeta, \eta \in \bfk; \text{$\eta = 0$ if $r < N$} \}, \\
  \mathscr{F}_4
  & = \{ \mathscr{A}_4(\alpha, \beta; \xi)
    \mid \alpha, \beta, \xi \in \bfk, (\alpha, \beta) \ne (0,0) \},
\end{align*}
call these sets {\em families}, and let $\mathscr{F}$ be the union of the families:
\begin{equation*}
  \mathscr{F} := \mathscr{F}_0 \cup \mathscr{F}_1 \cup \mathscr{F}_2 \cup \mathscr{F}_3 \cup \mathscr{F}_4.
\end{equation*}
By Lemma~\ref{lem:cocycle-deform-H-Morita} and the conclusion of Step 3 (Subsection~\ref{subsec:Step-3}), we have:

\begin{theorem}
  \label{thm:conclusion}
  An indecomposable exact module category over $\Rep(u_q)$ is equivalent to $\Rep(A)$ for some $A \in \mathscr{F}$. Two elements of $\mathscr{F}$ belonging to different families are not $u_q$-Morita equivalent.
  Regarding $u_q$-Morita equivalence of left $u_q$-comodule algebras of the same family, we have:
  \begin{enumerate}
    \setcounter{enumi}{-1}
  \item $\mathscr{A}_0(r), \mathscr{A}_0(r') \in \mathscr{F}_0$ are $u_q$-Morita equivalent if and only if
    \begin{equation*}
      r = r'.
    \end{equation*}
  \item $\mathscr{A}_1(r; \xi), \mathscr{A}_1(r'; \xi') \in \mathscr{F}_1$ are $u_q$-Morita equivalent if and only if
    \begin{equation*}
      (r, \xi) = (r', \xi').
    \end{equation*}
  \item $\mathscr{A}_2(r; \zeta), \mathscr{A}_2(r'; \zeta') \in \mathscr{F}_2$ are $u_q$-Morita equivalent if and only if
    \begin{equation*}
      (r, \zeta) = (r', \zeta').
    \end{equation*}
  \item $\mathscr{A}_3(r; \xi, \zeta, \eta), \mathscr{A}_3(r'; \xi', \zeta', \eta') \in \mathscr{F}_3$ are $u_q$-Morita equivalent if and only if there exists an integer $k \in \mathbb{Z}$ such that
    \begin{equation*}
      (r', \xi', \zeta', \eta') = (r, \xi, \zeta, q^{2k} \eta).
    \end{equation*}
  \item $\mathscr{A}_4(\alpha, \beta; \xi), \mathscr{A}_4(\alpha', \beta'; \xi') \in \mathscr{F}_4$ are $u_q$-Morita equivalent if and only if there exist $k \in \mathbb{Z}$ and $\lambda \in \bfk^{\times}$ such that
    \begin{equation*}
      (\alpha', \beta', \xi') = (\lambda q^{2k} \alpha, \lambda q^{-2k} \beta, \lambda^N \xi).
    \end{equation*}
  \end{enumerate}
\end{theorem}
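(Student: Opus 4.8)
The plan is to assemble the theorem from the machinery of the preceding sections, treating it essentially as a transport of the Step~3 classification along the cocycle-deformation correspondence. First I would invoke Theorem~\ref{thm:AM-exact-mod-cat}: an indecomposable exact module category over $\Rep(u_q) = {}_{u_q}\Mod$ is equivalent to $\Rep(A) = {}_A\Mod$ for some right $u_q$-simple left $u_q$-comodule algebra $A$. Since two comodule algebras yield equivalent module categories over ${}_{u_q}\Mod$ precisely when they are $u_q$-Morita equivalent (by the very definition of $u_q$-Morita equivalence), it suffices to enumerate the right $u_q$-simple left $u_q$-comodule algebras up to $u_q$-Morita equivalence and to determine the equivalence relation among them.

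Next I would apply the cocycle-deformation correspondence of Subsection~\ref{subsec:cocycle-deform}. By Lemma~\ref{lem:gr-uq-sl2-cocycle-deform} we have $u_q \cong H^{\sigma}$ for $H = \gr(u_q)$, and the assignment $\mathcal{L} \mapsto {}_{\sigma}\mathcal{L}$ is a bijection between left $H$-comodule algebras and left $u_q$-comodule algebras. By Lemma~\ref{lem:properties-preserved-by-cocycle-deform}, this bijection preserves right-simplicity, indecomposability and exactness, so every right $u_q$-simple left $u_q$-comodule algebra is of the form ${}_{\sigma}\mathcal{L}$ with $\mathcal{L}$ right $H$-simple. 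The conclusion of Step~3 (Subsection~\ref{subsec:Step-3}) then shows that every such $\mathcal{L}$ is $H$-Morita equivalent to a representative of one of the five $\mathscr{L}$-families, while Step~4 (Lemmas~\ref{lem:cocycle-deform-L3}, \ref{lem:cocycle-deform-L0123} and~\ref{lem:cocycle-deform-L4}) identifies the deformation ${}_{\sigma}\mathscr{L}_i$ of each representative with the corresponding $\mathscr{A}_i$-algebra in the statement. Together these establish the first assertion: every indecomposable exact $\Rep(u_q)$-module category is equivalent to $\Rep(A)$ for some $A \in \mathscr{F}$.

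For the Morita-equivalence assertions, the key input is Lemma~\ref{lem:cocycle-deform-H-Morita}, which says that ${}_{\sigma}\mathcal{L}$ and ${}_{\sigma}\mathcal{L}'$ are $u_q$-Morita equivalent if and only if $\mathcal{L}$ and $\mathcal{L}'$ are $H$-Morita equivalent. Because the parameters $r,\xi,\zeta,\eta,\alpha,\beta$ labelling $\mathscr{A}_i$ are literally the same as those labelling $\mathscr{L}_i$, this transports the equivalence relation verbatim. Thus the statement that algebras in distinct families are never $u_q$-Morita equivalent follows from the $H$-Morita invariant $d(-)$ of~\eqref{eq:H-Morita-invariant-d} together with Lemma~\ref{lem:H-Morita-L1-L2}, and the intra-family conditions $(0)$--$(4)$ are exactly the conclusions of Lemmas~\ref{lem:H-Morita-L0}--\ref{lem:H-Morita-L4} reinterpreted through the deformation. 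I would simply record this transfer, carrying each condition over unchanged.

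Since virtually all of the difficulty has been front-loaded into the earlier steps---the Chebyshev-polynomial computation of the minimal polynomial in Lemma~\ref{lemma:A4-min-pol} and the isomorphism analyses of Subsection~\ref{subsec:Step-3}---the remaining care here is purely bookkeeping: verifying that the five families are mutually exhaustive and non-redundant. The step I would watch most closely is completeness at the level of liftings, namely that the Step~2 lemmas exhaust \emph{all} liftings of each graded coideal subalgebra of $H$, so that no right $u_q$-simple comodule algebra escapes the list. Once that completeness is granted, together with the fact that every graded left coideal subalgebra appears in the Step~1 list~\eqref{eq:homogeneous-CSA}, the theorem follows formally from the cited results.
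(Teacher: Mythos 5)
Your proposal is correct and follows essentially the same route as the paper: the paper's own proof is precisely the combination of Theorem~\ref{thm:AM-exact-mod-cat}, the cocycle-deformation correspondence (Lemmas~\ref{lem:properties-preserved-by-cocycle-deform} and~\ref{lem:cocycle-deform-H-Morita}), the Step~3 classification with its invariant $d(-)$ and Lemmas~\ref{lem:H-Morita-L3}--\ref{lem:H-Morita-L1-L2}, and the Step~4 identifications of the deformed algebras. Your explicit flagging of the completeness of the lifting lemmas and of the coideal-subalgebra list as the load-bearing inputs is exactly where the paper places the weight as well.
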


\subsection{Remarks}

(1) In recent developments of finite tensor categories and their modules, the powerfulness of the techniques of the relative Serre functor is recognized; see, {\it e.g.}, \cite{MR3435098,MR4042867,MR4586249,MR4624433}. As natural generalizations of the notions of pivotal and spherical finite tensor categories, pivotal and spherical exact module categories are introduced in terms of the relative Serre functor \cite{MR3435098,2022arXiv220707031F,MR4624433}. In a forthcoming paper, we will compute the relative Serre functors of indecomposable exact module categories over $\Rep(u_q(\mathfrak{sl}_2))$. The pivotality and the sphericality of them will also be addressed.

(2) Simple objects of indecomposable exact module categories over $\Rep(u_q(\mathfrak{sl}_2))$ will also be classified in our forthcoming paper. Together with (1), this leads to the classification of division algebras, Frobenius division algebras and symmetric Frobenius division algebras in $\Rep(u_q(\mathfrak{sl}_2))$ in an abstract form of the internal endomorphism algebra.

(3) It is known that the group of tensor autoequivalences of $\Rep(u_q(\mathfrak{sl}_2))$ is isomorphic to the projective special linear group $PSL_2$ of degree 2 \cite{MR3530496,MR3890207,MR3937297}. Hence $PSL_2$ acts on the set of equivalence classes of indecomposable exact module categories over $\Rep(u_q(\mathfrak{sl}_2))$. Cris Negron posed the question whether the number of the orbits is finite. We do not yet know the answer to this question, but expect the conclusion to be in the affirmative.

(4) In this paper, we have only considered the case where $q$ is a root of unity of odd order. The even order case may also be addressed in a similar manner. An interesting relevant problem is the classification of indecomposable exact module categories over $\Rep(u_q^{\phi}(\mathfrak{sl}_2))$, where $u_q^{\phi}(\mathfrak{sl}_2)$ is the modification of $u_q(\mathfrak{sl}_2)$ at a root of unity $q$ of even order discussed in \cite{MR3608158,2018arXiv180902116G,MR4082225,MR4227163} in connection with logarithmic conformal field theory. Since $u_q^{\phi}(\mathfrak{sl}_2)$ is no more a Hopf algebra (but a quasi-Hopf algebra), most of our methods cannot be applied to $u_q^{\phi}(\mathfrak{sl}_2)$ at least in a direct way. Nevertheless, we hope that our results would be a successful model for the study of the case of $u_q^{\phi}(\mathfrak{sl}_2)$.

\appendix

\section{Equivariant Eilenberg-Watts theorem}
\label{appendix:equiv-EW}

The base field $\bfk$ is arbitrary.
In Appendix \ref{appendix:equiv-EW}, we give a proof of the equivariant Eilenberg-Watts theorem (Theorem~\ref{thm:equivariant-EW}) due to Andruskiewitsch and Mombelli in a generalized form. They only considered finite-dimensional Hopf algebras in \cite{MR2331768}, however, we point out that a crucial part of the argument works for any bialgebras. For the clarification, we first introduce some terminology:

\begin{definition}
  A {\em $\bfk$-linear monoidal category} is a $\bfk$-linear category $\mathcal{C}$ endowed with a structure of a monoidal category $(\mathcal{C}, \otimes, \unitobj)$ such that the monoidal product $\otimes$ is $\bfk$-bilinear. Given a $\bfk$-linear monoidal category $\mathcal{C}$, a {\em $\bfk$-linear left $\mathcal{C}$-module category} is a $\bfk$-linear category $\mathcal{M}$ endowed with a structure of a left $\mathcal{C}$-module category such that the action $\catactl : \mathcal{C} \times \mathcal{M} \to \mathcal{M}$ is $\bfk$-bilinear.
\end{definition}

\begin{definition}
  Let $\mathcal{C}$ be a monoidal category, and let $\mathcal{M}$ and $\mathcal{N}$ be left $\mathcal{C}$-module categories. An {\em oplax left $\mathcal{C}$-module functor} from $\mathcal{M}$ to $\mathcal{N}$ is a functor $F: \mathcal{M} \to \mathcal{N}$ equipped with a natural transformation
  \begin{equation*}
    \xi^{(F)}_{X,M} : F(X \catactl M) \to X \catactl F(M)
    \quad (X \in \mathcal{C}, M \in \mathcal{M})
  \end{equation*}
  satisfying $\xi^{(F)}_{X \otimes Y,M} = (\id_X \catactl \xi^{(F)}_{Y,M}) \circ \xi^{(F)}_{X, Y \catactl M}$ and $\xi^{(F)}_{\unitobj, M} = \id_{F(M)}$ for all objects $X, Y \in \mathcal{C}$ and $M \in \mathcal{M}$. Given two left $\mathcal{C}$-module functors $F$ and $G$ from $\mathcal{M}$ to $\mathcal{N}$, a {\em morphism} from $F$ to $G$ is a natural transformation $\alpha : F \to G$ satisfying
  \begin{equation*}
    (\id_X \catactl \alpha_M) \circ \xi^{(F)}_{X,M}
    = \xi^{(G)}_{X,M} \circ \alpha_{X \catactl M}
  \end{equation*}
  for all $X \in \mathcal{C}$ and $M \in \mathcal{M}$.
\end{definition}

\begin{definition}
  A {\em strong left $\mathcal{C}$-module functor} is an oplax left $\mathcal{C}$-module functor with an invertible structure morphism.
\end{definition}

Unlike Section~\ref{sec:exact-comod-alg-over-pointed}, (co)algebras and (co)modules are not assumed to be finite-dimensional in this section. We have used the symbol $\Mod$ for some kind of categories of finite-dimensional modules or comodules. We mean the same category but with no restrictions on the dimension by the same way as Section~\ref{sec:exact-comod-alg-over-pointed} but with $\Mod$ replaced with $\overline{\Mod}$. For example, ${}_A\overline{\Mod}$ for an algebra $A$ is the category of all left $A$-modules.

Now let $H$ be a bialgebra. Then the category $\mathcal{C} := {}_H\overline{\Mod}$ is a $\bfk$-linear monoidal category. If $A$ is a left $H$-comodule algebra, then ${}_A\overline{\Mod}$ is a $\bfk$-linear left module category over $\mathcal{C}$ by the same way as the case of finite-dimensional Hopf algebras.

Given $\bfk$-linear left $\mathcal{C}$-module categories $\mathcal{M}$ and $\mathcal{N}$, we denote by $\mathscr{F}_{\mathcal{C}}^{\mathrm{oplax}}(\mathcal{M}, \mathcal{N})$ the category of $\bfk$-linear oplax left $\mathcal{C}$-module functors from $\mathcal{M}$ to $\mathcal{N}$ whose underlying functor admits a right adjoint. If $A$ and $B$ are left $H$-comodule algebras and $P$ is an object of the category ${}^H_A\overline{\Mod}_B^{}$ of $A$-$B$-bimodules in ${}^H\overline{\Mod}$, then the functor $F := P \otimes_B (-)$ is an object of $\mathscr{F}_{\mathcal{C}}^{\mathrm{oplax}}({}_B\overline{\Mod}, {}_A\overline{\Mod})$ together with the natural transformation
\begin{equation}
  \label{eq:Appendix-oplax-module-structure}
  \begin{gathered}
    F(X \catactl M) = P \otimes_B (X \catactl M)
    \to X \catactl (P \otimes_B M) = X \catactl F(M), \\
    p \otimes_B (x \otimes m)
    \mapsto p_{(-1)} x \otimes (p_{(0)} \otimes_B m),
  \end{gathered}
\end{equation}
where $p \in P$, $x \in X \in \mathcal{C}$ and $m \in M \in {}_B\overline{\Mod}$.

\begin{theorem}
  \label{thm:Appendix-equiv-EW}
  For left $H$-comodule algebras $A$ and $B$, the functor
  \begin{equation}
    \label{eq:Appendix-equiv-EW}
    {}_A^{H}\overline{\Mod}_B^{} \to   \mathscr{F}_{\mathcal{C}}^{\mathrm{oplax}}({}_B\overline{\Mod}, {}_A\overline{\Mod}),
    \quad M \mapsto M \otimes_B (-)
  \end{equation}
  is an equivalence.
\end{theorem}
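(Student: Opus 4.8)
The plan is to reduce the statement to the classical (non-equivariant) Eilenberg--Watts theorem and then treat the oplax $\mathcal{C}$-module structure as extra data living over that equivalence. First, since every object of $\mathscr{F}_{\mathcal{C}}^{\mathrm{oplax}}({}_B\overline{\Mod}, {}_A\overline{\Mod})$ has, by definition, an underlying functor admitting a right adjoint, that functor is cocontinuous, and the ordinary Eilenberg--Watts theorem supplies an equivalence $\Phi\colon {}_A\overline{\Mod}_B \to \mathscr{F}({}_B\overline{\Mod}, {}_A\overline{\Mod})$, $M \mapsto M \otimes_B(-)$, onto the full subcategory $\mathscr{F}$ of such functors, with quasi-inverse $F \mapsto F(B)$; here the right $B$-action on $F(B)$ is obtained by applying $F$ to the right-multiplication endomorphisms of the left $B$-module ${}_BB$. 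Forgetting the comodule structure on the source and the oplax structure on the target, the functor \eqref{eq:Appendix-equiv-EW} sits strictly over $\Phi$. Hence it suffices to establish that \eqref{eq:Appendix-equiv-EW} is bijective on the extra structure and on $2$-cells: for each $P \in {}_A\overline{\Mod}_B$ the oplax $\mathcal{C}$-module structures on $P\otimes_B(-)$ should correspond bijectively to the left $H$-comodule structures making $P$ an object of ${}^H_A\overline{\Mod}_B$, and for each pair $P,P'$ the morphisms of oplax module functors should correspond to $H$-colinear bimodule maps.

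The key observation for the structure correspondence is that the coaction $\delta_B\colon B \to H \otimes B$ is a morphism of left $B$-modules into $H \catactl B$: since $B$ is a comodule algebra, $\delta_B$ is an algebra map, and this unwinds exactly to $B$-linearity for the twisted action $b\cdot(h\otimes b') = b_{(-1)}h \otimes b_{(0)}b'$. Given an oplax structure $\xi$ on $F = P\otimes_B(-)$, I would define a coaction on $P$ by
\[
  \delta_P := \xi_{H,B}\circ F(\delta_B)\colon P \cong P\otimes_B B \longrightarrow P\otimes_B(H\catactl B) \longrightarrow H\catactl P = H\otimes P ,
\]
and check that, written out, $\delta_P(p) = p_{(-1)}\otimes p_{(0)}$ recovers precisely the coaction appearing in \eqref{eq:Appendix-oplax-module-structure}. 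Conversely, an $H$-comodule structure on $P$ compatible with the bimodule structure produces an oplax structure through the very formula \eqref{eq:Appendix-oplax-module-structure}. One then matches the two packages of axioms: the counit condition $\xi_{\unitobj, M} = \id$ corresponds to counitality of $\delta_P$; the associativity (cocycle) condition $\xi_{X\otimes Y,M} = (\id_X\catactl \xi_{Y,M})\circ \xi_{X,Y\catactl M}$ corresponds to coassociativity; well-definedness of \eqref{eq:Appendix-oplax-module-structure} over $\otimes_B$ corresponds to right $B$-colinearity of $\delta_P$; and compatibility of $\xi$ with the left $A$-action corresponds to left $A$-colinearity, so that $\delta_P$ indeed makes $P$ an object of ${}^H_A\overline{\Mod}_B$.

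The main obstacle will be verifying that these two assignments are mutually inverse, and in particular that the oplax structure reconstructed from $\delta_P$ via \eqref{eq:Appendix-oplax-module-structure} returns the original $\xi$. For the regular module $M = B$ this holds by the very way $\delta_P$ was extracted, but for a general $M$ one must propagate the identity using naturality of $\xi$ together with cocontinuity of $F$: presenting $M$ as a coequalizer of free modules $B^{(J)} \rightrightarrows B^{(I)} \twoheadrightarrow M$ and using that $F$, as well as both candidate oplax structures, are compatible with these colimits forces $\xi_{X,M}$ to be determined by $\xi_{X,B}$. I expect this reduction, together with the diagram chase translating the oplax cocycle condition into coassociativity, to be the technically delicate part. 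Finally, the statement on morphisms follows the same pattern: classical Eilenberg--Watts identifies natural transformations $P\otimes_B(-)\to P'\otimes_B(-)$ with bimodule maps $P\to P'$, and a direct check shows that such a transformation is a morphism of oplax module functors precisely when the associated bimodule map is $H$-colinear. Combining the bijections on objects, on structures, and on morphisms yields that \eqref{eq:Appendix-equiv-EW} is an equivalence.
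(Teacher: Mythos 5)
Your outline coincides with the paper's proof in its global structure: fix $P$, set up a bijection between coactions on $P$ and oplax structures on $F = P \otimes_B (-)$, invoke the classical Eilenberg--Watts theorem for essential surjectivity, and check the morphism statement directly. Indeed your extraction $\delta_P := \xi_{H,B} \circ F(\delta_B)$, evaluated at $p \otimes_B 1_B$, is literally the paper's $\delta_{\xi}(p) = \xi_{H,B}(p \otimes_B (1_H \otimes 1_B))$, since $\delta_B(1_B) = 1_H \otimes 1_B$. However, there is a genuine gap at the step you yourself flag as delicate. The extraction pins down $\xi$ only on the subset $\{\, p \otimes_B (1_H \otimes 1_B) \mid p \in P \,\}$ of $F(H \catactl B)$; it says nothing about $\xi_{H,B}$ at a general element $p \otimes_B (h \otimes b)$, nor about $\xi_{X,B}$ for a general $X \in \mathcal{C}$. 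So the assertion that ``for the regular module $M = B$ this holds by the very way $\delta_P$ was extracted'' is false as stated, and your coequalizer argument---which propagates the identity only in the module variable, reducing general $M$ to $M = B$---lands on a base case that has not been established. Nothing in the plan reduces the $\mathcal{C}$-variable $X$ to $H$, nor general elements of $H \catactl B$ to the unit element.

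The missing idea, which is the heart of the paper's proof and renders the colimit machinery unnecessary, is bivariate naturality with respect to cyclic-generator maps: for $x \in X$ let $\phi_x : H \to X$ be the unique left $H$-module map with $\phi_x(1_H) = x$, and for $m \in M$ let $\psi_m : B \to M$ be the unique left $B$-module map with $\psi_m(1_B) = m$. Naturality of $\xi$ in both variables, applied to $\phi_x \catactl \psi_m$, yields
\begin{equation*}
  \xi_{X,M}(p \otimes_B (x \otimes m))
  = \bigl(\phi_x \otimes (\id_P \otimes_B \psi_m)\bigr)\, \xi_{H,B}(p \otimes_B (1_H \otimes 1_B))
  = p_{(-1)} x \otimes (p_{(0)} \otimes_B m),
\end{equation*}
and since elements of the form $p \otimes_B (x \otimes m)$ span $F(X \catactl M)$, this determines $\xi_{X,M}$ completely from $\delta_P$, for all $X$ and $M$ simultaneously. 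This one computation is what establishes that the two assignments are mutually inverse; once it is in place, the remaining points of your proposal (the translation of the oplax axioms into the comodule axioms, and the identification of morphisms of oplax module functors with $H$-colinear bimodule maps) go through as you describe.
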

\begin{proof}
  Although our argument is the same as \cite[Proposition 1.23]{MR2331768}, we present the detail to demonstrate that we do not require an antipode of $H$.
  We fix $P \in {}_A\overline{\Mod}_B$ and let $F: {}_B\overline{\Mod} \to {}_A\overline{\Mod}$ be the functor given by tensoring $P$ over $B$. We first show that there is a bijection between the set of linear maps $\delta : P \to H \otimes P$ making $P$ an object of ${}_A^{H}\overline{\Mod}_B^{}$ and the set of natural transformations
  \begin{equation*}
    \xi_{X,M} : F(X \otimes M) \to X \otimes F(M) \quad (X \in \mathcal{C}, M \in {}_B\overline{\Mod})
  \end{equation*}
  making $F$ an oplax left $\mathcal{C}$-module functor. A construction of $\xi$ from $\delta$ has been given in the above. The inverse construction is as follows: Suppose that we are given a natural transformation $\xi$ making $F$ an oplax left $\mathcal{C}$-module functor. Under the canonical identification $P \otimes_B B \cong P$, we define
  \begin{equation*}
    \delta_{\xi}(p) = \xi_{H,B}(p \otimes_B (1_H \otimes 1_B))
    \in H \otimes (P \otimes_B B) = H \otimes P
  \end{equation*}
  for $p \in P$ and write it as $\delta_{\xi}(p) = p_{(-1)} \otimes p_{(0)}$ although we have not yet proved that $\delta_{\xi}$ is indeed a left coaction of $H$. For all $X \in \mathcal{C}$ and $M \in {}_B\overline{\Mod}$, the natural transformation $\xi_{X,M}$ is given by the same formula as \eqref{eq:Appendix-oplax-module-structure}. Indeed, for $x \in X$, there is a unique left $H$-module map $\phi_x : H \to X$ such that $\phi_x(1_H) = x$. For $m \in M$, there is also a unique left $B$-module map $\psi_m : B \to M$ such that $\psi_m(1_B) = m$. By the naturality of $\xi$, we have
  \begin{align*}
    \xi_{X,M}(p \otimes_B (x \otimes m))
    & = \xi_{X,M}(p \otimes_B (\phi_x(1_H) \otimes \psi_m(1_B))) \\
    & = (\phi_x \otimes (\id_P \otimes_B \psi_m)) \xi_{H,B}(p \otimes_B (1_H \otimes 1_B)) \\
    & = (\phi_x \otimes (\id_P \otimes_B \psi_m)) (p_{(-1)} \otimes p_{(0)} \otimes_B 1_B) \\
    & = p_{(-1)} x \otimes (p_{(0)} \otimes_B m)
  \end{align*}
  for $p \in P$, $x \in X$ and $m \in M$. Now it is straightforward to show that $\delta_{\xi}$ makes $P$ an object of ${}^H_A\overline{\Mod}_B^{}$ by translating the axiom of oplax left $\mathcal{C}$-module functors. It is obvious that the construction $\xi \mapsto \delta_{\xi}$ is the inverse of the construction of an oplax left $\mathcal{C}$-module structure from a left coaction of $H$ mentioned at the beginning of the proof.

  We shall complete the proof of this theorem. By the Eilenberg-Watts theorem, a $\bfk$-linear functor ${}_B\overline{\Mod} \to {}_A\overline{\Mod}$ admitting a right adjoint is isomorphic to $P \otimes_B (-)$ for some $A$-$B$-bimodule $P$. The above discussion means that the functor~\eqref{eq:Appendix-equiv-EW} is essentially surjective. By the definition of morphisms of oplax left $\mathcal{C}$-module functors, it is easy to check that \eqref{eq:Appendix-equiv-EW} is fully faithful. The proof is done.
\end{proof}

An {\em opantipode} of a bialgebra $H$ is a linear map $\varsigma : H \to H$ such that
\begin{equation*}
  \varsigma(h_{(2)}) h_{(1)} = \varepsilon(h) 1_H = h_{(2)} \varsigma(h_{(1)})
\end{equation*}
for all $h \in H$. If $H$ has a bijective antipode, then the inverse of the antipode of $H$ is an opantipode of $H$.

\begin{theorem}
  \label{thm:Appendix-equiv-EW-opantipode}
  If the bialgebra $H$ has an opantipode $\varsigma$, then we have
  \begin{equation*}
    \mathscr{F}_{\mathcal{C}}^{\mathrm{strong}}({}_B\overline{\Mod}, {}_A\overline{\Mod})
    = \mathscr{F}_{\mathcal{C}}^{\mathrm{oplax}}({}_B\overline{\Mod}, {}_A\overline{\Mod}),
  \end{equation*}
  where the left-hand side is the full subcategory of the right-hand side consisting of strong monoidal functors.
\end{theorem}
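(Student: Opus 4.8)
The plan is to combine the equivariant Eilenberg--Watts equivalence just established (Theorem~\ref{thm:Appendix-equiv-EW}) with an explicit inverse of the oplax structure morphism built from the opantipode $\varsigma$. By that theorem every object of $\mathscr{F}_{\mathcal{C}}^{\mathrm{oplax}}({}_B\overline{\Mod}, {}_A\overline{\Mod})$ is isomorphic to $F = P \otimes_B (-)$ for some $P \in {}^H_A\overline{\Mod}_B^{}$, carrying the structure morphism $\xi_{X,M}$ of \eqref{eq:Appendix-oplax-module-structure}. It therefore suffices to prove that, once $H$ has an opantipode, this $\xi_{X,M}$ is invertible for all $X \in \mathcal{C}$ and $M \in {}_B\overline{\Mod}$: then every oplax module functor in sight is strong, and since the two categories have the same morphisms (one being the full subcategory of the other on the same objects), they coincide.

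First I would write down the candidate inverse
\[
  \eta_{X,M} \colon X \catactl (P \otimes_B M) \to P \otimes_B (X \catactl M),
  \qquad x \otimes (p \otimes_B m) \longmapsto p_{(0)} \otimes_B \bigl( \varsigma(p_{(-1)}) x \otimes m \bigr),
\]
written using $\delta_P(p) = p_{(-1)} \otimes p_{(0)}$. The two composite identities are then essentially forced by the defining equations of the opantipode: after a single use of coassociativity, $\eta_{X,M} \circ \xi_{X,M}$ produces the scalar factor $\varsigma(h_{(2)}) h_{(1)} = \varepsilon(h) 1_H$ and $\xi_{X,M} \circ \eta_{X,M}$ the factor $h_{(2)} \varsigma(h_{(1)}) = \varepsilon(h) 1_H$, both with $h = p_{(-1)}$; in each case the comodule counit axiom $\varepsilon(p_{(-1)}) p_{(0)} = p$ collapses the result back to the identity. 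These verifications are mechanical once the Sweedler bookkeeping is arranged.

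The step I expect to be the main obstacle is checking that $\eta_{X,M}$ is well defined on the balanced tensor product, that is, that it respects $pb \otimes_B m = p \otimes_B bm$ for $b \in B$. Expanding $\eta_{X,M}$ on $x \otimes pb \otimes m$ forces one to use the $H$-colinearity of the right $B$-action on $P$, namely $\delta_P(pb) = p_{(-1)} b_{(-1)} \otimes p_{(0)} b_{(0)}$, which inserts the coaction of $b$ inside the argument of $\varsigma$. To separate these one needs $\varsigma$ to be an algebra anti-homomorphism, so that $\varsigma(p_{(-1)} b_{(-1)}) = \varsigma(b_{(-1)}) \varsigma(p_{(-1)})$. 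I would justify this by noting that the opantipode is precisely the antipode of the bialgebra $H^{\cop}$---its two defining equations are exactly the antipode axioms for the comultiplication $\Delta^{\cop}$---so $\varsigma$ is unique and is an anti-homomorphism by the standard property of antipodes. After this splitting, the opantipode relation $h_{(2)} \varsigma(h_{(1)}) = \varepsilon(h) 1_H$ applied to $h = b_{(-1)}$, together with the counit axiom for the $B$-comodule $P$, reduces $\eta_{X,M}$ on $x \otimes pb \otimes m$ to its value on $x \otimes p \otimes bm$, giving well-definedness.

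Finally, since $\xi_{X,M}$ is already known (from Theorem~\ref{thm:Appendix-equiv-EW}) to be a morphism in ${}_A\overline{\Mod}$ natural in $X$ and $M$, the existence of the two-sided linear inverse $\eta_{X,M}$ shows that $\xi_{X,M}$ is invertible, so its structure morphism is an isomorphism and $F$ is strong. As every object of $\mathscr{F}_{\mathcal{C}}^{\mathrm{oplax}}$ is of this form, the asserted equality of categories follows.
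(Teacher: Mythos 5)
Your proposal is correct and follows essentially the same route as the paper: the paper likewise reduces via Theorem~\ref{thm:Appendix-equiv-EW} to functors of the form $P \otimes_B (-)$ and then exhibits exactly the same inverse $x \otimes (p \otimes_B m) \mapsto p_{(0)} \otimes_B (\varsigma(p_{(-1)}) x \otimes m)$. The verifications you spell out---well-definedness over $\otimes_B$ via anti-multiplicativity of $\varsigma$ (which holds since $\varsigma$ is the antipode of $H^{\cop}$) and the two composite identities forced by the opantipode axioms---are precisely the details the paper's one-line proof leaves implicit, and they check out.
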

\begin{proof}
  Indeed, the natural transformation \eqref{eq:Appendix-oplax-module-structure} has the inverse
  \begin{equation*}
    x \otimes (p \otimes_B m)
    \mapsto p_{(0)} \otimes_B (\varsigma(p_{(-1)}) x \otimes m).
    \qedhere
  \end{equation*}
\end{proof}

\begin{proof}[Proof of Theorem~\ref{thm:equivariant-EW}]
  Let $H$ be a finite-dimensional bialgebra, and let $A$ and $B$ be finite-dimensional left $H$-comodule algebras. We recall the fact that a $\bfk$-linear functor between finite abelian categories is right exact if and only if it has a right adjoint. By the same argument as Theorem~\ref{thm:Appendix-equiv-EW}, we have an equivalence
  \begin{equation}
    \label{eq:Appendix-equivariant-EW-fd}
    {}_A^H\Mod_B^{} \approx \Rex_{\mathcal{C}}^{\mathrm{oplax}}({}_A\Mod, {}_B\Mod)
  \end{equation}
  of $\bfk$-linear categories, where $\mathcal{C}$ is ${}_H\Mod$ and the right hand side is the category of $\bfk$-linear right exact oplax left $\mathcal{C}$-module functors from ${}_A\Mod$ to ${}_B\Mod$.

  Now we suppose that $H$ is a finite-dimensional Hopf algebra. Then, by the theorem of Larson and Sweedler, the antipode of $H$ is invertible. By the same argument as Theorem~\ref{thm:Appendix-equiv-EW-opantipode}, we now have that the target of \eqref{eq:Appendix-equivariant-EW-fd} is equal to the category of $\bfk$-linear right exact strong left $\mathcal{C}$-module functors. The proof is done.
\end{proof}

\section{Morita duality between ${}_H\Mod$ and ${}^H\Mod$}
\label{appendix:morita-duality}

The base field $\bfk$ is arbitrary.
Let $H$ be a finite-dimensional Hopf algebra.
The aim of Appendix \ref{appendix:morita-duality} is to complete the proof of the Morita duality between ${}_H\Mod$ and ${}^H\Mod$ (Theorem~\ref{thm:Morita-duality}). A key ingredient is {\em the fundamental theorem for Hopf modules} \cite[\S1.9]{MR1243637}, which states that the functor $\Phi : \Vect \to {}^H\Mod_H$ given by $M \mapsto H \otimes M$ is an equivalence. Here, the left coaction and the right action of $H$ on the vector space $\Phi(M)$ are given by
\begin{equation*}
  h \otimes m \mapsto h_{(1)} \otimes h_{(2)} \otimes m \quad \text{and} \quad
  (h \otimes m) \cdot h' = h h' \otimes m,
\end{equation*}
respectively, for $h, h' \in H$ and $m \in M$.

We need a slightly generalized version of the fundamental theorem involving a left $H$-comodule algebra. We note that the source and the target of $\Phi$ are left module category over $\mathcal{D} := {}^H\Mod$. The functor $\Phi$ is in fact an equivalence of left $\mathcal{D}$-module categories by the natural transformation
\begin{equation*}
  \xi_{X,M} : X \otimes \Phi(M) \to \Phi(X \otimes M),
  \quad x \otimes (h \otimes m) \mapsto x_{(-1)} h \otimes (x_{(0)} \otimes m),
\end{equation*}
where $x \in X \in \mathcal{D}$, $m \in M \in \Vect$ and $h \in H$.

Given a finite left $\mathcal{D}$-module category $\mathcal{M}$ and an algebra $A$ in $\mathcal{D}$, a {\em left $A$-module in $\mathcal{M}$} is an object $M \in \mathcal{M}$ together with a morphism $A \otimes M \to M$ in $\mathcal{M}$ satisfying the same axioms as a left $A$-module in $\mathcal{D}$. It is obvious that an equivalence of left $\mathcal{D}$-module categories induces an equivalence between the categories of $A$-modules in them. Since the category of left $A$-modules in $\Vect$ and in ${}^H\Mod_H$ are the same as ${}_A\Mod$ and ${}^H_A\Mod_H^{}$, respectively, the functor $\Phi$ induces an equivalence ${}_A\Mod \approx {}^H_A \Mod_H^{}$ of linear categories (as has been well-known; see, {\it e.g.}, \cite{MR2238882}). For later use, we denote the equivalence so obtained by
\begin{equation}
  \label{eq:fundamental-thm-of-Hopf-modules}
  \HM(A) : {}_A\Mod \to {}^H_A\Mod_H^{},
  \quad M \mapsto \Phi(M).
\end{equation}
By definition, the left action of $A$ on $\Phi(M)$ is given by
\begin{equation*}
  A \otimes \Phi(M)
  \xrightarrow{\quad \xi_{X,M} \quad} \Phi(A \otimes M)
  \xrightarrow{\quad \Phi(a_M) \quad} \Phi(M),
\end{equation*}
where $a_M : A \otimes M \to M$ is the action of $A$ on $M$. Namely,
\begin{equation*}
  a \cdot (h \otimes m) = a_{(-1)} h \otimes a_{(0)} m
  \quad (a \in A, h \in H, m \in M).
\end{equation*}

\begin{proof}[Proof of Theorem~\ref{thm:Morita-duality}]
  Let $H$ be a Hopf algebra, and let $\mathcal{M}$ be a finite left module category over $\mathcal{C} := {}_H\Mod$. Since $\phi_{\mathcal{M}}$ is natural in $\mathcal{M}$, it suffices to consider the case where $\mathcal{M} = {}_A\Mod$ for some left $H$-comodule algebra $A$. We write $\mathcal{D} = {}^H\Mod$. Then there are equivalences
  \begin{gather*}
    \mathcal{M} = {}_A\Mod
    \xrightarrow{\quad \HM(A) \quad}
    {}^H_A\Mod_H^{} \xrightarrow{\quad \widetilde{\EW}_{\mathcal{D}}(A, H) \quad}
    \Rex_{\mathcal{D}}({}^H\Mod_A, {}^H\Mod_H) \\
    \xrightarrow{\quad \Rex_{\mathcal{D}}(\EW(\bfk, A), \HM(\bfk))^{-1} \quad}
    \Rex_{\mathcal{D}}(\Rex_{\mathcal{C}}({}_A\Mod, \Vect), \Vect) = \mathcal{M}^{**}
  \end{gather*}
  of categories, where the functor
  \begin{equation*}
    \widetilde{\EW}_{\mathcal{D}}(A, H) : {}_A\mathcal{D}_H \to \Rex_{\mathcal{D}}(\mathcal{D}_A, \mathcal{D}_H),
    \quad M \mapsto (-) \otimes_A M
  \end{equation*}
  is the Eilenberg-Watts equivalence in $\mathcal{D}$.
  We prove this theorem by showing that the equivalence $\mathcal{M} \to \mathcal{M}^{**}$ obtained by composing the above equivalences is isomorphic to $\phi_{\mathcal{M}}$. To achieve this, it suffices to show that there is an isomorphism
  \begin{equation*}
    \widetilde{\EW}_{\mathcal{D}}(A, H) \circ \HM(A)
    \cong \Rex_{\mathcal{D}}(\EW(\bfk, A), \HM(\bfk)) \circ \phi_{\mathcal{M}}.
  \end{equation*}
  Now let $L$ and $R$ be the left and the right hand side, respectively.
  For $M \in \mathcal{M}$ and $N \in {}^H\Mod_A$, we have $L(M)(N) = N \otimes_A (H \otimes M)$, where the left coaction and the right action of $H$ are given by
  \begin{gather*}
    n \otimes_A (h \otimes m) \mapsto n_{(-1)} h_{(1)} \otimes (n_{(0)} \otimes_A (h_{(2)} \otimes m)) \\
    \text{and} \quad
    (n \otimes_A (h \otimes m)) \cdot h' = n \otimes_A (h h' \otimes m),
  \end{gather*}
  respectively, for $n \in N$, $h, h' \in H$ and $m \in M$. We also have
  \begin{equation*}
    R(M)(N)
    = (\HM(\bfk) \circ \phi_{\mathcal{M}}(M) \circ \EW(\bfk, A))(N)
    = H \otimes (N \otimes_A M),
  \end{equation*}
  where the left coaction and the right action of $H$ are given by
  \begin{equation*}
    h \otimes x \mapsto h_{(1)} \otimes h_{(2)} \otimes x
    \quad \text{and} \quad
    (h \otimes x) \cdot h' = h h' \otimes x,
  \end{equation*}
  respectively, for $h, h' \in H$ and $x \in N \otimes_A M$. Now we define
  \begin{equation*}
    f_{M,N} : L(M)(N) \to R(M)(N),
    \quad n \otimes_A (h \otimes m)
    \mapsto n_{(-1)} h \otimes (n_{(0)} \otimes_A m)
  \end{equation*}
  for $n \in N$, $h \in H$ and $m \in M$. It is straightforward to verify that $f_{M,N}$ is an isomorphism and natural in $M$ and $N$. Hence we have $L \cong R$ as functors.
\end{proof}

\section{A Hopf 2-cocycle turning $\gr(u_q(\mathfrak{sl}_2))$ into $u_q(\mathfrak{sl}_2)$}
\label{appendix:detail-Hopf-2-cocycle}

The base field $\bfk$ is assumed to be algebraically closed and of characteristic zero. We fix an odd integer $N > 1$ and a root of unity $q \in \bfk$ of order $N$.
In Appendix \ref{appendix:detail-Hopf-2-cocycle}, we give an explicit Hopf 2-cocycle $\sigma$ of $H := \gr(u_q(\mathfrak{sl}_2))$ by the method of Grunenfelder and Mastnak \cite{2010arXiv1010.4976G}. We then show that the Hopf 2-cocycle $\sigma$ deforms the Hopf algebra $H$ into $u_q(\mathfrak{sl}_2)$ as stated in Lemma \ref{lem:gr-uq-sl2-cocycle-deform}.

For simplicity of notation, we set $\mathbb{I} = \{ 0, 1, \cdots, N - 1 \}$.
As we have observed, the Hopf algebra $H$ is generated by the homogeneous element $x$, $y$ and $g$ of degree 1, 1 and 0, respectively, subject to the relations~\eqref{eq:gr-uq-sl2-relations}.
The comultiplication is given by \eqref{eq:gr-uq-sl2-comultiplication} on the generators. By the $q$-binomial formula, we have $\Delta(x^i y^j g^k)$
\begin{align*}
  & = \sum_{r = 0}^i \sum_{s = 0}^j \binom{i}{r}_{\!\!q^2} \binom{j}{s}_{\!\!q^2}
  (g^{-1} \otimes x)^{r} (x \otimes 1)^{i - r}
  (y \otimes 1)^{j-s} (g^{-1} \otimes y)^{s} (g^k \otimes g^k) \\
  & = \sum_{r = 0}^i \sum_{s = 0}^j \binom{i}{r}_{\!\!q^2} \binom{j}{s}_{\!\!q^2} q^{-2r(i - r) + 2 r(j - s)} x^{i-r} y^{j-s} g^{-r+s+k} \otimes x^r y^s g^k
\end{align*}
for all $i, j, k \in \mathbb{I}$.
To construct the Hopf 2-cocycle $\sigma$, we consider two algebra maps $\rho_1$ and $\rho_2$ from $H$ to the matrix algebra of degree 2 given by
\begin{gather*}
  \rho_1(g) = \begin{pmatrix} 1 & 0 \\ 0 & q^{-2} \end{pmatrix}, \quad
  \rho_1(x) = \begin{pmatrix} 0 & 1 \\ 0 & 0 \end{pmatrix}, \quad
  \rho_1(y) = \begin{pmatrix} 0 & 0 \\ 0 & 0 \end{pmatrix}, \\
  \rho_2(g) = \begin{pmatrix} 1 & 0 \\ 0 & q^{-2} \end{pmatrix}, \quad
  \rho_2(x) = \begin{pmatrix} 0 & 0 \\ 0 & 0 \end{pmatrix}, \quad
  \rho_2(y) = \begin{pmatrix} 0 & 0 \\ 1 & 0 \end{pmatrix}.
\end{gather*}
One can define $\alpha, \xi_1, \xi_2 \in H^*$ so that the equations
\begin{equation*}
  \rho_1(h) = \begin{pmatrix} \varepsilon(h) & \xi_1(h) \\ 0 & \alpha(h) \end{pmatrix},
  \quad
  \rho_2(h) = \begin{pmatrix} \varepsilon(h) & 0 \\ \xi_2(h) & \alpha(h) \end{pmatrix}
\end{equation*}
hold for all $h \in H$. Since $\rho_1$ and $\rho_2$ are algebra maps, we have
\begin{equation}
  \label{eq:appendix-xi-1-xi-2-comult}
  \Delta(\alpha) = \alpha \otimes \alpha, \quad
  \Delta(\xi_1) = \xi_1 \otimes \alpha + \varepsilon \otimes \xi_1, \quad
  \Delta(\xi_2) = \xi_2 \otimes \varepsilon + \alpha \otimes \xi_2.
\end{equation}
where $\Delta$ is the comultiplication of $H^*$. 
It is easy to verify
\begin{equation}
  \label{eq:appendix-alpha-xi-1-xi-2}
  \begin{aligned}
    \langle \xi_1, x^i y^j g^k \rangle & = \delta_{i,1} \delta_{j,0} q^{-2k},
    & \langle \alpha, x^i y^j g^k \rangle = \delta_{i,0} \delta_{j,0} q^{-2k}, \\
    \langle \xi_2, x^i y^j g^k \rangle & = \delta_{i,0} \delta_{j,1}
  \end{aligned}
\end{equation}
for $i, j, k \in \mathbb{I}$, where $\delta$ means the Kronecker delta. By induction on $m \in \mathbb{Z}_{\ge 0}$, one can also verify that the $m$-th power of $\xi_1$ and $\xi_2$ with respect to the convolution product are given by
\begin{align}
  \label{eq:appendix-xi-1-power}
  \langle \xi_1^m, x^i y^j g^k \rangle
  & = \delta_{i,m} \delta_{j,0} (m)_{q^2}! \, q^{-2km}, \\
  \label{eq:appendix-xi-2-power}
  \langle \xi_2^m, x^i y^j g^k \rangle
  & = \delta_{i,0} \delta_{j,m} (m)_{q^2}!
  \quad (i, j, k \in \mathbb{I}).                                       
\end{align}
In particular, we have $\xi_1^N = \xi_2^N = 0$. Now we define
\begin{equation}
  \label{eq:appendix-gr-uq-sl2-cocycle-def}
  \sigma = \exp_{q^2}(\xi_1 \otimes \xi_2),
  \quad \text{where }
  \exp_{q^2}(X) = \sum_{r = 0}^{N-1} \frac{X^r}{(r)_{q^2}!}.
\end{equation}

\begin{lemma}
  The element $\sigma \in H^* \otimes H^*$, viewed as a bilinear form on $H$, is equal to the bilinear form mentioned in Lemma~\ref{lem:gr-uq-sl2-cocycle-deform}.
\end{lemma}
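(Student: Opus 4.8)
The plan is to unwind the definition $\sigma = \exp_{q^2}(\xi_1 \otimes \xi_2)$ of an element of $H^* \otimes H^*$ into an explicit bilinear form on $H$ and to match it term by term against the formula \eqref{eq:gr-uq-sl2-cocycle-deform}. The first and conceptually most important step is to pin down the algebra structure on $H^* \otimes H^*$ with respect to which the convolution powers $(\xi_1 \otimes \xi_2)^r$ appearing in $\exp_{q^2}$ are formed. Identifying $H^* \otimes H^*$ with $(H \otimes H)^*$, the relevant product is the convolution product dual to the comultiplication $\Delta_{H \otimes H}(a \otimes b) = (a_{(1)} \otimes b_{(1)}) \otimes (a_{(2)} \otimes b_{(2)})$ of $H \otimes H$. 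A short direct computation then shows that this product factors componentwise, namely $(\phi_1 \otimes \phi_2) * (\psi_1 \otimes \psi_2) = (\phi_1 * \psi_1) \otimes (\phi_2 * \psi_2)$, and consequently $(\xi_1 \otimes \xi_2)^r = \xi_1^r \otimes \xi_2^r$ for every $r \ge 0$, the powers on the right being ordinary convolution powers in $H^*$.

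Granting this, I would rewrite the defining expression as $\sigma = \sum_{r = 0}^{N-1} \frac{\xi_1^r \otimes \xi_2^r}{(r)_{q^2}!}$, so that as a bilinear form $\sigma(a, b) = \sum_{r = 0}^{N-1} \frac{\langle \xi_1^r, a \rangle \, \langle \xi_2^r, b \rangle}{(r)_{q^2}!}$. Substituting $a = x^{i_1} y^{j_1} g^{k_1}$ and $b = x^{i_2} y^{j_2} g^{k_2}$ and invoking the convolution-power formulas \eqref{eq:appendix-xi-1-power} and \eqref{eq:appendix-xi-2-power}, the $r$-th summand becomes $\frac{1}{(r)_{q^2}!} \cdot \delta_{i_1, r} \delta_{j_1, 0} (r)_{q^2}! \, q^{-2 k_1 r} \cdot \delta_{i_2, 0} \delta_{j_2, r} (r)_{q^2}!$, which simplifies to $\delta_{i_1, r} \delta_{j_2, r} \delta_{j_1, 0} \delta_{i_2, 0} (r)_{q^2}! \, q^{-2 k_1 r}$.

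The final step is to carry out the sum over $r$: the factor $\delta_{i_1, r} \delta_{j_2, r}$ forces $r = i_1 = j_2$, so the sum collapses to a single term and vanishes unless $i_1 = j_2$, leaving exactly $\delta_{i_1, j_2} \delta_{j_1, 0} \delta_{i_2, 0} (i_1)_{q^2}! \, q^{-2 i_1 k_1}$, which is the right-hand side of \eqref{eq:gr-uq-sl2-cocycle-deform}. I expect the only genuine obstacle to lie in the first step, that is, in verifying carefully that the convolution product on $H^* \otimes H^*$ is the componentwise one so that the $q^2$-exponential separates as $\sum_r (r)_{q^2}!^{-1}\, \xi_1^r \otimes \xi_2^r$; once this identification is secured, the remainder is a routine substitution followed by a collapsing of Kronecker deltas.
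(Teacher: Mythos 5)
Your proposal is correct and follows essentially the same route as the paper: expand $\exp_{q^2}(\xi_1\otimes\xi_2)$ using $(\xi_1\otimes\xi_2)^r=\xi_1^r\otimes\xi_2^r$, apply the power formulas \eqref{eq:appendix-xi-1-power} and \eqref{eq:appendix-xi-2-power}, and collapse the Kronecker deltas. The only difference is that you spell out the componentwise nature of the convolution product on $H^*\otimes H^*\cong(H\otimes H)^*$, which the paper's proof uses implicitly in its first equality.
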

\begin{proof}
  For $i_1, j_1, k_1, i_2, j_2, k_2 \in \mathbb{I}$, we have
  \begin{align*}
    \sigma(x^{i_1} y^{j_1} g^{k_1}, x^{i_2} y^{j_2} g^{k_2})
    & = \sum_{r = 0}^{N-1} \frac{1}{(r)_{q^2}!}
      \langle \xi_1^r, x^{i_1} y^{j_1} g^{k_1} \rangle
      \langle \xi_2^r, x^{i_2} y^{j_2} g^{k_2} \rangle \\
    {}^{\eqref{eq:appendix-xi-1-power}, \eqref{eq:appendix-xi-2-power}}
    & = \sum_{r = 0}^{N-1} \frac{1}{(r)_{q^2}!}
      \delta_{i_1, r} \delta_{j_1, 0} (i_1)_{q^2}! \, q^{-2 i_1 k_1} \cdot
      \delta_{i_2, 0} \delta_{j_2, r} (j_2)_{q^2}! \\
    & = \delta_{i_1, j_2} \delta_{j_1, 0} \delta_{i_2, 0} (i_1)_{q^2}! \, q^{-2 i_1 k_1}.
      \qedhere
  \end{align*}
\end{proof}

Equations \eqref{eq:appendix-xi-1-xi-2-comult} and \eqref{eq:appendix-alpha-xi-1-xi-2} say that $\xi_1$ and $\xi_2$ are $d_1 * \chi_2$ and $d_2$, respectively, with the notation of \cite[Example 5.2]{2010arXiv1010.4976G}. Thus, by the general argument given in \cite{2010arXiv1010.4976G}, the bilinear form $\sigma$ is a Hopf 2-cocycle of $H$. Below, for reader's convenience, we give a direct proof of this fact. As the first step, we note:

\begin{lemma}
  The following equations hold:
  \begin{equation}
    \label{eq:appendix-gr-uq-sl2-dual-relations}
    \alpha \xi_1 = q^2 \xi_1 \alpha, \quad
    \alpha \xi_2 = q^2 \xi_2 \alpha, \quad
    \xi_1 \xi_2 = \xi_2 \xi_1.
  \end{equation}
\end{lemma}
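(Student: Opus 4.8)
The plan is to verify the three identities in \eqref{eq:appendix-gr-uq-sl2-dual-relations} directly, by evaluating both sides against the PBW basis $\{ x^i y^j g^k \mid i,j,k \in \mathbb{I} \}$ of $H$. Since the product on $H^*$ is the convolution product, one has $\langle f \ast h, a \rangle = \langle f \otimes h, \Delta(a) \rangle$ for $f, h \in H^*$ and $a \in H$, so each identity reduces to pairing the relevant pair from $\{ \alpha, \xi_1, \xi_2 \}$ with the two tensor-legs of the comultiplication formula for $\Delta(x^i y^j g^k)$ displayed above.

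The computation is short thanks to the explicit values \eqref{eq:appendix-alpha-xi-1-xi-2}: the functional $\alpha$ is supported on the monomials $g^k$, while $\xi_1$ and $\xi_2$ are supported on $x g^k$ and $y g^k$ respectively. Hence, when $\langle f \ast h, x^i y^j g^k \rangle$ is expanded as the double sum coming from $\Delta(x^i y^j g^k)$ and the supports of the two factors are inserted, at most one pair of summation indices $(r,s)$ survives, and it forces $i, j$ to take specific small values. For example, for $\alpha \ast \xi_1$ only the term with $(r,s) = (i,j) = (1,0)$ contributes, yielding $\langle \alpha \ast \xi_1, x g^k \rangle = q^{-4k+2}$, while for $\xi_1 \ast \alpha$ only $(r,s) = (0,0)$ survives, yielding $\langle \xi_1 \ast \alpha, x g^k \rangle = q^{-4k}$; comparing the two gives $\alpha \xi_1 = q^2 \xi_1 \alpha$. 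The second relation $\alpha \xi_2 = q^2 \xi_2 \alpha$ is obtained identically using the monomials $y g^k$, and $\xi_1 \xi_2 = \xi_2 \xi_1$ follows by checking that both convolutions are supported on $x y g^k$ and take the same value there (both equal to $q^{-2k+2}$ on $x y g^k$, the two surviving terms being $(r,s)=(0,1)$ for $\xi_1 \ast \xi_2$ and $(r,s)=(1,0)$ for $\xi_2 \ast \xi_1$).

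The only genuine care-point, and thus the main (if modest) obstacle, is tracking the powers of $q$ that arise when the factors $g^{-r}$ coming from $\Delta(x)^i$ and $\Delta(y)^j$ are commuted past $x^{i-r}$ and $y^{j-s}$ to bring the appropriate leg of $\Delta(x^i y^j g^k)$ into PBW normal form; these are governed by $g^{-1} x = q^{-2} x g^{-1}$ and $g^{-1} y = q^{2} y g^{-1}$, together with the prefactor recorded in the comultiplication formula. Once the surviving term is isolated, each identity is a one-line evaluation. Alternatively, one could argue conceptually: by \eqref{eq:appendix-xi-1-xi-2-comult} the element $\alpha$ is grouplike in $H^*$ and $\xi_1, \xi_2$ are skew-primitive between $\varepsilon$ and $\alpha$, and the convolutions $\alpha \ast \xi_1$, $\xi_1 \ast \alpha$ are matrix coefficients of the tensor-product representation $\rho_1 \otimes \rho_1$ (and $\xi_1 \ast \xi_2$, $\xi_2 \ast \xi_1$ of $\rho_1 \otimes \rho_2$ and $\rho_2 \otimes \rho_1$); evaluating these tensor products on the generators $x, y, g$ reproduces the same $q$-factors. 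This route is no shorter than the direct one, so I would present the direct evaluation.
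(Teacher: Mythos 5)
Your proof is correct, but it takes a genuinely different route from the paper's. You verify all three identities by direct evaluation on the PBW basis, using the supports of $\alpha$, $\xi_1$, $\xi_2$ to isolate the unique surviving term of the double sum; your values check out ($q^{-4k+2}$ vs.\ $q^{-4k}$ on $xg^k$, the analogous pair on $yg^k$, and $q^{-2k+2}$ for both $\xi_1\xi_2$ and $\xi_2\xi_1$ on $xyg^k$). The paper instead argues structurally: for the first two relations it observes that $\alpha\xi_1$ and $\xi_1\alpha$ both lie in the space $P_{\alpha^2,\alpha}$ of homogeneous functionals $\xi$ with $\Delta(\xi)=\xi\otimes\alpha^2+\alpha\otimes\xi$, and that any such $\xi$ is determined by the two scalars $\langle\xi,x\rangle$ and $\langle\xi,y\rangle$, so only two evaluations are needed; for the third it shows $\zeta=\xi_1\xi_2-\xi_2\xi_1$ satisfies $\Delta(\zeta)=\zeta\otimes\alpha+\alpha\otimes\zeta$, so $\alpha^{-1}\zeta$ is primitive and hence zero, because a finite-dimensional Hopf algebra over a field of characteristic zero has no nonzero primitive elements. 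Your computation buys uniformity and characteristic-independence (the paper's third step genuinely uses characteristic zero), at the cost of heavier bookkeeping; the paper's buys brevity and conceptual clarity. One caution about your route: the displayed formula for $\Delta(x^iy^jg^k)$ in the paper has a sign slip in the grouplike exponent --- it should read $g^{-r-s+k}$, not $g^{-r+s+k}$ (test it on $\Delta(xy)$, whose last term must be $g^{-2}\otimes xy$). Your quoted values are consistent with the corrected exponent, which shows you performed the commutations yourself using the correct rules $g^{-1}x=q^{-2}xg^{-1}$ and $g^{-1}y=q^{2}yg^{-1}$; but note that plugging in the displayed formula literally would make the third identity appear to fail ($q^{-2k-2}$ vs.\ $q^{-2k+2}$), so a written-up version of your argument should flag and correct this typo explicitly.
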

\begin{proof}
  Given $\beta, \gamma \in \Grp(H^*)$, we define $P_{\beta, \gamma}$ to be the space of homogeneous elements $\xi \in H^*$ satisfying $\Delta(\xi) = \xi \otimes \beta + \gamma \otimes \xi$. If $\xi \in P_{\beta, \gamma}$, then we have
  \begin{equation*}
    \langle \xi, x g^i \rangle
    = \langle \xi, x \rangle \langle \beta, g^i \rangle
    + \langle \gamma, x \rangle \langle \xi, g^i \rangle
    = \langle \beta, g^i \rangle \langle \xi, x \rangle,
    \quad
    \langle \xi, y g^i \rangle
    = \langle \gamma, g^i \rangle \langle \xi, y \rangle
  \end{equation*}
  for all $i \in \mathbb{I}$. This means that $\xi \in P_{\beta, \gamma}$ is completely determined by the scalars $\langle \xi, x \rangle$ and $\langle \xi, y \rangle$. Both $\alpha \xi_1$ and $\xi_1 \alpha$ belongs to $P_{\alpha^2, \alpha}$. Since $\langle \alpha \xi_1, x \rangle = q^2$, $\langle \xi_1 \alpha, x \rangle = 1$ and $\langle \alpha \xi_1, y \rangle = \langle \xi_1 \alpha, y \rangle = 0$, we have $\alpha \xi_1 = q^2 \xi_1 \alpha$.

  We have verified the first equation in \eqref{eq:appendix-gr-uq-sl2-dual-relations}. The second one is proved in a similar way. To prove the third one, we note that $\zeta := \xi_1 \xi_2 - \xi_2 \xi_1$ satisfies
  \begin{align*}
    \Delta(\zeta)
    & = \xi_1 \xi_2 \otimes \alpha + \xi_1 \alpha \otimes \alpha \xi_2 + \xi_2 \otimes \xi_1 + \alpha \otimes \xi_1 \xi_2 \\
    & \qquad - (\xi_2 \xi_1 \otimes \alpha + \xi_2 \otimes \xi_1 + \alpha \xi_1 \otimes \xi_2 \alpha + \alpha \otimes \xi_2 \xi_1) \\
    & = \zeta \otimes \alpha + \alpha \otimes \zeta,
  \end{align*}
  where the second equality follows from $\alpha \xi_i = q^2 \xi_i \alpha$ ($i = 1, 2$).
  Hence $\alpha^{-1} \zeta$ is a primitive element. Since a finite-dimensional Hopf algebra over a field of characteristic zero cannot have a non-zero primitive element, we have $\zeta = 0$, that is, the third equation of \eqref{eq:appendix-gr-uq-sl2-dual-relations} holds.
\end{proof}

\begin{lemma}
  \label{lem:appendix-sigma-inverse}
  $\sigma$ is invertible with respect to the convolution product.
\end{lemma}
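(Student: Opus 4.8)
The plan is to realize $\sigma$ as ``unit plus nilpotent'' in the convolution algebra $(H \otimes H)^*$, which makes invertibility automatic. First I would use the standard identification of $H^* \otimes H^*$ (equipped with the tensor product of the convolution algebra $H^*$ with itself) with $(H \otimes H)^*$ (equipped with the convolution product induced by the comultiplication of $H \otimes H$). A one-line computation with $\Delta_{H \otimes H}(a \otimes b) = (a_{(1)} \otimes b_{(1)}) \otimes (a_{(2)} \otimes b_{(2)})$ shows that this identification is an algebra isomorphism, so that the convolution product satisfies $(f \otimes f')(g \otimes g') = (f g) \otimes (f' g')$ for $f, f', g, g' \in H^*$, where all products are convolution products. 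In particular, the $r$-th convolution power of $\xi_1 \otimes \xi_2$ is $\xi_1^r \otimes \xi_2^r$ for every $r \ge 0$.

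Next I would invoke \eqref{eq:appendix-xi-1-power} and \eqref{eq:appendix-xi-2-power}, which already yield $\xi_1^N = \xi_2^N = 0$; hence $(\xi_1 \otimes \xi_2)^r = \xi_1^r \otimes \xi_2^r = 0$ for all $r \ge N$, so $\xi_1 \otimes \xi_2$ is nilpotent in $(H \otimes H)^*$. Consequently the series \eqref{eq:appendix-gr-uq-sl2-cocycle-def} defining $\sigma$ is genuinely finite, its $r = 0$ term is the unit $\varepsilon \otimes \varepsilon$ of $(H \otimes H)^*$, and the remaining terms form the element
\begin{equation*}
  \nu := \sum_{r = 1}^{N-1} \frac{(\xi_1 \otimes \xi_2)^r}{(r)_{q^2}!},
\end{equation*}
which is a convolution polynomial with vanishing constant term in a nilpotent element and hence satisfies $\nu^N = 0$. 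Writing $\sigma = \varepsilon \otimes \varepsilon + \nu$, the two-sided convolution inverse is then the terminating geometric series $\sigma^{-1} = \sum_{k = 0}^{N-1} (-1)^k \nu^k$.

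This argument presents no real obstacle: the only point requiring a little care is the multiplicativity $(\xi_1 \otimes \xi_2)^r = \xi_1^r \otimes \xi_2^r$ of convolution powers, which rests on the algebra isomorphism above and not on the commutation relations \eqref{eq:appendix-gr-uq-sl2-dual-relations}. I note that, because $\xi_1 \otimes \xi_2$ is a single element whose powers commute, one may alternatively write down an explicit inverse of $q$-exponential type, namely $\sigma^{-1} = \sum_{r = 0}^{N-1} \frac{(-1)^r q^{r(r-1)}}{(r)_{q^2}!} (\xi_1 \otimes \xi_2)^r$, via the classical identity relating the two $q$-exponentials; however, the geometric-series inverse already settles invertibility.
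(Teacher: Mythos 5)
Your proof is correct and is essentially the paper's own argument: both decompose $\sigma$ as $\varepsilon \otimes \varepsilon$ plus a nilpotent element $\nu$ (using $\xi_1^N = \xi_2^N = 0$ and the componentwise convolution product on $H^* \otimes H^*$) and invert by the terminating geometric series. The additional details you supply---the algebra identification $H^* \otimes H^* \cong (H \otimes H)^*$ and the explicit $q$-exponential-type inverse---are correct but go beyond what the paper's short proof records.
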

\begin{proof}
  Since $\xi_1^N = \xi_2^N = 0$, the element $\nu := \varepsilon \otimes \varepsilon - \sigma$ satisfies $\nu^N = 0$. Thus the inverse of $\sigma$ is given by $\sigma^{-1} = (\varepsilon \otimes \varepsilon - \nu)^{-1} = \varepsilon \otimes \varepsilon + \nu + \nu^2 + \cdots + \nu^{N-1}$.
\end{proof}

\begin{lemma}
  The bilinear form $\sigma$ is a Hopf 2-cocycle of $H$.
\end{lemma}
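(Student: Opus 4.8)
The normalization $\sigma(x,1) = \varepsilon(x) = \sigma(1,x)$ is immediate once one knows $\xi_1(1) = \xi_2(1) = 0$ (by \eqref{eq:appendix-alpha-xi-1-xi-2}): every term of $\exp_{q^2}(\xi_1\otimes\xi_2)$ of order $r\ge 1$ then annihilates $1\otimes b$ and $a\otimes 1$, leaving only the $r=0$ term $\varepsilon\otimes\varepsilon$. Convolution-invertibility is Lemma~\ref{lem:appendix-sigma-inverse}. So the only remaining task is the cocycle equation \eqref{eq:Hopf-2-cocycle}. The plan is to regard both of its sides as elements of $(H^{\otimes 3})^* \cong (H^*)^{\otimes 3}$ and expand them explicitly from $\sigma = \sum_{r=0}^{N-1} \tfrac{1}{(r)_{q^2}!}\,\xi_1^r\otimes\xi_2^r$, the powers being taken in the convolution algebra $H^*$.

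The first step is to record the comultiplications of the powers $\xi_i^s$. Writing $\Delta$ for the comultiplication of $H^*$, I would start from \eqref{eq:appendix-xi-1-xi-2-comult} and the relations \eqref{eq:appendix-gr-uq-sl2-dual-relations}: in $H^*\otimes H^*$ the two summands of $\Delta(\xi_1)=\xi_1\otimes\alpha+\varepsilon\otimes\xi_1$ satisfy $(\xi_1\otimes\alpha)(\varepsilon\otimes\xi_1)=q^2(\varepsilon\otimes\xi_1)(\xi_1\otimes\alpha)$, so the $q$-binomial formula gives
\begin{equation*}
  \Delta(\xi_1^s) = \sum_{t} \binom{s}{t}_{\!\!q^2}\,\xi_1^{s-t}\otimes\xi_1^{t}\alpha^{s-t},
  \qquad
  \Delta(\xi_2^s) = \sum_{t} \binom{s}{t}_{\!\!q^2}\,\xi_2^{t}\alpha^{s-t}\otimes\xi_2^{s-t},
\end{equation*}
the second obtained identically from $\Delta(\xi_2)=\xi_2\otimes\varepsilon+\alpha\otimes\xi_2$. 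Substituting these into the two sides of \eqref{eq:Hopf-2-cocycle}, expanding $\xi_1^n(x_{(2)}y_{(2)})=\Delta(\xi_1^n)(x_{(2)}\otimes y_{(2)})$ and $\xi_2^n(y_{(2)}z_{(2)})=\Delta(\xi_2^n)(y_{(2)}\otimes z_{(2)})$, and using $\xi_1\xi_2=\xi_2\xi_1$ to put all monomials into normal form $\xi_1^a\xi_2^b\alpha^c$, the left-hand side becomes
\begin{equation*}
  \sum_{m,n,t} \frac{1}{(m)_{q^2}!\,(n)_{q^2}!}\binom{n}{t}_{\!\!q^2}\;
  \xi_1^{m+n-t}\otimes \xi_1^{t}\xi_2^{m}\alpha^{n-t}\otimes\xi_2^{n},
\end{equation*}
while the right-hand side becomes
\begin{equation*}
  \sum_{m,n,t} \frac{1}{(m)_{q^2}!\,(n)_{q^2}!}\binom{n}{t}_{\!\!q^2}\;
  \xi_1^{n}\otimes \xi_1^{m}\xi_2^{t}\alpha^{n-t}\otimes\xi_2^{m+n-t}.
\end{equation*}

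The final step is to match these two sums term by term under the reindexing $(m,n,t)\mapsto(a,b,c)=(t,\,m+n-t,\,m)$, which carries each monomial of the first sum precisely onto the monomial $\xi_1^{b}\otimes\xi_1^{a}\xi_2^{c}\alpha^{b-c}\otimes\xi_2^{a+b-c}$ of the second. The corresponding identity of coefficients reduces to
\begin{equation*}
  \frac{1}{(m)_{q^2}!\,(n)_{q^2}!}\binom{n}{t}_{\!\!q^2}
  = \frac{1}{(t)_{q^2}!\,(m+n-t)_{q^2}!}\binom{m+n-t}{m}_{\!\!q^2},
\end{equation*}
both sides being equal to $\bigl((m)_{q^2}!\,(t)_{q^2}!\,(n-t)_{q^2}!\bigr)^{-1}$, so the two sums agree and \eqref{eq:Hopf-2-cocycle} holds.

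The one point that requires care — and the main obstacle — is the truncation $\xi_1^N=\xi_2^N=0$ together with the fact that $1/(r)_{q^2}!$ is defined only for $r\le N-1$: the reindexing need not respect the honest summation ranges, since $b=m+n-t$ can exceed $N-1$. I would dispose of this by noting that whenever $m+n-t\ge N$ the matched terms vanish on both sides (the first tensor factor $\xi_1^{m+n-t}$ is $0$ on the left, as is $\xi_1^{b}$ on the right), so that the substitution restricts to a genuine bijection between the triples contributing nonzero terms, on which all $q$-factorials occurring are well defined. This confirms the cocycle equation and hence that $\sigma$ is a Hopf 2-cocycle of $H$.
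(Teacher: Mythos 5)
Your proof is correct, and it verifies the cocycle identity by a more hands-on route than the paper. Both arguments reduce \eqref{eq:Hopf-2-cocycle} to an identity in $(H^*)^{\otimes 3}$, and both rest on the relations \eqref{eq:appendix-gr-uq-sl2-dual-relations}; but the paper keeps everything packaged as $q$-exponentials: it writes each side as a product of factors $\exp_{q^2}(-)$ and rearranges one into the other using the addition law $\exp_{q^2}(A+B)=\exp_{q^2}(A)\exp_{q^2}(B)$ for $BA=q^2AB$, $A^N=B^N=0$, quoted from \cite[Proposition IV.2.4]{MR1321145}, together with the commutativity of $\xi_1\otimes\xi_2\otimes\varepsilon$ and $\varepsilon\otimes\xi_1\otimes\xi_2$. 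You instead inline exactly the combinatorics that Kassel's proposition encapsulates: you compute $\Delta_{H^*}(\xi_i^s)$ by the $q$-binomial formula (your two formulas are correct, given \eqref{eq:appendix-xi-1-xi-2-comult} and $\alpha\xi_i=q^2\xi_i\alpha$), expand both sides into normal-ordered monomials, and match terms under $(m,n,t)\mapsto(t,m+n-t,m)$; the coefficient identity you reduce to is right, both sides being equal to $\bigl((m)_{q^2}!\,(t)_{q^2}!\,(n-t)_{q^2}!\bigr)^{-1}$. What the paper's route buys is brevity and the complete invisibility of truncation issues, since its exponentials are honest polynomials thanks to $\xi_i^N=0$; what yours buys is self-containedness, using nothing beyond the $q$-binomial formula already stated in the paper, at the cost of the index bookkeeping that you rightly single out as the delicate point.

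On that point, one detail of your bookkeeping should be repaired. After discarding the terms of the first sum with $m+n-t\ge N$ (these do vanish, since their first tensor factor $\xi_1^{m+n-t}$ is zero), your substitution bijects the remaining triples onto the triples $(a,b,c)$ of the second sum satisfying $a+b-c\le N-1$. The second sum, however, also contains honest terms with $a+b-c\ge N$ (for instance $a=b=N-1$, $c=0$); their coefficients are perfectly well defined, since $a,b\le N-1$, so they are not disposed of by your remark about ill-defined $q$-factorials, and they are not images of any term of the first sum. They do vanish, but because the third tensor factor $\xi_2^{a+b-c}$ is zero --- not the first factor $\xi_1^{b}$, which is nonzero there. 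With this symmetric observation added, the claimed bijection between the nonzero terms of the two sums is justified and your argument is complete.
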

\begin{proof}
  We have already proved that $\sigma$ is invertible with respect to the convolution product.
  It is easy to see that the equations $\sigma(1, h) = \varepsilon(h) = \sigma(h, 1)$ hold for all $h \in H$. We shall show that $\sigma$ satisfies \eqref{eq:Hopf-2-cocycle}. By the definition of the multiplication and the comultiplication of $H^*$, the equation \eqref{eq:Hopf-2-cocycle} is equivalent to
  \begin{equation*}
    (\varepsilon \otimes \sigma) \cdot (\id_{H^*} \otimes \Delta_{H^*})(\sigma)
    = (\sigma \otimes \varepsilon) \cdot (\Delta_{H^*} \otimes \id_{H^*})(\sigma).
  \end{equation*}
  This is verified as follows:
  \begin{align*}
    & (\varepsilon \otimes \sigma)
      \cdot (\id_{H^*} \otimes \Delta_{H^*})(\sigma) \\
    & = \exp_{q^2}(\varepsilon \otimes \xi_1 \otimes \xi_2)
      \cdot \exp_{q^2}(\xi_1 \otimes \xi_2 \otimes \varepsilon
      + \xi_1 \otimes \alpha \otimes \xi_2) \\
    & = \exp_{q^2}(\varepsilon \otimes \xi_1 \otimes \xi_2)
      \cdot \exp_{q^2}(\xi_1 \otimes \xi_2 \otimes \varepsilon)
      \cdot \exp_{q^2}(\xi_1 \otimes \alpha \otimes \xi_2) \\
    & = \exp_{q^2}(\xi_1 \otimes \xi_2 \otimes \varepsilon)
      \cdot \exp_{q^2}(\varepsilon \otimes \xi_1 \otimes \xi_2)
      \cdot \exp_{q^2}(\xi_1 \otimes \alpha \otimes \xi_2) \\
    & = \exp_{q^2}(\xi_1 \otimes \xi_2 \otimes \varepsilon)
      \cdot \exp_{q^2}(\varepsilon \otimes \xi_1 \otimes \xi_2
      + \xi_1 \otimes \alpha \otimes \xi_2) \\
    & = (\sigma \otimes \varepsilon)
      \cdot (\Delta_{H^*} \otimes \id_{H^*})(\sigma).
  \end{align*}
  Here, the third equation holds since $\xi_1 \otimes \xi_2 \otimes \varepsilon$ and $\varepsilon \otimes \xi_1 \otimes \xi_2$ commute, and we have used the following fact at the second and the fourth equalities: If $A$ and $B$ are elements of the same algebra satisfying $B A = q^2 A B$ and $A^N = B^N = 0$, then the equation $\exp_{q^2}(A + B) = \exp_{q^2}(A) \cdot \exp_{q^2}(A)$ holds \cite[Proposition IV.2.4]{MR1321145}. The proof is done.
\end{proof}

\begin{proof}[Proof of Lemma~\ref{lem:gr-uq-sl2-cocycle-deform}]
  Let $\sigma$ be the Hopf 2-cocycle of $H := \gr(u_q(\mathfrak{sl}_2))$ constructed in the previous subsection. By the proof of Lemma~\ref{lem:appendix-sigma-inverse}, the inverse of $\sigma$ with respect to the convolution product can be written as
  \begin{equation*}
    \sigma^{-1} = \varepsilon \otimes \varepsilon - \xi_1 \otimes \xi_2 + c_2 (\xi_1^2 \otimes \xi_2^2) + \cdots + c_{N-1} (\xi_1^{N-1} \otimes \xi_2^{N-1})
  \end{equation*}
  for some $c_2, \cdots, c_{N-1} \in \bfk$. Let $*$ denotes the multiplication of $H^{\sigma}$. We consider the Hopf subalgebra $A = \langle g, x \rangle$ of $H$. Since $\xi_1 \otimes \xi_2$ vanishes on $A^{\otimes 2}$, $\sigma$ and $\sigma^{-1}$ are equal to $\varepsilon \otimes \varepsilon$ on $A^{\otimes 2}$. Thus, for $a, b \in A$, we have
  \begin{align*}
    a*b
    & = \sigma(a_{(1)}, b_{(1)}) a_{(2)} b_{(2)} \sigma^{-1}(a_{(3)}, b_{(3)}) \\
    & = \varepsilon(a_{(1)}) \varepsilon(b_{(1)})
      a_{(2)} b_{(2)} \varepsilon(a_{(3)})\varepsilon(b_{(3)})
      = a b.
  \end{align*}
  In particular, we have $g * x = g x$, $x * g = x g$, $x^{* m} = x^{m}$ and $g^{* m} = g^m$ for $m \in \mathbb{Z}_{\ge 0}$, where $(-)^{*m}$ denotes the $m$-th power with respect to $*$. Since $\xi_1 \otimes \xi_2$ also vanishes on the Hopf subalgebra $\langle g, y \rangle$, we have $g * y = g y$, $y * g = y g$ and $y^{*m} = y^m$. Finally, a tedious but straightforward computation shows
  \begin{equation*}
    x * y = 1 + x y - g^{-2} \quad \text{and} \quad y * x = y x.
  \end{equation*}
  Hence, $x * y - q^2 y * x = 1 - g^{-2}$.
  The discussion so far shows that
  \begin{equation}
    \tag{\ref{eq:gr-uq-sl2-cocycle-deform-2}}
    \gr(u_q(\mathfrak{sl}_2))^{\sigma} \to u_q(\mathfrak{sl}_2),
    \quad x \mapsto \tilde{E} = (q-q^{-1})K^{-1}E, \quad y \mapsto F, \quad g \mapsto K
  \end{equation}
  is a well-defined algebra map. It is obvious that \eqref{eq:gr-uq-sl2-cocycle-deform-2} is a surjective Hopf algebra map. Since the source and the target of \eqref{eq:gr-uq-sl2-cocycle-deform-2} have the same dimension, \eqref{eq:gr-uq-sl2-cocycle-deform-2} is in fact an isomorphism of Hopf algebras. The proof is done.
\end{proof}

\section{A polynomial identity involving roots of unity}
\label{appendix:Chebyshev}

In Appendix~\ref{appendix:Chebyshev}, we give a proof of \eqref{eq:appendix-Chebyshev-3} with the use of Chebyshev polynomials. For a while, we work over the field $\mathbb{C}$ of complex numbers to use an analytic method slightly. For a non-negative integer $n$, we denote by $T_n(z) \in \mathbb{C}[z]$ the $n$-th Chebyshev polynomial of the first kind, which is defined to be the polynomial with
the property that $T_n(\cos \theta) = \cos n \theta$ for $\theta \in \mathbb{R}$. A closed formula of $T_n(z)$ is known: According to \cite[Subsection 2.3.2]{MR1937591}, we have
\begin{equation}
  \label{eq:appendix-Chebyshev-0}
  T_n(z) = \frac{n}{2} \sum_{k = 0}^{\lfloor n / 2 \rfloor}
  \binom{n - k}{k} \frac{(-1)^k}{n - k} (2z)^{n - 2k},
\end{equation}
where $\lfloor \ \rfloor$ is the floor function.

\begin{lemma}
  Let $n \ge 1$ and $\ell$ be relatively prime integers. Then we have
  \begin{equation}
    \label{eq:appendix-Chebyshev-1}
    T_n(z) - \cos(n \alpha)
    = \frac{1}{2} \prod_{k = 0}^{n-1}
    \left( 2z - 2 \cos\left(\alpha + \frac{2k\ell\pi}{n}\right) \right)
    \quad (z, \alpha \in \mathbb{C}).
  \end{equation}
\end{lemma}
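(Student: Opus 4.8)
The plan is to reduce the claimed identity in $z$ to a transparent Laurent-polynomial identity in an auxiliary variable $w$ via the standard Chebyshev substitution $z = \tfrac{1}{2}(w + w^{-1})$. First I would record that, under this substitution, one has the Laurent-polynomial identity
\begin{equation*}
  T_n\!\left(\tfrac{1}{2}(w + w^{-1})\right) = \tfrac{1}{2}(w^n + w^{-n}),
\end{equation*}
which follows because both sides are Laurent polynomials in $w$ agreeing on the unit circle (where $w = e^{i\theta}$ and $T_n(\cos\theta) = \cos n\theta$). Since the map $w \mapsto \tfrac{1}{2}(w + w^{-1})$ is surjective onto $\mathbb{C}$, any Laurent-polynomial identity valid for all $w \in \mathbb{C}^{\times}$ that expresses a polynomial in $z = \tfrac12(w+w^{-1})$ descends to a genuine polynomial identity in $z \in \mathbb{C}$; this is the mechanism by which the final conclusion will be drawn, and it simultaneously accommodates complex $\alpha$ since $\cos(n\alpha) = \tfrac12(e^{in\alpha} + e^{-in\alpha})$ holds for all $\alpha \in \mathbb{C}$.

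Next I would compute both sides after substitution. For the left-hand side the key step is the telescoping factorization
\begin{equation*}
  T_n(z) - \cos(n\alpha) = \tfrac{1}{2}\, w^{-n}\,(w^n - e^{in\alpha})(w^n - e^{-in\alpha}),
\end{equation*}
obtained by expanding the right side and using $T_n(z) = \tfrac12(w^n + w^{-n})$. For a single factor on the right-hand side, writing $\beta_k = \alpha + \tfrac{2k\ell\pi}{n}$, I would use the analogous degree-one factorization $2z - 2\cos\beta_k = w^{-1}(w - e^{i\beta_k})(w - e^{-i\beta_k})$, so that the full product becomes $w^{-n}\prod_{k}(w - e^{i\beta_k}) \cdot \prod_{k}(w - e^{-i\beta_k})$. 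Applying the elementary identity $\prod_{k=0}^{n-1}(w - c\,e^{2k\pi i/n}) = w^n - c^n$ to $c = e^{\pm i\alpha}$ then collapses these two products to $(w^n - e^{in\alpha})(w^n - e^{-in\alpha})$, matching the left-hand side after multiplication by $\tfrac12$.

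The one genuinely non-formal point, which I expect to be the main obstacle, is the role of the hypothesis $\gcd(n,\ell) = 1$: the product on the right of \eqref{eq:appendix-Chebyshev-1} involves the arguments $\alpha + \tfrac{2k\ell\pi}{n}$ rather than $\alpha + \tfrac{2k\pi}{n}$, so the factorization $\prod_k(w - c\,e^{2k\pi i/n}) = w^n - c^n$ does not apply verbatim. Here I would invoke that coprimality makes $k \mapsto k\ell \bmod n$ a bijection of $\{0,\dots,n-1\}$, whence $\{\tfrac{2k\ell\pi}{n} \bmod 2\pi\} = \{\tfrac{2k\pi}{n} \bmod 2\pi\}$ as sets; since $\cos$ is $2\pi$-periodic, the unordered collection of factors $2z - 2\cos\beta_k$ is unchanged by this reindexing, and the product is therefore the same as for $\ell = 1$. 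Combining this reindexing with the two factorizations above yields the desired identity for all $w \in \mathbb{C}^{\times}$, and the surjectivity of $w \mapsto \tfrac12(w+w^{-1})$ completes the proof for all $z, \alpha \in \mathbb{C}$. As a consistency check one may also verify that both sides are polynomials of degree $n$ in $z$ with leading coefficient $2^{n-1}$, in accordance with \eqref{eq:appendix-Chebyshev-0}.
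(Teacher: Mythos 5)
Your proof is correct, but it takes a genuinely different route from the paper's. The paper argues by root-counting and analytic continuation: it fixes a \emph{real} $\alpha \in (0, \pi/n)$, shows by a careful monotonicity argument that the $n$ numbers $\cos(\alpha + 2k\pi/n)$ are pairwise distinct, observes that both sides of \eqref{eq:appendix-Chebyshev-1} are degree-$n$ polynomials in $z$ with leading coefficient $2^{n-1}$ whose root sets both equal this $n$-element set, concludes equality for such $\alpha$, and finally extends to all complex $\alpha$ by the identity theorem. You instead make the identity purely algebraic via the substitution $z = \tfrac{1}{2}(w + w^{-1})$: the Laurent identities $T_n\bigl(\tfrac{1}{2}(w+w^{-1})\bigr) = \tfrac{1}{2}(w^n + w^{-n})$ and $2z - 2\cos\beta = w^{-1}(w - e^{i\beta})(w - e^{-i\beta})$, together with the cyclotomic factorization $\prod_{k=0}^{n-1}(w - c\,e^{2k\pi i/n}) = w^n - c^n$, collapse both sides to $\tfrac{1}{2}w^{-n}(w^n - e^{in\alpha})(w^n - e^{-in\alpha})$. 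Both proofs use the coprimality hypothesis in the same essential way, namely that $k \mapsto k\ell \bmod n$ permutes $\{0,\dots,n-1\}$; in the paper this guarantees the right-hand side has the same root set as the $\ell = 1$ case, while in your argument it reindexes the linear factors. What your approach buys: it handles complex $\alpha$ uniformly from the outset (no identity theorem), it never needs the roots to be distinct, and it replaces the delicate real-interval separation argument by root-of-unity algebra; the only analytic input is the defining property $T_n(\cos\theta) = \cos n\theta$ on the unit circle, which both proofs require. What the paper's approach buys: it avoids introducing the auxiliary variable $w$ and the factorization machinery, working directly with the polynomials in $z$ and the closed formula \eqref{eq:appendix-Chebyshev-0} for the leading coefficient.
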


The authors found an identity essentially the same as \eqref{eq:appendix-Chebyshev-1} at the article of Wikipedia entitled {\it List of trigonometric identities} \cite{Wiki}. Although \eqref{eq:appendix-Chebyshev-1} may be known to experts, we provide a proof of \eqref{eq:appendix-Chebyshev-1} since we do not know an appropriate reference for this identity. In fact, the identity is equipped with a [citation needed] tag in the article of Wikipedia.

\begin{proof}
  For simplicity of notation, we set $c_{k}(\alpha) := \cos(\alpha + 2 k \pi/n)$.
  We first show that the set $\mathcal{X}_{\alpha} := \{ c_{k}(\alpha) \mid k = 0, 1, \cdots, n - 1 \}$ has exactly $n$ elements when $\alpha$ is a real number with $0 < \alpha < \pi/n$. Indeed, if $0 \le k < n/2$, then we have
  \begin{equation*}
    c_k(0) = \cos(2k\pi/n) > \cos(\alpha + 2k\pi/n) > \cos(\pi/n + 2k\pi/n) = c_{k+1/2}(0)
  \end{equation*}
  since the cosine function is strictly monotonically decreasing in this range. When $n/2 \le k < n$, the direction of the above inequalities are reversed. Thus, for an integer $k$ with $0 \le k < n/2$, we have
  \begin{gather*}
    c_k(0) > c_k(\alpha) > c_{k+1/2}(0) = \cos((2k+1)\pi/n)
    = \cos((2n - 2k - 1)\pi/n) \\
    = c_{(n-k-1)+1/2}(0) > c_{n-k-1}(\alpha) > c_{n-k-1}(0) = c_{k+1}(0)
  \end{gather*}
  since, as $k$ is an integer, we have $n/2 \le n - k - 1 < n$.
  This means that the elements of $\mathcal{X}_{\alpha}$ are separated by $n$ distinct real intervals. Therefore $\#\mathcal{X}_{\alpha} = n$.

  Now let $P_{\alpha}(z)$ and $Q_{\alpha}(z)$ be the left and the right hand side of \eqref{eq:appendix-Chebyshev-1}, respectively, and assume $0 < \alpha < \pi/n$.
  It is easy to see that the leading coefficient of $P_{\alpha}(z)$ and that of $Q_{\alpha}(z)$ are $2^{n-1}$.
  By the defining property of the Chebyshev polynomial, we see that $\mathcal{X}_{\alpha}$ is the set of all roots of $P_{\alpha}(z)$.
  Since $n$ and $\ell$ are assumed to be relatively prime, $\mathcal{X}_{\alpha}$ is also equal to the set of all roots of $Q_{\alpha}(z)$.
  We have verified that $P_{\alpha}(z)$ and $Q_{\alpha}(z)$ are polynomials of degree $n$ with the same leading coefficient, and that they share $n$ distinct roots.
  Hence we have $P_{\alpha}(z) = Q_{\alpha}(z)$ as polynomials with variable $z$.

  We have verified \eqref{eq:appendix-Chebyshev-1} under the assumption $0 < \alpha < \pi/n$. We now fix $z \in \mathbb{C}$. Since the both sides of \eqref{eq:appendix-Chebyshev-1} are analytic functions of $\alpha \in \mathbb{C}$, the equation \eqref{eq:appendix-Chebyshev-1} actually holds for any $\alpha \in \mathbb{C}$ by the identity theorem. The proof is done.
\end{proof}

Let $i$ denote the imaginary unit. By replacing $z$, $\ell$ and $\alpha$ in \eqref{eq:appendix-Chebyshev-1} with $z/2$, $-\ell$ and $i \alpha$, respectively, we obtain
\begin{equation}
  \label{eq:appendix-Chebyshev-2}
  T_n(z/2) - \cosh(n \alpha)
  = \frac{1}{2} \prod_{k = 0}^{n - 1}
  \left(z - 2 \cosh\left( \alpha + \frac{2k\ell\pi}{n} i \right) \right),
\end{equation}
which is the equation that we will use.

\begin{lemma}
  Let $\bfk$ be a field of characteristic zero, and let $\omega \in \bfk$ be a root of unity of order $n > 1$. Then the equation
  \begin{equation}
    \tag{\ref{eq:appendix-Chebyshev-3}}
    \prod_{k = 0}^{n - 1} \left( z - (u \omega^k + v \omega^{-k}) \right)
    = \sum_{k = 0}^{\lfloor n/2 \rfloor} \left(
      \frac{n}{n - k} \binom{n - k}{k} (-uv)^k z^{n - 2k} \right)
    - u^n - v^n
  \end{equation}
  holds in the ring $\bfk[u, v, z]$ of polynomials with variables $u$, $v$ and $z$.   
\end{lemma}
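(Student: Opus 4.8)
The final lemma is equation (eq:appendix-Chebyshev-3), a polynomial identity in $\bfk[u,v,z]$ over an arbitrary field $\bfk$ of characteristic zero, given a root of unity $\omega$ of order $n$. Crucially, the earlier computation (eq:appendix-Chebyshev-2) was done over $\mathbb{C}$ using analytic tools (Chebyshev polynomials, $\cosh$, the identity theorem), so the main task is to transfer the $\mathbb{C}$-identity to a purely algebraic statement over $\bfk$.

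Let me sketch the proof.

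The plan is to reduce the identity \eqref{eq:appendix-Chebyshev-3} to the case $\bfk = \mathbb{C}$ with $\omega = \zeta := e^{2\pi i/n}$, to prove it there using the Chebyshev identity \eqref{eq:appendix-Chebyshev-2} on the hypersurface $uv = 1$, and then to extend it to all of $\bfk[u,v,z]$ by a homogeneity argument. For the field reduction, I would observe that the coefficients of the right-hand side of \eqref{eq:appendix-Chebyshev-3} lie in $\mathbb{Z}$, while those of the left-hand side, read as a polynomial in $z$, are elementary symmetric functions of the quantities $u\omega^k + v\omega^{-k}$ and hence lie in $\mathbb{Z}[\omega][u,v]$ (using $\omega^{-1} = \omega^{n-1}$). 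Since any root of unity $\omega$ of order $n$ is a root of the $n$-th cyclotomic polynomial $\Phi_n$ and $\mathbb{Z}[\zeta] \cong \mathbb{Z}[t]/(\Phi_n)$, there is a ring homomorphism $\mathbb{Z}[\zeta] \to \bfk$ sending $\zeta \mapsto \omega$; applying it coefficientwise carries an identity proved over $\mathbb{Z}[\zeta] \subset \mathbb{C}$ to $\bfk[u,v,z]$, so it suffices to work over $\mathbb{C}$ with $\omega = \zeta$.

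First I would prove \eqref{eq:appendix-Chebyshev-3} on the locus $uv = 1$. Writing $u = e^{\alpha}$ and $v = e^{-\alpha}$ for $\alpha \in \mathbb{C}$, one has $u\zeta^k + v\zeta^{-k} = 2\cosh(\alpha + 2k\pi i/n)$, so \eqref{eq:appendix-Chebyshev-2} with $\ell = 1$ gives $\prod_{k=0}^{n-1}\bigl(z - (u\zeta^k + v\zeta^{-k})\bigr) = 2T_n(z/2) - 2\cosh(n\alpha) = 2T_n(z/2) - (u^n + v^n)$. Expanding $2T_n(z/2)$ by the closed formula \eqref{eq:appendix-Chebyshev-0} yields precisely $\sum_{k=0}^{\lfloor n/2\rfloor}\frac{n}{n-k}\binom{n-k}{k}(-1)^k z^{n-2k}$, and since $(-1)^k = (-uv)^k$ when $uv = 1$, this matches the right-hand side of \eqref{eq:appendix-Chebyshev-3}. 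As every $u \in \mathbb{C}^{\times}$ is of the form $e^{\alpha}$, the identity holds for all triples $(u,v,z)$ with $uv = 1$ and $u \neq 0$.

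It remains to upgrade this from the hypersurface $uv = 1$ to a polynomial identity, and this is the step I expect to be the main obstacle, together with the passage from $\mathbb{C}$ back to a general characteristic-zero field. The key observation is that both sides of \eqref{eq:appendix-Chebyshev-3} are homogeneous of degree $n$ for the grading $\deg u = \deg v = \deg z = 1$: the term $(-uv)^k z^{n-2k}$ has degree $2k + (n-2k) = n$, as do $\prod_{k}(z - u\zeta^k - v\zeta^{-k})$ and $u^n + v^n$. Writing the difference of the two sides as $\sum_{a+b+c=n} c_{abc}\,u^a v^b z^c$ and restricting to $v = 1/u$, the vanishing on the locus forces $\sum_{a+b=n-c} c_{abc}\,u^{a-b} = 0$ as a Laurent polynomial in $u$ for each fixed $c$; since the exponents $a-b$ are distinct, every $c_{abc}$ must vanish. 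Hence the two sides agree in $\mathbb{C}[u,v,z]$, and the cyclotomic specialization of the first paragraph completes the proof. As an aside, one can avoid the analytic Chebyshev input altogether by factoring $z - u\omega^k - v\omega^{-k} = -\omega^{-k}u(\omega^k - r_+)(\omega^k - r_-)$, where $r_{\pm}$ are the roots of $u t^2 - z t + v$, using $\prod_{k}(t - \omega^k) = t^n - 1$ to obtain $\prod_{k}(z - u\omega^k - v\omega^{-k}) = -u^n(r_+^n - 1)(r_-^n - 1)$, and then invoking the Girard--Waring formula for the power sum $r_+^n + r_-^n$; this route works uniformly over any field of characteristic zero.
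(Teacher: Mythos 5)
Your proof is correct, and its core is the same as the paper's: reduce to $\mathbb{C}$ and feed the Chebyshev identity \eqref{eq:appendix-Chebyshev-2} together with the closed formula \eqref{eq:appendix-Chebyshev-0} into the product. The differences lie in how you wrap that core. The paper keeps $\omega$ an arbitrary complex root of unity of order $n$ (hence needs \eqref{eq:appendix-Chebyshev-2} for general $\ell$ coprime to $n$) and handles all points with $uv \neq 0$ in one stroke via the substitution $u = y e^{\alpha}$, $v = y e^{-\alpha}$ with $y^2 = uv$, treating $u = 0$ or $v = 0$ as an easy separate case; pointwise equality on a Zariski-dense set then gives the polynomial identity. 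You instead prove the identity only on the slice $uv = 1$ (so $\ell = 1$ suffices) and upgrade to the full polynomial identity by the homogeneity argument; that step is sound, since for fixed $c$ the exponents $a - b$ with $a + b = n - c$, $a, b \geq 0$ are pairwise distinct, so vanishing of the Laurent polynomial kills every coefficient. Your slice-plus-homogeneity upgrade and the paper's $y$-scaling are really two ways of exploiting the same degree-$n$ homogeneity. Your field-transfer step, via the homomorphism $\mathbb{Z}[t]/(\Phi_n) \cong \mathbb{Z}[\zeta] \to \bfk$, $\zeta \mapsto \omega$, is more explicit than the paper's one-line remark that both sides have coefficients in $\mathbb{Q}(\omega)$, and it buys the normalization $\omega = e^{2\pi i/n}$. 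Finally, your closing aside --- factoring $z - u\omega^k - v\omega^{-k}$ through the quadratic $u t^2 - z t + v$, using $\prod_{k}(t - \omega^k) = t^n - 1$, and invoking Girard--Waring for $r_+^n + r_-^n$ --- is a genuinely different, purely algebraic route; written out carefully (splitting field for the quadratic, the case $u = 0$ handled separately), it would bypass the analytic Chebyshev machinery of the appendix entirely, which neither your main argument nor the paper's does.
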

\begin{proof}
  Since the both sides of \eqref{eq:appendix-Chebyshev-3} are polynomials of $z$, $u$ and $v$ with coefficients in $\mathbb{Q}(\omega)$, we may assume $\bfk = \mathbb{C}$ and it suffices to show that the equation \eqref{eq:appendix-Chebyshev-3} holds for all elements $u, v, z \in \mathbb{C}$.
  Since the proof of \eqref{eq:appendix-Chebyshev-3} is easy when $u = 0$ or $v = 0$, we consider the case where both $u$ and $v$ are non-zero. We choose $\ell \in \mathbb{Z}$, $w \in \mathbb{C}$ and $\alpha \in \mathbb{C}$ so that $\omega = \exp(2 \ell \pi i / n)$, $y^2 = u v$ and $\exp(\alpha) = u/y$. Then we have $v/y = y/u = \exp(-\alpha)$ and
  \begin{align*}
    & \prod_{k = 0}^{n - 1} \left( z - (u \omega^k + v \omega^{-k}) \right)
    = \prod_{k = 0}^{n - 1} \Big( z - (y \exp(\alpha) \omega^k + y \exp(-\alpha) \omega^{-k}) \Big) \\
    & = y^n \prod_{k = 0}^{n - 1} \left( y^{-1} z - 2 \cosh\left( \alpha + \frac{2 k \ell \pi i}{n} \right) \right)
    \mathop{=}^{\eqref{eq:appendix-Chebyshev-2}} 2 y^n (T_n(y^{-1} z / 2) - \cosh(n \alpha)) \\
    & \mathop{=}^{\eqref{eq:appendix-Chebyshev-0}} \sum_{k = 0}^{\lfloor n/2 \rfloor}
    \left( \frac{n}{n - k} \binom{n - k}{k} (-uv)^k z^{n - 2k} \right) - u^n - v^n.
    \qedhere
  \end{align*}
\end{proof}


\newcommand{\etalchar}[1]{$^{#1}$}
\def\cprime{$'$}

\end{document}